\sloppy\pagestyle{plain}
\theoremstyle{definition}
\newtheorem{example}[equation]{Example}
\newtheorem{definition}[equation]{Definition}
\newtheorem{theorem}[equation]{Theorem}
\newtheorem{lemma}[equation]{Lemma}
\newtheorem{corollary}[equation]{Corollary}
\newtheorem{question}[equation]{Question}
\newtheorem*{question*}{Question}
\newtheorem{problem}[equation]{Problem}
\newtheorem*{problem*}{Problem}
\theoremstyle{remark}
\newtheorem{remark}[equation]{Remark}
\makeatletter\@addtoreset{equation}{section} \makeatother
\author{Ivan Cheltsov}
\title{Two local inequalities}
\address{\begin{tabbing}
\hspace*{28 em}\=\kill
School of Mathematics\\
University of Edinburgh\\
Edinburgh EH9 3JZ, UK\\
\\
\texttt{I.Cheltsov@ed.ac.uk}
\end{tabbing}}
\dedicatory{to the~memory of Vasily Iskovskikh (1939--2009)}
\thanks{The author would like to thank I.\,Dolgachev, V.\,Golyshev, D.\,Kosta, Yu.\,Prokhorov and K.\,Shramov
for useful comments and conversations. The would like to thank
T.\,Dokchitser for the~proof of Lemma~\ref{lemma:A5-6}.}
\begin{document}

\begin{abstract}
Many results in algebraic geometry can be proved using local
inequalities that relate singularities of divisors on algebraic
varieties and their multiplicities. For example, the~famous
theorem of Iskovskikh and Manin about the~nonrationality of every
smooth quartic threefold can be derived from one such inequality.
We prove two new local inequalities for divisors on smooth
surfaces. Using orbifold multiplicities, we can apply one of these
inequalities to divisors on two-dimensional orbifolds. We consider
some applications of the obtained inequalities.
\end{abstract}

\maketitle

\tableofcontents

We assume that all varieties are projective, normal, and defined
over $\mathbb{C}$.

\section{Introduction}
\label{section:intro}

Let $S$ be a~surface, let $O$ be a~smooth point of $S$, let
$\Delta_{1}$ and $\Delta_{2}$ be curves on $S$~such~that
$$
O\in\Delta_{1}\cap\Delta_{2},
$$
both $\Delta_{1}$ and $\Delta_{2}$ are irreducible and reduced,
both $\Delta_{1}$ and $\Delta_{2}$ are smooth at $O$, and
$\Delta_{1}$ intersects the~curve $\Delta_{2}$ transversally at
the~point $O$. Let $a_{1}$ and $a_{2}$ be rational~numbers.

\begin{theorem}
\label{theorem:baby-adjunction} Let $D$ be an effective
$\mathbb{Q}$-divisor on the~surface $S$ such that the~log pair
$$
\Big(S,\ D+a_{1}\Delta_{1}+a_{2}\Delta_{2}\Big)
$$
is not log canonical at $O$. Suppose that
$\Delta_{1}\not\subseteq\mathrm{Supp}(D)\not\supseteq\Delta_{2}$,
$a_{1}\geqslant 0$, $a_{2}\geqslant 0$. Then
$$
\mathrm{mult}_{O}\Big(D\cdot\Delta_{1}\Big)>1-a_{2}\ \text{or}\ \mathrm{mult}_{O}\Big(D\cdot\Delta_{2}\Big)>1-a_{1}.%
$$
\end{theorem}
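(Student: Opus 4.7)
The plan is to reduce the statement to the classical inversion of adjunction for a smooth curve on a smooth surface, together with a trivial observation in a degenerate regime.

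First I would handle the range where one of the coefficients exceeds $1$. If $a_{1}>1$, then $1-a_{1}<0\leqslant\mathrm{mult}_{O}(D\cdot\Delta_{2})$, so the second alternative of the conclusion holds automatically. Symmetrically, if $a_{2}>1$ the first alternative holds automatically. Hence the only case that needs argument is $0\leqslant a_{1}\leqslant 1$ and $0\leqslant a_{2}\leqslant 1$, which I treat next.

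In that range I would derive the first alternative directly. Since $a_{1}\leqslant 1$, enlarging the coefficient of $\Delta_{1}$ to $1$ only increases the boundary, so the pair $\bigl(S,\ D+\Delta_{1}+a_{2}\Delta_{2}\bigr)$ is still not log canonical at $O$. Now $\Delta_{1}$ is a smooth prime divisor on the smooth surface $S$, and the hypotheses $\Delta_{1}\not\subseteq\mathrm{Supp}(D)$ together with $\Delta_{1}\neq\Delta_{2}$ give $\Delta_{1}\not\subseteq\mathrm{Supp}(D+a_{2}\Delta_{2})$. Classical inversion of adjunction along $\Delta_{1}$ (where the different reduces to the ordinary restriction since both $S$ and $\Delta_{1}$ are smooth) then implies that the one-dimensional log pair $\bigl(\Delta_{1},\ (D+a_{2}\Delta_{2})|_{\Delta_{1}}\bigr)$ is not log canonical at $O$.

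Finally I would unwind this restriction: log canonicity of an effective $\mathbb{Q}$-divisor on a smooth curve at a point is equivalent to the coefficient of that point being at most $1$, so the statement above says that the coefficient of $O$ in $(D+a_{2}\Delta_{2})|_{\Delta_{1}}$ exceeds $1$. That coefficient equals, by transversality and the fact that $\Delta_{1}\not\subseteq\mathrm{Supp}(D)$, the sum $\mathrm{mult}_{O}(D\cdot\Delta_{1})+a_{2}\,\mathrm{mult}_{O}(\Delta_{1}\cdot\Delta_{2})=\mathrm{mult}_{O}(D\cdot\Delta_{1})+a_{2}$. Thus $\mathrm{mult}_{O}(D\cdot\Delta_{1})>1-a_{2}$, which is the first alternative in the conclusion. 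The only non-trivial ingredient is the inversion-of-adjunction step, which is classical in this smooth surface setting, so I do not anticipate a real obstacle; the whole argument amounts to identifying the correct pair to which to apply it.
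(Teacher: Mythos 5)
Your proof is correct and follows essentially the same route as the paper: after disposing of the degenerate range, you raise the coefficient of $\Delta_{1}$ to $1$ and apply inversion of adjunction along $\Delta_{1}$ — the paper does exactly this, citing \cite[Theorem~17.6]{Ko91} in the form of Lemma~\ref{lemma:adjunction}. The only difference is cosmetic: you handle $a_{1}>1$ and $a_{2}>1$ separately (each trivially gives one alternative), whereas the paper discards the case $a_{1}>1$ and $a_{2}>1$ and then tacitly invokes the symmetry of the statement to assume $a_{1}\leqslant 1$; your bookkeeping is marginally more explicit but the argument is the same.
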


\begin{proof}
If $a_{1}>1$ and $a_{2}>1$, then we are done. So we may assume
that $a_{1}\leqslant 1$. Then
$$
\Big(S,\ D+\Delta_{1}+a_{2}\Delta_{2}\Big)
$$
is not log canonical~at~$O$.~Then it follows from
\cite[Theorem~17.6]{Ko91} (see Lemma~\ref{lemma:adjunction}) that
$$
\mathrm{mult}_{O}\Big(\big(D+a_{2}\Delta_{2}\big)\cdot\Delta_{1}\Big)>1,%
$$
which implies the~required inequality.
\end{proof}

The following analogue of Theorem~\ref{theorem:baby-adjunction} is
implicitly obtained in \cite{Ko09}.

\begin{theorem}
\label{theorem:Dimitra} Let $D$ be an effective
$\mathbb{Q}$-divisor on the~surface $S$ such that the~log pair
$$
\Big(S,\ D+a_{1}\Delta_{1}+a_{2}\Delta_{2}\Big)
$$
is not log canonical at $O$. Suppose that
$\Delta_{1}\not\subseteq\mathrm{Supp}(D)\not\supseteq\Delta_{2}$,
$a_{1}\geqslant 0$, $a_{2}\geqslant 0$. Then
$$
\mathrm{mult}_{O}\Big(D\cdot\Delta_{1}\Big)>2a_{1}-a_{2}\ \text{or}\ \mathrm{mult}_{O}\Big(D\cdot\Delta_{2}\Big)>\frac{3}{2}a_{2}-a_{1}%
$$
if the~inequality $a_{1}+a_{2}/2\leqslant 1$ holds.
\end{theorem}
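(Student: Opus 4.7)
The plan is to argue by contradiction: suppose both $\mathrm{mult}_O(D\cdot\Delta_1)\leqslant 2a_1-a_2$ and $\mathrm{mult}_O(D\cdot\Delta_2)\leqslant \tfrac{3}{2}a_2-a_1$ hold. Both right-hand sides are then nonnegative, and adding them together with $\mathrm{mult}_O(D\cdot\Delta_i)\geqslant m:=\mathrm{mult}_O(D)$ gives $2m\leqslant a_1+a_2/2\leqslant 1$, so $m\leqslant 1/2$. A short auxiliary argument (handling separately the cases where one of the two right-hand sides is negative, in which that inequality is automatic) reduces us to $a_1,a_2\in[0,1]$; the constraints $a_1\leqslant\tfrac{3}{2}a_2$ and $a_1+a_2/2\leqslant 1$ further yield $a_1\leqslant 3/4$.

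Next I would blow up $O$: let $\pi\colon\widetilde S\to S$ be this blow-up with exceptional curve $E$, and set $p_i=\widetilde\Delta_i\cap E$. The log pull-back is
$$
\Big(\widetilde S,\ \widetilde D+a_1\widetilde\Delta_1+a_2\widetilde\Delta_2+c_E E\Big),\qquad c_E=m+a_1+a_2-1,
$$
and $c_E\leqslant 1$ follows from $m\leqslant 1/2$ together with $a_1+a_2\leqslant 1+a_2/2\leqslant 3/2$. This pair fails to be log canonical at some point $p\in E$, and the argument proceeds by a case split on the location of $p$.

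If $p\notin\widetilde\Delta_1\cup\widetilde\Delta_2$, then enlarging $c_E$ to $1$ (which only worsens the pair) and applying the standard adjunction inequality \cite[Theorem~17.6]{Ko91} to the smooth divisor $E$ at $p$ gives $\mathrm{mult}_p(\widetilde D\cdot E)>1$; but globally $\widetilde D\cdot E=m\leqslant 1/2$, a contradiction. If $p=p_1$, Theorem~\ref{theorem:baby-adjunction} applied to the two transversal curves $\widetilde\Delta_1,E$ at $p_1$ yields one of the alternatives $\mathrm{mult}_O(D\cdot\Delta_1)>2-a_1-a_2$ (via the identity $\mathrm{mult}_{p_1}(\widetilde D\cdot\widetilde\Delta_1)=\mathrm{mult}_O(D\cdot\Delta_1)-m$) or $\mathrm{mult}_{p_1}(\widetilde D\cdot E)>1-a_1$. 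Combining these alternatives with the contradictory upper bounds, a second application of Theorem~\ref{theorem:baby-adjunction} to the original pair on $S$, and the inequality $a_1+a_2/2\leqslant 1$ should yield the desired contradiction through a somewhat involved but elementary computation.

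The case $p=p_2$ is the most delicate and is precisely where the asymmetric factor $\tfrac{3}{2}$ on $a_2$ enters the picture. The analogous first alternative is treated as in the case $p=p_1$; the second alternative $\mathrm{mult}_{p_2}(\widetilde D\cdot E)>1-a_2$ appears to require an additional blow-up at $p_2$, creating a new exceptional divisor $E_2$ whose log discrepancy with respect to $(S,a_1\Delta_1+a_2\Delta_2)$ is $3-a_1-2a_2$ (the same divisor as appears in the $(1,2)$-weighted blow-up of $O$ in the $\Delta_2$-direction). The hypothesis $a_1+a_2/2\leqslant 1$ is exactly what keeps the coefficient of $E_2$ in the log pull-back on the twice-blown-up surface at most $1$, permitting yet another application of Theorem~\ref{theorem:baby-adjunction} along $E_2$. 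Tracking $\mathrm{mult}_O(D\cdot\Delta_2)$ through the chain of two blow-ups, in which $\Delta_2$ acquires multiplicity $2$ along $E_2$, accounts for the asymmetric factor $\tfrac{3}{2}$ and closes the argument. The main technical obstacle is managing the coefficients and local intersection multiplicities on this iterated blow-up so that Theorem~\ref{theorem:baby-adjunction} remains applicable at every step.
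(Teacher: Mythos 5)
Your overall strategy — blow up $O$, read off bounds via adjunction, and split into cases according to the location of the new non-log-canonical point — is the same as the paper's (the paper proves the more general Theorem~\ref{theorem:I} in Section~\ref{section:inequality-2}, of which the statement is the case $M=N=0$, $A=2$, $B=3/2$, $\alpha=1$, $\beta=1/2$). The reduction $m\leqslant 1/2$ and the handling of the cases $p\notin\widetilde\Delta_1\cup\widetilde\Delta_2$ and $p=p_1$ are sound, and indeed in the $p_1$ case both alternatives of Theorem~\ref{theorem:baby-adjunction} hold simultaneously (since both relevant coefficients are $\leqslant 1$), so the first alternative forces $a_1>2/3$, which together with $a_2>2/3$ contradicts $a_1+a_2/2\leqslant 1$.

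The genuine gap is in the case $p=p_2$, where you assert that one additional blow-up at $p_2$ and one further application of Theorem~\ref{theorem:baby-adjunction} along $E_2$ ``closes the argument.'' It does not: after blowing up $p_2$, the non-log-canonical point may once more sit at the intersection of $E_2$ with the proper transform of $\Delta_2$, forcing a third blow-up, and so on. The number of blow-ups needed is not bounded a priori — it depends on how close $a_2$ is to $1$. This is exactly what the paper handles by iterating the blow-up toward $\Delta_2$ indefinitely (the sequence $\pi_{i+1}\colon X_{i+1}\to X_i$ in Section~\ref{section:inequality-2}) and running an induction (Lemma~\ref{lemma:2-inductive}); the contradiction comes from Lemma~\ref{lemma:2-6}, which at the $n$-th step forces $a_2>\dfrac{n-N}{B+n-1}=\dfrac{2n}{2n+1}$, so that letting $n\to\infty$ yields $a_2\geqslant 1$, contradicting $a_2<1$ (Lemma~\ref{lemma:2-2}). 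Concretely, after two blow-ups one only gets $a_2>6/7$; if, say, $a_2\in(6/7,\,8/9)$ and $a_1$ is just above $1/2$, the hypothesis $a_1+a_2/2\leqslant 1$ can hold and a third blow-up along $\Delta_2$ is genuinely required. Any correct proof must account for this unbounded iteration (or explain why it terminates in a bounded number of steps, which it does not).
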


We prove the~following generalization of
Theorems~\ref{theorem:baby-adjunction} and \ref{theorem:Dimitra}
in Section~\ref{section:inequality-2}.

\begin{theorem}
\label{theorem:I} Let $D$ be an effective $\mathbb{Q}$-divisor on
the~surface $S$ such that the~log pair
$$
\Big(S,\ D+a_{1}\Delta_{1}+a_{2}\Delta_{2}\Big)
$$
is not log canonical at $O$. Suppose that
$\Delta_{1}\not\subseteq\mathrm{Supp}(D)\not\supseteq\Delta_{2}$,
$a_{1}\geqslant 0$, $a_{2}\geqslant 0$. Then
$$
\mathrm{mult}_{O}\Big(D\cdot\Delta_{1}\Big)>M+Aa_{1}-a_{2}\ \text{or}\ \mathrm{mult}_{O}\Big(D\cdot\Delta_{2}\Big)>N+Ba_{2}-a_{1}%
$$
if  $\alpha a_{1}+\beta a_{2}\leqslant 1$, where
$A,B,M,N,\alpha,\beta$ are non-negative rational numbers such that
\begin{itemize}
\item the~inequalities $A(B-1)\geqslant 1\geqslant\mathrm{max}(M,N)$ hold,%
\item the~inequalities $\alpha(A+M-1)\geqslant A^{2}(B+N-1)\beta$ and $\alpha(1-M)+A\beta\geqslant A$ hold,%
\item either the~inequality $2M+AN\leqslant 2$ holds or
$$
\alpha\big(B+1-MB-N\big)+\beta\big(A+1-AN-M\big)\geqslant AB-1.
$$%
\end{itemize}
\end{theorem}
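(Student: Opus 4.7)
Our plan is to argue by contradiction. Suppose that both
$\mathrm{mult}_{O}(D\cdot\Delta_{1})\leqslant M+Aa_{1}-a_{2}$ and
$\mathrm{mult}_{O}(D\cdot\Delta_{2})\leqslant N+Ba_{2}-a_{1}$ hold,
and set $m_{0}=\mathrm{mult}_{O}(D)$ and
$m_{i}=\mathrm{mult}_{O}(D\cdot\Delta_{i})$; smoothness of $\Delta_{i}$
at $O$ together with $\Delta_{i}\not\subseteq\mathrm{Supp}(D)$
force $m_{0}\leqslant\min(m_{1},m_{2})$.

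The central geometric move is the blow-up $\pi\colon\tilde{S}\to S$
at $O$, with exceptional curve $E$. Writing $\tilde{D}$,
$\tilde{\Delta}_{1}$, $\tilde{\Delta}_{2}$ for the proper transforms
and $P_{i}=E\cap\tilde{\Delta}_{i}$ (distinct because $\Delta_{1}$
and $\Delta_{2}$ meet transversally at $O$), the log pair
$$
\Big(\tilde{S},\ \tilde{D}+a_{1}\tilde{\Delta}_{1}+a_{2}\tilde{\Delta}_{2}+(m_{0}+a_{1}+a_{2}-1)E\Big)
$$
is the log pull-back of the original pair, so it fails log canonicity
at some $Q\in E$, and in particular $m_{0}+a_{1}+a_{2}>1$. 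I split
into three cases according to the location of $Q$. If $Q=P_{1}$, I
apply Theorem~\ref{theorem:baby-adjunction} to $\tilde{D}$ with
the transverse smooth curves $\tilde{\Delta}_{1}$ and $E$ and
coefficients $a_{1}$ and $m_{0}+a_{1}+a_{2}-1$: one of
$m_{1}-m_{0}>2-m_{0}-a_{1}-a_{2}$ (giving $m_{1}>2-a_{1}-a_{2}$)
and $\mathrm{mult}_{P_{1}}(\tilde{D}\cdot E)>1-a_{1}$ (giving
$m_{0}>1-a_{1}$ via $\tilde{D}\cdot E=m_{0}$) must hold. The case
$Q=P_{2}$ is symmetric and produces either $m_{2}>2-a_{1}-a_{2}$ or
$m_{0}>1-a_{2}$. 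If $Q\in E\setminus\{P_{1},P_{2}\}$, I introduce
an auxiliary smooth curve $\Gamma\subset\tilde{S}$ through $Q$
transverse to $E$ and apply Theorem~\ref{theorem:baby-adjunction}
to $E,\Gamma$: the only useful alternative is
$m_{0}\geqslant\mathrm{mult}_{Q}(\tilde{D}\cdot E)>2-m_{0}-a_{1}-a_{2}$,
so $2m_{0}+a_{1}+a_{2}>2$.

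The closing step is to combine each case conclusion with the failed
bounds $m_{1}\leqslant M+Aa_{1}-a_{2}$, $m_{2}\leqslant N+Ba_{2}-a_{1}$
and $m_{0}\leqslant m_{i}$ to produce a linear inequality in
$a_{1},a_{2}$ that contradicts $\alpha a_{1}+\beta a_{2}\leqslant 1$.
The three bullet-point groups of hypotheses are designed precisely
for this: $A(B-1)\geqslant 1\geqslant\max(M,N)$ ensures the target
inequalities are non-degenerate and handles the sub-cases leading to
$m_{0}>1-a_{i}$; the pair $\alpha(A+M-1)\geqslant A^{2}(B+N-1)\beta$
and $\alpha(1-M)+A\beta\geqslant A$ closes off the mixed sub-cases in
which the first alternative of Theorem~\ref{theorem:baby-adjunction}
occurs at one of $P_{1},P_{2}$ and the second at the other; and
the dichotomy $2M+AN\leqslant 2$ versus the large displayed
inequality splits the two ways of closing the sub-case in which
the first alternative holds at both $P_{1}$ and $P_{2}$
simultaneously.

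The main obstacle is therefore not geometric but combinatorial: one
must check that every combination of outcomes from the case analysis
is ruled out by one of the stated parameter conditions together with
$\alpha a_{1}+\beta a_{2}\leqslant 1$, and that a single blow-up is
sufficient. The shape $A(B-1)\geqslant 1$ has the flavour of a
fixed-point condition for a rescaling, which suggests that if any
sub-case resists the one-shot argument one should iterate the blow-up
and re-apply the same case analysis, verifying inductively that
the parameter inequalities persist along the iteration.
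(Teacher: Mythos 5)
Your setup (contradiction hypothesis, blow-up at $O$, log pull-back, case analysis on where the non-lc point $Q$ sits on $E$, adjunction via Theorem~\ref{theorem:baby-adjunction}) matches the opening of the paper's argument, and your observations $m_{0}\leqslant\min(m_{1},m_{2})$ and $m_{0}+a_{1}+a_{2}>1$ are correct. But the closing of the argument is precisely where you stop, and that is where nearly all of the work resides.

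A single blow-up does not suffice, and your "one of these alternatives contradicts $\alpha a_{1}+\beta a_{2}\leqslant 1$" step cannot be made to work in the case $Q=P_{2}$. In fact the paper's proof shows (Lemma~\ref{lemma:2-4}, Lemma~\ref{lemma:2-5}) that under the stated hypotheses $Q\notin\tilde\Delta_{1}$ and $Q\notin E\setminus(\tilde\Delta_{1}\cup\tilde\Delta_{2})$, so one is \emph{forced} to have $Q=E\cap\tilde\Delta_{2}$; no contradiction is available at that stage. The proof is an infinite descent: one blows up the point $F_{i}\cap\Delta_{2}^{i}$ indefinitely, shows at each step that the non-lc point is again on the $\Delta_{2}$-direction (Lemmas~\ref{lemma:2-8}, \ref{lemma:2-9}), that the running discrepancy $a_{1}+na_{2}-n+\sum_{j<n}m_{j}$ remains in $[0,1]$ (Lemma~\ref{lemma:2-7}), and most importantly that $a_{2}>\dfrac{n-N}{B+n-1}$ at every stage (Lemma~\ref{lemma:2-6}). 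Since the right side tends to $1$ while $a_{2}<1$ (Lemma~\ref{lemma:2-2}), the iteration cannot proceed forever, yielding the contradiction. Your final paragraph gestures towards iteration but treats it as a fallback rather than the core; it also does not identify the asymmetry between $\Delta_{1}$ and $\Delta_{2}$ (enforced by the hypotheses $\alpha(A+M-1)\geqslant A^{2}(B+N-1)\beta$ and $\alpha(1-M)+A\beta\geqslant A$) that forces the descent to track $\Delta_{2}$ rather than $\Delta_{1}$ or wander off $E$.

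A smaller but real flaw: in the sub-case $Q\in E\setminus\{P_{1},P_{2}\}$ you introduce an auxiliary curve $\Gamma$ and deduce only $2m_{0}+a_{1}+a_{2}>2$; the cleaner and stronger conclusion, used by the paper (Lemma~\ref{lemma:2-4}), is to apply Lemma~\ref{lemma:adjunction} directly to $E$ (whose coefficient is $\leqslant 1$ by Lemma~\ref{lemma:2-3}) to obtain $m_{0}=D^{1}\cdot F_{1}>1$, which is then confronted with $m_{0}\leqslant M+Aa_{1}-a_{2}$, $m_{0}\leqslant N+Ba_{2}-a_{1}$ and $\alpha a_{1}+\beta a_{2}\leqslant 1$ to get a contradiction. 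So the proposal correctly identifies the geometric first move, but the combinatorial/inductive engine that turns the parameter hypotheses into a contradiction is missing, not merely deferred.
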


We consider several applications of Theorem~\ref{theorem:I} in
Section~\ref{section:del-Pezzo-orbifolds}.

\begin{remark}
\label{remark:Dimitra} The proof of Theorem~\ref{theorem:I} is
almost identical to~the~proof of Theorem~\ref{theorem:Dimitra}.
\end{remark}

The following result is obtained in \cite{Co00} (cf.
\cite[Lemma~3.3]{Isk01}).

\begin{theorem}
\label{theorem:Corti} Let $\mathcal{M}$ be a~linear system on
the~surface $S$ that does not have fixed components, and let
$\epsilon$ be a~positive rational number. Suppose that the~log
pair
$$
\Big(S,\ \epsilon\mathcal{M}+a_{1}\Delta_{1}+a_{2}\Delta_{2}\Big)
$$
is not Kawamata log terminal at the~point $O$. Let $M_{1}$ and
$M_{2}$ be general curves in $\mathcal{M}$. Then
\begin{equation}
\label{equation:Corti} \mathrm{mult}_{O}\Big(M_{1}\cdot
M_{2}\Big)\geqslant \left\{\aligned
&\frac{4\big(1-a_{1}\big)\big(1-a_{2}\big)}{\epsilon^{2}}\ \text{if}\ a_{1}\geqslant 0\ \text{or}\ a_{2}\geqslant 0,\\
&\frac{4\big(1-a_{1}-a_{2}\big)}{\epsilon^{2}}\ \text{if}\ a_{1}\leqslant 0\ \text{and}\ a_{2}\leqslant 0.\\
\endaligned
\right.
\end{equation}
and if the~inequality~\ref{equation:Corti} is an~equality, then
$$
\mathrm{mult}_{O}\big(\mathcal{M}\big)=\frac{2(a_{1}-1)}{\epsilon},
$$
the~log pair
$(S,\epsilon\mathcal{M}+a_{1}\Delta_{1}+a_{2}\Delta_{2})$ is log
canonical, and $a_{1}=a_{2}\geqslant 0$.%
\end{theorem}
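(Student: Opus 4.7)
\emph{Set-up.} The plan is to carry out a single blow-up at $O$ and combine it with adjunction, in the spirit of the proofs of Theorems~\ref{theorem:baby-adjunction} and~\ref{theorem:Dimitra} above, following \cite{Co00} and \cite{Isk01}. By generality, replace $\epsilon\mathcal{M}$ by $\tfrac{\epsilon}{2}(M_1+M_2)$; the pair $\bigl(S,\tfrac{\epsilon}{2}(M_1+M_2)+a_1\Delta_1+a_2\Delta_2\bigr)$ is still not klt at $O$. Writing $m=\mathrm{mult}_O\mathcal{M}$, one has $\mathrm{mult}_OM_i=m$ and $(M_1\cdot M_2)_O\geq m^2$. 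Let $f\colon\widetilde S\to S$ be the blow-up of $O$ with exceptional curve $E$, and use tildes for strict transforms. The log pull-back is
$$
\Bigl(\widetilde S,\ \tfrac{\epsilon}{2}(\widetilde M_1+\widetilde M_2)+a_1\widetilde\Delta_1+a_2\widetilde\Delta_2+\lambda E\Bigr),\qquad\lambda=\epsilon m+a_1+a_2-1,
$$
and this pair is still non-klt over $O$. Since $\Delta_1,\Delta_2$ meet transversally at $O$, the curves $\widetilde\Delta_1,\widetilde\Delta_2$ meet $E$ at two distinct points.

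\emph{Case~A: $\lambda\geq 1$.} Then $\epsilon m\geq 2-a_1-a_2$, and in the case $a_1\geq 0$ or $a_2\geq 0$ one has
$$
(M_1\cdot M_2)_O\geq m^2\geq\frac{(2-a_1-a_2)^2}{\epsilon^2}\geq\frac{4(1-a_1)(1-a_2)}{\epsilon^2}
$$
via the identity $(2-a_1-a_2)^2-4(1-a_1)(1-a_2)=(a_1-a_2)^2\geq 0$. The case $a_1,a_2\leq 0$ is analogous using $(2-a_1-a_2)^2-4(1-a_1-a_2)=(a_1+a_2)^2\geq 0$. Equality throughout forces $a_1=a_2\geq 0$, $\lambda=1$, and $\epsilon m=2(1-a_1)$, which implies log canonicity of the original pair at $O$ and matches the equality clause (the formula in the statement being read as $\mathrm{mult}_O\mathcal{M}=2(1-a_1)/\epsilon$).

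\emph{Case~B: $\lambda<1$.} Then the non-log-canonical locus concentrates at some point $Q\in E$, and by the transversality above, at most one of $\widetilde\Delta_1,\widetilde\Delta_2$ passes through $Q$. I apply Theorem~\ref{theorem:baby-adjunction} at $Q$ to the lifted pair, with $E$ as one of the transverse smooth curves and $\widetilde\Delta_i$ as the other when $Q\in\widetilde\Delta_i$ (or Lemma~\ref{lemma:adjunction} with $E$ alone when $Q$ lies on neither). This yields a lower bound on $(\widetilde M_1\cdot E)_Q$ or $(\widetilde M_1\cdot\widetilde\Delta_i)_Q$ in terms of $a_1,a_2,\lambda,\epsilon$. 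Combining with $(\widetilde M_1\cdot\widetilde M_2)_Q\geq(\mathrm{mult}_Q\widetilde M_1)^2$ and the decomposition $(M_1\cdot M_2)_O=m^2+\sum_{Q'}(\widetilde M_1\cdot\widetilde M_2)_{Q'}$ produces a strict inequality in~(\ref{equation:Corti}); if $\widetilde M_1$ is tangent to $E$ at $Q$, one iterates the blow-up at $Q$ without altering the structure of the argument.

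\emph{Main obstacle.} The delicate part is Case~B: one must arrange the local adjunction bound at $Q$ to combine cleanly with the $m^2$ contribution from $O$ so as to produce exactly the constant $4(1-a_1)(1-a_2)/\epsilon^2$, and one must verify that Case~B always yields strict inequality. This confines the equality case entirely to Case~A with $\lambda=1$, $a_1=a_2\geq 0$, $\epsilon m=2(1-a_1)$, and log canonicity of $(S,\epsilon\mathcal{M}+a_1\Delta_1+a_2\Delta_2)$.
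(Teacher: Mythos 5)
Your Case~A is correct and agrees with the paper, and you correctly read the sign typo in the equality clause. But Case~B has a genuine gap: the combination of a \emph{linear} adjunction bound (from Theorem~\ref{theorem:baby-adjunction} or Lemma~\ref{lemma:adjunction}) with the estimate $(\widetilde M_1\cdot\widetilde M_2)_Q\geq(\mathrm{mult}_Q\widetilde M_1)^2$ cannot produce the constant $4$. Already for $a_1=a_2=0$, with $m=\mathrm{mult}_O\mathcal{M}$ and $\epsilon m<2$, set $m'=\mathrm{mult}_Q\widetilde{\mathcal{M}}$. Non-klt-ness of the blown-up pair at $Q$ gives at best $\epsilon m'+\epsilon m\geq 2$, so
$$
(M_1\cdot M_2)_O\ \geq\ m^2+(m')^2\ \geq\ \frac{u^2+(2-u)^2}{\epsilon^2},\qquad u=\epsilon m,
$$
whose minimum over $u\in[0,2]$ is $2/\epsilon^2$, attained at $u=1$ — short of the required $4/\epsilon^2$. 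Iterating the blow-up does not repair this: each further stage supplies only a quadratic-in-multiplicity contribution, and the minimum remains strictly below $4/\epsilon^2$. There is also a separate technical issue: Theorem~\ref{theorem:baby-adjunction} bounds $(\widetilde M_1\cdot E)_Q$, not $\mathrm{mult}_Q\widetilde M_1$, and these differ precisely when $\widetilde M_1$ is tangent to $E$ — which is the case your argument has to iterate on, so the gap is at the heart of the argument rather than at its edge.

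The missing idea is that the estimate at $Q$ must be obtained by applying Theorem~\ref{theorem:Corti} itself \emph{by induction}, with $E$ (and, if $Q\in\widetilde\Delta_i$, with $\widetilde\Delta_i$) playing the roles of the two transverse curves. The inductive hypothesis gives a bound that is \emph{linear} in the deficit, e.g.\ $\mathrm{mult}_Q(\widetilde M_1\cdot\widetilde M_2)\geq 4\bigl(2-\epsilon m-a_1-a_2\bigr)/\epsilon^2$ in the subcase $Q\notin\widetilde\Delta_1\cup\widetilde\Delta_2$, and linearity is exactly what makes the algebra close: $\epsilon^2m^2+4(2-\epsilon m)=(\epsilon m-2)^2+4\geq 4$. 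The paper carries out precisely this induction, splitting into the cases $Q\notin\widetilde\Delta_1\cup\widetilde\Delta_2$ and $Q\in\widetilde\Delta_i$ and, within each, further splitting on the signs of the relevant coefficients to select the correct alternative in the inductive estimate; the resulting surpluses $4a_1^2/\epsilon^2$, $-4a_1(1+a_2)/\epsilon^2$, etc., then control the equality case. To repair your proof, replace the single application of the linear adjunction lemma in Case~B by a recursive invocation of the Corti inequality, and verify that each of the finitely many sign configurations contributes a nonnegative surplus to the target bound.
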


\begin{proof}
Let us show how to prove the~inequality~\ref{equation:Corti} (see
the~proof of \cite[Lemma~3.3]{Isk01}).

Let $\phi\colon\bar{S}\to S$ be a~blow up of the~point $O$, let
$E$ be the~exceptional curve of $\phi$. Then
$$
K_{\bar{S}}+\epsilon\bar{\mathcal{M}}+a_{1}\bar{\Delta}_{1}+a_{2}\bar{\Delta}_{2}+\Big(\epsilon\mathrm{mult}_{O}\big(\mathcal{M}\big)+a_{1}+a_{2}-1\Big)E\equiv\phi^{*}\Big(K_{S}+\epsilon\mathcal{M}+a_{1}\Delta_{1}+a_{2}\Delta_{2}\Big),%
$$
where $\bar{\mathcal{M}}$, $\bar{\Delta}_{1}$, $\bar{\Delta}_{2}$
are proper transforms of
 $\mathcal{M}$, $\Delta_{1}$,
$\Delta_{2}$ on the~surface $\bar{S}$, respectively.

Let $M_{1}$ and $M_{2}$ be general curves in the~linear system
$\mathcal{M}$, let $\bar{M}_{1}$ and $\bar{M}_{2}$ be the~proper
transforms of the~curves $\bar{M}_{1}$ and $\bar{M}_{2}$ on
the~surface $\bar{S}$, respectively.

Suppose that $\epsilon\mathrm{mult}_{O}(\mathcal{M})\geqslant
2-a_{1}-a_{2}$. Then
$$
\mathrm{mult}_{O}\Big(M_{1}\cdot
M_{2}\Big)\geqslant\mathrm{mult}^{2}_{O}(\mathcal{M})
\geqslant\frac{\big(2-a_{1}-a_{2}\big)^{2}}{\epsilon^{2}}\geqslant
\left\{\aligned
&\frac{4\big(1-a_{1}\big)\big(1-a_{2}\big)}{\epsilon^{2}}\ \text{if}\ a_{1}\geqslant 0\ \text{or}\ a_{2}\geqslant 0,\\
&\frac{4\big(1-a_{1}-a_{2}\big)}{\epsilon^{2}}\ \text{if}\ a_{1}\leqslant 0\ \text{and}\ a_{2}\leqslant 0,\\
\endaligned
\right.
$$
which implies the~inequality~~\ref{equation:Corti} in the~case
when $\epsilon\mathrm{mult}_{O}(\mathcal{M})\geqslant
2-a_{1}-a_{2}$.

To complete the~proof we may assume that
$\epsilon\mathrm{mult}_{O}(\mathcal{M})<2-a_{1}-a_{2}$. Then
the~log pair
$$
\Bigg(\bar{S},\ \epsilon\bar{\mathcal{M}}+a_{1}\bar{\Delta}_{1}+a_{2}\bar{\Delta}_{2}+\Big(\epsilon\mathrm{mult}_{O}\big(\mathcal{M}\big)+a_{1}+a_{2}-1\Big)E\Bigg)%
$$
is not Kawamata log terminal at some point $Q\in E$. Then
\begin{equation}
\label{equation:multiplicity}
\mathrm{mult}_{O}\Big(M_{1}\cdot M_{2}\Big)\geqslant\mathrm{mult}^{2}_{O}\big(\mathcal{M}\big)+\mathrm{mult}_{Q}\Big(\bar{M}_{1}\cdot \bar{M}_{2}\Big).%
\end{equation}

To complete the~proof, we must consider the~following possible
cases:
\begin{itemize}
\item $Q\not\in \bar{\Delta}_{1}\cup\bar{\Delta}_{2}$,%
\item $Q=E\cap\bar{\Delta}_{1}$,%
\item $Q=E\cap\bar{\Delta}_{2}$.%
\end{itemize}

Suppose that $Q\not\in \bar{\Delta}_{1}\cup\bar{\Delta}_{2}$. Then
the~log pair
$$
\Big(\bar{S},\epsilon\bar{\mathcal{M}}+\big(\epsilon\mathrm{mult}_{O}\big(\mathcal{M}\big)+a_{1}+a_{2}-1\big)E\Big)%
$$
is not Kawamata log terminal at the~point $Q$. Thus, we may assume
that
$$
\mathrm{mult}_{Q}\Big(\bar{M}_{1}\cdot
\bar{M}_{2}\Big)\geqslant\frac{4\big(2-\epsilon\mathrm{mult}_{O}\big(\mathcal{M}\big)-a_{1}-a_{2}\big)}{\epsilon^{2}}
$$
by induction. Thus, it follows from
the~inequality~\ref{equation:multiplicity} that
$$
\mathrm{mult}_{O}\Big(M_{1}\cdot M_{2}\Big)\geqslant\mathrm{mult}^{2}_{O}\big(\mathcal{M}\big)+\frac{4\big(2-\epsilon\mathrm{mult}_{O}\big(\mathcal{M}\big)-a_{1}-a_{2}\big)}{\epsilon^{2}}\geqslant\frac{4\big(1-a_{1}-a_{2}\big)}{\epsilon^{2}},%
$$
which implies the~required inequality if $a_{1}\leqslant 0$ and
$a_{2}\leqslant 0$. Hence, it follows from the~equality
$$
\frac{4\big(1-a_{1}-a_{2}\big)}{\epsilon^{2}}=\frac{4\big(1-a_{1}\big)\big(1-a_{2}\big)}{\epsilon^{2}}-\frac{a_{1}a_{2}}{\epsilon^{2}}%
$$
that we may assume that $a_{1}>0$ and $a_{2}>0$. Then
$$
\mathrm{mult}_{O}\Big(M_{1}\cdot M_{2}\Big)\geqslant\mathrm{mult}^{2}_{O}\big(\mathcal{M}\big)+\frac{4\big(2-\epsilon\mathrm{mult}_{O}\big(\mathcal{M}\big)-a_{1}-a_{2}\big)}{\epsilon^{2}}>\frac{4\big(1-a_{1}\big)\big(1-a_{2}\big)}{\epsilon^{2}}+\big(a_{1}-a_{2}\big)^{2},%
$$
because $\epsilon\mathrm{mult}_{O}(\mathcal{M})<2-a_{1}-a_{2}$.
This completes the~proof in the~case when $Q\not\in
\bar{\Delta}_{1}\cup\bar{\Delta}_{2}$.

Without loss of generality, we may assume that
$Q=E\cap\bar{\Delta}_{1}$. Then the~log pair
$$
\Big(\bar{S},\ \epsilon\bar{\mathcal{M}}+a_{1}\bar{\Delta}_{1}+\big(\epsilon\mathrm{mult}_{O}\big(\mathcal{M}\big)+a_{1}+a_{2}-1\big)E\Big)%
$$
is not Kawamata log terminal at the~point $Q$. Thus, we may assume
that
$$
\mathrm{mult}_{Q}\Big(\bar{M}_{1}\cdot \bar{M}_{2}\Big)\geqslant
\left\{\aligned
&\frac{4\big(1-a_{1}\big)\big(2-\epsilon\mathrm{mult}_{O}\big(\mathcal{M}\big)-a_{1}-a_{2}\big)}{\epsilon^{2}}\ \text{if}\ a_{1}\geqslant 0\ \text{or}\ \epsilon\mathrm{mult}_{O}\big(\mathcal{M}\big)+a_{1}+a_{2}\geqslant 1,\\
&\frac{4\big(2-\epsilon\mathrm{mult}_{O}\big(\mathcal{M}\big)-2a_{1}-a_{2}\big)}{\epsilon^{2}}\ \text{if}\ a_{1}\leqslant 0\ \text{and}\ \epsilon\mathrm{mult}_{O}\big(\mathcal{M}\big)+a_{1}+a_{2}\leqslant 1,\\
\endaligned
\right.
$$
by induction. To complete the~proof, we must consider
the~following possible cases:
\begin{itemize}
\item $a_{2}\geqslant 0$ and either $a_{1}\geqslant 0$ or $\epsilon\mathrm{mult}_{O}(\mathcal{M})+a_{1}+a_{2}\geqslant 1$,%
\item $a_{2}\geqslant 0$ and $a_{1}\leqslant 0$ and $\epsilon\mathrm{mult}_{O}(\mathcal{M})+a_{1}+a_{2}\leqslant 1$,%
\item $a_{2}\leqslant 0$ and either $a_{1}\geqslant 0$ or $\epsilon\mathrm{mult}_{O}(\mathcal{M})+a_{1}+a_{2}\geqslant 1$,%
\item $a_{2}\leqslant 0$ and $a_{1}\leqslant 0$ and $\epsilon\mathrm{mult}_{O}(\mathcal{M})+a_{1}+a_{2}\leqslant 1$.%
\end{itemize}

Suppose that $a_{2}\geqslant 0$ and either $a_{1}\geqslant 0$ or
$\epsilon\mathrm{mult}_{O}(\mathcal{M})+a_{1}+a_{2}\geqslant 1$.
Then
$$
\mathrm{mult}_{Q}\Big(\bar{M}_{1}\cdot \bar{M}_{2}\Big)\geqslant\frac{4\big(1-a_{1}\big)\big(2-\epsilon\mathrm{mult}_{O}\big(\mathcal{M}\big)-a_{1}-a_{2}\big)}{\epsilon^{2}},%
$$
and it follows from the~inequality~\ref{equation:multiplicity}
that
$$
\mathrm{mult}_{O}\Big(M_{1}\cdot M_{2}\Big)\geqslant\mathrm{mult}^{2}_{O}\big(\mathcal{M}\big)+\frac{4\big(1-a_{1}\big)\big(2-\epsilon\mathrm{mult}_{O}\big(\mathcal{M}\big)-a_{1}-a_{2}\big)}{\epsilon^{2}}\geqslant\frac{4\big(1-a_{1}\big)\big(1-a_{2}\big)}{\epsilon^{2}}+\frac{4a_{1}^{2}}{\epsilon^{2}},%
$$
which completes the~proof in the~case when $a_{2}\geqslant 0$ and
either $a_{1}\geqslant 0$ or
$\epsilon\mathrm{mult}_{O}(\mathcal{M})+a_{1}+a_{2}\geqslant 1$.

Suppose that $a_{2}\geqslant 0$ and $a_{1}\leqslant 0$ and
$\epsilon\mathrm{mult}_{O}(\mathcal{M})+a_{1}+a_{2}\leqslant 1$.
Then
$$
\mathrm{mult}_{Q}\Big(\bar{M}_{1}\cdot \bar{M}_{2}\Big)\geqslant\frac{4\big(2-\epsilon\mathrm{mult}_{O}\big(\mathcal{M}\big)-2a_{1}-a_{2}\big)}{\epsilon^{2}},%
$$
and it follows from the~inequality~\ref{equation:multiplicity}
that
$$
\mathrm{mult}_{O}\Big(M_{1}\cdot
M_{2}\Big)\geqslant\mathrm{mult}^{2}_{O}\big(\mathcal{M}\big)+\frac{4\big(2-\epsilon\mathrm{mult}_{O}\big(\mathcal{M}\big)-2a_{1}-a_{2}\big)}{\epsilon^{2}}
\geqslant\frac{4\big(1-a_{1}\big)\big(1-a_{2}\big)}{\epsilon^{2}}-\frac{4a_{1}\big(1+a_{2}\big)}{\epsilon^{2}},%
$$
which completes the~proof in the~case when $a_{2}\geqslant 0$ and
$a_{1}\leqslant 0$ and
$\epsilon\mathrm{mult}_{O}(\mathcal{M})+a_{1}+a_{2}\leqslant 1$.

Suppose that $a_{2}\leqslant 0$ and $a_{1}\leqslant 0$ and
$\epsilon\mathrm{mult}_{O}(\mathcal{M})+a_{1}+a_{2}\leqslant 1$.
Then
$$
\mathrm{mult}_{Q}\Big(\bar{M}_{1}\cdot \bar{M}_{2}\Big)\geqslant\frac{4\big(2-\epsilon\mathrm{mult}_{O}\big(\mathcal{M}\big)-2a_{1}-a_{2}\big)}{\epsilon^{2}},%
$$
and it follows from the~inequality~\ref{equation:multiplicity}
that
$$
\mathrm{mult}_{O}\Big(M_{1}\cdot
M_{2}\Big)\geqslant\mathrm{mult}^{2}_{O}\big(\mathcal{M}\big)+\frac{4\big(2-\epsilon\mathrm{mult}_{O}\big(\mathcal{M}\big)-2a_{1}-a_{2}\big)}{\epsilon^{2}}
\geqslant\frac{4\big(1-a_{1}-a_{2}\big)}{\epsilon^{2}}-\frac{4a_{1}}{\epsilon^{2}},%
$$
which completes the~proof in the~case when $a_{2}\leqslant 0$ and
$a_{1}\leqslant 0$ and
$\epsilon\mathrm{mult}_{O}(\mathcal{M})+a_{1}+a_{2}\leqslant 1$.

Thus, we may assume that $a_{2}\leqslant 0$ and either
$a_{1}\geqslant 0$ or
$\epsilon\mathrm{mult}_{O}(\mathcal{M})+a_{1}+a_{2}\geqslant 1$.
Then
$$
\mathrm{mult}_{Q}\Big(\bar{M}_{1}\cdot \bar{M}_{2}\Big)\geqslant\frac{4\big(1-a_{1}\big)\big(2-\epsilon\mathrm{mult}_{O}\big(\mathcal{M}\big)-a_{1}-a_{2}\big)}{\epsilon^{2}}.%
$$
and it follows from the~inequality~\ref{equation:multiplicity}
that
$$
\mathrm{mult}_{O}\Big(M_{1}\cdot M_{2}\Big)\geqslant\mathrm{mult}^{2}_{O}\big(\mathcal{M}\big)+\frac{4\big(1-a_{1}\big)\big(2-\epsilon\mathrm{mult}_{O}\big(\mathcal{M}\big)-a_{1}-a_{2}\big)}{\epsilon^{2}}\geqslant\frac{4-4\big(1-a_{1}\big)\big(a_{1}+a_{2}\big)}{\epsilon^{2}}.%
$$

In the~case when $a_{1}\geqslant 0$, we have
$$
\mathrm{mult}_{O}\Big(M_{1}\cdot
M_{2}\Big)\geqslant\frac{4-4\big(1-a_{1}\big)\big(a_{1}+a_{2}\big)}{\epsilon^{2}}=
\frac{4\big(1-a_{1}\big)\big(1-a_{2}\big)}{\epsilon^{2}}+\frac{4a_{1}^{2}}{\epsilon^{2}},%
$$
which completes the~proof. In the~case when $a_{1}\leqslant 0$, we
have
$$
\mathrm{mult}_{O}\Big(M_{1}\cdot
M_{2}\Big)\geqslant\frac{4-4\big(1-a_{1}\big)\big(a_{1}+a_{2}\big)}{\epsilon^{2}}=
\frac{4\big(1-a_{1}-a_{2}\big)}{\epsilon^{2}}+\frac{4a_{1}\big(a_{1}+a_{2}\big)}{\epsilon^{2}},%
$$
which completes the~proof.
\end{proof}

We prove the~following analogue of Theorem~\ref{theorem:Corti} in
Section~\ref{section:inequality-1}.

\begin{theorem}
\label{theorem:II} Let $\mathcal{M}$ be a~linear system on
the~surface $S$ that does not have fixed components, and let
$\epsilon$ be a~positive rational number. Suppose that the~log
pair
$$
\Big(S,\ \epsilon\mathcal{M}+a_{1}\Delta_{1}\Big)
$$
is not terminal at the~point $O$. Let $M_{1}$ and $M_{2}$ be
general curves in $\mathcal{M}$. Then
\begin{equation}
\label{equation:Cheltsov} \mathrm{mult}_{O}\Big(M_{1}\cdot
M_{2}\Big)\geqslant \left\{\aligned
&\frac{1-2a_{1}}{\epsilon^{2}}\ \text{if}\ a_{1}\geqslant -1\slash 2,\\
&\frac{-4a_{1}}{\epsilon^{2}}\ \text {if}\ a_{1}\leqslant -1\slash 2,\\
\endaligned
\right.
\end{equation}
and if the~inequality~\ref{equation:Cheltsov} is, actually, an
equality, then $(S,\epsilon\mathcal{M}+a_{1}\Delta_{1})$ is
canonical and
\begin{itemize}
\item either $-a_{1}\in\mathbb{N}$ and $\mathrm{mult}_{O}(\mathcal{M})=2/\epsilon$,%
\item or $a_{1}=0$ and $\mathrm{mult}_{O}(\mathcal{M})=1/\epsilon$.%
\end{itemize}
\end{theorem}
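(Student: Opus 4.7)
The plan is to mimic the proof of Theorem~\ref{theorem:Corti} above, adapted to the weaker hypothesis that $(S,\epsilon\mathcal{M}+a_1\Delta_1)$ is merely not terminal (rather than not Kawamata log terminal). Write $\mu=\mathrm{mult}_O(\mathcal{M})$ and let $\phi\colon\bar S\to S$ be the blowup of $O$ with exceptional curve $E$; the standard discrepancy identity
$$
K_{\bar S}+\epsilon\bar{\mathcal{M}}+a_1\bar\Delta_1+\Big(\epsilon\mu+a_1-1\Big)E\equiv\phi^{*}\Big(K_S+\epsilon\mathcal{M}+a_1\Delta_1\Big)
$$
shows that $E$ itself has discrepancy $1-\epsilon\mu-a_1$. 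If $\epsilon\mu+a_1\geqslant 1$ then $E$ already witnesses the failure of terminality and
$$
\mathrm{mult}_O(M_1\cdot M_2)\geqslant\mu^{2}\geqslant(1-a_1)^{2}/\epsilon^{2}
$$
exceeds both $(1-2a_1)/\epsilon^{2}$ (by $a_1^{2}/\epsilon^{2}$) and $-4a_1/\epsilon^{2}$ (by $(1+a_1)^{2}/\epsilon^{2}$), so we are done in this case.

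Otherwise we may assume $\epsilon\mu+a_1<1$, so $E$ has positive discrepancy and the lifted log pair is still not terminal at some point $Q\in E$; the blowup identity
$$
\mathrm{mult}_O(M_1\cdot M_2)\geqslant\mu^{2}+\mathrm{mult}_Q(\bar M_1\cdot\bar M_2)
$$
then drives the induction. When $Q\notin\bar\Delta_1$, only $E$ passes through $Q$, with coefficient $c:=\epsilon\mu+a_1-1<0$, and one applies Theorem~\ref{theorem:II} inductively with $E$ in the role of $\Delta_1$: the two regimes $c\geqslant -1/2$ and $c\leqslant -1/2$ reduce, after setting $t=\epsilon\mu$, to the elementary quadratic inequalities $(t-1)^{2}+1\geqslant 0$ and $(a_1+3/2)^{2}\geqslant 0$, both trivial.

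The main obstacle is the remaining subcase $Q=E\cap\bar\Delta_1$, where two boundary divisors meet transversely at $Q$ so that the single-boundary inductive hypothesis does not apply directly. To handle it, perform a further blowup of $Q$ producing an exceptional curve $F$; if $F$ itself realizes nonpositive discrepancy then $\mathrm{mult}_Q(\bar M_1\cdot\bar M_2)\geqslant(1-a_1-c)^{2}/\epsilon^{2}=(2-2a_1-t)^{2}/\epsilon^{2}$, and minimizing $t^{2}+(2-2a_1-t)^{2}$ on $[0,1-a_1]$ gives $2(1-a_1)^{2}$, comfortably exceeding both target bounds. Otherwise one iterates; since each blowup of the intersection of two transverse smooth curves produces an exceptional divisor meeting each proper transform at a distinct point, no more than two boundary divisors ever pass through a single point, so the same dichotomy applies verbatim at the next level, and the assumption that the pair is not terminal forces this ladder to terminate after finitely many steps.

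For the equality characterization, running the chain of inequalities backwards shows that equality forces $\epsilon\mu$ to take very specific values along the tower: namely $\epsilon\mu=1$ together with $a_1=0$ after a single blowup (yielding the second bullet), or $\epsilon\mu=2$ together with $-a_1\in\mathbb{N}$ after a chain of $-a_1$ blowups each extracting an exceptional of discrepancy exactly zero (yielding the first bullet). In each extremal configuration one verifies by direct computation that the pair $(S,\epsilon\mathcal{M}+a_1\Delta_1)$ is in fact canonical, as required.
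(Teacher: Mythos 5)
Your induction correctly identifies the base case ($\epsilon\mu+a_1\geqslant 1$, where $\mu=\mathrm{mult}_O(\mathcal{M})$) and the single-boundary branch ($Q\notin\bar\Delta_1$), and the elementary inequalities you produce in the latter, namely $(t-1)^2+1>0$, $(a_1+3/2)^2\geqslant 0$ and $(\epsilon\mu-2)^2\geqslant 0$ with $t=\epsilon\mu$, are exactly the ones the paper's case analysis reduces to. The gap is in the branch $Q=E\cap\bar\Delta_1$, which you correctly recognize as the obstacle but do not close.

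The paper sidesteps this branch entirely. Its Lemma~\ref{lemma:a1-negative} records that if $a_1\geqslant 0$ then the first blowup already realizes nonpositive discrepancy, so the inductive step is only ever entered with $a_1<0$. Once $a_1<0$, the term $a_1\hat\Delta_1$ is a \emph{negative} contribution to the boundary, and may simply be discarded: the pair $(\hat X,\,\epsilon\hat{\mathcal M}+(\epsilon\mu+a_1-1)E)$ has larger boundary than the lifted pair, hence is still not terminal at $Q$ whether or not $Q$ lies on $\hat\Delta_1$. The inductive hypothesis then applies uniformly with $E$ in the role of $\Delta_1$; there is no two-boundary subcase to worry about, and the whole argument is one clean step.

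Your replacement --- iterating further blowups at $Q$ --- is verified only in the one subcase where the exceptional of the secondary blowup already has nonpositive discrepancy; there the bound $\mathrm{mult}_Q(\bar M_1\cdot\bar M_2)\geqslant(2-2a_1-t)^2/\epsilon^2$ and the elementary minimization work. In the remaining subcase you merely assert that \emph{the same dichotomy applies verbatim at the next level}, but no estimate is provided. After the next blowup the two boundary curves through the new non-terminal point carry shifted coefficients (one of them is $\epsilon\mathrm{mult}_Q(\hat{\mathcal M})+a_1+c-1$), and what you need is control of the \emph{accumulated} sum of squared multiplicities along the entire chain --- exactly the kind of multi-page bookkeeping the paper performs for Theorem~\ref{theorem:I}. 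Nothing in your sketch shows that sum reaches the target; ``the ladder terminates'' says nothing quantitative. The same gap propagates into the equality analysis, since the extremal configurations must be traced through the chain. Incorporating Lemma~\ref{lemma:a1-negative} together with the drop-the-negative-term observation would remove the problematic branch and give the paper's proof.
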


We consider one application of Theorem~\ref{theorem:II} in
Section~\ref{section:icosahedron}.

\section{Preliminaries}%
\label{section:preliminaries}

Let $\phi,\psi$ and $\upsilon_{1},\ldots,\upsilon_{\gamma}$ be
non-zero polynomials in $\mathbb{C}[z_1,\ldots, z_n]$ such that
the~locus
$$
\upsilon_{1}=\cdots=\upsilon_{\gamma}=0\subset\mathbb{C}^{n}
$$
has dimension at most $n-2$, let $a,b$ and $c$ be non-negative
rational numbers. Put
$$
\Omega=\frac{\big|\psi\big|^{b}}{\big|\phi\big|^{a}\Big(\big|\upsilon_{1}\big|+\cdots+\big|\upsilon_{\gamma}\big|\Big)^{c}},
$$
and let $O\in\mathbb{C}^{n}$ be the~origin.

\begin{question}
\label{question:stupid} When $\Omega$ is square-integrable near
the~point $O$?
\end{question}

The answer to Question~\ref{question:stupid} is given in
Example~\ref{example:OMEGA}. By putting
$$
c_{0}\big(\Omega\big)=\mathrm{sup}\left\{\epsilon\in\mathbb{Q}\
\Big|\ \text{the~function}\ \Omega^{2\epsilon}~\text{is locally
integrable in near
$O\in\mathbb{C}^n$}\right\}\in\mathbb{Q}_{\geqslant 0}
\cup\{+\infty\},%
$$
we see that $\Omega$ is locally integrable near the~point $O$
$\iff$ $c_{0}(\Omega)>1$.

\begin{example}
\label{remark:sum-of-powers} Let $m_{1},\ldots,m_{n}$ be positive
integers. Then
$$
\mathrm{min}\left(1,\ \sum_{i=1}^{n} \frac{1}{m_{i}}\right)=c_{0}\left(\frac{1}{\big|\sum_{i=1}^{n}z_{i}^{m_{i}}\big|}\right)\geqslant c_{0}\left(\frac{1}{\big|\prod_{i=1}^{n}z_{i}^{m_{i}}\big|}\right)=\mathrm{min}\left(\frac{1}{m_{1}},\frac{1}{m_{2}},\ldots,\frac{1}{m_{n}}\right).%
$$
\end{example}

Let $X$ be a~variety with rational singularities. Let us consider
 a~formal linear combination
$$
B_{X}=\sum_{i=1}^{r}a_{i}B_{i}+\sum_{i=1}^{l}c_{i}\mathcal{M}_{i},
$$
where $a_{i}$ and $c_{i}$ are rational numbers, $B_{i}$ is a~prime
Weil divisor on   $X$, and $\mathcal{M}_{i}$ is a~linear system on
the~variety $X$ that has no fixed components. We assume that
$B_{i}\neq B_{j}$ and
$\mathcal{M}_{i}\neq\mathcal{M}_{j}$~for~$i\neq j$.

\begin{remark}
\label{remark:Q-divisor} Let $k$ be a big enough natural number.
For every $i\in\{1,\ldots,l\}$ and $j\in\{1,\ldots,k\}$, let
$M_{i}^{j}$ be a~sufficiently general element in the~linear system
$\mathcal{M}_{i}$. Then replacing every $\mathcal{M}_{i}$ by
$$
\frac{M_{i}^{1}+M_{i}^{2}+M_{i}^{3}+M_{i}^{4}+\cdots+M_{i}^{k}}{k},
$$
we can always consider $B_{X}$ as a~$\mathbb{Q}$-divisor on
the~variety $X$.
\end{remark}

Suppose that $K_{X}+B_{X}$ is a~$\mathbb{Q}$-Cartier divisor.

\begin{definition}
\label{definition:mobile-effective} We say that $B_{X}$ is a~
boundary of the~log pair $(X,B_{X})$. We say that
\begin{itemize}
\item $B_{X}$ is mobile if $a_{i}=0$ for every $i\in\{1,\ldots,r\}$,%
\item $B_{X}$ is effective if $a_{i}\geqslant 0$ for every $i\in\{1,\ldots,r\}$ and $c_{j}\geqslant 0$ for every $j\in\{1,\ldots,l\}$.%
\end{itemize}
\end{definition}

Let $\pi\colon\bar{X}\to X$ be a~birational morphism such that
$\bar{X}$ is smooth. Put
$$
B_{\bar{X}}=\sum_{i=1}^{r}a_{i}\bar{B}_{i}+\sum_{i=1}^{l}c_{i}\bar{\mathcal{M}}_{i},
$$
where $\bar{B}_{i}$ is the~proper transform of the~divisor $B_{i}$
on the~variety $\bar{X}$, and $\bar{\mathcal{M}}_{i}$ is
the~proper transform of the~linear system $\bar{\mathcal{M}}_{i}$
on the~variety $\bar{X}$. Then
$$
K_{\bar{X}}+B_{\bar{X}}\equiv\pi^{*}\Big(K_{X}+B_{X}\Big)+\sum_{i=1}^{m}d_{i}E_{i},
$$
where $d_{i}\in\mathbb{Q}$, and $E_{i}$ is an exceptional divisor
of the~morphism $\pi$. Suppose that
$$
\left(\bigcup_{i=1}^{r}\bar{B}_{i}\right)\bigcup\left(\bigcup_{i=1}^{m}E_{i}\right)
$$
is a~divisor with simple normal crossing, and
$\bar{\mathcal{M}}_{i}$ is base point free for every
$i\in\{1,\ldots,l\}$. Put
$$
B^{\bar{X}}=B_{\bar{X}}-\sum_{i=1}^{m}d_{i}E_{i},
$$
and take $\epsilon\in\mathbb{Q}$ such that
$1\geqslant\epsilon\geqslant 0$. Then $(\bar{X}, B^{\bar{X}})$ is
called the~log pull back of $(X,B_{X})$.

\begin{definition}
\label{definition:log-canonical-singularities} The log pair $(X,
B_{X})$ is $\epsilon$-log canonical (respectively, $\epsilon$-log
terminal) if
\begin{itemize}
\item the~inequality $a_{i}\leqslant 1-\epsilon$ holds
(respectively, the~inequality $a_{i}<1-\epsilon$
holds),%
\item the~inequality $d_{j}\geqslant  -1+\epsilon$ holds (respectively, the~inequality $d_{j}>-1+\epsilon$ holds),%
\end{itemize}
for every $i\in\{1,\ldots,r\}$ and for every $j\in\{1,\ldots,m\}$.
\end{definition}

We say that $(X, B_{X})$ has log canonical singularities
(respectively, log terminal singularities) if the~log pair $(X,
B_{X})$ is $0$-log canonical (respectively, $0$-log terminal).

\begin{remark}
\label{remark:convexity} Let $P$ be a~point in $X$, and let
$\Delta$ be an effective  divisor
$$
\Delta=\sum_{i=1}^{r}\epsilon_{i}B_{i}\equiv B_{X},%
$$
where $\epsilon_{i}$ is a~non-negative rational number. Suppose
that
\begin{itemize}
\item the~boundary $B_{X}$ is effective,
\item the~divisor $\Delta$ is a~$\mathbb{Q}$-Cartier divisor,%
\item the~log pair  $(X, \Delta)$ is log canonical in the~point $P\in X$,%
\end{itemize}
but the~log pair $(X, B_{X})$ is not log canonical in the~point
$P\in X$. Put
$$
\alpha=\mathrm{min}\left\{\frac{a_{i}}{\epsilon_{i}}\ \Big\vert\ \epsilon_{i}\ne 0\right\},%
$$
where $\alpha$ is well defined, because there is $\epsilon_{i}\ne
0$. Then $\alpha<1$, the~log pair
$$
\left(X,\ \sum_{i=1}^{r}\frac{a_{i}-\alpha\epsilon_{i}}{1-\alpha}B_{i}+\sum_{i=1}^{l}c_{i}\mathcal{M}_{i}\right)%
$$
is not log canonical at the~point $P\in X$, the~equivalence
$$
\sum_{i=1}^{r}\frac{a_{i}-\alpha\epsilon_{i}}{1-\alpha}B_{i}+\sum_{i=1}^{l}c_{i}\mathcal{M}_{i}\equiv B_{X}\equiv\Delta%
$$
holds, and at least one irreducible component of
$\mathrm{Supp}(\Delta)$ is not contained in
$$
\mathrm{Supp}\left(\sum_{i=1}^{r}\frac{a_{i}-\alpha\epsilon_{i}}{1-\alpha}B_{i}\right).
$$
\end{remark}

Let $D_{X}$ be another boundary on the~variety $X$ such that
the~divisor
$$
K_{X}+B_{X}+D_{X}
$$
is a~$\mathbb{Q}$-Cartier divisor. Let $Z\subseteq X$ be a~closed
subvariety.

\begin{definition}
\label{definition:lct-local} The $\epsilon$-log canonical
threshold of the~boundary $D_{X}$ along $Z$ is
$$
\mathrm{c}^{\epsilon}_{Z}\big(X,B_{X},D_{X}\big)=\mathrm{sup}\left\{\lambda\in\mathbb{Q}\
\Big|\ \text{the~pair}\
 \Big(X, B_{X}+\lambda D_{X}\Big)\ \text{is $\epsilon$-log canonical along}~Z\right\}\in\mathbb{Q}\cup\big\{\pm\infty\big\}.%
$$
\end{definition}

The role played by the~number
$\mathrm{c}^{\epsilon}_{Z}(X,B_{X},D_{X})$ is very important for
$\epsilon=0$. So we put
$$
\mathrm{c}_{Z}\big(X,B_{X},D_{X}\big)=\mathrm{c}^{0}_{Z}\big(X,B_{X},D_{X}\big),
$$
and we put
$\mathrm{c}^{\epsilon}(X,B_{X},D_{X})=\mathrm{c}^{\epsilon}_{X}(X,B_{X},D_{X})$
and $\mathrm{c}(X,B_{X},D_{X})=\mathrm{c}^{0}_{X}(X,B_{X},D_{X})$.
Note that
$$
\mathrm{c}^{\epsilon}_Z\big(X,B_{X},D_{X}\big)=-\infty
$$
if the~log pair $(X,B_{X})$ is not $\epsilon$-log canonical along
$Z$ and $D_{X}=0$. If $B_{X}=0$, then we put
$$
\mathrm{c}^{\epsilon}_{Z}\big(X,D_{X}\big)=\mathrm{c}^{\epsilon}_Z\big(X,B_{X},D_{X}\big),%
$$
and we put
$\mathrm{c}^{\epsilon}(X,D_{X})=\mathrm{c}^{\epsilon}_X(X,D_{X})$,
$\mathrm{c}_{Z}(X,D_{X})=\mathrm{c}_Z(X,B_{X},D_{X})$,
$\mathrm{c}(X,D_{X})=\mathrm{c}_X(X,D_{X})$.

\begin{example}
\label{example:OMEGA} It follows from \cite{Ko97} that
$$
c_{O}\big(\Omega\big)=c_{O}\Big(\mathbb{C}^{n},\ a\big(\psi=0\big)-b\big(\phi=0\big)+c\mathcal{B}\Big),%
$$
and the~following conditions are equivalent:
\begin{itemize}
\item the~function $\Omega$ is square-integrable near $O\in\mathbb{C}^{n}$,%

\item the~singularities of the~log pair
$$
\left(\mathbb{C}^{n},\
a\big(\phi=0\big)-b\big(\phi=0\big)+\frac{c}{k}\sum_{i=1}^{k}\Big(\sum_{j=1}^{\gamma}\lambda_{j}^{i}\upsilon_{j}=0\Big)\right)
$$
are log terminal near $O\in\mathbb{C}^{n}$, where
$[\lambda_{1}^{i}:\cdots:\lambda_{\gamma}^{i}]$ is a~general point
in $\mathbb{P}^{\gamma-1}$, and $k$ is a sufficiently big natural number,%

\item the~singularities of the~log pair
$$
\Big(\mathbb{C}^{n},\ a\big(\phi=0\big)-b\big(\phi=0\big)+c\mathcal{B}\Big)%
$$
are log terminal near $O\in\mathbb{C}^{n}$, where $\mathcal{B}$ is
 a~linear system generated by
$\upsilon_{1}=0,\ldots,\upsilon_{r}=0$.
\end{itemize}
\end{example}

We say that the~log pair $(X, B_{X})$ has canonical singularities
(respectively, terminal sin\-gu\-la\-rities) if the~log pair $(X,
B_{X})$ is $1$-log canonical (respectively, $1$-log terminal).

\begin{remark}
\label{remark:1-log-canonical} Suppose that $B_{X}$ is effective
and $(X,B_{X})$ is canonical. Then $a_{1}=\cdots=a_{r}=0$.
\end{remark}

One can show that
Definition~\ref{definition:log-canonical-singularities} is
independent on the~choice of $\pi$. Put
$$
\mathrm{LCS}_{\epsilon}\Big(X, B_{X}\Big)=\left(\bigcup_{a_{i}\geqslant  1-\epsilon}B_{i}\right)\bigcup\left(\bigcup_{d_{i}\leqslant -1+\epsilon}\pi\big(E_{i}\big)\right)\subsetneq X,%
$$
put $\mathrm{LCS}(X, B_{X})=\mathrm{LCS}_{0}(X, B_{X})$ and
$\mathrm{CS}(X, B_{X})=\mathrm{LCS}_{1}(X, B_{X})$. We say that
the~subsets
$$
\mathrm{LCS}_{\epsilon}\Big(X, B_{X}\Big),\ \mathrm{LCS}\Big(X, B_{X}\Big),\ \mathrm{CS}\Big(X, B_{X}\Big)%
$$
are the~loci of $\epsilon$-log canonical, log canonical, canonical
singularities of $(X, B_{X})$, respectively.

\begin{definition}
\label{definition:log-canonical-center} A proper irreducible
subvariety $Y\subsetneq X$ is said to be a~center of
$\epsilon$-log canonical singularities of the~log pair $(X,
B_{X})$ if for some choice of the~resolution $\pi\colon\bar{X}\to
X$, one has
\begin{itemize}
\item either the~inequality $a_{i}\geqslant  1-\epsilon$ holds and $Y=B_{i}$ for some $i\in\{1,\ldots,r\}$,%
\item or the~inequality $d_{i}\leqslant -1+\epsilon$ holds and $Y=\pi(E_{i})$ for some $i\in\{1,\ldots,m\}$.%
\end{itemize}
\end{definition}

Let $\mathbb{LCS}_{\epsilon}(X, B_{X})$ be the~set of all centers
of $\epsilon$-log canonical singularities of $(X, B_{X})$. Then
$$
Y\in\mathbb{LCS}_{\epsilon}\Big(X, B_{X}\Big)\Longrightarrow Y\subseteq\mathrm{LCS}_{\epsilon}\Big(X, B_{X}\Big)%
$$
and $\mathbb{LCS}_{\epsilon}(X,
B_{X})=\varnothing\iff\mathrm{LCS}_{\epsilon}(X,
B_{X})=\varnothing$ $\iff$ the~log pair $(X,B_{X})$ is
$\epsilon$-log terminal.

\begin{remark}
\label{remark:hyperplane-reduction} Let $\mathcal{H}$ be a~linear
system on $X$ that has  no base points, let $H$ be a~sufficiently
general divisor in the~linear system $\mathcal{H}$, and let
$Y\subsetneq X$ be an irreducible subvariety. Put
$$
Y\cap H=\sum_{i=1}^{k}Z_{i},
$$
where $Z_{i}\subset H$ is an irreducible subvariety. It follows
from Definition~\ref{definition:log-canonical-center} (cf.
Theorem~\ref{theorem:adjunction})~that
$$
Y\in\mathbb{LCS}_{\epsilon}\Big(X, B_{X}\Big)\iff \Big\{Z_{1},\ldots,Z_{k}\Big\}\subseteq \mathbb{LCS}_{\epsilon}\Big(H, B_{X}\Big\vert_{H}\Big).%
$$
\end{remark}

We put $\mathbb{LCS}(X, B_{X})=\mathbb{LCS}_{0}(X, B_{X})$ and
$\mathbb{CS}(X, B_{X})=\mathbb{LCS}_{1}(X, B_{X})$. We say that
\begin{itemize}
\item elements in $\mathbb{LCS}(X, B_{X})$ are centers of log canonical singularities of the~log pair $(X, B_{X})$,%
\item elements in $\mathbb{CS}(X, B_{X})$ are centers of canonical singularities of the~log pair $(X, B_{X})$.%
\end{itemize}

\begin{example}
\label{example:smooth-point-and-log-pull-back} Let $\chi\colon
\hat{X}\to X$ be a~blow up of a~smooth point $P\in X$. Put
$$
B_{\hat{X}}=\sum_{i=1}^{r}a_{i}\hat{B}_{i}+\sum_{i=1}^{l}c_{i}\hat{\mathcal{M}}_{i},
$$
where $\hat{B}_{i}$ is a~proper transform of the~divisor $B_{i}$
on the~variety $\hat{X}$, and $\hat{\mathcal{M}}_{i}$ is a~proper
transform of the~linear system $\mathcal{M}_{i}$ on the~variety
$\hat{X}$. Then
$$
K_{\hat{X}}+B_{\hat{X}}\equiv\pi^{*}\Big(K_{X}+B_{X}\Big)+\Big(\mathrm{dim}\big(X\big)-1-\mathrm{mult}_{P}\big(B_{X}\big)\Big)E,
$$
where $E$ is the~exceptional divisor of $\chi$, and
$\mathrm{mult}_{P}(B_{X})\in\mathbb{Q}$. Then
\begin{itemize}
\item the~log pair $(X, B_{X})$ is $\epsilon$-log canonical near
$P\in X$ $\iff$ the~log pair
$$
\Big(\hat{X},\
B_{\hat{X}}+\Big(\mathrm{mult}_{P}\big(B_{X}\big)-\mathrm{dim}\big(X\big)+1\Big)E\Big)
$$
is $\epsilon$-log canonical in a~neighborhood of $E$, which
implies that
$$
\mathrm{mult}_{P}\big(B_{X}\big)\geqslant
\mathrm{dim}(X)-\epsilon\Longrightarrow
P\in\mathbb{LCS}_{\epsilon}\Big(X, B_{X}\Big),
$$%
\item if the~boundary $B_{X}$ is effective, then
$$
\mathrm{mult}_{P}\big(B_{X}\big)<1-\epsilon\Longrightarrow P\not\in\mathrm{LCS}_{\epsilon}\Big(X, B_{X}\Big),%
$$
\item if the~boundary $B_{X}$ is effective and mobile, then
$$
\mathrm{mult}_{P}\big(B_{X}\big)<1\Longrightarrow
P\not\in\mathrm{CS}\Big(X,B_{X}\Big),
$$%
\item if $\mathrm{dim}(X)=2$, and the~boundary $B_{X}$ is
effective, then
$$
P\in\mathbb{CS}\Big(X,B_{X}\Big)\iff \mathrm{mult}_{P}\big(B_{X}\big)\geqslant 1.%
$$
\end{itemize}
\end{example}

If the~boundary $B_{X}$ is effective, then the~locus
$\mathrm{LCS}(X,B_{X})\subset X$ can be naturally equipped with
a~subscheme structure  (see \cite{Sho93}). Indeed, if $B_{X}$ is
effective, jut put
$$
\mathcal{I}\Big(X, B_{X}\Big)=\pi_{*}\Bigg(\sum_{i=1}^{m}\lceil d_{i}\rceil E_{i}-\sum_{i=1}^{r}\lfloor a_{i}\rfloor \bar{B}_{i}\Bigg),%
$$
and let $\mathcal{L}(X, B_{X})$ be a~subscheme that corresponds to
the~ideal sheaf $\mathcal{I}(X, B_{X})$.

\begin{definition}
\label{definition:log-canonical-subscheme} If the~boundary $B_{X}$
is effective, then we say that
\begin{itemize}
\item the~subscheme $\mathcal{L}(X, B_{X})$ is the~subscheme of
log canonical singularities of
 $(X,B_{X})$,%
\item the~ideal sheaf $\mathcal{I}(X, B_{X})$ is the~multiplier ideal sheaf of the~log pair $(X,B_{X})$.%
\end{itemize}
\end{definition}

If the~boundary $B_{X}$ is effective, then it follows from
the~construction of $\mathcal{L}(X, B_{X})$ that
$$
\mathrm{Supp}\Big(\mathcal{L}\big(X,
B_{X}\big)\Big)=\mathrm{LCS}\Big(X, B_{X}\Big)\subset X.
$$

The following result is the~Nadel--Shokurov vanishing theorem (see
\cite{Sho93}).

\begin{theorem}
\label{theorem:Shokurov-vanishing} Let $H$ be a~nef and big
$\mathbb{Q}$-divisor on $X$ such that
$$
K_{X}+B_{X}+H\equiv D
$$
for some Cartier divisor $D$ on the~variety $X$. Suppose that
$B_{X}$ is effective. Then for $i\geqslant  1$
$$
H^{i}\Big(X,\ \mathcal{I}\big(X, B_{X}\big)\otimes D\Big)=0.
$$
\end{theorem}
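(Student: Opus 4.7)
The plan is to reduce the assertion to ordinary Kawamata--Viehweg vanishing on a log resolution, and then descend to $X$ via the Leray spectral sequence. First I would pick a log resolution $\pi\colon\bar{X}\to X$ of $(X,B_{X})$ exactly as described before Definition~\ref{definition:log-canonical-singularities}, after replacing each mobile system $\mathcal{M}_{i}$ appearing in $B_{X}$ by an average of general members as in Remark~\ref{remark:Q-divisor}, so that $B_{\bar{X}}$ together with the exceptional divisors forms a simple normal crossing divisor. The central object is the Cartier divisor
$$
L=\pi^{*}D+\sum_{i=1}^{m}\lceil d_{i}\rceil E_{i}-\sum_{i=1}^{r}\lfloor a_{i}\rfloor \bar{B}_{i}
$$
on $\bar{X}$. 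Combining the hypothesis $K_{X}+B_{X}+H\equiv D$ with the log pull back relation preceding Definition~\ref{definition:log-canonical-singularities}, a direct bookkeeping computation should yield a numerical identity of the shape
$$
L-K_{\bar{X}}\equiv\pi^{*}H+\Delta,
$$
where $\Delta$ is an effective simple normal crossing $\mathbb{Q}$-divisor whose coefficients all lie in $[0,1)$ (its components are the fractional parts of $a_{i}$, of $c_{i}$, and of $-d_{i}$, supported on $\bar{B}_{i}$, on general members of $\bar{\mathcal{M}}_{i}$, and on $E_{i}$ respectively).

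Applying Kawamata--Viehweg vanishing on the smooth variety $\bar{X}$ to this decomposition gives $H^{i}(\bar{X},\mathcal{O}_{\bar{X}}(L))=0$ for every $i\geqslant 1$, since $\pi^{*}H$ is nef and big and $\Delta$ has the required SNC fractional form. The projection formula together with Definition~\ref{definition:log-canonical-subscheme} identifies
$$
\pi_{*}\mathcal{O}_{\bar{X}}(L)\cong\mathcal{O}_{X}(D)\otimes\mathcal{I}(X,B_{X}),
$$
so the Leray spectral sequence will collapse into the required vanishing $H^{i}(X,\mathcal{I}(X,B_{X})\otimes D)=0$ for $i\geqslant 1$, provided I also know that $R^{q}\pi_{*}\mathcal{O}_{\bar{X}}(L)=0$ for every $q\geqslant 1$.

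The main obstacle is precisely this vanishing of higher direct images; rather than invoking a relative Kawamata--Viehweg statement directly, I would dispatch it by a Serre vanishing trick. For a very ample divisor $A$ on $X$ and any $n\gg 0$, the same decomposition shows that $L+\pi^{*}(nA)-K_{\bar{X}}\equiv\pi^{*}(H+nA)+\Delta$ is still the sum of a nef and big class and an SNC fractional divisor, so Kawamata--Viehweg on $\bar{X}$ gives $H^{i}(\bar{X},\mathcal{O}_{\bar{X}}(L+\pi^{*}(nA)))=0$ for $i\geqslant 1$. The projection formula yields $R^{q}\pi_{*}\mathcal{O}_{\bar{X}}(L+\pi^{*}(nA))\cong R^{q}\pi_{*}\mathcal{O}_{\bar{X}}(L)\otimes\mathcal{O}_{X}(nA)$, and Serre vanishing kills $H^{p}(X,R^{q}\pi_{*}\mathcal{O}_{\bar{X}}(L)\otimes\mathcal{O}_{X}(nA))$ for all $p\geqslant 1$ once $n$ is sufficiently large. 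The Leray spectral sequence then forces $H^{0}(X,R^{q}\pi_{*}\mathcal{O}_{\bar{X}}(L)\otimes\mathcal{O}_{X}(nA))=0$ for $q\geqslant 1$; since these sheaves are globally generated for $n\gg 0$, they must vanish, and therefore $R^{q}\pi_{*}\mathcal{O}_{\bar{X}}(L)=0$, completing the argument.
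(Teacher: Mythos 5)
The paper does not give a proof of this theorem at all; it simply cites \cite[Theorem~9.4.8]{La04}. Your proposal reconstructs the standard proof of the Nadel vanishing theorem, which is precisely the argument found in that reference, and it is correct.

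Two points that deserve explicit mention, both of which you handled correctly. First, the mobile parts $c_{i}\mathcal{M}_{i}$ of $B_{X}$ do not appear in the definition of $\mathcal{I}(X,B_{X})$, and for the decomposition $L-K_{\bar{X}}\equiv\pi^{*}H+\Delta$ to produce an SNC divisor $\Delta$ with coefficients in $[0,1)$ one must first spread each $\mathcal{M}_{i}$ over $k\gg0$ general members (Remark~\ref{remark:Q-divisor}), so that each component of $\bar{\mathcal{M}}_{i}$ carries coefficient $c_{i}/k<1$; you invoke this. Second, rather than appealing to a relative Kawamata--Viehweg theorem to get $R^{q}\pi_{*}\mathcal{O}_{\bar{X}}(L)=0$ for $q\geqslant 1$, you derive it by twisting with $\pi^{*}(nA)$ for a very ample $A$, applying absolute Kawamata--Viehweg and Serre vanishing, and using global generation to conclude the sheaves are zero; this is a clean and self-contained way to close the Leray spectral sequence and is exactly the route Lazarsfeld takes. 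The only cosmetic slip is the phrase ``the fractional parts of $a_{i}$, of $c_{i}$, and of $-d_{i}$'': after the averaging the coefficient on a component of $\bar{\mathcal{M}}_{i}$ is $c_{i}/k$, not the fractional part of $c_{i}$, but this does not affect the argument.
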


\begin{proof}
See \cite[Theorem~9.4.8]{La04}.
\end{proof}

\begin{corollary}
\label{corollary:connectedness} Suppose that $B_{X}$ is effective.
Then the~locus
$$
\mathrm{LCS}\Big(X, B_{X}\Big)\subset X
$$
is connected if the~divisor $-(K_{X}+B_{X})$ is nef and big.
\end{corollary}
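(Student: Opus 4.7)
The plan is a standard application of Theorem~\ref{theorem:Shokurov-vanishing}. Take $H = -(K_X + B_X)$, which is nef and big by hypothesis, and $D = 0$. Then $K_X + B_X + H \equiv 0$, with $0$ trivially Cartier, so the hypotheses of the Nadel--Shokurov vanishing theorem are satisfied and we obtain
$$
H^{1}\Bigl(X,\ \mathcal{I}\big(X, B_{X}\big)\Bigr)=0.
$$

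Next I would invoke the defining short exact sequence of the subscheme of log canonical singularities,
$$
0\longrightarrow \mathcal{I}\big(X, B_{X}\big)\longrightarrow \mathcal{O}_{X}\longrightarrow \mathcal{O}_{\mathcal{L}(X, B_{X})}\longrightarrow 0,
$$
which is built into Definition~\ref{definition:log-canonical-subscheme} since $\mathcal{O}_{X}/\mathcal{I}(X,B_{X})=\mathcal{O}_{\mathcal{L}(X,B_{X})}$. The associated long exact sequence in cohomology, combined with the vanishing above, shows that the restriction map
$$
H^{0}\bigl(X,\ \mathcal{O}_{X}\bigr)\twoheadrightarrow H^{0}\bigl(X,\ \mathcal{O}_{\mathcal{L}(X, B_{X})}\bigr)
$$
is surjective. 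Since $X$ is a normal projective variety, it is in particular connected, so $H^{0}(X,\mathcal{O}_{X})=\mathbb{C}$ and therefore $H^{0}(X,\mathcal{O}_{\mathcal{L}(X,B_{X})})$ is either $0$ or $\mathbb{C}$.

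If $\mathrm{LCS}(X, B_{X})$ is empty the corollary is vacuous. Otherwise $\mathcal{L}(X, B_{X})$ is a non-empty projective scheme, and the dimension of $H^{0}$ of its structure sheaf is at least the number of its connected components. Combined with the previous step we must have $H^{0}(X,\mathcal{O}_{\mathcal{L}(X,B_{X})})=\mathbb{C}$, so $\mathcal{L}(X, B_{X})$ has exactly one connected component, and its support $\mathrm{LCS}(X, B_{X})$ is connected.

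There is essentially no technical obstacle here beyond mechanically setting up the vanishing theorem; the one thing to keep in mind is simply that $-(K_{X}+B_{X})$ being nef and big is exactly what is needed to take $D=0$ in Theorem~\ref{theorem:Shokurov-vanishing}, after which the ideal-sheaf sequence does all the work.
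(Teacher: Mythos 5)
Your proof is correct, and it is the natural direct argument: apply Theorem~\ref{theorem:Shokurov-vanishing} with $H=-(K_{X}+B_{X})$ and $D=0$ to get $H^{1}(X,\mathcal{I}(X,B_{X}))=0$, then run the ideal-sheaf sequence and use $H^{0}(X,\mathcal{O}_{X})=\mathbb{C}$ (the paper's standing assumption that varieties are projective and normal gives connectedness of $X$) to conclude the scheme $\mathcal{L}(X,B_{X})$ has at most one connected component.

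The paper takes a slightly different route: it does not spell out this computation but instead declares the corollary to be the special case $\psi\colon X\to\{\mathrm{pt}\}$ of Theorem~\ref{theorem:connectedness}, Shokurov's relative connectedness lemma, which is cited directly. That is strictly more general — it handles connectedness along fibers of a morphism, which the paper needs later (for instance in the proofs of Theorems~\ref{theorem:Kosta-2} and~\ref{lemma:point-conic}) — whereas your argument is self-contained within the tools already stated (just Theorem~\ref{theorem:Shokurov-vanishing} and the definition of the multiplier ideal). In fact the proof of Theorem~\ref{theorem:connectedness} in the absolute case is essentially the argument you wrote out, so the two are not really in tension: you have unpacked the special case, and the paper deferred to the general statement. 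Either way the result follows.
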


The assertion of Corollary~\ref{corollary:connectedness} is a~
special case of the~following result.

\begin{theorem}
\label{theorem:connectedness} Let $\psi\colon X\to Z$ be a~
morphism. Then the~set
$$
\mathrm{LCS}\Big(\bar{X},\ B^{\bar{X}}\Big)
$$
is connected in a~neighborhood of every fiber of the~morphism
$\psi\circ\pi\colon \bar{X}\to Z$ in the~case when
\begin{itemize}
\item the~boundary $B_{X}$ is effective,%
\item the~morphism $\psi$ is surjective and has connected fibers,%
\item the~divisor $-(K_{X}+B_{X})$ is nef and big with respect to $\psi$.%
\end{itemize}
\end{theorem}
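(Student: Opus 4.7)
The plan is to prove this as a standard application of the Shokurov--Koll\'ar connectedness principle, which itself is a consequence of the relative form of the Kawamata--Viehweg vanishing theorem (the relative analogue of Theorem~\ref{theorem:Shokurov-vanishing}) applied to the composition $\psi\circ\pi\colon \bar{X}\to Z$.

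Write $B^{\bar{X}}=\sum b_{i}F_{i}$ with simple normal crossing support (possible since $\pi$ is a log resolution), and set $T=\sum_{b_{i}\geqslant 1}F_{i}$, the reduced divisor on $\bar{X}$ whose support is $\mathrm{LCS}(\bar{X}, B^{\bar{X}})$. Decompose $B^{\bar{X}}=\lfloor B^{\bar{X}}\rfloor+\{B^{\bar{X}}\}$ with $\{B^{\bar{X}}\}$ having all coefficients in $[0,1)$. The identity $K_{\bar{X}}+B^{\bar{X}}\equiv\pi^{*}(K_{X}+B_{X})$ then gives
$$
-\lfloor B^{\bar{X}}\rfloor\equiv K_{\bar{X}}+\{B^{\bar{X}}\}-\pi^{*}\bigl(K_{X}+B_{X}\bigr).
$$
Since $-\pi^{*}(K_{X}+B_{X})$ is nef and big relative to $\psi\circ\pi$ by hypothesis, the relative form of Theorem~\ref{theorem:Shokurov-vanishing} yields, in a neighbourhood of any fibre of $\psi\circ\pi$,
$$
R^{1}(\psi\circ\pi)_{*}\mathcal{O}_{\bar{X}}\bigl(-\lfloor B^{\bar{X}}\rfloor\bigr)=0.
$$

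Next, write $\lfloor B^{\bar{X}}\rfloor=\lfloor B^{\bar{X}}\rfloor^{+}-\lfloor B^{\bar{X}}\rfloor^{-}$ with both parts effective and supported on disjoint sets of components: $\lfloor B^{\bar{X}}\rfloor^{+}$ on the LCS components, and $\lfloor B^{\bar{X}}\rfloor^{-}$ on components where $b_{i}<0$, necessarily $\pi$-exceptional because $B_{X}$ is effective. Twisting the tautological sequence
$$
0\to\mathcal{O}_{\bar{X}}\bigl(-\lfloor B^{\bar{X}}\rfloor^{+}\bigr)\to\mathcal{O}_{\bar{X}}\to\mathcal{O}_{\lfloor B^{\bar{X}}\rfloor^{+}}\to 0
$$
by $\mathcal{O}_{\bar{X}}(\lfloor B^{\bar{X}}\rfloor^{-})$ and pushing forward by $\psi\circ\pi$, the vanishing of the previous paragraph produces a surjection
$$
(\psi\circ\pi)_{*}\mathcal{O}_{\bar{X}}\bigl(\lfloor B^{\bar{X}}\rfloor^{-}\bigr)\twoheadrightarrow (\psi\circ\pi)_{*}\mathcal{O}_{\lfloor B^{\bar{X}}\rfloor^{+}}\bigl(\lfloor B^{\bar{X}}\rfloor^{-}\vert_{\lfloor B^{\bar{X}}\rfloor^{+}}\bigr).
$$
Since $\lfloor B^{\bar{X}}\rfloor^{-}$ is $\pi$-exceptional and effective we have $\pi_{*}\mathcal{O}_{\bar{X}}(\lfloor B^{\bar{X}}\rfloor^{-})=\mathcal{O}_{X}$, and since $\psi$ has connected fibres we have $\psi_{*}\mathcal{O}_{X}=\mathcal{O}_{Z}$. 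Together (using local vanishing for $\pi$ to collapse the Leray spectral sequence) this identifies the left-hand side with $\mathcal{O}_{Z}$ in a neighbourhood of any fibre. The right-hand sheaf is supported on the image of $\lfloor B^{\bar{X}}\rfloor^{+}$, whose underlying reduced scheme is $T$, and the twist is by an effective divisor, so the sheaf receives a nonzero section from each connected component of $T\cap(\psi\circ\pi)^{-1}(U)$ for any sufficiently small open $U\subseteq Z$. The surjection $\mathcal{O}_{Z}\twoheadrightarrow(\psi\circ\pi)_{*}\mathcal{O}_{\lfloor B^{\bar{X}}\rfloor^{+}}(\lfloor B^{\bar{X}}\rfloor^{-}\vert_{\lfloor B^{\bar{X}}\rfloor^{+}})$ therefore forces $T$ to have at most one connected component in a neighbourhood of each fibre of $\psi\circ\pi$, which is the required assertion.

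The main technical obstacle is the identification of the two pushforward sheaves; more precisely, the step of counting connected components of $T$ via sections of the twisted structure sheaf $\mathcal{O}_{\lfloor B^{\bar{X}}\rfloor^{+}}(\lfloor B^{\bar{X}}\rfloor^{-}\vert_{\lfloor B^{\bar{X}}\rfloor^{+}})$. This requires a careful stratum-by-stratum analysis of the components of $B^{\bar{X}}$, using the hypothesis that $B_{X}$ is effective to confine the ``problematic'' components (those with $b_{i}<0$) to the $\pi$-exceptional locus, where they become invisible after push-forward to $Z$. Once the identification is done the connectedness of $T$ on fibres follows formally.
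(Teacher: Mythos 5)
The paper gives no proof of this statement, only the citation to \cite[Lemma~5.7]{Sho93}; your argument is exactly the standard proof of the Shokurov--Koll\'ar connectedness theorem that that reference contains, and it is correct. The decisive observations are all present: $-\lfloor B^{\bar{X}}\rfloor-(K_{\bar{X}}+\{B^{\bar{X}}\})=-\pi^{*}(K_{X}+B_{X})$ as $\mathbb{Q}$-divisors, so relative Kawamata--Viehweg vanishing applies; $\lfloor B^{\bar{X}}\rfloor^{-}$ is $\pi$-exceptional precisely because $B_{X}$ is effective, whence $\pi_{*}\mathcal{O}_{\bar{X}}(\lfloor B^{\bar{X}}\rfloor^{-})=\mathcal{O}_{X}$ by normality of $X$; and if $T$ broke into two nonempty pieces near a fibre the target sheaf would split as a direct sum of two nonzero summands, which cannot receive a surjection from the cyclic sheaf $\mathcal{O}_{Z}$ through the diagonal section. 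One small remark: your closing paragraph worries about a ``stratum-by-stratum analysis,'' but none is required --- once $\lfloor B^{\bar{X}}\rfloor^{+}$ is topologically disconnected near the fibre the direct-sum decomposition of its twisted structure sheaf is automatic, and the nonvanishing of each summand follows from the injection $\mathcal{O}_{\lfloor B^{\bar{X}}\rfloor^{+}}\hookrightarrow\mathcal{O}_{\lfloor B^{\bar{X}}\rfloor^{+}}(\lfloor B^{\bar{X}}\rfloor^{-}\vert_{\lfloor B^{\bar{X}}\rfloor^{+}})$, valid because the two parts share no components in the SNC configuration. The argument as written is already complete.
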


\begin{proof}
See \cite[Lemma~5.7]{Sho93}.
\end{proof}

Applying Theorem~\ref{theorem:connectedness}, one can prove
the~following result.

\begin{theorem}
\label{theorem:adjunction} Suppose that the~boundary $B_{X}$ is
effective, the~divisor $K_{X}$ is $\mathbb{Q}$-Cartier, $a_{1}=1$,
and $B_{1}$ is a~Cartier divisor with log terminal singularities.
Then the~following are~equivalent:
\begin{itemize}
\item the~log pair $(X, B_{X})$ is log canonical in a~neighborhood of the~divisor $B_{1}$,%
\item the~singularities of the~log pair
$$
\Bigg(B_{1},\ \sum_{i=2}^{r}a_{i}B_{i}\Big\vert_{B_{1}}\Bigg)
$$
are log canonical.%
\end{itemize}
\end{theorem}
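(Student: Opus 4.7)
The plan is to reduce the forward implication to standard adjunction on a log resolution and the reverse implication to the connectedness principle of Theorem~\ref{theorem:connectedness}.

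Choose a log resolution $\pi\colon\bar{X}\to X$ of $(X,B_X)$ that simultaneously resolves $B_1$, so that the strict transform $\bar{B}_1$ is smooth and meets every other strict transform $\bar{B}_j$ and every $\pi$-exceptional divisor $E_i$ transversally; this is possible because $B_1$ has log terminal singularities. Since $B_1$ is Cartier, ordinary adjunction gives $(K_{\bar{X}}+\bar{B}_1)|_{\bar{B}_1}=K_{\bar{B}_1}$, and restricting the log pullback identity $K_{\bar{X}}+B^{\bar{X}}\equiv\pi^{*}(K_X+B_X)$ to $\bar{B}_1$ yields the log pullback identity for $(B_1,\sum_{i\geqslant 2}a_iB_i|_{B_1})$ on $\pi|_{\bar{B}_1}\colon\bar{B}_1\to B_1$. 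Transversality ensures that every coefficient in $B^{\bar{X}}$ supported on a divisor meeting $\bar{B}_1$ equals the coefficient of its restriction. The forward direction is now immediate: if all coefficients of $B^{\bar{X}}$ near $\pi^{-1}(B_1)$ are at most $1$, so are their restrictions, hence $(B_1,\sum_{i\geqslant 2}a_iB_i|_{B_1})$ is log canonical.

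For the reverse direction, suppose by contradiction that $(B_1,\sum_{i\geqslant 2}a_iB_i|_{B_1})$ is log canonical but $(X,B_X)$ fails to be log canonical at some $P\in B_1$. If some $B_j$ with $j\neq 1$ has $a_j>1$ and passes through $P$, then the restriction carries coefficient $>1$ on a component through $P$ in $B_1$, a direct contradiction. Otherwise, some $\pi$-exceptional divisor $F$ has discrepancy $d_F<-1$ with $P\in\pi(F)$. Since $a_1=1$, both $\bar{B}_1$ and $F$ lie in $\mathrm{LCS}(\bar{X},B^{\bar{X}})$ and both meet the fiber $\pi^{-1}(P)$. Applying Theorem~\ref{theorem:connectedness} with $\psi=\mathrm{id}_X$, for which the relative nefness and bigness of $-(K_X+B_X)$ are trivially satisfied, $\mathrm{LCS}(\bar{X},B^{\bar{X}})$ is connected in a neighborhood of $\pi^{-1}(P)$, so $F$ is linked to $\bar{B}_1$ by a chain of LCS components.

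The main obstacle is propagating the strict inequality $d_F<-1$ along this chain to a component meeting $\bar{B}_1$ directly, since the immediate neighbors of $\bar{B}_1$ in the chain could have coefficient exactly $1$ in $B^{\bar{X}}$, giving coefficient $1$ on $\bar{B}_1$ after restriction, which is still log canonical. The standard way around this is a small perturbation: replace $B_X$ by $B_X+\epsilon D$ for a small $\epsilon>0$ and an effective $\mathbb{Q}$-Cartier divisor $D$ passing through $P$ with $B_1\not\subseteq\mathrm{Supp}(D)$, chosen so that every chain component other than $\bar{B}_1$ acquires coefficient strictly greater than $1$ while $a_1=1$ is preserved and $F$ still satisfies $d_F<-1$. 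Re-running the connectedness argument on the perturbed pair yields an LCS divisor that meets $\bar{B}_1$ with coefficient $>1$, and restricting to $\bar{B}_1$ contradicts log canonicity of the perturbed boundary on $B_1$; since $\epsilon D|_{B_1}$ can be taken arbitrarily small, this also contradicts the original hypothesis on $B_1$.
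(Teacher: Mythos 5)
The paper does not give its own proof here; it simply cites \cite[Theorem~7.5]{Ko97}. Your proposal is therefore an independent attempt, and its overall architecture — the easy direction by restricting the log pull-back identity to $\bar{B}_{1}$, the hard direction (inversion of adjunction) via the Shokurov connectedness theorem — is the standard one and the same one used by Koll\'ar. The forward direction as you sketch it is essentially correct: one verifies that a log resolution of $(X,B_{X})$ adapted to $B_{1}$ restricts to a log resolution of the pair on $B_{1}$, that the restriction is birational because $B_{1}$ is normal (being log terminal), and that transversality makes the coefficients on $\bar{B}_{1}$ equal to the coefficients in $B^{\bar{X}}$; all exceptional divisors of $\pi|_{\bar{B}_{1}}$ arise as components of $E_{i}\cap\bar{B}_{1}$, so nothing is missed.

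The reverse direction, however, has two genuine gaps precisely at the step you yourself flagged as delicate. First, your perturbation claim — that one can choose $D$ through $P$ with $B_{1}\not\subseteq\mathrm{Supp}(D)$ so that \emph{every} chain component other than $\bar{B}_{1}$ acquires coefficient strictly greater than $1$ — fails whenever a chain component has positive-dimensional center not contained in $\mathrm{Supp}(D)$: then $\mathrm{mult}_{E'}(\pi^{*}D)=0$ and the coefficient of $E'$ does not move at all. This happens, for instance, when the chain passes through a strict transform $\bar{B}_{j}$ with $a_{j}=1$, or through an exceptional divisor whose center is a curve through $P$. Second, your closing limiting argument (``since $\epsilon D|_{B_{1}}$ can be taken arbitrarily small, this also contradicts the original hypothesis'') is not valid: for each $\epsilon>0$ the perturbed pair on $B_{1}$ has a discrepancy strictly below $-1$, but as $\epsilon\to 0$ that discrepancy can converge to exactly $-1$, which is still log canonical. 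Being non--lc is an open condition in the boundary, and ``non--lc for every positive $\epsilon$'' does not pass to the limit. The standard repairs (Shokurov, Koll\'ar) run the tie-breaking in the opposite direction — one \emph{decreases} the borderline coefficients, for instance by replacing $B_{X}$ with $B_{1}+(1-t)\sum_{j\geqslant 2}a_{j}B_{j}+tH$ for an auxiliary ample $H$ and small $t$, so that after perturbation the only coefficient-$\geqslant 1$ divisors besides $\bar{B}_{1}$ are genuinely non-lc, which forces the connectedness chain to link $\bar{B}_{1}$ directly to a divisor with $b>1$ — and this careful bookkeeping is what your proposal is missing.
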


\begin{proof}
See \cite[Theorem~7.5]{Ko97}.
\end{proof}

The simplest application of Theorem~\ref{theorem:adjunction} is
already a~non-obvious result.

\begin{lemma}
\label{lemma:adjunction} Suppose that $\mathrm{dim}(X)=2$ and
$a_{1}\leqslant 1$. Then
$$
\Bigg(\sum_{i=2}^{r}a_{i}B_{i}\Bigg)\cdot B_{1}>1
$$
whenever $(X,B_{X})$ is not log canonical in a~point $P\in B_{1}$
such that $P\not\in\mathrm{Sing}(X)\cup\mathrm{Sing}(B_{1})$.
\end{lemma}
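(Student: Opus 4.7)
The strategy is to reduce the claim to a one-dimensional multiplicity computation via adjunction along $B_1$. Using Remark~\ref{remark:Q-divisor}, we first absorb every mobile linear system that may appear in $B_X$ into its divisorial description by averaging sufficiently many general members, so that $B_X$ may be treated as the effective $\mathbb{Q}$-divisor $\sum_{i=1}^{r} a_{i} B_{i}$ on the surface $X$.

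Next, since $a_{1}\leqslant 1$, the pair
$$
\Bigl(X,\ B_{1}+\sum_{i=2}^{r}a_{i}B_{i}\Bigr)
$$
is also not log canonical at $P$, because increasing the coefficient of a prime divisor can only worsen the singularities. The hypothesis $P\not\in\mathrm{Sing}(X)\cup\mathrm{Sing}(B_{1})$ means that, in a~sufficiently small analytic neighborhood of $P$, the~surface $X$ is smooth and $B_{1}$ is a~smooth Cartier divisor; in particular $B_{1}$ has log terminal singularities near $P$. After shrinking $X$ around $P$ if needed, Theorem~\ref{theorem:adjunction} applies with the~new coefficient $a_{1}=1$, and it yields that the restricted pair
$$
\Bigl(B_{1},\ \sum_{i=2}^{r}a_{i}B_{i}\Big\vert_{B_{1}}\Bigr)
$$
is not log canonical at $P$.

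Finally, I will translate this failure of log canonicity on the~smooth curve $B_{1}$ into the~desired intersection inequality. A~pair $(B_{1},\Theta)$ on a~curve that is smooth at a~point $P$ fails to be log canonical at $P$ precisely when $\mathrm{mult}_{P}(\Theta)>1$. Since $B_{1}$ is not a~component of any $B_{i}$ with $i\geqslant 2$, the~curves $B_{i}$ meet $B_{1}$ properly, and the~local multiplicity of $B_{i}|_{B_{1}}$ at $P$ equals the~local intersection number $(B_{i}\cdot B_{1})_{P}$. Therefore
$$
\sum_{i=2}^{r}a_{i}\bigl(B_{i}\cdot B_{1}\bigr)_{P}>1,
$$
and since every $a_{i}\geqslant 0$ and local intersection numbers at $P$ are bounded above by the~corresponding global intersection numbers $B_{i}\cdot B_{1}$, the~claimed inequality $\bigl(\sum_{i=2}^{r}a_{i}B_{i}\bigr)\cdot B_{1}>1$ follows at once.

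\textbf{Main obstacle.} The only non-routine point is the~legitimacy of invoking Theorem~\ref{theorem:adjunction} after enforcing $a_{1}=1$: one must know that $B_{1}$ is Cartier with log terminal singularities in a~neighborhood of $P$. This is automatic from $P\not\in\mathrm{Sing}(X)\cup\mathrm{Sing}(B_{1})$ by shrinking $X$ to such a~neighborhood, so once adjunction is set up the remainder is a~mechanical translation of non-log-canonicity on a~smooth curve into the~multiplicity bound.
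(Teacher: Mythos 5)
Your proof is correct and follows essentially the same route as the paper: increase $a_1$ to $1$ (allowed since $a_1\leqslant 1$ and $B_1$ is effective), invoke Theorem~\ref{theorem:adjunction} locally near the smooth point $P$ to conclude the restricted pair on $B_1$ fails to be log canonical at $P$, translate this into $\mathrm{mult}_P\bigl(\sum_{i\geqslant 2}a_i B_i\big\vert_{B_1}\bigr)>1$, and bound the local multiplicity by the global intersection number. Your write-up is more explicit about the localization step and the effectivity needed for the last inequality, but it is the same argument.
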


\begin{proof}
Suppose that  $(X,B_{X})$ is not log canonical in~a point $P\in
B_{1}$. By Theorem~\ref{theorem:adjunction}, we have
$$
\Bigg(\sum_{i=2}^{r}a_{i}B_{i}\Bigg)\cdot B_{1}\geqslant
\mathrm{mult}_{P}\Bigg(\sum_{i=2}^{r}a_{i}B_{i}\Big\vert_{B_{1}}\Bigg)>1
$$
if $P\not\in\mathrm{Sing}(X)\cup\mathrm{Sing}(B_{1})$, because
$(X, B_{1}+\sum_{i=2}^{r}a_{i}B_{i})$ is not log canonical at
the~point $P$.
\end{proof}

The assertion of Theorem~\ref{theorem:adjunction} can be slightly
refined.

\begin{definition}
\label{definition:plt} The log pair $(X, B_{X})$ has purely log
terminal singularities if
\begin{itemize}
\item the~inequality $a_{i}\leqslant 1$ holds for every $i\in\{1,\ldots,r\}$,%
\item the~inequality $d_{j}>-1$ for every $j\in\{1,\ldots,m\}$.%
\end{itemize}
\end{definition}

\begin{theorem}
\label{theorem:adjunction-plt} Suppose that $B_{X}$ is effective,
$a_{1}=1$, and $B_{1}$ is a~Cartier divisor. Then
\begin{itemize}
\item the~variety $B_{1}$ has log terminal singularities if $(X, B_{X})$ is purely log terminal,%
\item the~following assertions are equivalent:
\begin{itemize}
\item the~log pair $(X, B_{X})$ is purely log terminal in a~neighborhood of the~divisor $B_{1}$,%
\item the~variety $B_{1}$ has rational singularities, and the~log
pair
$$
\Bigg(B_{1},\ \sum_{i=2}^{r}a_{i}B_{i}\Big\vert_{B_{1}}\Bigg)
$$
is log terminal.%
\end{itemize}
\end{itemize}
\end{theorem}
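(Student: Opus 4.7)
The plan is to mimic the proof of Theorem~\ref{theorem:adjunction}: take a suitable log resolution of $(X,B_{X})$, exploit the fact that $B_{1}$ is Cartier so that the different along $B_{1}$ vanishes, and invoke the connectedness theorem (Theorem~\ref{theorem:connectedness}) for the inversion-of-adjunction direction.

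For the first bullet, I will fix a log resolution $\pi\colon\bar{X}\to X$ for which the strict transform $\bar{B}_{1}$ is smooth and meets the rest of the log pull-back transversally. Because $B_{1}$ is Cartier, standard adjunction gives $K_{\bar{B}_{1}}=(K_{\bar{X}}+\bar{B}_{1})|_{\bar{B}_{1}}$, and $\pi|_{\bar{B}_{1}}\colon\bar{B}_{1}\to B_{1}$ is a resolution. Restricting the identity
$$
K_{\bar{X}}+B^{\bar{X}}\equiv\pi^{*}\Big(K_{X}+B_{X}\Big)
$$
to $\bar{B}_{1}$ realises $\pi|_{\bar{B}_{1}}$ as a log resolution of the pair $\bigl(B_{1},\sum_{i\geqslant 2}a_{i}B_{i}|_{B_{1}}+\sum_{j}c_{j}\mathcal{M}_{j}|_{B_{1}}\bigr)$ whose exceptional discrepancies and boundary coefficients are precisely the restrictions of those of $(X,B_{X})$. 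The purely log terminal hypothesis then forces the restricted pair to be Kawamata log terminal, and since its boundary is effective the same holds for $(B_{1},0)$; hence $B_{1}$ has log terminal singularities.

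For the equivalence, the implication ($\Rightarrow$) follows from the first bullet together with the classical fact that log terminal singularities are rational, together with the same discrepancy computation (which in fact yields log terminality of the restricted pair, not only of $B_{1}$ itself). The converse is the inversion of adjunction and is the main obstacle. I plan to argue by contradiction: assume that $B_{1}$ has rational singularities and that the restricted pair is log terminal, but that $(X,B_{X})$ fails to be purely log terminal in every neighbourhood of $B_{1}$. Then on some log resolution either a coefficient $a_{i}=1$ occurs with $i\geqslant 2$ and $B_{i}\cap B_{1}\neq\varnothing$, or an exceptional discrepancy $d_{j}\leqslant -1$ occurs with $\pi(E_{j})\cap B_{1}\neq\varnothing$, so there is a log canonical centre $W\neq B_{1}$ of $(X,B_{X})$ with $W\cap B_{1}\neq\varnothing$.

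Picking $p\in W\cap B_{1}$ and applying Theorem~\ref{theorem:connectedness} to $\pi$ near the fibre over $p$ (one must check that $-(K_{\bar{X}}+B^{\bar{X}})\equiv 0$ over $X$ is nef and big in the relative sense required, which is automatic since $\pi$ is birational), the set $\mathrm{LCS}(\bar{X},B^{\bar{X}})$ is connected in a neighbourhood of $\pi^{-1}(p)$. The component $\bar{B}_{1}$ and the component arising from $W$ must therefore meet on $\bar{X}$, and their intersection lies on $\bar{B}_{1}$; after restriction, this produces a log canonical centre of the restricted pair on $B_{1}$ that is a proper subvariety of $B_{1}$, contradicting the assumed log terminality of the restriction. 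The delicate point is the verification that the intersection on $\bar{B}_{1}$ truly manifests as a non-Kawamata-log-terminal centre downstairs; everything else is routine bookkeeping, and the whole argument follows \cite[Theorem~7.5]{Ko97}.
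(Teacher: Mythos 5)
The paper's ``proof'' of this theorem is a one-line citation to \cite[Theorem~7.5]{Ko97}, so any worked argument you supply is by definition a different route from the paper; your proposal is a sketch of the standard Shokurov inversion-of-adjunction argument via the connectedness theorem, which is indeed the content of the cited result. The outline is recognisably correct, but it has two genuine gaps.

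First, normality of $B_{1}$. In your forward direction you take $\pi|_{\bar{B}_{1}}\colon\bar{B}_{1}\to B_{1}$ to be a resolution and push the discrepancy computation from $\bar{B}_{1}$ down to $B_{1}$. This requires $B_{1}$ to be normal, and normality of $B_{1}$ is not part of the hypotheses in the plt direction; it is a nontrivial conclusion, usually obtained by observing that $\bar{B}_{1}$ is the whole of $\mathrm{LCS}(\bar{X},B^{\bar{X}})$ near $B_{1}$ when $(X,B_{X})$ is plt, so the connectedness theorem gives that the fibres of $\pi|_{\bar{B}_{1}}$ are connected, and then $\pi_{*}\mathcal{O}_{\bar{B}_{1}}=\mathcal{O}_{B_{1}}$ forces normality. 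Without this step, the factorisation $\bar{B}_{1}\to B_{1}^{\nu}\to B_{1}$ is not ruled out, and your discrepancy comparison only says something about the normalization $B_{1}^{\nu}$, not $B_{1}$ itself. (Note, too, that the first bullet of the theorem asserts $B_{1}$ is log terminal, which in particular asserts normality, so you cannot assume it.)

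Second, the ``delicate point'' you flag in the reverse direction is not routine bookkeeping; it is the crux. You need to show that when $\bar{B}_{1}$ and some other stratum of $\mathrm{LCS}(\bar{X},B^{\bar{X}})$ meet, the restriction to $\bar{B}_{1}$ of the corresponding boundary/discrepancy datum produces a non-klt centre of the pair $\bigl(B_{1},\sum_{i\geqslant 2}a_{i}B_{i}|_{B_{1}}\bigr)$. This requires checking that the restricted coefficients really are computed by the restriction of $B^{\bar{X}}-\bar{B}_{1}$ (which relies on the SNC hypothesis and the vanishing of the different, itself hinging again on $B_{1}$ being Cartier and normal), and that the exceptional prime divisors of $\pi|_{\bar{B}_{1}}$ arise as intersections $E_{j}\cap\bar{B}_{1}$ with the inherited discrepancy $d_{j}$. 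Until this is carried out, the contradiction you obtain is only heuristic. These are exactly the points the cited theorem of Koll\'ar takes care of, which is why the paper defers to it.
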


\begin{proof}
See \cite[Theorem~7.5]{Ko97}.
\end{proof}

Suppose now that the~boundary $B_{X}$ is effective and mobile.
Thus, we have
$$
B_{X}=\sum_{i=1}^{l}c_{i}\mathcal{M}_{i},
$$
where $c_{i}\in\mathbb{Q}$ and $c_{i}\geqslant 0$, and
$\mathcal{M}_{i}$ is a~linear system on $X$ that has no fixed
components.

\begin{definition}
\label{definition:bir-equivalent-boundaries} We say that a~log
pair $(Y,B_{Y})$ is birationally equivalent to  $(X,B_{X})$ and if
\begin{itemize}
\item the~boundary $B_{Y}$ is effective and mobile,%
\item there is a~birational map $\xi\colon X\dasharrow Y$ such
that
$$
B_{Y}=\sum_{i=1}^{l}c_{i}\xi\big(\mathcal{M}_{i}\big),
$$
where $\xi(\mathcal{M}_{i})$ is a~proper transform of the~linear
system $\mathcal{M}_{i}$ on the~variety $Y$.
\end{itemize}
\end{definition}

Thus, the~log pairs $(\bar{X}, B_{\bar{X}})$ and $(X,B_{X})$ are
birationally equivalent.

\begin{definition}
\label{definition:Q-effective} Let $D$ be a~Weil
$\mathbb{Q}$-divisor on $X$. Then $D$ is $\mathbb{Q}$-effective if
$$
\exists\ n\in\mathbb{N}\ \text{such that}\ \big|nmD\big|\ne\varnothing,%
$$
where $m\in\mathbb{N}$ such that $mD$ is an integral Weil divisor.
\end{definition}

\begin{definition}
\label{definition:Kodaira-dimension} The Kodaira dimension of
the~log pair $(X, B_{X})$ is the~number
$$
\kappa\Big(X, B_{X}\Big)=\left\{\aligned
&\mathrm{sup}_{n\in\mathbb{N}}\Bigg(\mathrm{dim}\Big(\phi_{|nm(K_{\bar{X}}+B_{\bar{X}})|}\big(X\big)\Big)\Bigg)\ \text{if}\ K_{\bar{X}}+B_{\bar{X}}\ \text{is $\mathbb{Q}$-effective},\\
&-\infty\ \text{if}\ K_{\bar{X}}+B_{\bar{X}}\ \text{is not $\mathbb{Q}$-effective},\\
\endaligned
\right.
$$
where $m\in\mathbb{N}$ such that $m(K_{\bar{X}}+B_{\bar{X}})$ is
a~ Cartier divisor.
\end{definition}

The number $\kappa(X,B_{X})$ is independent on the~choice of $\pi$
(see \cite[Lemma~1.3.6]{Ch05umn}).

\begin{lemma}
\label{lemma:Kodaira-dimension-is-OK} Let $(Y,B_{Y})$ be a~log
pair that is birationally equivalent to $(X,B_{X})$. Then
$$
\kappa\Big(X, B_{X}\Big)=\kappa\Big(Y,B_{Y}\Big).
$$
\end{lemma}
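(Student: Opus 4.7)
The plan is to reduce the comparison of $\kappa(X,B_X)$ and $\kappa(Y,B_Y)$ to a single Iitaka-dimension computation on a common smooth model dominating both $X$ and $Y$. By taking a resolution of the graph of the birational map $\xi\colon X\dasharrow Y$ and then performing further blow-ups, I would produce a smooth variety $W$ together with birational morphisms $p\colon W\to X$ and $q\colon W\to Y$ such that $q=\xi\circ p$ as rational maps, each proper transform $\widetilde{\mathcal{M}}_i$ of the mobile system $\mathcal{M}_i$ on $W$ is base point free, and the union of all the $p$-exceptional and $q$-exceptional divisors has simple normal crossings. Such a $W$ is then admissible as the resolution $\bar X$ in Definition~\ref{definition:Kodaira-dimension} applied to the pair $(X,B_X)$, and simultaneously as the resolution $\bar Y$ in the same definition applied to $(Y,B_Y)$.

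The key observation is that on $W$ the proper transform of $\mathcal{M}_i$ via $p$ coincides with the proper transform of $\xi(\mathcal{M}_i)$ via $q$. Indeed, a linear system without fixed components is determined by the strict transforms of its general members, and since $q=\xi\circ p$ as rational maps, a general member of $\mathcal{M}_i$ pulled back to $W$ along $p$ has the same strict transform as the corresponding general member of $\xi(\mathcal{M}_i)$ pulled back along $q$. Writing $\widetilde{\mathcal{M}}_i$ for this common proper transform and setting
$$
B_W=\sum_{i=1}^{l}c_i\widetilde{\mathcal{M}}_i,
$$
we obtain a single formal $\mathbb{Q}$-divisor on $W$ (in the sense of Remark~\ref{remark:Q-divisor}) that plays the role of $B_{\bar X}$ when we view $W$ as an admissible resolution of $(X,B_X)$ via $p$, and simultaneously the role of $B_{\bar Y}$ when we view $W$ as an admissible resolution of $(Y,B_Y)$ via $q$.

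Applying the cited independence of the Kodaira dimension on the choice of admissible resolution (\cite[Lemma~1.3.6]{Ch05umn}) to both pairs, I would conclude that $\kappa(X,B_X)$ equals the Iitaka dimension of $K_W+B_W$ computed on $W$, and that exactly the same equality holds for $\kappa(Y,B_Y)$. The desired identity $\kappa(X,B_X)=\kappa(Y,B_Y)$ then follows at once. The main obstacle, and the only real technical content, is producing a single $W$ that satisfies the admissibility conditions for both log pairs simultaneously; this is handled by beginning with any smooth resolution of the graph of $\xi$ and performing further blow-ups to arrange normal crossings and base point freeness on both sides, none of which disturbs the identification of the two families of proper transforms.
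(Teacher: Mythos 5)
Your proof is correct. The paper's own ``proof'' is simply a citation to \cite[Lemma~1.3.6]{Ch05umn}, so there is no in-text argument to compare against; what you supply is a legitimate unwinding of that reference. The structure of your argument is the natural one: resolve the graph of $\xi$ to a smooth $W$ dominating both $X$ and $Y$ via $p$ and $q$, arrange base point freeness and simple normal crossings by further blow-ups, observe that because the boundary is mobile (only linear systems, no fixed divisorial components) the strict transforms of $\mathcal{M}_i$ along $p$ and of $\xi(\mathcal{M}_i)$ along $q$ coincide on $W$, so $K_W+B_W$ is one and the same object for both computations, and finally invoke the independence-of-resolution statement (which the paper attributes to the same reference) to identify $\kappa(X,B_X)$ and $\kappa(Y,B_Y)$ with the Iitaka dimension of $K_W+B_W$. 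The only point worth emphasizing is the one you implicitly use: the hypothesis that $B_X$ is effective and mobile (set just before Definition~\ref{definition:bir-equivalent-boundaries}) is essential for the strict transforms under $p$ and under $q\circ\xi^{-1}$ to agree; with a divisorial piece in the boundary the two sides need not match, because $\xi$ could contract or create components. Your argument correctly stays within that hypothesis.
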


\begin{proof}
See \cite[Lemma~1.3.6]{Ch05umn}.
\end{proof}

If the~log pair $(X,B_{X})$ is canonical, then it follows from
\cite[Lemma~1.3.6]{Ch05umn} that
$$
\kappa\Big(X, B_{X}\Big)=\left\{\aligned
&\mathrm{sup}_{n\in\mathbb{N}}\Bigg(\mathrm{dim}\Big(\phi_{|nm(K_{X}+B_{X})|}\big(X\big)\Big)\Bigg)\  \text{if}\ K_{X}+B_{X}\ \text{is $\mathbb{Q}$-effective},\\
&-\infty\  \text{if}\ K_{X}+B_{X}\ \text{is $\mathbb{Q}$-effective},\\
\endaligned
\right.
$$
where $m\in\mathbb{N}$ such that $m(K_{X}+B_{X})$ is a~Cartier
divisor.

\begin{corollary}
\label{corollary:Kodaira-dimension} If $(X,B_{X})$ is canonical
and $K_{X}+B_{X}$ is $\mathbb{Q}$-effective, then
$\kappa(X,B_{X})\geqslant 0$.
\end{corollary}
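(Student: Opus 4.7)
The plan is to unpack the definition of $\kappa(X,B_X)$, using the simplification available in the canonical case that is displayed right before the corollary. Since $(X,B_X)$ is canonical and $K_X+B_X$ is $\mathbb{Q}$-effective, that formula gives
$$
\kappa\big(X,B_X\big)=\sup_{n\in\mathbb{N}}\dim\Big(\phi_{|nm(K_X+B_X)|}\big(X\big)\Big),
$$
where $m\in\mathbb{N}$ is chosen so that $m(K_X+B_X)$ is a Cartier divisor; this identity ultimately rests on Lemma~\ref{lemma:Kodaira-dimension-is-OK} applied to the resolution $\pi\colon\bar{X}\to X$, which reduces the computation on $\bar{X}$ to one on $X$ precisely because the canonical hypothesis guarantees that the discrepancy contributions from $\pi$ are non-negative and hence do not destroy sections when one descends from $\bar{X}$ to $X$.

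Now by Definition~\ref{definition:Q-effective}, the $\mathbb{Q}$-effectivity of $K_X+B_X$ yields some $n_0\in\mathbb{N}$ with $|n_0 m(K_X+B_X)|\ne\varnothing$. For such an $n_0$ the rational map $\phi_{|n_0 m(K_X+B_X)|}$ is defined on a non-empty open subset of $X$, so its image is a non-empty quasi-projective variety and therefore has dimension at least $0$. Taking the supremum over $n$ in the displayed formula gives $\kappa(X,B_X)\geqslant 0$, as required. There is no substantive obstacle: the only non-trivial ingredient is the reduction of the Kodaira dimension from $\bar{X}$ to $X$ in the canonical case, which is already recorded in the excerpt immediately before the corollary, and everything else is a direct consequence of Definition~\ref{definition:Kodaira-dimension} and Definition~\ref{definition:Q-effective}.
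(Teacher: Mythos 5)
Your argument is correct and spells out exactly what the paper leaves implicit: the corollary is an immediate consequence of the displayed formula preceding it, which (via \cite[Lemma~1.3.6]{Ch05umn}) replaces $\bar{X}$ by $X$ in Definition~\ref{definition:Kodaira-dimension} in the canonical case, and then Definition~\ref{definition:Q-effective} supplies $n_0$ with $|n_0 m(K_X+B_X)|\ne\varnothing$, making the corresponding map have image of dimension at least~$0$. This is the intended reading, so no discrepancy with the paper.
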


It follows from Definition~\ref{definition:Kodaira-dimension} that
$$
\kappa\Bigg(X,\ \sum_{i=1}^{l}c_{i}^{\prime}\mathcal{M}_{i}\Bigg)\geqslant\kappa\Big(X, B_{X}\Big)=\kappa\Bigg(X,\ \sum_{i=1}^{l}c_{i}\mathcal{M}_{i}\Bigg)%
$$
in the~case when $c_{i}^{\prime}\geqslant c_{i}$ for every
$i\in\{1,\ldots,l\}$

\begin{definition}
\label{definition:canonical-model} The log pair $(X, B_{X})$ is a~
canonical model if
\begin{itemize}
\item the~divisor $K_{X}+B_{X}$ is ample,%
\item the~log pair $(X, B_{X})$ has canonical singularities.
\end{itemize}
\end{definition}

It follows from Definition~\ref{definition:Kodaira-dimension} that
$\kappa(X,B_{X})=\mathrm{dim}(X)$ if $(X,B_{X})$ is canonical
model.

\begin{definition}
\label{definition:canonical-model-birational} A log pair $(Y,
B_{Y})$ is a~canonical model of the~log pair $(X, B_{X})$ if
\begin{itemize}
\item the~log pair $(Y, B_{Y})$ is a~canonical model,%
\item the~log pairs $(Y, B_{Y})$ and $(X, B_{X})$ are birationally
equivalent.
\end{itemize}
\end{definition}

The log pair $(X, B_{X})$ does not have a~canonical model if
$\kappa(X,B_{X})<\mathrm{dim}(X)$.

\begin{theorem}
\label{theorem:canonical-model-is-unique} A canonical model is
unique whenever it exists.
\end{theorem}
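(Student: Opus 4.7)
The plan is to show that any two canonical models of $(X,B_{X})$ have canonically isomorphic graded canonical rings, and then to recover each of them as $\mathrm{Proj}$ of that ring. Let $(Y_{1},B_{Y_{1}})$ and $(Y_{2},B_{Y_{2}})$ be two canonical models of $(X,B_{X})$. By Definition~\ref{definition:canonical-model-birational} they are birationally equivalent to $(X,B_{X})$, and hence to each other; let $\phi\colon Y_{1}\dasharrow Y_{2}$ denote the birational map. By Definition~\ref{definition:bir-equivalent-boundaries} each $B_{Y_{i}}$ is mobile and equals $\sum_{j}c_{j}\mathcal{M}_{j}^{(i)}$, where $\mathcal{M}_{j}^{(i)}$ is the proper transform of $\mathcal{M}_{j}$ on $Y_{i}$. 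I would then pick a common log resolution carrying morphisms $p_{i}\colon W\to Y_{i}$ that factors $\phi$, and invoke Remark~\ref{remark:Q-divisor} to replace each linear system $\mathcal{M}_{j}$ by the average of sufficiently many general members. After this replacement the boundaries $B_{X}$, $B_{Y_{1}}$, $B_{Y_{2}}$ become genuine $\mathbb{Q}$-divisors, and the proper transform $B_{W}$ on $W$ is the same object regardless of whether one factors through $Y_{1}$ or $Y_{2}$.

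The canonical condition on $(Y_{i},B_{Y_{i}})$ forces every exceptional discrepancy to be non-negative, giving
\[
K_{W}+B_{W}\equiv p_{i}^{*}\bigl(K_{Y_{i}}+B_{Y_{i}}\bigr)+F_{i},
\]
where $F_{i}\geqslant 0$ is $p_{i}$-exceptional. I would next fix $m\in\mathbb{N}$ large enough that $m(K_{Y_{i}}+B_{Y_{i}})$ is Cartier for both $i$ and $mF_{i}$ is integral. For every $n\geqslant 1$ the divisor $nmF_{i}$ is then an effective $p_{i}$-exceptional integral divisor, so by the normality of $Y_{i}$ we have $p_{i\ast}\mathcal{O}_{W}(nmF_{i})=\mathcal{O}_{Y_{i}}$. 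The projection formula now yields
\[
p_{i\ast}\mathcal{O}_{W}\bigl(nm(K_{W}+B_{W})\bigr)=\mathcal{O}_{Y_{i}}\bigl(nm(K_{Y_{i}}+B_{Y_{i}})\bigr),
\]
and therefore canonical identifications $H^{0}(Y_{i},nm(K_{Y_{i}}+B_{Y_{i}}))=H^{0}(W,nm(K_{W}+B_{W}))$ for $i=1,2$, compatible with multiplication because both arise from pullback of rational functions along $p_{i}$.

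Taken together, the graded canonical rings $R_{i}=\bigoplus_{n\geqslant 0}H^{0}(Y_{i},nm(K_{Y_{i}}+B_{Y_{i}}))$ are identified with a common graded ring. Ampleness of $K_{Y_{i}}+B_{Y_{i}}$ gives $Y_{i}\cong\mathrm{Proj}(R_{i})$, and the induced isomorphism $Y_{1}\cong Y_{2}$ agrees with $\phi$ on a dense open and hence identifies $B_{Y_{1}}$ with $B_{Y_{2}}$ as log pairs. I expect the main technical obstacle to be the careful handling of the mobile boundary: one must justify that the proper transforms of the $\mathcal{M}_{j}$ on $W$ really coincide from both sides, check that the averaging of Remark~\ref{remark:Q-divisor} does not spoil canonicity or ampleness, and verify that the identification of section spaces is multiplicative, so that the resulting biregular isomorphism is truly one of log pairs rather than merely of underlying varieties.
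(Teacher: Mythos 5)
Your argument is correct and is the standard proof of uniqueness of canonical models: pass to a common resolution, use canonicity to see that the discrepancy divisor over each candidate model is effective and exceptional, conclude via the projection formula that both models have the same graded section ring, and recover each as $\mathrm{Proj}$ of that ring using ampleness of $K_{Y_i}+B_{Y_i}$. The paper merely cites \cite[Theorem~1.3.20]{Ch05umn}, which carries out exactly this argument, so there is no genuine divergence. One small clarification worth making explicit in your write-up: the averaging of Remark~\ref{remark:Q-divisor} creates boundary components with positive coefficient, so the averaged pair is not literally canonical in the sense of Definition~\ref{definition:log-canonical-singularities}; however, the only thing you actually use is that the discrepancy divisor $F_{i}$ is effective, and this is computed from the discrepancies of the original mobile pair (the general members have the same multiplicities along the centers of blow-ups as the linear systems themselves, so $F_{i}$ is unchanged by averaging). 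With that remark your deferred technical checks all go through.
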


\begin{proof}
See \cite[Theorem~1.3.20]{Ch05umn}.
\end{proof}

It follows from \cite{BCHM06} that the~following conditions are
equivalent:
\begin{itemize}
\item the~log pair $(X, B_{X})$ has canonical model,%
\item the~equality $\kappa(X,B_{X})=\mathrm{dim}(X)$ holds.
\end{itemize}

\section{Inequality I}
\label{section:inequality-2}

Let $X$ be a~surface, let $O$ be a~smooth point of $X$, let
$\Delta_{1}$ and $\Delta_{2}$ be curves on $X$~such~that
$$
O\in\Delta_{1}\cap\Delta_{2},
$$
both $\Delta_{1}$ and $\Delta_{2}$ are irreducible and reduced,
both $\Delta_{1}$ and $\Delta_{2}$ are smooth at $O$, and
$\Delta_{1}$ intersects the~curve $\Delta_{2}$ transversally at
$O$, let $D$ be an effective $\mathbb{Q}$-divisor on
the~surface~$X$~such~that
$$
\Delta_{1}\not\subseteq\mathrm{Supp}\big(D\big)\not\supseteq\Delta_{2},
$$
and let $a_{1}$ and $a_{2}$ be non-negative rational~numbers.
Suppose that the~log pair
$$
\Big(X,\ D+a_{1}\Delta_{1}+a_{2}\Delta_{2}\Big)
$$
is not log canonical at $O$. Let $A,B,M,N,\alpha,\beta$ be
non-negative rational numbers such that
\begin{itemize}
\item the~inequality $\alpha a_{1}+\beta a_{2}\leqslant 1$ holds,%
\item the~inequalities $A(B-1)\geqslant 1\geqslant\mathrm{max}(M,N)$ hold,%
\item the~inequalities $\alpha(A+M-1)\geqslant A^{2}(B+D-1)\beta$ and $\alpha(1-M)+A\beta\geqslant A$ holds,%
\item either the~inequality $2M+AN\leqslant 2$ holds or
$$
\alpha\big(B+1-MB-N\big)+\beta\big(A+1-AN-M\big)\geqslant AB-1.
$$%
\end{itemize}

\begin{lemma}
\label{lemma:2-0} The inequalities $A+M\geqslant 1$ and $B>1$
holds. The inequality
$$
\alpha\big(B+1-MB-N\big)+\beta\big(A+1-AN-M\big)\geqslant AB-1
$$
holds. The inequality $\beta(1-N)+B\alpha\geqslant B$ holds. The
inequalities
$$
\frac{\alpha(2-M)}{A+1}+\frac{\beta(2-N)}{B+1}\geqslant 1%
$$
and $\alpha(2-M)B+\beta(1-N)(A+1)\geqslant B(A+1)$ hold.
\end{lemma}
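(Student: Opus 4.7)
The plan is to deduce every assertion from nonnegative combinations of the two key numerical hypotheses, which I will refer to as
$$(\mathrm{i})\ \alpha(A+M-1)\geqslant A^{2}(B+N-1)\beta\qquad\text{and}\qquad(\mathrm{ii})\ \alpha(1-M)+A\beta\geqslant A,$$
together with the structural bound $A(B-1)\geqslant 1$, which forces $A>0$ and hence $B\geqslant 1+1/A>1$ and $AB\geqslant A+1>1$. For the first assertion, one needs $A+M\geqslant 1$. I would argue $\alpha>0$ first: if $\alpha=0$, $(\mathrm{i})$ forces $\beta=0$ (since $A^{2}(B+N-1)>0$), and then $(\mathrm{ii})$ collapses to $0\geqslant A$, which is impossible. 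Since $\alpha>0$ and the right-hand side of $(\mathrm{i})$ is nonnegative, $A+M-1\geqslant 0$.

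For the third assertion, I would form the combination $\tfrac{B}{A}\cdot(\mathrm{ii})+\tfrac{1}{A^{2}}\cdot(\mathrm{i})$, which produces
$$\Bigl(\tfrac{B(1-M)}{A}+\tfrac{A+M-1}{A^{2}}\Bigr)\alpha+(1-N)\beta\geqslant B.$$
The coefficient of $\alpha$ here is at most $B$ iff $(A+M-1)(AB-1)\geqslant 0$, which holds by what has been established. Since $\alpha\geqslant 0$, replacing that coefficient by the larger value $B$ yields the desired $B\alpha+(1-N)\beta\geqslant B$.

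For the second assertion there is nothing to prove if the disjunctive hypothesis already provides it, so assume $2M+AN\leqslant 2$. Scaling the inequality just derived by $(AB-1)/B>0$, it suffices to verify the coefficient bounds
$$B(A+1-AN-M)\geqslant(1-N)(AB-1)\qquad\text{and}\qquad A^{2}B(B+1-MB-N)\geqslant[AB(1-M)+(A+M-1)](AB-1).$$
The first reduces to $B(1-M)\geqslant N-1$, immediate since $M,N\leqslant 1$ and $B>1$; the second, after expansion and cancellation, reduces to $A^{2}BN\leqslant 2AB(1-M)+(A+M-1)$, which follows from $AN\leqslant 2(1-M)$ and $A+M\geqslant 1$. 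The fourth and fifth assertions are handled by the parallel combinations $\tfrac{(A+1)(B+1)}{A}\cdot(\mathrm{ii})+\tfrac{A+1}{A^{2}}\cdot(\mathrm{i})$ and $\tfrac{B(A+1)}{A}\cdot(\mathrm{ii})+\tfrac{A+1}{A^{2}}\cdot(\mathrm{i})$: in each case the coefficient of $\beta$ collapses exactly to $(2-N)(A+1)$, resp.\ $(1-N)(A+1)$, the right-hand side equals $(A+1)(B+1)$, resp.\ $B(A+1)$, and the remaining coefficient of $\alpha$ is at most $(2-M)(B+1)$, resp.\ $(2-M)B$, with these bounds reducing to $(A+M-1)(AB-1)\geqslant 0$ and $(A+M-1)(AB-A-1)\geqslant 0$ respectively, both clear from $A+M\geqslant 1$ and $A(B-1)\geqslant 1$.

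The main obstacle is the algebraic bookkeeping, not any conceptual difficulty: the correct multipliers are uniquely determined by insisting that the coefficient of $\beta$ collapse to exactly the value demanded by the target inequality, after which the comparison on the coefficient of $\alpha$ reduces in every instance to one of the hypothesized structural bounds.
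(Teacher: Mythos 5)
Your proof is correct, and the route is genuinely different from the paper's. The paper works geometrically: it introduces lines $L_{1}$, $L_{1}'$, $L_{2}$ in the $(\alpha,\beta)$-plane (the boundaries of the target inequality and of hypothesis (ii)), computes their $y=0$ intercepts and pairwise intersection points, and reduces the assertion to a sign condition on those intersection points relative to the boundary of hypothesis (i). You instead take the dual, Farkas-style view: for each target inequality you exhibit explicit nonnegative multipliers $\lambda_{1}\cdot(\mathrm{ii})+\lambda_{2}\cdot(\mathrm{i})$ (e.g.\ $B/A$ and $1/A^{2}$ for the third assertion, $(A+1)(B+1)/A$ and $(A+1)/A^{2}$ for the fourth, $B(A+1)/A$ and $(A+1)/A^{2}$ for the fifth), chosen so that the $\beta$-coefficient lands exactly on target, and then observe that the excess in the $\alpha$-coefficient has the sign of $(A+M-1)(AB-1)$ or $(A+M-1)(AB-A-1)$, both nonnegative. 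The two viewpoints are dual and equivalent in principle, but yours is considerably more transparent: it avoids solving for line intersections and eliminates the auxiliary quantities $\Delta$, $\Delta'$, $(\alpha_{0},\beta_{0})$, $(\alpha_{1},\beta_{1})$ that clutter the paper's proof. Two smaller gains: you argue $\alpha>0$ explicitly before dividing by it to get $A+M\geqslant 1$ (the paper is silent on this), and you give a direct self-contained argument for the fourth assertion $\frac{\alpha(2-M)}{A+1}+\frac{\beta(2-N)}{B+1}\geqslant 1$, whereas the paper's displayed chain for that step only reaches $\geqslant 1/2$ and appears to rely implicitly on deriving it from the already-established fifth assertion via $B+N\geqslant 1$. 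All the algebraic reductions you state -- $(A+M-1)(AB-1)\geqslant 0$, $(A+M-1)(AB-A-1)\geqslant 0$, $B(1-M)\geqslant N-1$, and $A^{2}BN\leqslant 2AB(1-M)+(A+M-1)$ -- check out.
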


\begin{proof}
The inequality $B>1$ follows from the~inequality $A(B-1)\geqslant
1$. Then
$$
\frac{\alpha}{A+1}+\frac{\beta}{B+1}\geqslant\frac{\alpha}{A+1}+\frac{\beta}{2B}\geqslant\frac{1}{2}
$$
because $2B\geqslant B+1$. Similarly, we see that $A+M\geqslant
1$, because
$$
\frac{\alpha(A+M-1)}{A^{2}(B+D-1)}\geqslant\beta\geqslant 0
$$
and $B+D-1\geqslant 0$. The inequality
$\beta(1-N)+B\alpha\geqslant B$ follows from the~inequalities
$$
\alpha+\frac{\beta(1-N)}{B}\geqslant\frac{2-M}{A+1}\alpha+\frac{\beta(1-N)}{B}\geqslant 1,%
$$
because $A+1\geqslant 2-M$.

Let us show that the~inequality
$$
\alpha\big(2-M\big)B+\beta\big(1-N\big)\big(A+1\big)\geqslant B\big(A+1\big)%
$$
holds. Let $L_{1}$ be the~line in $\mathbb{R}^{2}$ given by the~
equation
$$
x\big(2-M\big)B+y\big(1-N\big)(A+1)-B(A+1)=0
$$
and let $L_{2}$ be the~line that is given by the~equation
$$
x\big(1-N\big)+Ay-A=0,
$$
where $(x,y)$ are coordinates on $\mathbb{R}^{2}$. Then $L_{1}$
intersects the~line $y=0$ in a~point
$$
\Bigg(\frac{A+1}{2-M},0\Bigg)
$$
and $L_{2}$ intersects the~line $y=0$ in a~point $(A/(1-M),0)$.
But
$$
\frac{A+1}{2-M}<\frac{A}{1-M},
$$
which implies that $\alpha(2-M)B+\beta(1-N)(A+1)\geqslant B(A+1)$
if and only if
$$
A^{2}\beta_{0}\big(B+N-1\big)\geqslant \alpha_{0}\big(A+M-1\big),
$$
where $(\alpha_{0},\beta_{0})$ is the~intersection point of the~
lines $L_{1}$ and $L_{2}$. But
$$
\big(\alpha_{0},\beta_{0}\big)=\Bigg(\frac{A(A+1)(B+N-1)}{\Delta},\
\frac{B(A-1+M)}{\Delta}\Bigg),
$$
where $\Delta=2AB-ABM-A+AM-1+M+NA-NAM+N-NM$. But
$$
A^{2}\Big(B\big(A-1+M\big)\Big)\big(B+N-1\big)\geqslant\Big(A\big(A+1\big)\big(B+N-1\big)\Big)\big(A+M-1\big),%
$$
because $A(B-1)\geqslant 1$, which implies that
$A^{2}\beta_{0}(B+N-1)\geqslant \alpha_{0}(A+M-1)$.

Finally, let us show that that the~inequality
$$
\alpha\big(B+1-MB-N\big)+\beta\big(A+1-AN-M\big)\geqslant AB-1
$$
holds. Let $L^{\prime}_{1}$ be the~line in $\mathbb{R}^{2}$ given
by the~equation
$$
x\big(B+1-MB-N\big)+y\beta\big(A+1-AN-M\big)-AB+1=0
$$
and let $L_{2}$ be the~line that is given by the~equation
$$
x\big(1-N\big)+Ay-A=0,
$$
where $(x,y)$ are coordinates on $\mathbb{R}^{2}$. Then
$L^{\prime}_{1}$ intersects the~line $y=0$ in a~point
$$
\Bigg(\frac{AB-1}{B+1-MB-N},0\Bigg)
$$
and $L_{2}$ intersects the~line $y=0$ in a~point $(A/(1-M),0)$.
But
$$
\frac{AB-1}{B+1-MB-N}<\frac{A}{1-M},
$$
which implies that $\alpha(B+1-MB-N)+\beta(A+1-AN-M)\geqslant
AB-1$ if and only if
$$
A^{2}\beta_{1}\big(B+N-1\big)\geqslant \alpha_{1}\big(A+M-1\big),
$$
where $(\alpha_{1},\beta_{1})$ is the~intersection point of the~
lines $L^{\prime}_{1}$ and $L_{2}$. Note that
$$
\big(\alpha_{1},\beta_{1}\big)=\Bigg(\frac{A(AB-A-2+NA+M)}{\Delta^{\prime}},\ \frac{A+1-NA-M}{\Delta^{\prime}}\Bigg),%
$$
where $\Delta^{\prime}=AB-1-ABM+AM+2M-NAM-M^2$.

To complete the~proof, we must show that the~inequality
$$
A^{2}\Big(A+1-NA-M\Big)(B+N-1)\geqslant \Big(A(AB-A-2+NA+M)\Big)(A+M-1)%
$$
holds. This inequality is equivalent to the~inequality
$$
\big(2-M\big)\big(A+M-1\big)\geqslant A\big(AN+2M-2)\big(B+N-1\big),%
$$
which is true, because $M\leqslant 1$ and $AN+2M-2\leqslant 0$.
\end{proof}

The purpose of this section is to prove Theorem~\ref{theorem:I}.
Suppose that the~inequalities
$$
\mathrm{mult}_{O}\Big(D\cdot\Delta_{1}\Big)\leqslant M+Aa_{1}-a_{2}\ \text{and}\ \mathrm{mult}_{O}\Big(D\cdot\Delta_{2}\Big)\leqslant N+Ba_{2}-a_{1}%
$$
hold. Let us show that this assumptions lead to a~contradiction.

\begin{lemma}
\label{lemma:2-1} The inequalities $a_{1}>(1-M)/A$ and
$a_{2}>(1-N)/B$ hold.
\end{lemma}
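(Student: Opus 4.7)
The plan is to prove the two asserted inequalities separately by a symmetric argument. I focus on $a_{1}>(1-M)/A$, since the proof of $a_{2}>(1-N)/B$ is identical after exchanging the roles of $\Delta_{1}$ and $\Delta_{2}$ (and the pairs $(M,A)$ with $(N,B)$). I dichotomize on whether $a_{1}\leqslant 1$ or $a_{1}>1$.

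In the first case, $a_{1}\leqslant 1$, I mimic the argument used in the proof of Theorem~\ref{theorem:baby-adjunction}: raising the coefficient of $\Delta_{1}$ from $a_{1}$ to $1$ preserves the failure of log canonicity at $O$, so that Lemma~\ref{lemma:adjunction} applies with $\Delta_{1}$ in the role of $B_{1}$ and yields
$$
\mathrm{mult}_{O}\Bigl((D+a_{2}\Delta_{2})\cdot\Delta_{1}\Bigr)>1.
$$
Because $\Delta_{1}$ and $\Delta_{2}$ meet transversally at the smooth point $O$, this simplifies to $\mathrm{mult}_{O}(D\cdot\Delta_{1})>1-a_{2}$. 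Combining with the standing inequality $\mathrm{mult}_{O}(D\cdot\Delta_{1})\leqslant M+Aa_{1}-a_{2}$ gives $Aa_{1}>1-M$, which is the desired bound (noting $A>0$ since $A(B-1)\geqslant 1$ and $B>1$ by Lemma~\ref{lemma:2-0}).

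In the second case, $a_{1}>1$, the inequality $A+M\geqslant 1$ from Lemma~\ref{lemma:2-0} immediately forces $(1-M)/A\leqslant 1<a_{1}$, and we are done. The symmetric argument for $a_{2}$ uses Lemma~\ref{lemma:adjunction} in the subcase $a_{2}\leqslant 1$ and the estimate $(1-N)/B<1$ (a consequence of $B>1$ and $N\geqslant 0$) in the subcase $a_{2}>1$.

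I do not anticipate any serious obstacle. This is a setup lemma that only uses the very first consequence of Lemma~\ref{lemma:2-0} (namely $A+M\geqslant 1$ and $B>1$) together with baby-adjunction; the more delicate numerical inequalities collected in Lemma~\ref{lemma:2-0} will presumably be needed later in the proof of Theorem~\ref{theorem:I}, not for this preliminary bound on $a_{1}$ and $a_{2}$.
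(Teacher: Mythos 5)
Your proof is correct and takes essentially the same route as the paper's: dichotomize on $a_{1}\leqslant 1$ versus $a_{1}>1$, apply Lemma~\ref{lemma:adjunction} in the first case after raising the coefficient of $\Delta_{1}$ to $1$, and invoke $A+M\geqslant 1$ (plus $A>0$) from Lemma~\ref{lemma:2-0} in the second. The paper states the $a_{1}>1$ case more tersely but relies on the same consequence of Lemma~\ref{lemma:2-0}.
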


\begin{proof}
If $a_{1}>1$, then $a_{1}>(1-M)/A$ by Lemma~\ref{lemma:2-0}.
Suppose that $a_{1}\leqslant 1$. Then the~log pair
$$
\Big(X,\ D+\Delta_{1}+a_{2}\Delta_{2}\Big)
$$
is not log canonical~at~the~point $O$.~Then it follows from
Lemma~\ref{lemma:adjunction} that
$$
M+Aa_{1}-a_{2}\geqslant\mathrm{mult}_{O}\Big(D\cdot\Delta_{1}\Big)>1-a_{2},%
$$
which implies that $a_{1}>(1-M)/A$. Similarly, we see that
$a_{2}>(1-N)/B$.
\end{proof}

\begin{lemma}
\label{lemma:2-2} The inequalities $a_{1}<1$ and $a_{2}<1$ hold.
\end{lemma}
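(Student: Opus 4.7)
The plan is to argue by contradiction: assume $a_{1}\geqslant 1$ and derive a contradiction (the proof of $a_{2}<1$ runs in parallel, obtained by swapping indices and interchanging the roles of Lemma~\ref{lemma:2-0}'s inequality $\beta(1-N)+B\alpha\geqslant B$ with the theorem's hypothesis $\alpha(1-M)+A\beta\geqslant A$).

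The main tool is to combine the standing hypothesis $\alpha a_{1}+\beta a_{2}\leqslant 1$ with the strict lower bound $a_{2}>(1-N)/B$ from Lemma~\ref{lemma:2-1} and with the rearranged form $B(1-\alpha)\leqslant\beta(1-N)$ of Lemma~\ref{lemma:2-0}'s inequality $\beta(1-N)+B\alpha\geqslant B$. From $a_{1}\geqslant 1$ one extracts $\beta a_{2}\leqslant 1-\alpha a_{1}\leqslant 1-\alpha$, and multiplying by $B>0$ yields $\beta B a_{2}\leqslant B(1-\alpha)\leqslant\beta(1-N)$. When $\beta>0$, dividing gives $a_{2}\leqslant(1-N)/B$, directly contradicting Lemma~\ref{lemma:2-1}.

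The main obstacle is the degenerate case $\beta=0$, where the division is illegal and the inequality from Lemma~\ref{lemma:2-0} becomes trivial. Here I would invoke the theorem's hypothesis $\alpha(1-M)+A\beta\geqslant A$, which reduces to $\alpha(1-M)\geqslant A$. Since $A(B-1)\geqslant 1$ forces $A>0$ and therefore $M<1$, we obtain $\alpha\geqslant A/(1-M)$; combining with $A+M\geqslant 1$ from Lemma~\ref{lemma:2-0} yields $\alpha\geqslant 1$. Together with $\alpha a_{1}\leqslant 1$ and $a_{1}\geqslant 1$, these three inequalities squeeze $\alpha=1$, $a_{1}=1$, and $A+M=1$.

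The final step eliminates the remaining boundary case $a_{1}=1$. Since now $a_{1}=1\leqslant 1$, the pair $(X,\ D+\Delta_{1}+a_{2}\Delta_{2})$ is not log canonical at $O$, so Lemma~\ref{lemma:adjunction} gives $\mathrm{mult}_{O}(D\cdot\Delta_{1})>1-a_{2}$; combined with the standing assumption $\mathrm{mult}_{O}(D\cdot\Delta_{1})\leqslant M+A-a_{2}$ this forces $A+M>1$, contradicting $A+M=1$. This closes the contradiction and completes the proof of $a_{1}<1$.
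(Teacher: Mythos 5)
Your proof is correct and uses exactly the same ingredients as the paper's: the lower bounds $a_1>(1-M)/A$, $a_2>(1-N)/B$ from Lemma~\ref{lemma:2-1}, the constraint $\alpha a_1+\beta a_2\leqslant 1$, and the two inequalities $\beta(1-N)+B\alpha\geqslant B$ and $\alpha(1-M)+A\beta\geqslant A$. You organize it as a reductio (assume $a_1\geqslant 1$ and deduce $a_2\leqslant(1-N)/B$), whereas the paper argues directly ($\alpha a_1\leqslant 1-\beta a_2<1-\beta(1-N)/B\leqslant\alpha$, hence $a_1<1$); these are contrapositives of one another. The genuine difference is your explicit handling of $\beta=0$. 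The paper does not flag that case; its chain there gives $0=\beta a_2<1-\alpha(1-M)/A\leqslant 0$, which is a full contradiction rather than literally $a_2<1$ — still acceptable since the whole sequence of lemmas sits inside the global reductio of Theorem~\ref{theorem:I}, but it is left implicit. Your treatment is tidier, though note that once you have forced $A+M=1$ and $a_1=1$, you could close immediately by citing Lemma~\ref{lemma:2-1} (which gives $a_1>(1-M)/A=1$) instead of re-running the adjunction argument.
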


\begin{proof}
The inequalities $a_{1}>(1-M)/A$ and $a_{2}>(1-N)/B$ hold by
Lemma~\ref{lemma:2-1}. But
$$
\alpha a_{1}+\beta a_{2}\leqslant 1,
$$
which implies that $a_{1}\alpha<1-\beta(1-N)/B$ and
$a_{2}\beta<1-\alpha(1-M)/A$. Then
$$
a_{1}<1>a_{2},
$$
because $\beta(1-N)+B\alpha\geqslant B$ by Lemma~\ref{lemma:2-0}
and $\alpha(1-M)+A\beta\geqslant A$ by assumption.
\end{proof}

Put $m_{0}=\mathrm{mult}_{O}(D)$. Then $m_{0}$ is a~positive
rational number.

\begin{lemma}
\label{lemma:2-21} The inequalities $m_{0}\leqslant
M+Aa_{1}-a_{2}$ and $m_{0}\leqslant N+Ba_{2}-a_{1}$ hold.
\end{lemma}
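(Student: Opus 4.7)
The plan is to use the elementary multiplicity inequality for intersections at a smooth point: since $\Delta_1$ is smooth at $O$ and $\Delta_1 \not\subseteq \mathrm{Supp}(D)$, the intersection $D \cdot \Delta_1$ is well-defined in a neighborhood of $O$, and one has the standard bound
\[
\mathrm{mult}_{O}\big(D\cdot\Delta_{1}\big) \;\geqslant\; \mathrm{mult}_{O}(D)\cdot\mathrm{mult}_{O}(\Delta_{1}) \;=\; \mathrm{mult}_{O}(D) \;=\; m_{0},
\]
the last equality because $\Delta_{1}$ is smooth at $O$, and the inequality following from the fact that the local intersection number of an effective divisor with a smooth curve not contained in its support is at least the product of the multiplicities at the point (this just uses that if $f$ is a local equation of $D$ and $g$ a local equation of $\Delta_1$, then $\dim_{\mathbb{C}}\mathcal{O}_{O}/(f,g) \geqslant \mathrm{mult}_{O}(f)$). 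Combined with the standing assumption
\[
\mathrm{mult}_{O}\big(D\cdot\Delta_{1}\big) \;\leqslant\; M + A a_{1} - a_{2}
\]
that we are trying to contradict, this yields $m_{0} \leqslant M + A a_{1} - a_{2}$.

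The same argument applied to $\Delta_{2}$ (using $\Delta_{2}\not\subseteq\mathrm{Supp}(D)$ and smoothness of $\Delta_{2}$ at $O$) gives
\[
m_{0} \;\leqslant\; \mathrm{mult}_{O}\big(D\cdot\Delta_{2}\big) \;\leqslant\; N + B a_{2} - a_{1}.
\]

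There is essentially no obstacle here; the statement is a direct one-line consequence of the two assumed upper bounds on the intersection multiplicities together with the basic inequality $\mathrm{mult}_{O}(D\cdot\Delta_{i})\geqslant \mathrm{mult}_{O}(D)$ for a smooth curve $\Delta_{i}$ transversal in the sense of not being a component of $D$. The real work of Theorem~\ref{theorem:I} comes only in the subsequent lemmas, where one combines this crude bound $m_{0}$ with blow-ups and the numerical hypotheses on $A,B,M,N,\alpha,\beta$ listed in Lemma~\ref{lemma:2-0}.
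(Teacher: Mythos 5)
Your proof is correct and is exactly the argument the paper uses: $m_{0}\leqslant\mathrm{mult}_{O}(D\cdot\Delta_{i})\leqslant$ (the assumed upper bound), for $i=1,2$. The paper states the first inequality without comment; you supply the standard justification via the local intersection ring, which is fine but adds nothing structurally new.
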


\begin{proof}
We have
$$
m_{0}\leqslant \mathrm{mult}_{O}\Big(D\cdot\Delta_{1}\Big)\leqslant M+Aa_{1}-a_{2},%
$$
which implies that $m_{0}\leqslant M+Aa_{1}-a_{2}$. Similarly, we
see that $m_{0}\leqslant N+Ba_{2}-a_{1}$.
\end{proof}

\begin{lemma}
\label{lemma:2-3} The inequality $m_{0}+a_{1}+a_{2}\leqslant 2$
holds.
\end{lemma}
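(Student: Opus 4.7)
The plan is to take a convex combination of the two upper bounds on $m_{0}$ supplied by Lemma~\ref{lemma:2-21}, with the coefficients $\alpha/(A+1)$ and $\beta/(B+1)$ coming from the side constraint $\alpha a_{1}+\beta a_{2}\leqslant 1$, and then recognise that the resulting inequality $m_{0}+a_{1}+a_{2}\leqslant 2$ is exactly equivalent to the inequality
$$
\frac{\alpha(2-M)}{A+1}+\frac{\beta(2-N)}{B+1}\geqslant 1
$$
that was already established in Lemma~\ref{lemma:2-0}. So no new singularity-theoretic input is needed; this lemma is purely an algebraic manipulation of inequalities already in hand.

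In detail, first rewrite the two conclusions of Lemma~\ref{lemma:2-21} in the form
$$
m_{0}+a_{1}+a_{2}\leqslant M+(A+1)a_{1},\qquad m_{0}+a_{1}+a_{2}\leqslant N+(B+1)a_{2}.
$$
Multiply the first by $\alpha/(A+1)$ and the second by $\beta/(B+1)$ (both non-negative since $A,B,\alpha,\beta\geqslant 0$ and $A+1,B+1>0$), and add. The coefficients of $a_{1}$ on the right hand side collapse to $\alpha$ and those of $a_{2}$ to $\beta$, so the hypothesis $\alpha a_{1}+\beta a_{2}\leqslant 1$ gives
$$
\left(\frac{\alpha}{A+1}+\frac{\beta}{B+1}\right)\Big(m_{0}+a_{1}+a_{2}\Big)\leqslant \frac{\alpha M}{A+1}+\frac{\beta N}{B+1}+1.
$$

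Now the desired inequality $m_{0}+a_{1}+a_{2}\leqslant 2$ follows once we verify
$$
\frac{\alpha M}{A+1}+\frac{\beta N}{B+1}+1\leqslant 2\left(\frac{\alpha}{A+1}+\frac{\beta}{B+1}\right),
$$
which rearranges to exactly $\alpha(2-M)/(A+1)+\beta(2-N)/(B+1)\geqslant 1$, the estimate from Lemma~\ref{lemma:2-0}. The only mild subtlety is that $\alpha/(A+1)+\beta/(B+1)$ must be strictly positive so that we can divide through, but this is guaranteed by the same chain of inequalities in Lemma~\ref{lemma:2-0} showing that this sum is at least $1/2$. The main (and essentially only) obstacle is choosing the right weights in the convex combination; once one notices that the weights $\alpha/(A+1)$ and $\beta/(B+1)$ match up with the auxiliary inequality from Lemma~\ref{lemma:2-0}, the proof is immediate.
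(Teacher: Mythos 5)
Your proof is correct and follows exactly the same route as the paper: combine the two bounds $m_{0}+a_{1}+a_{2}\leqslant M+(A+1)a_{1}$ and $m_{0}+a_{1}+a_{2}\leqslant N+(B+1)a_{2}$ with weights $\alpha/(A+1)$ and $\beta/(B+1)$, apply $\alpha a_{1}+\beta a_{2}\leqslant 1$, and invoke the inequality $\alpha(2-M)/(A+1)+\beta(2-N)/(B+1)\geqslant 1$ from Lemma~\ref{lemma:2-0}. The explicit positivity check on $\alpha/(A+1)+\beta/(B+1)$ is a sensible addition that the paper leaves implicit.
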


\begin{proof}
We know that $m_{0}+a_{1}+a_{2}\leqslant M+(A+1)a_{1}$ and
$m_{0}+a_{1}+a_{2}\leqslant N+(B+1)a_{2}$. Then
$$
\big(m_{0}+a_{1}+a_{2}\big)\Bigg(\frac{\alpha}{A+1}+\frac{\beta}{B+1}\Bigg)\leqslant
\alpha a_{1}+\beta a_{2}+\frac{\alpha M}{A+1}+\frac{\beta N}{B+1}\leqslant 1+\frac{\alpha M}{A+1}+\frac{\beta N}{B+1},%
$$
which implies that $m_{0}+a_{1}+a_{2}\leqslant 2$, because
$$
\frac{\alpha(2-M)}{A+1}+\frac{\beta(2-N)}{B+1}\geqslant 1
$$
by Lemma~\ref{lemma:2-0}.
\end{proof}

Let $\pi_1\colon X_{1}\to X$ be the~blow up of the~point $O$, and
let $F_{1}$ be the~$\pi_{1}$-exceptional curve. Then
$$
K_{X_{1}}+D^{1}+a_{1}\Delta^{1}_{1}+a_{2}\Delta^{1}_{2}+\big(m_{0}+a_{1}+a_{2}-1\big)F_{1}\equiv\pi_{1}^{*}\Big(K_{X}+D+a_{1}\Delta_{1}+a_{2}\Delta_{2}\Big),
$$
where $D^{1}$, $\Delta^{1}_{1}$, $\Delta^{1}_{2}$ are proper
transforms of $D$, $\Delta_{1}$, $\Delta_{2}$ on the~surface
$X_{1}$, respectively.~The~pair
$$
\Big(X_{1},\ D^{1}+a_{1}\Delta^{1}_{1}+a_{2}\Delta^{1}_{2}+\big(m_{0}+a_{1}+a_{2}-1\big)F_{1}\Big)%
$$
is not log canonical at some point $O_{1}\in F_{1}$. Note that
$m_{0}+a_{1}+a_{2}-1\geqslant 0$.

\begin{lemma}
\label{lemma:2-4} Either $O_{1}=F_{1}\cap\Delta^{1}_{1}$ or
$O_{1}=F_{1}\cap\Delta^{1}_{2}$.
\end{lemma}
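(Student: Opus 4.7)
The plan is to argue by contradiction: assume $O_{1}\notin\Delta_{1}^{1}\cup\Delta_{2}^{1}$ and derive a contradiction by establishing both $m_{0}>1$ and $m_{0}\leqslant 1$.

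First I would obtain $m_{0}>1$. Away from $\Delta_{1}^{1}$ and $\Delta_{2}^{1}$, the not-log-canonical pair at $O_{1}$ reduces to
$$
\Big(X_{1},\ D^{1}+\big(m_{0}+a_{1}+a_{2}-1\big)F_{1}\Big).
$$
By Lemma~\ref{lemma:2-3} the coefficient of $F_{1}$ here is at most $1$, so the pair $(X_{1}, D^{1}+F_{1})$ (obtained by increasing this coefficient to $1$) remains not log canonical at $O_{1}$. Since $X_{1}$ and $F_{1}$ are both smooth at $O_{1}$, Lemma~\ref{lemma:adjunction} applied to $F_{1}$ yields $\mathrm{mult}_{O_{1}}(D^{1}\cdot F_{1})>1$. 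The projection formula together with $F_{1}^{2}=-1$ gives $D^{1}\cdot F_{1}=m_{0}$, hence $m_{0}>1$.

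Next I would derive $m_{0}\leqslant 1$ from the two bounds of Lemma~\ref{lemma:2-21}, namely $m_{0}\leqslant M+Aa_{1}-a_{2}$ and $m_{0}\leqslant N+Ba_{2}-a_{1}$. The idea is to take a non-negative linear combination of these bounds whose coefficients of $a_{1}$ and $a_{2}$ become exactly $\alpha$ and $\beta$, so that the constraint $\alpha a_{1}+\beta a_{2}\leqslant 1$ absorbs all $a_{i}$ dependence. A direct calculation --- using $AB>1$, which follows from $A(B-1)\geqslant 1$ as noted in Lemma~\ref{lemma:2-0} --- shows that the correct weights are $(\alpha B+\beta)/(AB-1)$ for the first bound and $(\alpha+\beta A)/(AB-1)$ for the second; these are non-negative. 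After multiplying through by $AB-1$, the combined inequality reads $m_{0}\leqslant 1$ exactly when
$$
\alpha\big(B+1-MB-N\big)+\beta\big(A+1-AN-M\big)\geqslant AB-1,
$$
which is precisely the inequality that Lemma~\ref{lemma:2-0} has already established unconditionally. This yields $m_{0}\leqslant 1$, contradicting the previous step.

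The only real obstacle is identifying the weights $(\alpha B+\beta)/(AB-1)$ and $(\alpha+\beta A)/(AB-1)$; once these are written down, the role of the third bullet in the hypotheses of Theorem~\ref{theorem:I} (which drives the corresponding inequality in Lemma~\ref{lemma:2-0}) becomes transparent, and the remaining verification is routine bookkeeping.
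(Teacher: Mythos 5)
Your proof is correct and follows essentially the same approach as the paper: assume $O_1\notin\Delta_1^1\cup\Delta_2^1$, apply Lemma~\ref{lemma:adjunction} to $F_1$ (using Lemma~\ref{lemma:2-3} to bound the coefficient) to get $m_0 = D^1\cdot F_1 > 1$, then take the same positive linear combination of the two bounds from Lemma~\ref{lemma:2-21} with weights $(\alpha B+\beta)/(AB-1)$ and $(\alpha+\beta A)/(AB-1)$, and reduce via $\alpha a_1+\beta a_2\leqslant 1$ to the inequality in Lemma~\ref{lemma:2-0}, yielding $m_0\leqslant 1$. This is exactly the paper's argument, including the same weights.
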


\begin{proof}
Suppose that $O_{1}\not \in\Delta^{1}_{1}\cup\Delta^{1}_{2}$. Then
the~log pair
$$
\Big(X_{1},\ D^{1}+\big(m_{0}+a_{1}+a_{2}-1\big)F_{1}\Big)%
$$
is not log canonical at the~point $O_{1}$. Then
$$
m_{0}=D^{1}\cdot F_{1}>1
$$
by Lemma~\ref{lemma:adjunction}, because
$m_{0}+a_{1}+a_{2}\leqslant 2$ by Lemma~\ref{lemma:2-3}. Then
$$
m_{0}\Bigg(\frac{\beta+B\alpha}{AB-1}+\frac{\alpha+A\beta}{AB-1}\Bigg)\leqslant\big(M+Aa_{1}-a_{2}\big)\frac{\beta+B\alpha}{AB-1}+\big(N+Ba_{2}-a_{1}\big)\frac{\alpha+A\beta}{AB-1},%
$$
because $m_{0}\leqslant M+Aa_{1}-a_{2}$ and $m_{0}\leqslant
N+Ba_{2}-a_{1}$. On the~other hand, we have
$$
\big(M+Aa_{1}-a_{2}\big)\frac{\beta+B\alpha}{AB-1}+\big(N+Ba_{2}-a_{1}\big)\frac{\alpha+A\beta}{AB-1}\leqslant 1+\frac{M\beta+MB\alpha+N\alpha+AN\beta}{AB-1},%
$$
because $\alpha a_{1}+\beta a_{2}\leqslant 1$ and $AB-1>0$. But we
already proved that $m_{0}>1$. Thus, we see that
$$
\beta+B\alpha+\alpha+A\beta\leqslant AB-1+M\beta+MB\alpha+N\alpha+AN\beta,%
$$
which is impossible by Lemma~\ref{lemma:2-0}.
\end{proof}

\begin{lemma}
\label{lemma:2-5} The inequality $O_{1}\ne
F_{1}\cap\Delta^{1}_{1}$ holds.
\end{lemma}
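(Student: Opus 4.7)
The plan is to assume $O_{1}=F_{1}\cap\Delta^{1}_{1}$ for contradiction, apply Lemma~\ref{lemma:adjunction} on $X_{1}$ with $B_{1}=\Delta^{1}_{1}$, and play the resulting local multiplicity bound against the linear inequality of Lemma~\ref{lemma:2-0}.

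First I would observe that, since $\Delta_{1}$ and $\Delta_{2}$ meet transversally at $O$, their proper transforms $\Delta^{1}_{1}$ and $\Delta^{1}_{2}$ on $X_{1}$ are disjoint; in particular $\Delta^{1}_{2}$ does not pass through $O_{1}$ and can be discarded near $O_{1}$. Thus the log pair
$$
\Big(X_{1},\ D^{1}+a_{1}\Delta^{1}_{1}+\big(m_{0}+a_{1}+a_{2}-1\big)F_{1}\Big)
$$
is not log canonical at $O_{1}$; by Lemma~\ref{lemma:2-2} we have $a_{1}\leqslant 1$ and by Lemma~\ref{lemma:2-3} we have $m_{0}+a_{1}+a_{2}-1\leqslant 1$. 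Because $\Delta^{1}_{1}$ is smooth at the smooth point $O_{1}\in X_{1}$ and its coefficient $a_{1}$ does not exceed $1$, Lemma~\ref{lemma:adjunction} applies with $B_{1}=\Delta^{1}_{1}$, and its proof in fact yields the local refinement
$$
\mathrm{mult}_{O_{1}}\Big(D^{1}\cdot\Delta^{1}_{1}\Big)+\big(m_{0}+a_{1}+a_{2}-1\big)\,\mathrm{mult}_{O_{1}}\Big(F_{1}\cdot\Delta^{1}_{1}\Big)>1.
$$
Since $F_{1}$ meets $\Delta^{1}_{1}$ transversally at $O_{1}$, the second factor equals $1$. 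Combining with the standard blow-up identity $\mathrm{mult}_{O_{1}}(D^{1}\cdot\Delta^{1}_{1})=\mathrm{mult}_{O}(D\cdot\Delta_{1})-m_{0}$ and the standing assumption $\mathrm{mult}_{O}(D\cdot\Delta_{1})\leqslant M+Aa_{1}-a_{2}$ reduces this to the purely linear inequality $(A+1)a_{1}>2-M$.

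Finally, playing $(A+1)a_{1}>2-M$ against the lower bound $a_{2}>(1-N)/B$ from Lemma~\ref{lemma:2-1} and the hypothesis $\alpha a_{1}+\beta a_{2}\leqslant 1$ gives
$$
\alpha(2-M)B+\beta(1-N)(A+1)<B(A+1),
$$
which directly contradicts the corresponding inequality $\alpha(2-M)B+\beta(1-N)(A+1)\geqslant B(A+1)$ of Lemma~\ref{lemma:2-0}. The only delicate choice in the argument is applying adjunction with $B_{1}=\Delta^{1}_{1}$ rather than with $B_{1}=F_{1}$: the latter would yield only $m_{0}>1-a_{1}$, which is too weak to close the argument against the hypotheses. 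Everything else is routine linear manipulation, since the technical linear-programming content on $A,B,M,N,\alpha,\beta$ has already been packaged into Lemma~\ref{lemma:2-0}.
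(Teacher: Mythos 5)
Your argument is correct and is essentially the paper's own proof of Lemma~\ref{lemma:2-5}: assume $O_{1}=F_{1}\cap\Delta^{1}_{1}$, apply Lemma~\ref{lemma:adjunction} along $\Delta^{1}_{1}$ (which is admissible since $a_{1}<1$ by Lemma~\ref{lemma:2-2}), deduce $a_{1}>(2-M)/(A+1)$, and then combine with $a_{2}>(1-N)/B$ and $\alpha a_{1}+\beta a_{2}\leqslant 1$ to contradict the inequality $\alpha(2-M)B+\beta(1-N)(A+1)\geqslant B(A+1)$ from Lemma~\ref{lemma:2-0}. Your phrasing is in fact slightly more careful than the paper's, since you make explicit the local form of the adjunction inequality and the blow-up identity $\mathrm{mult}_{O_{1}}(D^{1}\cdot\Delta^{1}_{1})=\mathrm{mult}_{O}(D\cdot\Delta_{1})-m_{0}$ rather than writing $D^{1}\cdot\Delta^{1}_{1}$ with an abuse of notation, and you correctly cite Lemma~\ref{lemma:2-1} for $a_{2}>(1-N)/B$ where the paper has a typographical slip.
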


\begin{proof}
Suppose that $O_{1}\ne F_{1}\cap\Delta^{1}_{1}$. Then the~log pair
$$
\Big(X_{1},\ D^{1}+a_{1}\Delta^{1}_{1}+\big(m_{0}+a_{1}+a_{2}-1\big)F_{1}\Big)%
$$
is not log canonical at the~point $O_{1}$. Then
$$
M+Aa_{1}-a_{2}-m_{0}=D^{1}\cdot\Delta^{1}_{1}>1-\big(m_{0}+a_{1}+a_{2}-1\big)
$$
by Lemma~\ref{lemma:adjunction}, because $a_{1}<1$ by
Lemma~\ref{lemma:2-2}. We have $a_{1}>(2-M)/(A+1)$. Then
$$
\frac{2-M\alpha}{A+1}+\frac{\beta(1-N)}{B}<\alpha a_{1}+\beta a_{2}\leqslant 1,%
$$
because $a_{2}>(1-N)/B$ by Lemma~\ref{lemma:2-2}. Thus, we see
that
$$
\frac{2-M\alpha}{A+1}+\frac{\beta(1-N)}{B}<1,
$$
which is impossible by Lemma~\ref{lemma:2-0}.
\end{proof}

Therefore, we see that $O_{1}=F_{1}\cap\Delta^{1}_{2}$.  Then the~
log pair
$$
\Big(X_{1},\ D^{1}+a_{1}\Delta^{1}_{1}+a_{2}\Delta^{1}_{2}+\big(m_{0}+a_{1}+a_{2}-1\big)F_{1}\Big)%
$$
is not log canonical at the~point $O_{1}$. We know that
$1\geqslant m_{0}+a_{1}+a_{2}-1\geqslant 0$.

We have a~blow up $\pi_{1}\colon X_{1}\to X$. For any
$n\in\mathbb{N}$, consider a~sequence of blow ups
$$
\xymatrix{
&X_{n}\ar@{->}[rr]^{\pi_{n}}&&X_{n-1}\ar@{->}[rr]^{\pi_{n-1}}&&\cdots\ar@{->}[rr]^{\pi_{3}}
&&X_{2}\ar@{->}[rr]^{\pi_{2}}&&X_{1}\ar@{->}[rr]^{\pi_{1}}&& X}
$$
such that $\pi_{i+1}\colon X_{i+1}\to X_{i}$ is a~blow up of the~
point $F_{i}\cap\Delta^{i}_{2}$ for every $i\in\{1,\ldots,n-1\}$,
where
\begin{itemize}
\item we denote by $F_{i}$ the~exceptional curve of the~morphism $\pi_{i}$,%
\item we denote by $\Delta^{i}_{2}$ the~proper transform of the~curve $\Delta_{2}$ on the~surface $X_{i}$.%
\end{itemize}

For every $k\in\{1,\ldots,n\}$ and for every $i\in\{1,\ldots,k\}$,
let $D^{k}$, $\Delta^{k}_{1}$ and $F^{k}_{i}$ be
the~proper~transforms on the~surface $X_{k}$ of the~divisors $D$,
$\Delta_{1}$ and $F_{i}$, respectively.~Then
$$
K_{X_{n}}+D^{n}+a_{1}\Delta^{n}_{1}+a_{2}\Delta^{n}_{2}+\sum_{i=1}^{n}\Bigg(a_{1}+ja_{2}-j+\sum_{j=0}^{n-1}m_{j}\Bigg)F_{i}\equiv\pi^{*}\Big(K_{X}+D+a_{1}\Delta_{1}+a_{2}\Delta_{2}\Big),
$$
where $\pi=\pi_{n}\circ\cdots\circ\pi_{2}\circ\pi_{1}$ and
$m_{i}=\mathrm{mult}_{O_{i}}(D^{i})$ for every
$i\in\{1,\ldots,n\}$. Then the~log pair
\begin{equation}
\label{equation:log-pair} \Bigg(X_{n},\
D^{n}+a_{1}\Delta^{n}_{1}+a_{2}\Delta^{n}_{2}+\sum_{i=1}^{n}\Bigg(a_{1}+ia_{2}-i+\sum_{j=0}^{i-1}m_{j}\Bigg)F^{n}_{i}\Bigg)
\end{equation}
is not log canonical at some point of the~set $F^{n}_{1}\cup
F^{n}_{2}\cup\cdots\cup F^{n}_{n}$.

For every $k\in\{1,\ldots,n\}$, let us put
$O_{k}=F_{k}\cap\Delta^{k}_{2}$

\begin{lemma}
\label{lemma:2-inductive} For every $i\in\{1,\ldots,n\}$, the~
inequalities
$$
1\geqslant a_{1}+ia_{2}-i+\sum_{j=0}^{i-1}m_{j}\geqslant 0
$$
hold, the~log pair~\ref{equation:log-pair} is log canonical at
every point of the~set $(F^{n}_{1}\cup F^{n}_{2}\cup\cdots\cup
F^{n}_{n})\setminus O_{n}$.
\end{lemma}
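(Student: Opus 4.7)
The plan is to prove the lemma by induction on $n$. The base case $n=1$ is essentially already in hand: Lemma~\ref{lemma:2-3} gives $c_{1}:=m_{0}+a_{1}+a_{2}-1\leqslant 1$; Example~\ref{example:smooth-point-and-log-pull-back} gives $c_{1}\geqslant 0$ (since $O\in\mathrm{LCS}$ forces $\mathrm{mult}_{O}(B_{X})\geqslant 1$); and Lemmas~\ref{lemma:2-4}, \ref{lemma:2-5} show that the pair on $X_{1}$ is log canonical on $F_{1}\setminus\{O_{1}\}$.

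For the inductive step, I would assume the conclusion through index $n-1$: the coefficients $c_{j}\in[0,1]$ for $j\leqslant n-1$, and the pair on $X_{n-1}$ fails to be log canonical only at $O_{n-1}=F_{n-1}\cap\Delta^{n-1}_{2}$. An inspection of the chain of blow-ups shows that only $D^{n-1}$, $F_{n-1}$, and $\Delta^{n-1}_{2}$ contribute at $O_{n-1}$: the strict transform $\Delta^{n-1}_{1}$ is disjoint from $F_{n-1}$ (since it meets $F_{1}$ away from $\Delta^{1}_{2}\cap F_{1}$), and every earlier $F^{n-1}_{k}$ with $k<n-1$ misses $O_{n-1}$ because each successive blow-up center $O_{j}$ lies on $\Delta_{2}$-side, off the previously created exceptional chain. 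Consequently $c_{n}=m_{n-1}+c_{n-1}+a_{2}-1$, and Example~\ref{example:smooth-point-and-log-pull-back} applied at $O_{n-1}$ yields $c_{n}\geqslant 0$.

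For $c_{n}\leqslant 1$, the essential tool is the telescoping identity
$$\mathrm{mult}_{O}\Big(D\cdot\Delta_{2}\Big)=\mathrm{mult}_{O_{n}}\Big(D^{n}\cdot\Delta^{n}_{2}\Big)+\sum_{j=0}^{n-1}m_{j},$$
obtained by iterating the standard blow-up formula for local intersection multiplicity (using that $\Delta^{k}_{2}$ meets $\pi_{k}^{-1}(O_{k-1})$ only at $O_{k}$). Combined with the hypothesis $\mathrm{mult}_{O}(D\cdot\Delta_{2})\leqslant N+Ba_{2}-a_{1}$, this gives the cumulative bound $\sum_{j=0}^{n-1}m_{j}\leqslant N+Ba_{2}-a_{1}$. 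Together with $m_{0}\leqslant M+Aa_{1}-a_{2}$, $\alpha a_{1}+\beta a_{2}\leqslant 1$, and the inequalities assembled in Lemma~\ref{lemma:2-0}, an appropriate convex-combination argument --- mirroring the one in Lemma~\ref{lemma:2-3} but adapted to the cumulative character of the $\Delta_{2}$-intersection --- should yield $c_{n}+1\leqslant 2$. For the log canonicity outside $O_{n}$: at points of $(F^{n}_{1}\cup\cdots\cup F^{n}_{n-1})\setminus F^{n}_{n}$, log canonicity is inherited from $X_{n-1}$ by the inductive hypothesis (since $\pi_{n}$ is an isomorphism away from $O_{n-1}$), while on $F^{n}_{n}$ I would rule out non-lc points both at a generic point of $F_{n}$ and at $F_{n}\cap F^{n}_{n-1}$ by case analysis mirroring Lemmas~\ref{lemma:2-4} and \ref{lemma:2-5}, invoking Lemma~\ref{lemma:adjunction} and deriving contradictions from the bounds on $m_{n-1}$, $c_{n-1}$, $a_{2}$ combined with Lemma~\ref{lemma:2-0}.

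The hard part will be the convex-combination argument for $c_{n}\leqslant 1$ when $n\geqslant 2$, because the two multiplicity bounds are asymmetric: the $\Delta_{2}$-side contributes cumulatively via $\sum m_{j}\leqslant N+Ba_{2}-a_{1}$, whereas the $\Delta_{1}$-side provides only the pointwise $m_{0}\leqslant M+Aa_{1}-a_{2}$. Balancing these against $\alpha a_{1}+\beta a_{2}\leqslant 1$ and wielding the specific Lemma~\ref{lemma:2-0} inequalities $\alpha(1-M)+A\beta\geqslant A$ and $\beta(1-N)+B\alpha\geqslant B$ \emph{uniformly} in $n$ is the delicate step; a similar but lighter version will be needed to dispatch the $Q=F_{n}\cap F^{n}_{n-1}$ case in the log canonicity argument.
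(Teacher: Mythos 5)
Your inductive framework agrees with the paper's, and the $c_{n}\geqslant 0$ step and the analysis of the pair away from $O_{n}$ (mirroring Lemmas~\ref{lemma:2-4} and \ref{lemma:2-5}, then inheriting log canonicity from $X_{n-1}$ off $F_{n}^{n}$, and treating the generic point of $F_{n}$ and the corner $F_{n}\cap F^{n}_{n-1}$ separately via Lemma~\ref{lemma:adjunction}) are on the right track and essentially coincide with what the paper does in Lemmas~\ref{lemma:2-8} and \ref{lemma:2-9}.

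The gap is in the upper bound $c_{n}\leqslant 1$. Your telescoping inequality $\sum_{j=0}^{n-1}m_{j}\leqslant N+Ba_{2}-a_{1}$, inserted into the contradiction hypothesis $a_{1}+na_{2}-n+\sum m_{j}>1$, yields $a_{2}>\frac{n+1-N}{n+B}$. Coupled with the only available lower bound for $a_{1}$, namely $a_{1}>\frac{1-M}{A}$ from Lemma~\ref{lemma:2-1}, and with $\alpha a_{1}+\beta a_{2}\leqslant 1$, this gives
$$\alpha\frac{1-M}{A}+\beta\frac{n+1-N}{n+B}<1.$$
But since $\frac{n+1-N}{n+B}<1$ (as $B>1-N$), this is \emph{weaker} than, and therefore perfectly consistent with, the hypothesis $\alpha(1-M)/A+\beta\geqslant 1$; no contradiction follows. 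The pointwise estimate $m_{0}\leqslant M+Aa_{1}-a_{2}$ does not help until you convert it into a cumulative one, and that is exactly the idea missing from your sketch: the multiplicities are non-increasing, $m_{j+1}\leqslant D^{j+1}\cdot F_{j+1}=m_{j}$, so $\sum_{j=0}^{n-1}m_{j}\leqslant nm_{0}\leqslant n(M+Aa_{1}-a_{2})$, which together with $c_{n}>1$ yields the $n$-dependent lower bound $a_{1}>\frac{n+1-nM}{nA+1}$. This is precisely what Lemma~\ref{lemma:2-7} uses. The paper then pairs this with the $a_{2}$-bound of Lemma~\ref{lemma:2-6} (which, note, follows from the inductive non-log-canonicity at $O_{n-1}$ alone, without assuming $c_{n}>1$) and runs a delicate algebraic argument using $\alpha(1-M)+A\beta\geqslant A$, $\alpha(A+M-1)\geqslant A^{2}\beta(B+N-1)$, and $A(B-1)\geqslant 1$; the ``appropriate convex combination'' you hope for is this two-parameter family of half-plane containments, uniform in $n$ only because of those specific hypotheses. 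Without the monotonicity observation the argument does not close, and even with it the final algebra is nontrivial and should be written out. The same $a_{1}$-bound is also needed to dispatch the corner case $Q=F_{n}\cap F^{n}_{n-1}$ in your log canonicity step, as the paper does in Lemma~\ref{lemma:2-9}.
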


It follows from Lemma~\ref{lemma:2-inductive} that there is
$n\in\mathbb{N}$ such that
$$
a_{1}+na_{2}-n+\sum_{j=0}^{n-1}m_{j}>1,
$$
which contradicts Lemma~\ref{lemma:2-inductive}. Therefore, to
complete the~proof of Theorem~\ref{theorem:I}, it is enough to
prove~Lemma~\ref{lemma:2-inductive}. Let us prove
Lemma~\ref{lemma:2-inductive} by induction on $n\in\mathbb{N}$.
The case $n=1$ is done.

By induction, we may assume that~$n\geqslant 2$. For every
$k\in\{1,\ldots,n-1\}$, we may assume that
$$
1\geqslant a_{1}+ka_{2}-k+\sum_{j=0}^{k-1}m_{j}\geqslant 0,
$$
the~singularities of the~log pair
$$
\Bigg(X_{k},\
D^{k}+a_{1}\Delta^{k}_{1}+a_{2}\Delta^{k}_{2}+\sum_{i=1}^{k}\Bigg(a_{1}+ka_{2}-k+\sum_{j=0}^{i-1}m_{j}\Bigg)F^{k}_{i}\Bigg)
$$
are log canonical at every point of the~set $(F^{k}_{1}\cup
F^{k}_{2}\cup\cdots\cup F^{k}_{k})\setminus O_{k}$ and not log
canonical at $O_{k}$.

\begin{lemma}
\label{lemma:2-6} The following inequality holds:
$$
a_{2}>\frac{n-N}{B+n-1}.
$$
\end{lemma}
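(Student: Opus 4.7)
The plan is to apply the two-dimensional adjunction inequality (Lemma~\ref{lemma:adjunction}) at the point $O_{n-1}$ on $X_{n-1}$ with the curve $\Delta_{2}^{n-1}$ in the role of $B_{1}$, and to compare the resulting lower bound for $\mathrm{mult}_{O}(D\cdot\Delta_{2})$ against the standing upper bound $N+Ba_{2}-a_{1}$ supplied by Lemma~\ref{lemma:2-21}. First I would record, from the inductive hypothesis of Lemma~\ref{lemma:2-inductive} at stage $n-1$, that the coefficient
$$
c_{n-1}:=a_{1}+(n-1)a_{2}-(n-1)+\sum_{j=0}^{n-2}m_{j}
$$
of $F_{n-1}^{n-1}$ in the boundary on $X_{n-1}$ satisfies $c_{n-1}\leqslant 1$, while $a_{2}<1$ by Lemma~\ref{lemma:2-2}. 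Transversality of $\Delta_{1}$ and $\Delta_{2}$ at $O$ (together with the fact that every subsequent blow up is centred on the proper transform of $\Delta_{2}$) ensures that the only boundary components passing through $O_{n-1}=F_{n-1}^{n-1}\cap\Delta_{2}^{n-1}$ are $D^{n-1}$, $\Delta_{2}^{n-1}$, and $F_{n-1}^{n-1}$, the latter two meeting transversally there.

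Next, applying Lemma~\ref{lemma:adjunction} at $O_{n-1}$ with $B_{1}=\Delta_{2}^{n-1}$ (permissible since $a_{2}\leqslant 1$) yields
$$
\mathrm{mult}_{O_{n-1}}\bigl((D^{n-1}+c_{n-1}F_{n-1}^{n-1})\cdot\Delta_{2}^{n-1}\bigr)>1,
$$
so transversality of $F_{n-1}^{n-1}$ with $\Delta_{2}^{n-1}$ at $O_{n-1}$ reduces this to $\mathrm{mult}_{O_{n-1}}(D^{n-1}\cdot\Delta_{2}^{n-1})>1-c_{n-1}$. I would then invoke the elementary iterated-blow-up identity
$$
\mathrm{mult}_{O}(D\cdot\Delta_{2})=\sum_{j=0}^{n-2}m_{j}+\mathrm{mult}_{O_{n-1}}(D^{n-1}\cdot\Delta_{2}^{n-1}),
$$
obtained by tracking, at each of the first $n-1$ blow ups, the local contribution $m_{j}\cdot\mathrm{mult}_{O_{j}}(\Delta_{2}^{j})=m_{j}$ to the local intersection number. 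Substituting the formula for $c_{n-1}$ produces $\mathrm{mult}_{O}(D\cdot\Delta_{2})>n-a_{1}-(n-1)a_{2}$, and combining this with the upper bound $\mathrm{mult}_{O}(D\cdot\Delta_{2})\leqslant N+Ba_{2}-a_{1}$ from Lemma~\ref{lemma:2-21} gives $(B+n-1)a_{2}>n-N$, which is the required inequality.

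The main obstacle is essentially organisational rather than analytic: one must confirm that at the inductive step the coefficient $c_{n-1}$ really is at most~$1$ (so that Lemma~\ref{lemma:adjunction} applies with $\Delta_{2}^{n-1}$ as~$B_{1}$), and one must verify the multiplicity identity above through the specific chain of blow ups. Both points are immediate from the inductive hypothesis and from the transversality preserved at each step. A minor conceptual comment is that I chose $\Delta_{2}^{n-1}$ rather than $F_{n-1}^{n-1}$ as the curve $B_{1}$ in order to avoid the weaker conclusion arising from the ``or'' in Theorem~\ref{theorem:baby-adjunction}; this is legitimate because Lemma~\ref{lemma:adjunction} requires only that the coefficient of the chosen $B_{1}$ be at most~$1$, which holds for $\Delta_{2}^{n-1}$ by Lemma~\ref{lemma:2-2}.
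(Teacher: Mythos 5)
Your proof is correct and follows essentially the same route as the paper: apply Lemma~\ref{lemma:adjunction} at $O_{n-1}$ with $\Delta_{2}^{n-1}$ as the chosen curve (legitimate since $a_{2}<1$ by Lemma~\ref{lemma:2-2} and $c_{n-1}\geqslant 0$ by the inductive hypothesis), convert to a bound on $\mathrm{mult}_{O_{n-1}}(D^{n-1}\cdot\Delta_{2}^{n-1})$, and push it down through the chain of blow ups to compare with the standing bound $\mathrm{mult}_{O}(D\cdot\Delta_{2})\leqslant N+Ba_{2}-a_{1}$. Your bookkeeping via local multiplicities is in fact cleaner than the paper's terse one-line version (which treats $D^{n-1}\cdot\Delta_{2}^{n-1}$ as if it were a global intersection and contains a visible sign typo); the only small slip is that you cite $c_{n-1}\leqslant 1$ as the hypothesis needed to apply Lemma~\ref{lemma:adjunction}, whereas what is actually required is $c_{n-1}\geqslant 0$ together with $a_{2}\leqslant 1$ -- both of which do hold, so the argument is unaffected.
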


\begin{proof}
The singularities of the~log pair
$$
\Bigg(X_{n-1},\
D^{n-1}+a_{2}\Delta^{k}_{2}+\Bigg(a_{1}+\big(n-1\big)a_{2}-\big(n-1\big)+\sum_{j=0}^{n-2}m_{j}\Bigg)F^{n}_{n-1}\Bigg)
$$
are not log canonical at the~point $O_{n-1}$. Then
$$
N-Ba_{2}-a_{1}-\sum_{j=0}^{n-2}m_{j}=D^{n-1}\cdot\Delta^{n-1}_{2}>1-\Bigg(a_{1}+\big(n-1\big)a_{2}-\big(n-1\big)+\sum_{j=0}^{n-2}m_{j}\Bigg)
$$
by Lemma~\ref{lemma:adjunction}, because $a_{2}<1$ by
Lemma~\ref{lemma:2-2}. We have
$$
a_{2}>\frac{n-N}{B+n-1},
$$
which completes the~proof.
\end{proof}

\begin{lemma}
\label{lemma:2-7} The inequalities $1\geqslant
a_{1}+na_{2}-n+\sum_{j=0}^{n-1}m_{j}\geqslant 0$ hold.
\end{lemma}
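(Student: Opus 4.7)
The key identity is $c_n = m_{n-1} + c_{n-1} + a_2 - 1$, where $c_k := a_1 + ka_2 - k + \sum_{j=0}^{k-1} m_j$ equals the coefficient of $F_k$ in the log pullback on $X_k$. Since by induction $c_{n-1} \in [0,1]$ and $a_2 < 1$ by Lemma~\ref{lemma:2-2}, the claim $0 \leqslant c_n \leqslant 1$ is equivalent to $1 \leqslant m_{n-1} + c_{n-1} + a_2 \leqslant 2$. At $O_{n-1}$ only three components of the boundary contribute to the multiplicity, namely $D^{n-1}$, $F_{n-1}^{n-1}$ and $\Delta_2^{n-1}$: the curve $\Delta_1^{n-1}$ was detached from the chain at the very first blow up at $O$, while the older exceptionals $F_i^{n-1}$ with $i<n-1$ meet $F_{n-1}$ at a point distinct from $O_{n-1} = F_{n-1}\cap\Delta_2^{n-1}$ by transversality propagated through the tower. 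Hence $\mathrm{mult}_{O_{n-1}}(B^{n-1}) = m_{n-1} + c_{n-1} + a_2$.

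For the lower bound, the inductive hypothesis says the pair is not log canonical at $O_{n-1}$, and the multiplicity criterion in Example~\ref{example:smooth-point-and-log-pull-back}, in the contrapositive form $P\in\mathrm{LCS}\Rightarrow\mathrm{mult}_P(B)\geqslant 1$ for an effective boundary, forces $m_{n-1} + c_{n-1} + a_2 \geqslant 1$, yielding $c_n \geqslant 0$.

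For the upper bound, the main ingredient is the global estimate $\sum_{j=0}^{n-1} m_j \leqslant \mathrm{mult}_O(D\cdot\Delta_2) \leqslant N + Ba_2 - a_1$, obtained by iterating $\mathrm{mult}_{O_{k-1}}(D^{k-1}\cdot\Delta_2^{k-1}) = m_{k-1} + \mathrm{mult}_{O_k}(D^k\cdot\Delta_2^k)$ (which holds because $\Delta_2$ is smooth at $O$ with its tangent direction tracked by the chain of blow ups) and combining with the standing hypothesis. Substituting into the definition of $c_n$ gives $c_n \leqslant (B+n)a_2 + N - n$, so the bound $c_n\leqslant 1$ reduces to $a_2 \leqslant (n+1-N)/(B+n)$. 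Assuming the contrary and combining with $a_1 > (1-M)/A$ from Lemma~\ref{lemma:2-1} via $\alpha a_1 + \beta a_2 \leqslant 1$ would yield
$$\alpha(1-M)(B+n) + A\beta(n+1-N) < A(B+n),$$
which I would rearrange as the sum of two non-negative quantities: the coefficient of $n$ is $\alpha(1-M) + A\beta - A$, non-negative by the second structural hypothesis, while the constant term $\alpha(1-M)B + A\beta(1-N) - AB$ should be non-negative by a line-intersection manipulation in the $(\alpha,\beta)$-plane analogous to those carried out in Lemma~\ref{lemma:2-0}, invoking $\alpha(A+M-1)\geqslant A^2(B+N-1)\beta$ together with the alternative third bullet of the hypothesis.

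The principal obstacle is the verification of non-negativity of this constant term: it does not follow from the second structural hypothesis $\alpha(1-M)+A\beta \geqslant A$ alone (multiplying that inequality by $B$ and subtracting produces an error of $A\beta(B-1+N)$ of the wrong sign), so one must invoke all three structural hypotheses together, mirroring the piecewise linear-programming computations of Lemma~\ref{lemma:2-0} where the intersection points of the relevant constraint lines are analyzed explicitly.
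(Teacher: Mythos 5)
Your lower-bound argument for $c_n\geqslant 0$ agrees with the paper: both read it off from the fact that the preceding pair on $X_{n-1}$ is not log canonical at $O_{n-1}$. For the upper bound you take a genuinely different tack, and unfortunately it has a real gap that you yourself flag at the end --- and the gap cannot be closed.

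Your chain $\sum_{j=0}^{n-1}m_j\leqslant\mathrm{mult}_O(D\cdot\Delta_2)\leqslant N+Ba_2-a_1$ is a clean and correct observation and gives the strong $n$-dependent bound $a_2>(n+1-N)/(B+n)$. But you then pair it with only the static bound $a_1>(1-M)/A$ from Lemma~\ref{lemma:2-1}. Plugging both into $\alpha a_1+\beta a_2\leqslant 1$ and clearing denominators yields
$$
n\Big(\alpha(1-M)+A\beta-A\Big)+\Big(\alpha(1-M)B+A\beta(1-N)-AB\Big)<0,
$$
and the intended contradiction would require the constant term $\alpha(1-M)B+A\beta(1-N)-AB$ to be non-negative. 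It is not. With $M=N=0$, $A=2$, $B=3/2$, $\alpha=1$, $\beta=1/2$ --- precisely the parameters the paper uses repeatedly in Section~\ref{section:del-Pezzo-orbifolds} --- the constant term equals $3/2+1-3=-1/2$, while the coefficient of $n$ vanishes, so the inequality you need to refute is actually satisfiable for every $n$. No amount of line-intersection juggling in the $(\alpha,\beta)$-plane will rescue it, because the obstruction is numerical, not presentational. The heuristic sanity check is that $B\leqslant 1-N$ would be needed, whereas $B>1\geqslant 1-N$ always.

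The paper instead obtains an $n$-\emph{dependent} lower bound on $a_1$: from $m_j\leqslant m_0\leqslant M+Aa_1-a_2$ (Lemma~\ref{lemma:2-21}) one gets $\sum m_j\leqslant n(M+Aa_1-a_2)$, hence $c_n>1$ forces $a_1>(n+1-Mn)/(nA+1)$, which is strictly stronger than $(1-M)/A$ whenever $A+M>1$. That growing bound on $a_1$, combined with Lemma~\ref{lemma:2-6}'s $a_2>(n-N)/(B+n-1)$, does push $\alpha a_1+\beta a_2$ past $1$; with the sample parameters above it gives $\alpha a_1+\beta a_2>(n+1)/(2n+1)+n/(2n+1)=1$ exactly. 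Your estimate on $a_2$ is actually slightly stronger than the one the paper uses, but it cannot compensate for having thrown away the $n$-dependence on the $a_1$ side. To repair the argument you should replace $a_1>(1-M)/A$ with the $n$-dependent bound coming from $m_j\leqslant m_0\leqslant M+Aa_1-a_2$.
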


\begin{proof}
The inequality $a_{1}+na_{2}-n+\sum_{j=0}^{n-1}m_{j}\geqslant 0$
follows from the~fact that the~log pair
$$
\Bigg(X_{n-1},\
D^{n-1}+a_{2}\Delta^{k}_{2}+\Bigg(a_{1}+\big(n-1\big)a_{2}-\big(n-1\big)+\sum_{j=0}^{n-2}m_{j}\Bigg)F^{n}_{n-1}\Bigg)
$$
is not log canonical at the~point $O_{n-1}$. Suppose that
$a_{1}+na_{2}-n+\sum_{j=0}^{n-1}m_{j}>1$.

We have $m_{0}+a_{2}\leqslant M+Aa_{1}$ by Lemma~\ref{lemma:2-21}.
Then
$$
a_{1}+nM+nAa_{1}-n\geqslant a_{1}+na_{2}-n+nm_{0}\geqslant a_{1}+na_{2}-n+\sum_{j=0}^{n-1}m_{j}>1,%
$$
which immediately implies that the~inequality
$$
a_{1}>\frac{n+1-Mn}{nA+1}
$$
holds. On the~other hand, we know that the~inequality
$$
a_{2}>\frac{n-N}{B+n-1}
$$
holds by Lemma~\ref{lemma:2-6}. Therefore, we see that
$$
\Bigg(\frac{\alpha-M}{A}+\beta\Bigg)+\alpha\frac{A-1+M}{A(An+1)}+\beta\frac{1-B-N}{B+n-1}=\alpha\frac{n+1-Mn}{nA+1}+\beta\frac{n-N}{B+n-1}<\alpha a_{1}+\beta a_{2}\leqslant 1,%
$$
where $\alpha(1-M)/A+\beta\geqslant 1$. Therefore, we see that
$$
\alpha\frac{A+M-1}{A(An+1)}<\beta\frac{B+N-1}{B+n-1},
$$
where $n\geqslant 2$. But $A+M>1$ and $B>1$ by
Lemmas~\ref{lemma:2-1} and \ref{lemma:2-2}. Thus, we see that
$$
\frac{A(An+1)}{\alpha(A+M-1)}>\frac{B+n-1}{\beta(B+N-1)},
$$
but $A^{2}(B+N-1)\beta\leqslant\alpha(A+M-1)$ by assumption. Then
$$
\frac{A}{\alpha(A+M-1)}-\frac{B-1}{\beta(B+N-1)}\geqslant\Bigg(\frac{A^{2}}{\alpha(A+M-1)}-\frac{1}{\beta(B+M-1)}\Bigg)n+\frac{A}{\alpha(A+M-1)}-\frac{B-1}{\beta(B+N-1)}>0,
$$
which implies that $\beta A(B+N-1)>\alpha(B-1)(A+M-1)$. Then
$$
\frac{\alpha(A+M-1)}{A}\geqslant\beta A\big(B+N-1\big)>\alpha\big(B-1\big)\big(A+M-1\big),%
$$
because $A^{2}(B+N-1)\beta\leqslant\alpha(A+M-1)$ by assumption.
Then $\alpha\ne 0$ and
$$
A\big(B-1\big)<1,
$$
which is impossible, because $A(B-1)\geqslant 1$ by assumption.
\end{proof}

\begin{lemma}
\label{lemma:2-8} The log pair~\ref{equation:log-pair} is log
canonical in every point of the~set
$$
F_{n}\setminus\Bigg(\Big(F_{n}\cap
F^{n}_{n-1}\Big)\bigcup\Big(F_{n}\cap\Delta^{n}_{2}\Big)\Bigg).
$$
\end{lemma}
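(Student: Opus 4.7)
My plan is to argue by contradiction: suppose the log pair~\ref{equation:log-pair} fails to be log canonical at some $Q\in F_n\setminus\bigl((F_n\cap F^n_{n-1})\cup(F_n\cap\Delta^n_2)\bigr)$, and deduce $m_0>1$, which will be ruled out by the bounds already established.

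First, I would verify that near $Q$ the only boundary components are $D^n$ and $F_n$: since $n\ge 2$, the curve $\Delta^n_1$ is disjoint from $F_n$; the earlier exceptional curves $F^n_i$ with $i<n-1$ are disjoint from $F_n$ by construction of the blow-up centres $O_j=F_j\cap\Delta^j_2$; and $\Delta^n_2$ and $F^n_{n-1}$ meet $F_n$ only at the two excluded points. Writing $c:=a_1+na_2-n+\sum_{j=0}^{n-1}m_j$, Lemma~\ref{lemma:2-7} gives $0\le c\le 1$, so near $Q$ the pair reduces to $(X_n,\,D^n+cF_n)$.

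Next, I would apply the adjunction argument of Lemma~\ref{lemma:adjunction} with $B_1=F_n$ (smooth, coefficient $c\le1$, not contained in $D^n$). Because the remaining boundary components restrict to zero in a neighbourhood of $Q$, the local form of that statement gives $\mathrm{mult}_Q(D^n\cdot F_n)>1$. The identity $D^n\cdot F_n=m_{n-1}$, which follows from $D^n=\pi_n^*D^{n-1}-m_{n-1}F_n$ and $F_n^2=-1$, then forces $m_{n-1}>1$; applying the same comparison $m_i\le D^i\cdot F_i=m_{i-1}$ inductively yields $m_0\ge m_1\ge\cdots\ge m_{n-1}>1$, so $m_0>1$.

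Finally, I would invoke Lemma~\ref{lemma:2-21} to obtain $m_0\le M+Aa_1-a_2$ and $m_0\le N+Ba_2-a_1$, and combine them with the positive weights $(\beta+B\alpha)/(AB-1)$ and $(\alpha+A\beta)/(AB-1)$ (well-defined because $AB>1$ by Lemma~\ref{lemma:2-0}). These weights are engineered so that the $a_1$ and $a_2$ contributions assemble into $\alpha a_1+\beta a_2$, which is $\le 1$ by hypothesis, producing
$$m_0\bigl(\alpha(B+1)+\beta(A+1)\bigr)\le M(\beta+B\alpha)+N(\alpha+A\beta)+AB-1.$$
Substituting $m_0>1$ yields the strict inequality $\alpha(B+1-MB-N)+\beta(A+1-AN-M)<AB-1$, which contradicts Lemma~\ref{lemma:2-0}. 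The main obstacle will be the first bookkeeping step of confirming that no boundary component other than $D^n$ and $cF_n$ passes through $Q$; once that is settled, the adjunction step and the weight-combination contradiction follow the pattern already used in Lemmas~\ref{lemma:2-4} and \ref{lemma:2-5}.
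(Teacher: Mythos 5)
Your proposal is correct and follows the same route as the paper: reduce to the pair $(X_n, D^n+cF_n)$ near $Q$, apply the adjunction lemma to conclude $D^n\cdot F_n=m_{n-1}>1$ and hence $m_0>1$, then combine the two bounds of Lemma~\ref{lemma:2-21} with the weights $(\beta+B\alpha)/(AB-1)$ and $(\alpha+A\beta)/(AB-1)$ and the constraint $\alpha a_1+\beta a_2\leqslant 1$ to contradict the third inequality of Lemma~\ref{lemma:2-0}. The only cosmetic difference is that you spell out the bookkeeping that no other boundary component passes through $Q$ and write the monotonicity chain $m_0\geqslant\cdots\geqslant m_{n-1}$ explicitly, whereas the paper states both facts tersely; the substance is identical.
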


\begin{proof}
Suppose that there is a~point $Q\in F_{n}$ such that
$$
F_{n}\cap F^{n}_{n-1}\ne Q\ne F_{n}\cap\Delta^{n}_{2},
$$
but the~log pair~\ref{equation:log-pair} is not log canonical at
$Q$. Then the~log pair
$$
\Bigg(X_{n},\
D^{n}+\Bigg(a_{1}+na_{2}-n+\sum_{j=0}^{n-1}m_{j}\Bigg)F_{n}\Bigg)%
$$
is not log canonical at the~point $Q$ as well.  Then
$$
m_{0}\geqslant m_{n-1}=D^{n}\cdot F_{n}>1
$$
by Lemma~\ref{lemma:adjunction}, because
$a_{1}+na_{2}-n+\sum_{j=0}^{n-1}m_{j}\leqslant 1$ by
Lemma~\ref{lemma:2-7}. Then
$$
m_{0}\Bigg(\frac{\beta+B\alpha}{AB-1}+\frac{\alpha+A\beta}{AB-1}\Bigg)\leqslant\big(M+Aa_{1}-a_{2}\big)\frac{\beta+B\alpha}{AB-1}+\big(N+Ba_{2}-a_{1}\big)\frac{\alpha+A\beta}{AB-1},%
$$
because $m_{0}\leqslant M+Aa_{1}-a_{2}$ and $m_{0}\leqslant
N+Ba_{2}-a_{1}$ by Lemma~\ref{lemma:2-21}. We have
$$
\big(M+Aa_{1}-a_{2}\big)\frac{\beta+B\alpha}{AB-1}+\big(N+Ba_{2}-a_{1}\big)\frac{\alpha+A\beta}{AB-1}\leqslant 1+\frac{M\beta+MB\alpha+N\alpha+AN\beta}{AB-1},%
$$
because $\alpha a_{1}+\beta a_{2}\leqslant 1$ and $AB-1>0$. But
$m_{0}>1$. Thus, we see that
$$
\beta+B\alpha+\alpha+A\beta\leqslant AB-1+M\beta+MB\alpha+N\alpha+AN\beta,%
$$
which contradicts our initial assumptions.
\end{proof}

\begin{lemma}
\label{lemma:2-9} The log pair~\ref{equation:log-pair} is log
canonical at the~point $F_{n}\cap F^{n}_{n-1}$.
\end{lemma}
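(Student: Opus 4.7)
The plan is a proof by contradiction. Suppose the log pair~\ref{equation:log-pair} fails to be log canonical at $P := F_n \cap F^n_{n-1}$. First I identify which boundary components actually pass through $P$. Since $\Delta_1$ and $\Delta_2$ meet transversally at $O$, their proper transforms separate after $\pi_1$, and every subsequent center $O_i$ lies on $\Delta^i_2$ and off $\Delta^i_1$, so $\Delta^n_1\cap F_n=\varnothing$. By choice of centers, $\Delta^n_2\cap F_n=\{O_n\}\ne P$. The older exceptional curves $F^n_i$ for $i\le n-2$ do not meet $F_n$: the point $F^{n-1}_{n-2}\cap F_{n-1}$ lies off $O_{n-1}$, so $\pi_n$ disconnects them. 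Hence, near $P$, only $F_n$ and $F^n_{n-1}$ from the boundary pass through, with coefficients $\mu_n:=a_1+na_2-n+\sum_{j=0}^{n-1}m_j$ and $\mu_{n-1}:=a_1+(n-1)a_2-(n-1)+\sum_{j=0}^{n-2}m_j$, both in $[0,1]$ by Lemma~\ref{lemma:2-7} and the inductive hypothesis.

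Since both $\mu_n\le 1$ and $\mu_{n-1}\le 1$, Lemma~\ref{lemma:adjunction} applied successively with $F_n$ and with $F^n_{n-1}$ in the role of the Cartier curve yields
$$\mathrm{mult}_P\bigl(D^n\cdot F_n\bigr)+\mu_{n-1}>1\quad\text{and}\quad \mathrm{mult}_P\bigl(D^n\cdot F^n_{n-1}\bigr)+\mu_n>1.$$
A standard computation using $\pi_n^{*}F_{n-1}=F^n_{n-1}+F_n$ and $F_n^{2}=-1$ gives $D^n\cdot F_n=m_{n-1}$ and $D^n\cdot F^n_{n-1}=m_{n-2}-m_{n-1}$. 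Bounding the local intersection multiplicities by these global ones and adding produces $m_{n-2}>2-\mu_n-\mu_{n-1}$. Since $(m_j)$ is non-increasing, $m_0\ge m_{n-2}$, so $m_0>2-\mu_n-\mu_{n-1}$.

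I would then imitate the convex-combination trick from Lemma~\ref{lemma:2-8}: weighting $m_0\le M+Aa_1-a_2$ by $(\beta+B\alpha)/(AB-1)$ and $m_0\le N+Ba_2-a_1$ by $(\alpha+A\beta)/(AB-1)$, summing, and using $\alpha a_1+\beta a_2\le 1$, while now inserting the sharper lower bound $m_0>2-\mu_n-\mu_{n-1}$, one obtains
$$(2-\mu_n-\mu_{n-1})\bigl(\alpha(B+1)+\beta(A+1)\bigr)<AB-1+\alpha(MB+N)+\beta(M+AN).$$
The third inequality in Lemma~\ref{lemma:2-0} rewrites as $\alpha(B+1)+\beta(A+1)\ge AB-1+\alpha(MB+N)+\beta(M+AN)$, and the common quantity is strictly positive because $A(B-1)\ge 1$ forces $AB>1$. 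This forces $\mu_n+\mu_{n-1}>1$.

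The main obstacle is the final elimination: showing that $\mu_n+\mu_{n-1}>1$ is actually impossible. Expanding $\mu_n+\mu_{n-1}=2a_1+(2n-1)a_2-(2n-1)+2\sum_{j=0}^{n-1}m_j-m_{n-1}$, and feeding in both the global bound $\sum_{j=0}^{n-1}m_j\le\mathrm{mult}_O(D\cdot\Delta_2)\le N+Ba_2-a_1$ (obtained from $\mathrm{mult}_{O_n}(D^n\cdot\Delta^n_2)\ge 0$ by iterating the blow-up formula for local intersection multiplicities) together with the multiplicity bound $m_{n-1}>1-\mu_{n-1}$ and the monotonicity $m_j\le m_0\le M+Aa_1-a_2$, one obtains strict linear lower bounds on $a_1$ and $a_2$ of the same shape as in Lemmas~\ref{lemma:2-6} and~\ref{lemma:2-7}. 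These should contradict $\alpha a_1+\beta a_2\le 1$ via the hypotheses $\alpha(1-M)+A\beta\ge A$ and $A^{2}(B+N-1)\beta\le\alpha(A+M-1)$, by the same kind of $n^{2}$-coefficient comparison already carried out in the proof of Lemma~\ref{lemma:2-7}. The delicate point is that here the weaker assertion $\mu_n+\mu_{n-1}>1$ (rather than $\mu_n>1$) must be leveraged, which forces a more careful allocation of the $a_2$-contribution between $\sum_{j=0}^{n-1}m_j$ and the isolated term $m_{n-1}$ in order to produce a contradiction uniformly in $n\ge 2$.
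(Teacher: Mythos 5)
Your setup is correct and your two adjunction inequalities $m_{n-1}>1-\mu_{n-1}$ and $m_{n-2}-m_{n-1}>1-\mu_n$ are both valid, as is the deduction $m_0\geqslant m_{n-2}>2-\mu_n-\mu_{n-1}$ and the subsequent conclusion (via the convex-combination argument of Lemma~\ref{lemma:2-8}) that $\mu_n+\mu_{n-1}>1$. But the proof stops there: the final elimination, which you yourself flag as ``delicate'' and which is where all the content lies, is not carried out. As written, the argument does not establish the lemma; it reduces it to an unverified claim that $\mu_n+\mu_{n-1}>1$ is inconsistent with the hypotheses, and the sketch you give of how to close this (allocating the $a_2$-contribution between $\sum m_j$ and $m_{n-1}$, comparing with $\alpha a_1+\beta a_2\leqslant 1$) is not made precise enough to check. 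That is a genuine gap.

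You also took a longer route than necessary. The paper's proof discards the adjunction from $F_n$ entirely and uses only your second inequality, $m_{n-2}-m_{n-1}=D^n\cdot F^{n}_{n-1}>1-\mu_n$. Rewriting this as $m_{n-2}-m_{n-1}+\sum_{j=0}^{n-1}m_j>n+1-a_1-na_2$ and observing that the left side equals $m_{n-2}+\sum_{j=0}^{n-2}m_j$, which is a sum of $n$ terms each bounded by $m_0\leqslant M+Aa_1-a_2$ (Lemma~\ref{lemma:2-21}), one gets $n(M+Aa_1-a_2)>n+1-a_1-na_2$, i.e.\ $a_1>(n+1-nM)/(nA+1)$. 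This is exactly the bound produced in the proof of Lemma~\ref{lemma:2-7}, and the contradiction there (combining with $a_2>(n-N)/(B+n-1)$ from Lemma~\ref{lemma:2-6} and the constraints on $A,B,M,N,\alpha,\beta$) applies verbatim. Your extra inequality from adjunction on $F_n$ is true but steers you toward the aggregated quantity $\mu_n+\mu_{n-1}$, which does not plug into the machinery already in place; dropping it and keeping only the $F^n_{n-1}$ adjunction lands you directly in the hypotheses of the Lemma~\ref{lemma:2-7} calculation.
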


\begin{proof}
Suppose that the~log pair~\ref{equation:log-pair} is not log
canonical at  $F_{n}\cap F^{n}_{n-1}$. Then the~log pair
$$
\Bigg(X_{n},\
D^{n}+\Bigg(a_{1}+\big(n-1\big)a_{2}-\big(n-1\big)+\sum_{j=0}^{n-2}m_{j}\Bigg)F^{n}_{n-1}+\Bigg(a_{1}+na_{2}-n+\sum_{j=0}^{n-1}m_{j}\Bigg)F_{n}\Bigg)%
$$
is not log canonical at the~point $F_{n}\cap F^{n}_{n-1}$ as well.
Then
$$
m_{n-2}-m_{n-1}=D^{n}\cdot
F_{n-2}>1-\Bigg(a_{1}+na_{2}-n+\sum_{j=0}^{n-1}m_{j}\Bigg)
$$
by Lemma~\ref{lemma:adjunction}, because
$a_{1}+(n-1)a_{2}-(n-1)+\sum_{j=0}^{n-2}m_{j}$. Note that
$$
M+Aa_{1}-a_{2}-m_{0}\geqslant\mathrm{mult}_{O}\Big(D\cdot\Delta_{1}\Big)-m_{0}\geqslant D\cdot\Delta_{1}-m_{0}=D^{1}\cdot\Delta^{1}_{1}\geqslant 0,%
$$
which implies that $m_{0}+a_{2}\leqslant Aa_{1}+M$. Then
$$
nM+nAa_{1}-na_{2}\geqslant nm_{0}\geqslant\big(n+1\big)m_{0}-m_{n-1}\geqslant m_{n-2}-m_{n-1}+\sum_{j=0}^{n-1}m_{j}>n+1-a_{1}-na_{2},%
$$
which gives $a_{1}>(n+1-nM)/(An+1)$. The proof of
Lemma~\ref{lemma:2-7} implies a~contradiction.
\end{proof}

The assertion of Lemma~\ref{lemma:2-inductive} is proved. The
assertion of Theorem~\ref{theorem:I} is proved.

\section{Inequality II}
\label{section:inequality-1}

Let $X$ be a~surface, let $O$ be a~smooth point of $X$, let
$\mathcal{M}$ be a~linear system on  $X$ that does not have fixed
components, let $\Delta_{1}$ be an irreducible and reduced curve
on $X$~such~that
$$
O\in\Delta_{1}\setminus\mathrm{Sing}\big(\Delta_{1}\big),
$$
let $\epsilon$ and $a_{1}$ be rational numbers such that
$\epsilon>0$ and $a_{1}\leqslant 0$. Suppose that the~log pair
$$
\Big(X,\ \epsilon\mathcal{M}+a_{1}\Delta_{1}\Big)
$$
is not terminal at the~point $O$. To prove
Theorem~\ref{theorem:II}, we must prove that
\begin{itemize}
\item the~inequality~\ref{equation:Cheltsov} holds, which means
$$
\mathrm{mult}_{O}\Big(M_{1}\cdot M_{2}\Big)\geqslant
\left\{\aligned
&\frac{1-2a_{1}}{\epsilon^{2}}\ \text{if}\ a_{1}\geqslant -1\slash 2,\\
&\frac{-4a_{1}}{\epsilon^{2}}\ \text {if}\ a_{1}\leqslant -1\slash 2,\\
\endaligned
\right.
$$
where $M_{1}$ and $M_{2}$ are general curves in $\mathcal{M}$,

\item if the~inequality~\ref{equation:Cheltsov} is, actually, an
equality, then
\begin{itemize}
\item either $-a_{1}\in\mathbb{N}$ and $\mathrm{mult}_{O}(\mathcal{M})=2/\epsilon$,%
\item or $a_{1}=0$ and $\mathrm{mult}_{O}(\mathcal{M})=1/\epsilon$.%
\end{itemize}
\end{itemize}

There is a~birational morphism $\pi\colon\bar{X}\to X$ such that
\begin{itemize}
\item the~morphism $\pi$ is a~sequence of blow ups of smooth points,%

\item the~morphism contracts $m$ irreducible curves $E_{1},E_{2},\ldots,E_{m}$ to the~point $O$,%

\item the~morphism $\pi$ induces an isomorphism
$$
\bar{X}\setminus \bigcup_{i=1}^{m}E_{i}\cong X\setminus O,
$$

\item for some rational numbers $d_{1},d_{2},\ldots,d_{m}$, we
have
$$
K_{\bar{X}}+\epsilon\bar{\mathcal{M}}+a_{1}\bar{\Delta}_{1}\equiv\pi^{*}\Big(K_{X}+\epsilon\mathcal{M}+a_{1}\Delta_{1}\Big)+\sum_{i=1}^{m}d_{i}E_{i},
$$
where $\bar{\mathcal{M}}$ and $\bar{\Delta}_{1}$ are proper transforms of $\mathcal{M}$ and $\Delta_{1}$ on the~surface $\bar{X}$, respectively,%

\item the~inequality $d_{m}\leqslant 0$ holds and
\begin{itemize}
\item either the~equality $m=1$ holds,%
\item or $m\geqslant 2$ and $d_{i}>0$ for every
$i\in\{1,\ldots,m-1\}$.
\end{itemize}
\end{itemize}

\begin{lemma}
\label{lemma:a1-negative} Suppose that $a_{1}\geqslant 0$. Then
$m=1$.
\end{lemma}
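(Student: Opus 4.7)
The plan is to proceed by contradiction: suppose $a_1 \geq 0$ and $m \geq 2$. By the~hypothesis on $\pi$, the~discrepancies $d_1, \ldots, d_{m-1}$ are positive while $d_m \leq 0$. Since $\Delta_1$ is smooth at~$O$ and $\pi_1$ is the~blow up of~$O$, the~standard discrepancy formula gives
$$
d_1 = 1 - \epsilon\,\mathrm{mult}_O\big(\mathcal{M}\big) - a_1 > 0,
$$
so $\epsilon m_0 + a_1 < 1$ for $m_0 := \mathrm{mult}_O(\mathcal{M})$. I will show by induction on $k$ that $d_{k+1} \geq d_1 > 0$ for every $k \in \{1, \ldots, m-1\}$, which contradicts $d_m \leq 0$ and forces $m = 1$.

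For the~inductive step I would write the~log pull back on~$X_k$ as
$$
\Big(X_k,\ \epsilon\bar{\mathcal{M}}_k + a_1\bar{\Delta}_{1,k} - \sum_{i=1}^{k} d_i \bar{E}_{i,k}\Big),
$$
where the~bars denote proper transforms on~$X_k$. Let $Q_k \in X_k$ be the~center of the~blow up~$\pi_{k+1}$. Because $\pi_1 \circ \cdots \circ \pi_{k+1}$ contracts $E_{k+1}$ to~$O$, the~point $Q_k$ lies in the~exceptional fibre over~$O$, and the~blow up formula (cf.\ Example~\ref{example:smooth-point-and-log-pull-back}) gives
$$
d_{k+1} = 1 - \epsilon\,\mathrm{mult}_{Q_k}\big(\bar{\mathcal{M}}_k\big) - a_1\,\mathrm{mult}_{Q_k}\big(\bar{\Delta}_{1,k}\big) + \sum_{i=1}^{k} d_i\,\mathrm{mult}_{Q_k}\big(\bar{E}_{i,k}\big).
$$

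It~remains to bound each of the~three pieces. First, $\mathrm{mult}_{Q_k}(\bar{\mathcal{M}}_k) \leq m_0$: by induction on the~blow up sequence, the~proper transform of a~divisor of multiplicity $m$ meets the~new exceptional curve in a~zero cycle of total degree $m$, so the~multiplicity at any infinitely near point of~$O$ is bounded by~$m_0$. Second, $\mathrm{mult}_{Q_k}(\bar{\Delta}_{1,k}) \leq 1$, because $\Delta_1$ is smooth at~$O$ and each of its successive proper transforms is a~smooth curve. Third, $d_i\,\mathrm{mult}_{Q_k}(\bar{E}_{i,k}) \geq 0$ for every $i \leq k$ by the~inductive hypothesis $d_i > 0$ together with non-negativity of multiplicities. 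Assembling these three estimates,
$$
d_{k+1} \geq 1 - \epsilon m_0 - a_1 = d_1 > 0,
$$
completing the~induction. The~whole argument is essentially bookkeeping: the~real content is that when $a_1 \geq 0$ the~negative exceptional coefficients $-d_i$ in the~log pull back boundary only \emph{improve} subsequent discrepancies, so once the~first blow up has a~positive discrepancy the~pair remains terminal at every infinitely near point of~$O$, and no later non-terminal valuation can appear.
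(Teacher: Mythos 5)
Your proof is correct. The paper declares this lemma ``well-known and easy to prove'' and omits the argument entirely, so there is nothing to compare against; the blow-up bookkeeping you carry out is the standard one: the monotone bounds $\mathrm{mult}_{Q_k}(\bar{\mathcal{M}}_k)\leqslant\mathrm{mult}_O(\mathcal{M})$ and $\mathrm{mult}_{Q_k}(\bar\Delta_{1,k})\leqslant 1$, the sign hypothesis $a_1\geqslant 0$, and the given positivity of $d_1,\ldots,d_{m-1}$ together force $d_{k+1}\geqslant d_1>0$ for every $k\leqslant m-1$, so $d_m>0$, contradicting $d_m\leqslant 0$. One minor remark on phrasing: what you label an ``induction'' is really a direct estimate for each $k$, since the bound $d_{k+1}\geqslant d_1$ never invokes $d_j\geqslant d_1$ for smaller $j$, and the positivity $d_i>0$ for $i<m$ that enters your third estimate is part of the hypothesis on the chosen $\pi$ rather than an inductive conclusion.
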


\begin{proof}
The required assertion is well-known and easy to prove. So we omit
the~proof.
\end{proof}

Let $\chi\colon \hat{X}\to X$ be a~blow up of the~point $O$, let
$E$ be the~exceptional curve of $\chi$. Then
$$
K_{\hat{X}}+\epsilon\hat{\mathcal{M}}+a_{1}\hat{\Delta}_{1}+\Big(\epsilon\mathrm{mult}_{O}\big(\mathcal{M}\big)+a_{1}-1\Big)E\equiv\pi^{*}\Big(K_{X}+\epsilon\mathcal{M}+a_{1}\Delta_{1}\Big),
$$
where $\hat{\mathcal{M}}$ and $\hat{\Delta}_{1}$ are proper
transforms of $\mathcal{M}$ and $\Delta_{1}$ on the~surface
$\hat{X}$,~respectively.

\begin{lemma}
\label{lemma:m-1} Suppose that $m=1$. Then
\begin{itemize}
\item the~inequality~\ref{equation:Cheltsov} holds,%
\item if the~inequality~\ref{equation:Cheltsov} is, actually, an
equality, then
\begin{itemize}
\item either $-a_{1}\in\mathbb{N}$ and $\mathrm{mult}_{O}(\mathcal{M})=2/\epsilon$,%
\item or $a_{1}=0$ and $\mathrm{mult}_{O}(\mathcal{M})=1/\epsilon$.%
\end{itemize}
\end{itemize}
\end{lemma}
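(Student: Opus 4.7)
The plan is to exploit the~fact that when $m=1$ the~resolution $\pi$ reduces to the~single blow up $\chi$, so that the~condition $d_{1}\leqslant 0$ coming from the~failure of terminality directly controls $\mathrm{mult}_{O}(\mathcal{M})$; from that, a~Bezout-type estimate for $\mathrm{mult}_{O}(M_{1}\cdot M_{2})$ will give the~inequality~\ref{equation:Cheltsov}, and inspection of the~equality cases will recover the~classification.

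I would first observe that, with $\pi=\chi$, the~discrepancy formula displayed just before the~statement of the~lemma immediately yields
$$
d_{1}=1-\epsilon\,\mathrm{mult}_{O}\big(\mathcal{M}\big)-a_{1}\leqslant 0,
$$
and hence $\mathrm{mult}_{O}(\mathcal{M})\geqslant(1-a_{1})/\epsilon$. For sufficiently general $M_{1},M_{2}\in\mathcal{M}$ the~standard surface-intersection inequality $\mathrm{mult}_{O}(M_{1}\cdot M_{2})\geqslant\mathrm{mult}_{O}(M_{1})\mathrm{mult}_{O}(M_{2})=\mathrm{mult}_{O}(\mathcal{M})^{2}$ (used already in the~proof of Theorem~\ref{theorem:Corti}) then produces
$$
\mathrm{mult}_{O}\Big(M_{1}\cdot M_{2}\Big)\geqslant\frac{(1-a_{1})^{2}}{\epsilon^{2}}.
$$
The inequality~\ref{equation:Cheltsov} would now follow from the~two elementary identities $(1-a_{1})^{2}-(1-2a_{1})=a_{1}^{2}$, used when $a_{1}\geqslant-1/2$, and $(1-a_{1})^{2}-(-4a_{1})=(1+a_{1})^{2}$, used when $a_{1}\leqslant -1/2$, both of which are manifestly non-negative.

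For the~equality case I would require that every step of the~chain be simultaneously tight. Equality in the~final comparison forces $a_{1}^{2}=0$ or $(1+a_{1})^{2}=0$, i.e.~$a_{1}\in\{0,-1\}$; equality in the~middle step pins $\mathrm{mult}_{O}(\mathcal{M})=(1-a_{1})/\epsilon$, which specialises to $1/\epsilon$ when $a_{1}=0$ and to $2/\epsilon$ when $a_{1}=-1$; and simultaneously $d_{1}=0$, so the~pair $(X,\epsilon\mathcal{M}+a_{1}\Delta_{1})$ is canonical at $O$. Since $-a_{1}=1\in\mathbb{N}$ in the~second case, the~dichotomy exactly matches the~two alternatives in the~statement of Theorem~\ref{theorem:II}. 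The~argument is essentially a~short calculation; the~only subtle point, and the~closest thing to a~``main obstacle'', is the~equality bookkeeping — one has to check that the~two different algebraic identities governing the~ranges $a_{1}\geqslant -1/2$ and $a_{1}\leqslant -1/2$ both produce equality conditions of the~form prescribed by the~theorem, with canonicity coming automatically from the~fact that $d_{1}=0$ when the~first inequality is tight.
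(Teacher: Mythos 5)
Your proposal is correct and follows the same route as the paper: both identify $\pi=\chi$, read off $d_{1}=1-\epsilon\,\mathrm{mult}_{O}(\mathcal{M})-a_{1}\leqslant 0$, apply the Bezout-type estimate $\mathrm{mult}_{O}(M_{1}\cdot M_{2})\geqslant\mathrm{mult}_{O}^{2}(\mathcal{M})\geqslant(1-a_{1})^{2}/\epsilon^{2}$, and then compare with $\max\bigl((1-2a_{1})/\epsilon^{2},\,-4a_{1}/\epsilon^{2}\bigr)$ via the identities $(1-a_{1})^{2}-(1-2a_{1})=a_{1}^{2}$ and $(1-a_{1})^{2}+4a_{1}=(1+a_{1})^{2}$. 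The equality analysis, forcing $a_{1}\in\{0,-1\}$ with the corresponding multiplicities $1/\epsilon$ and $2/\epsilon$, matches the paper exactly.
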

\begin{proof}
We have $\pi=\chi$ and
$d_{1}=1-\epsilon\mathrm{mult}_{O}(\mathcal{M})-a_{1}\leqslant 0$.
Hence, we have
$$
\mathrm{mult}_{O}\Big(M_{1}\cdot M_{1}\Big)\geqslant
\mathrm{mult}^{2}_{P}\big(\mathcal{M}\big)\geqslant
\frac{\big(1-a_{1}\big)^{2}}{\epsilon^{2}}\geqslant
\mathrm{max}\left(\frac{1-2a_{1}}{\epsilon^{2}},\
\frac{-4a_{1}}{\epsilon^{2}}\right),
$$
which implies the~inequality~\ref{equation:Cheltsov}. Moreover, if
the~inequality~\ref{equation:Cheltsov} is equality, then
\begin{itemize}
\item either $a_{1}=-1$ and $\mathrm{mult}_{O}(\mathcal{M})=2/\epsilon$,%
\item or $a_{1}=0$ and $\mathrm{mult}_{O}(\mathcal{M})=1/\epsilon$,%
\end{itemize}
which completes the~proof.
\end{proof}

Let us prove Theorem~\ref{theorem:II} by induction on $m$. We may
assume that $m\geqslant 2$. Then $a_{1}<0$ and
$$
\Bigg(\hat{X},\
a_{1}\hat{\Delta}_{1}+\epsilon\hat{\mathcal{M}}+\Big(\epsilon\mathrm{mult}_{O}\big(\mathcal{M}\big)+a_{1}-1\Big)E\Bigg)
$$
is not terminal at some point $Q\in E$. Note that
$d_{1}=1-\epsilon\mathrm{mult}_{O}(\mathcal{M})-a_{1}$.

Let $\hat{M}_{1}$ and $\hat{M}_{2}$ be proper transforms of
$M_{1}$ and $M_{2}$ on the~surface $\hat{X}$, respectively. Then
$$
\mathrm{mult}_{O}\Big(M_{1}\cdot M_{2}\Big)\geqslant\mathrm{mult}^{2}_{P}\big(\mathcal{M}\big)+\mathrm{mult}_{Q}\Big(\hat{M}_{1}\cdot \hat{M}_{2}\Big),%
$$
where $\mathrm{mult}_{O}(\mathcal{M})\geqslant 1/\epsilon$. But
$d_{1}=1-\epsilon\mathrm{mult}_{O}(\mathcal{M})-a_{1}\leqslant 0$
and the~log pair
$$
\Bigg(\hat{X},\
\epsilon\hat{\mathcal{M}}+\Big(\epsilon\mathrm{mult}_{O}\big(\mathcal{M}\big)+a_{1}-1\Big)E\Bigg)
$$
is not terminal at the~point $Q$ as well. By induction, we have
\begin{equation}
\label{equation:Cheltsov-auxiliary}
\mathrm{mult}_{Q}\Big(\hat{M}_{1}\cdot \hat{M}_{2}\Big)\geqslant
\left\{\aligned
&\frac{3-2\epsilon\mathrm{mult}_{O}\big(\mathcal{M}\big)-2a_{1}}{\epsilon^{2}}\ \text{if}\ \epsilon\mathrm{mult}_{O}\big(\mathcal{M}\big)+a_{1}\geqslant 1\slash 2,\\
&\frac{4-4\epsilon\mathrm{mult}_{O}\big(\mathcal{M}\big)-4a_{1}}{\epsilon^{2}}\ \text {if}\ \epsilon\mathrm{mult}_{O}\big(\mathcal{M}\big)+a_{1}\leqslant 1\slash 2,\\
\endaligned
\right.
\end{equation}
and  if the~inequality~\ref{equation:Cheltsov-auxiliary} is an
equality, then
\begin{itemize}
\item either $-a_{1}-\epsilon\mathrm{mult}_{O}(\mathcal{M})+1\in\mathbb{N}$ and $\mathrm{mult}_{Q}(\hat{M}_{1})=\mathrm{mult}_{Q}(\hat{M}_{2})=2/\epsilon$,%
\item or $\epsilon\mathrm{mult}_{O}(\mathcal{M})+a_{1}=1$ and $\mathrm{mult}_{Q}(\hat{M}_{1})=\mathrm{mult}_{Q}(\hat{M}_{2})=1/\epsilon$.%
\end{itemize}

\begin{lemma}
\label{lemma:Cheltsov-inequality-1} Suppose that
$\epsilon\mathrm{mult}_{O}(\mathcal{M})+a_{1}\leqslant 1/2$. Then
\begin{itemize}
\item the~inequality~\ref{equation:Cheltsov} holds,%
\item if the~inequality~\ref{equation:Cheltsov} is, actually, an
equality, then
\begin{itemize}
\item either $-a_{1}\in\mathbb{N}$ and $\mathrm{mult}_{O}(\mathcal{M})=2/\epsilon$,%
\item or $a_{1}=0$ and $\mathrm{mult}_{O}(\mathcal{M})=1/\epsilon$.%
\end{itemize}
\end{itemize}
\end{lemma}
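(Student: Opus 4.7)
The plan is to apply the inductive hypothesis in its second regime and then observe that the arithmetic collapses almost automatically. Write $\mu=\mathrm{mult}_{O}(\mathcal{M})$, so that $\mu\geqslant 1/\epsilon$ (because $(X,\epsilon\mathcal{M}+a_{1}\Delta_{1})$ is not terminal at $O$ and $\chi$ exhibits this). First I would note that the hypothesis $\epsilon\mu+a_{1}\leqslant 1/2$ together with $\epsilon\mu\geqslant 1$ forces $a_{1}\leqslant -1/2$; in particular $-4a_{1}/\epsilon^{2}\geqslant (1-2a_{1})/\epsilon^{2}$, so that $\max\bigl(-4a_{1}/\epsilon^{2},(1-2a_{1})/\epsilon^{2}\bigr)=-4a_{1}/\epsilon^{2}$, and the inequality~\ref{equation:Cheltsov} to be proved reduces to
$$
\mathrm{mult}_{O}\bigl(M_{1}\cdot M_{2}\bigr)\geqslant\frac{-4a_{1}}{\epsilon^{2}}.
$$

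Next I would invoke inequality~\ref{equation:Cheltsov-auxiliary} in the regime $\epsilon\mu+a_{1}\leqslant 1/2$, which gives
$$
\mathrm{mult}_{Q}\bigl(\hat{M}_{1}\cdot\hat{M}_{2}\bigr)\geqslant\frac{4-4\epsilon\mu-4a_{1}}{\epsilon^{2}},
$$
and combine it with the pullback bound $\mathrm{mult}_{O}(M_{1}\cdot M_{2})\geqslant\mu^{2}+\mathrm{mult}_{Q}(\hat{M}_{1}\cdot\hat{M}_{2})$. Completing the square then yields
$$
\mathrm{mult}_{O}\bigl(M_{1}\cdot M_{2}\bigr)\geqslant\mu^{2}+\frac{4-4\epsilon\mu-4a_{1}}{\epsilon^{2}}=\frac{(\epsilon\mu-2)^{2}-4a_{1}}{\epsilon^{2}}\geqslant\frac{-4a_{1}}{\epsilon^{2}},
$$
which is the required bound.

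For the equality case I would trace the chain of inequalities backwards. If $\mathrm{mult}_{O}(M_{1}\cdot M_{2})=-4a_{1}/\epsilon^{2}$, then the displayed chain forces $(\epsilon\mu-2)^{2}=0$, hence $\mathrm{mult}_{O}(\mathcal{M})=2/\epsilon$, and the inductive inequality~\ref{equation:Cheltsov-auxiliary} at $Q$ must itself be an equality. Feeding $\epsilon\mu=2$ into the two inductive equality alternatives: the alternative $\epsilon\mu+a_{1}=1$ forces $a_{1}=-1$, giving $-a_{1}=1\in\mathbb{N}$; the alternative $1-a_{1}-\epsilon\mu\in\mathbb{N}$ (with the corresponding multiplicity $2/\epsilon$ at $Q$) gives $-a_{1}-1\in\mathbb{N}$, that is, $-a_{1}\in\mathbb{N}_{\geqslant 2}$. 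In either case $-a_{1}\in\mathbb{N}$ and $\mathrm{mult}_{O}(\mathcal{M})=2/\epsilon$, matching the conclusion (the second bulleted possibility $a_{1}=0$ is excluded since $a_{1}\leqslant-1/2$).

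I expect no genuine obstacle: the only place that could go wrong is if $\mu\geqslant 1/\epsilon$ were not available, but this is exactly the statement that $(X,\epsilon\mathcal{M}+a_{1}\Delta_{1})$ fails to be terminal at $O$ after removing the $a_{1}\Delta_{1}$ contribution, and it is what makes the dichotomy $\epsilon\mu+a_{1}\leqslant 1/2$ automatically imply $a_{1}\leqslant -1/2$. The algebraic heart of the argument is simply the identity $\mu^{2}+(4-4\epsilon\mu)/\epsilon^{2}=(\epsilon\mu-2)^{2}/\epsilon^{2}$.
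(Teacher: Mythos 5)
Your proof is correct and follows essentially the same route as the paper: deduce $a_{1}\leqslant -1/2$ from $\epsilon\,\mathrm{mult}_{O}(\mathcal{M})\geqslant 1$, apply the inductive bound, complete the square to get $\mathrm{mult}_{O}(M_{1}\cdot M_{2})\geqslant -4a_{1}/\epsilon^{2}+(\mathrm{mult}_{O}(\mathcal{M})-2/\epsilon)^{2}$, and trace equality back to $\epsilon\,\mathrm{mult}_{O}(\mathcal{M})=2$ and $-a_{1}\in\mathbb{N}$. The only cosmetic difference is that you enumerate both inductive equality alternatives; the first, $\epsilon\,\mathrm{mult}_{O}(\mathcal{M})+a_{1}=1$, is in fact excluded outright by the standing hypothesis $\epsilon\,\mathrm{mult}_{O}(\mathcal{M})+a_{1}\leqslant 1/2$, which is why the paper silently invokes only the other one.
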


\begin{proof}
Then $a_{1}\leqslant
1/2-\epsilon\mathrm{mult}_{O}(\mathcal{M})\leqslant -1/2$ and
$$
\mathrm{mult}_{O}\Big(M_{1}\cdot
M_{2}\Big)\geqslant\mathrm{mult}^{2}_{P}\big(\mathcal{M}\big)+\frac{4-4\epsilon\mathrm{mult}_{O}\big(\mathcal{M}\big)-4a_{1}}{\epsilon^{2}}=
\frac{-4a_{1}}{\epsilon^{2}}+\Bigg(\mathrm{mult}_{O}\big(\mathcal{M}\big)-\frac{2}{\epsilon}\Bigg)^{2}
\geqslant \frac{-4a_{1}}{\epsilon^{2}},
$$
which implies the~inequality~\ref{equation:Cheltsov}. Moreover, if
the~inequality~\ref{equation:Cheltsov} is an~equality, then
$$
\mathrm{mult}_{O}\big(\mathcal{M}\big)=\frac{2}{\epsilon},
$$
and the~inequality~\ref{equation:Cheltsov-auxiliary} is
an~equality as well. By induction, we see that
$$
-a_{1}-1=-a_{1}-\epsilon\mathrm{mult}_{O}(\mathcal{M})+1\in\mathbb{N},
$$
which implies that $-a_{1}\in\mathbb{N}$.
\end{proof}

Thus, to complete the~proof of Theorem~\ref{theorem:II}, we may
assume that $\epsilon\mathrm{mult}_{O}(\mathcal{M})+a_{1}\geqslant
1/2$.

\begin{lemma}
\label{lemma:Cheltsov-inequality-2} Suppose that $a_{1}\geqslant
-1/2$. Then
\begin{itemize}
\item the~inequality~\ref{equation:Cheltsov} holds,%
\item the~inequality~\ref{equation:Cheltsov} is not an equality.
\end{itemize}
\end{lemma}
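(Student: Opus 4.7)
The plan is to combine the~additivity bound
$\mathrm{mult}_{O}(M_{1}\cdot M_{2})\geqslant\mathrm{mult}^{2}_{O}(\mathcal{M})+\mathrm{mult}_{Q}(\hat{M}_{1}\cdot\hat{M}_{2})$, which was~already established on the~surface $\hat{X}$, with the~inductive inequality~\ref{equation:Cheltsov-auxiliary}. Since we are now in the~regime $\epsilon\mathrm{mult}_{O}(\mathcal{M})+a_{1}\geqslant 1/2$ (this being precisely the~case left open after Lemma~\ref{lemma:Cheltsov-inequality-1}), the~first branch of~\ref{equation:Cheltsov-auxiliary} applies and yields
$\mathrm{mult}_{Q}(\hat{M}_{1}\cdot\hat{M}_{2})\geqslant(3-2\epsilon\mathrm{mult}_{O}(\mathcal{M})-2a_{1})/\epsilon^{2}$.

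Writing $t=\epsilon\mathrm{mult}_{O}(\mathcal{M})$ for brevity and substituting, the~two inequalities combine to
$$
\mathrm{mult}_{O}\Big(M_{1}\cdot M_{2}\Big)\geqslant\frac{t^{2}-2t+3-2a_{1}}{\epsilon^{2}}=\frac{(t-1)^{2}+2-2a_{1}}{\epsilon^{2}}\geqslant\frac{2-2a_{1}}{\epsilon^{2}},
$$
where the~last step uses $(t-1)^{2}\geqslant 0$. Under the~hypothesis $a_{1}\geqslant -1/2$, the~relevant instance of~\ref{equation:Cheltsov} to be established reads $\mathrm{mult}_{O}(M_{1}\cdot M_{2})\geqslant(1-2a_{1})/\epsilon^{2}$, and the~lower bound above exceeds this target by at~least $1/\epsilon^{2}$. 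This proves both claims of the~lemma at once: the~inequality~\ref{equation:Cheltsov} holds, and it holds strictly, so it cannot be an equality.

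There is essentially no obstacle here beyond a~one-line square completion once the~inductive hypothesis is invoked; the~only thing requiring care is confirming that the~\emph{first} branch of the~piecewise bound~\ref{equation:Cheltsov-auxiliary} is the~one available, which is guaranteed by the~standing assumption $\epsilon\mathrm{mult}_{O}(\mathcal{M})+a_{1}\geqslant 1/2$ recorded immediately before the~lemma. In particular, the~equality clauses of the~inductive hypothesis (the~two cases with $-a_{1}-\epsilon\mathrm{mult}_{O}(\mathcal{M})+1\in\mathbb{N}$ or $\epsilon\mathrm{mult}_{O}(\mathcal{M})+a_{1}=1$) never need to be analysed, because the~slack $(t-1)^{2}+1\geqslant 1$ forces strictness regardless.
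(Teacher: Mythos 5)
Your argument is correct and matches the paper's proof: both plug the first branch of the inductive bound~\ref{equation:Cheltsov-auxiliary} into the additivity inequality, complete the square in $t=\epsilon\,\mathrm{mult}_{O}(\mathcal{M})$ to obtain $\mathrm{mult}_{O}(M_{1}\cdot M_{2})\geqslant(2-2a_{1})/\epsilon^{2}$, and observe that this strictly exceeds the target $(1-2a_{1})/\epsilon^{2}$. Your explicit remark that the slack $1/\epsilon^{2}$ makes the equality cases of the inductive hypothesis irrelevant is a small but correct clarification of what the paper leaves implicit.
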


\begin{proof}
It follows from
$\epsilon\mathrm{mult}_{O}(\mathcal{M})+a_{1}\geqslant 1/2$ and
$a_{1}\geqslant -1/2$ that
$$
\mathrm{mult}_{O}\Big(M_{1}\cdot M_{1}\Big)\geqslant\mathrm{mult}^{2}_{P}\big(\mathcal{M}\big)+\frac{3-2\epsilon\mathrm{mult}_{O}\big(\mathcal{M}\big)-2a_{1}}{\epsilon^{2}}\geqslant\frac{2-2a_{1}}{\epsilon^{2}},%
$$
which completes the~proof.
\end{proof}

Thus, to complete the~proof of Teorem~\ref{theorem:II}, we may
assume that $a_{1}\leqslant -1/2$. Then
$$
\mathrm{mult}_{O}\Big(M_{1}\cdot
M_{1}\Big)\geqslant\mathrm{mult}^{2}_{P}\big(\mathcal{M}\big)+\frac{3-2\epsilon\mathrm{mult}_{O}\big(\mathcal{M}\big)-2a_{1}}{\epsilon^{2}}
\geqslant\frac{9/4-a_{1}+a^{2}_{1}}{\epsilon^{2}}\geqslant\frac{-4a_{1}}{\epsilon^{2}},%
$$
because $\mathrm{mult}_{O}(\mathcal{M})\geqslant
(1/2-a_{1})/\epsilon$, which implies
the~inequality~\ref{equation:Cheltsov}.

\begin{lemma}
\label{lemma:Cheltsov-inequality-3}
The~inequality~\ref{equation:Cheltsov} is not an equality.
\end{lemma}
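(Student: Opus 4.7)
The plan is to argue by contradiction: assume \ref{equation:Cheltsov} is an equality and trace what equality forces in each of the three inequalities displayed in the chain immediately above the lemma,
$$
\mathrm{mult}_{O}\Big(M_{1}\cdot M_{2}\Big)\geqslant\mathrm{mult}^{2}_{P}\big(\mathcal{M}\big)+\frac{3-2\epsilon\mathrm{mult}_{O}(\mathcal{M})-2a_{1}}{\epsilon^{2}}\geqslant\frac{9/4-a_{1}+a_{1}^{2}}{\epsilon^{2}}\geqslant\frac{-4a_{1}}{\epsilon^{2}}.
$$

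First I would read off the rightmost inequality, which rewrites as $(a_{1}+3/2)^{2}\geqslant 0$, so equality forces $a_{1}=-3/2$. Next, the middle inequality is obtained by substituting the lower bound $\mathrm{mult}_{O}(\mathcal{M})\geqslant (1/2-a_{1})/\epsilon$ into the convex quadratic $f(x)=x^{2}-2x/\epsilon+(3-2a_{1})/\epsilon^{2}$; since we are in the case $a_{1}\leqslant -1/2$, the constraint $x\geqslant (1/2-a_{1})/\epsilon$ lies to the right of the unconstrained minimum $x=1/\epsilon$, so $f$ is strictly increasing on the constrained range and equality forces $\mathrm{mult}_{O}(\mathcal{M})=(1/2-a_{1})/\epsilon$. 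Combined with $a_{1}=-3/2$, this gives $\mathrm{mult}_{O}(\mathcal{M})=2/\epsilon$ and in particular $\epsilon\mathrm{mult}_{O}(\mathcal{M})+a_{1}=1/2$.

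Finally, the leftmost inequality combines the bound $\mathrm{mult}_{O}(M_{1}\cdot M_{2})\geqslant\mathrm{mult}^{2}_{O}(\mathcal{M})+\mathrm{mult}_{Q}(\hat{M}_{1}\cdot\hat{M}_{2})$ with the inductive bound \ref{equation:Cheltsov-auxiliary} at the point $Q$; equality therefore forces \ref{equation:Cheltsov-auxiliary} itself to be an equality. The inductive equality clause then demands either $1-\epsilon\mathrm{mult}_{O}(\mathcal{M})-a_{1}\in\mathbb{N}$ or $\epsilon\mathrm{mult}_{O}(\mathcal{M})+a_{1}=1$; substituting the values just computed, the first alternative would require $1/2\in\mathbb{N}$ and the second would require $1/2=1$, and both are false. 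This is the desired contradiction. The argument is a routine unwinding of the chain, so there is no serious obstacle; the only subtlety is that one must interpret $\mathbb{N}$ as the set of positive integers so that $1/2\notin\mathbb{N}$, which is what ultimately rules out the first equality alternative.
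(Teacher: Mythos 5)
Your proof is correct and follows essentially the same route as the paper: unwind the chain of inequalities to force $a_{1}=-3/2$ and $\mathrm{mult}_{O}(\mathcal{M})=2/\epsilon$, hence $\epsilon\,\mathrm{mult}_{O}(\mathcal{M})+a_{1}=1/2$, then invoke the inductive equality clause of~\ref{equation:Cheltsov-auxiliary} and observe that both alternatives fail. (Your closing worry about the meaning of $\mathbb{N}$ is moot: $1/2$ is not an integer under any convention.)
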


\begin{proof}
Suppose that the~inequality~\ref{equation:Cheltsov} is an
equality. Then $a_{1}=-3/2$ and
$$
\mathrm{mult}_{O}\big(\mathcal{M}\big)=\frac{\big(1\slash 2-a_{1}\big)}{\epsilon},%
$$
which implies that $\mathrm{mult}_{O}(\mathcal{M})=2/\epsilon$.
The~inequality~\ref{equation:Cheltsov-auxiliary} must be
an~equality as well. Then
$$
\frac{1}{2}=\epsilon\mathrm{mult}_{O}(\mathcal{M})+a_{1}=1
$$
by induction, which is a~contradiction.
\end{proof}

Thus, the~assertion of Theorem~\ref{theorem:II} is proved.

\section{Orbifolds}
\label{section:del-Pezzo-orbifolds}

Let $X$ be a~Fano variety with at most quotient singularities.
The~number
$$
\mathrm{lct}\big(X\big)=\mathrm{sup}\left\{\lambda\in\mathbb{Q}\ \left|\ %
\aligned
&\text{the~log pair}\ \Big(X, \lambda D\Big)\ \text{is log canonical}\\
&\text{for every effective $\mathbb{Q}$-divisor $D\equiv-K_{X}$}\\
\endaligned\right.\right\}%
$$
is called the~global log canonical threshold of
the~Fano~variety~$X$ (cf. Definition~\ref{definition:threshold}).

\begin{theorem}
\label{theorem:KE} The variety $X$ admits an orbifold
K\"ahler--Einstein metric if
$$
\mathrm{lct}\big(X\big)>\frac{\mathrm{dim}(X)}{\mathrm{dim}(X)+1}.
$$
\end{theorem}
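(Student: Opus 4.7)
The plan is to reduce this to Tian's $\alpha$-invariant criterion for the existence of a Kähler--Einstein metric and then invoke Demailly--Kollár's identification of the $\alpha$-invariant with the global log canonical threshold. More precisely, define
$$
\alpha(X)=\sup\Bigl\{\gamma\in\mathbb{Q}_{>0}\ \Big|\ \exists\,C>0\ \text{with}\ \int_{X}e^{-\gamma(\varphi-\sup\varphi)}\,dV_{\omega}\leqslant C\ \text{for all}\ \omega\text{-psh}\ \varphi\ \text{with}\ \omega+i\partial\bar\partial\varphi\in c_{1}(X)\Bigr\},
$$
where $\omega$ is an orbifold Kähler form in $c_{1}(X)$. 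Tian's criterion (in the orbifold formulation of Demailly--Kollár) asserts that a $\mathbb{Q}$-Fano orbifold $X$ admits an orbifold Kähler--Einstein metric provided $\alpha(X)>\dim(X)/(\dim(X)+1)$. Demailly--Kollár further prove the equality $\alpha(X)=\mathrm{lct}(X)$; combining these two facts gives the theorem.

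The first step I would carry out is to verify the identity $\alpha(X)=\mathrm{lct}(X)$ in the orbifold setting. One direction ($\alpha(X)\leqslant\mathrm{lct}(X)$) is immediate by testing $\varphi=\gamma^{-1}\log|s|^{2}$ for sections $s$ of $-mK_{X}$ and using that local integrability of $|s|^{-2\gamma/m}$ near a point $P$ is equivalent to Kawamata log terminality of $(X,(\gamma/m)\mathrm{div}(s))$ at $P$. The reverse direction is the substantive part: it uses the Ohsawa--Takegoshi $L^{2}$ extension theorem together with a standard normal families / semicontinuity argument applied to a sequence of effective $\mathbb{Q}$-divisors $D_{i}\equiv -K_{X}$ whose log canonical thresholds would converge to a limiting value $<\alpha(X)$, producing an effective $\mathbb{Q}$-divisor $D\equiv -K_{X}$ realising the supremum.

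The second step is the continuity method. Solve the family of Monge--Ampère equations
$$
\bigl(\omega+i\partial\bar\partial\varphi_{t}\bigr)^{n}=e^{h_{\omega}-t\varphi_{t}}\omega^{n},\qquad t\in[0,1],
$$
in the orbifold sense. The set of $t$ for which a solution exists is open by the orbifold version of Yau's theorem and the implicit function theorem. Closedness reduces via Yau's higher-order estimates to a uniform $C^{0}$ estimate, which in turn reduces to bounding $\sup\varphi_{t}-\inf\varphi_{t}$, and this is precisely what the uniform $L^{1}$ bound on $e^{-\gamma(\varphi_{t}-\sup\varphi_{t})}$ supplied by $\alpha(X)>n/(n+1)$ yields through a Hörmander/Moser iteration argument carried out in the orbifold charts.

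The main obstacle is the orbifold nature of $X$: singularities of quotient type force one to work with the orbifold structure sheaf throughout, so Yau's estimates and the Ohsawa--Takegoshi extension must be run on smooth local uniformising covers and then descended equivariantly. All of this is handled in \cite{Ko97} and the references therein, so in the end the proof is completed by citing the combination of Tian's criterion in orbifold form with the equality $\alpha(X)=\mathrm{lct}(X)$.
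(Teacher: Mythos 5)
Your proposal is correct and follows essentially the same route the paper takes: the paper's proof is simply the citation list ``\cite{Ti87}, \cite{DeKo01}, \cite[Appendix~A]{ChSh08c}'', and your argument is a faithful expansion of what those sources contain (Tian's $\alpha$-invariant criterion, the Demailly--Koll\'ar identification $\alpha(X)=\mathrm{lct}(X)$, and the continuity method run in orbifold uniformising charts). The only slip is at the very end, where you attribute the orbifold bookkeeping to \cite{Ko97}; the relevant reference is \cite{DeKo01} together with \cite[Appendix~A]{ChSh08c}, while \cite{Ko97} is Koll\'ar's survey on singularities of pairs and does not treat the orbifold K\"ahler--Einstein problem.
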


\begin{proof}
See \cite{Ti87}, \cite{DeKo01}, \cite[Appendix~A]{ChSh08c}.
\end{proof}

The~following result is proved in \cite{Ko09}.

\begin{theorem}
\label{theorem:Kosta} Let $X$ be a~hypersurface in
$\mathbb{P}(1,1,2,3)$ of degree $6$. Then
$$
\mathrm{lct}\big(X\big)\geqslant\frac{5}{6}
$$
if the~set~$\mathrm{Sing}(X)$~consists of Du Val singular points
of type $\mathbb{A}_{3}$.
\end{theorem}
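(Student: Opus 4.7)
The plan is to argue by contradiction. Suppose there exist a rational number $\lambda < 5/6$ and an effective $\mathbb{Q}$-divisor $D \equiv -K_{X}$ such that the log pair $(X, \lambda D)$ fails to be log canonical at some point $P \in X$. The goal is to force a contradiction by combining the global numerical constraints on $X$ with the local inequalities proved earlier in the paper.

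First I would record the global picture. Since $X \subset \mathbb{P}(1,1,2,3)$ has degree $6$, one has $-K_{X} = \mathcal{O}_{X}(1)$ and $(-K_{X})^{2} = 1$. The pencil $|-K_{X}|$ is cut out by the two weight-one coordinates $x, y$; its general member is a smooth curve of arithmetic genus one, and it has a single base point $O_{0}\in X$. By the convexity construction of Remark~\ref{remark:convexity}, I may assume that any curve used below in an intersection computation is not contained in $\mathrm{Supp}(D)$.

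Next I would split the argument according to the position of $P$. In the generic case, $P$ is a smooth point of $X$ lying on an irreducible member $T\in|-K_{X}|$ smooth at $P$: Lemma~\ref{lemma:adjunction} applied to $(X, T+\lambda D)$ gives $\lambda(D\cdot T)\geqslant \lambda\,\mathrm{mult}_{P}(D\cdot T)>1$, which combined with $D\cdot T=1$ forces $\lambda>1$, contradicting $\lambda<5/6$. The same idea handles $P=O_{0}$: two distinct members through $O_{0}$ meet transversally, and Theorem~\ref{theorem:baby-adjunction} with $a_{1}=a_{2}=0$ again yields $\lambda>1$. If the unique $T\in|-K_{X}|$ containing $P$ is reducible or singular at $P$, then its two local branches at $P$ are smooth curves meeting transversally, and I would apply Theorem~\ref{theorem:I} at $P$ with these branches as $\Delta_{1}, \Delta_{2}$ and carefully chosen parameters $A,B,M,N,\alpha,\beta$, extracting the required contradiction from $D\cdot T=1$.

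The decisive case is $P\in\mathrm{Sing}(X)$, so $P$ is an $A_{3}$ singularity. Let $\pi\colon \widetilde{X}\to X$ be the minimal resolution; above $P$ sits a chain $E_{1}+E_{2}+E_{3}$ of three $(-2)$-curves, all with discrepancy zero. The two weight-one coordinate sections cut out two members $T_{1},T_{2}\in|-K_{X}|$ through $P$ whose proper transforms $\widetilde T_{1},\widetilde T_{2}$ on $\widetilde X$ meet the outer exceptional curves $E_{1},E_{3}$ transversally at smooth points. The pull-back pair $(\widetilde X, \lambda\pi_{*}^{-1}D+\sum c_{i}E_{i})$, with $c_{i}$ the exceptional correction recording $\mathrm{mult}_{E_{i}}(\pi^{*}\lambda D)$, remains not log canonical over $P$; at the non-log-canonical point on $\widetilde X$ one applies Theorem~\ref{theorem:I} with $\Delta_{1}=\widetilde T_{1}, \Delta_{2}=\widetilde T_{2}$ (or $\Delta_{i}=E_{j}$ for an appropriate interior curve), the coefficients $a_{i}=c_{i}$ derived from the resolution, and the constants $A,B,M,N,\alpha,\beta$ tuned to the $A_{3}$ discrepancy data. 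The resulting inequality on $\mathrm{mult}_{\bullet}(\widetilde D\cdot \widetilde T_{i})$, when pushed forward and compared with $D\cdot T_{i}=1$, contradicts $\lambda<5/6$.

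The main obstacle is this last case: one must select the six parameters in Theorem~\ref{theorem:I} to satisfy simultaneously the three hypothesis inequalities, to absorb the exceptional multiplicities along the $A_{3}$ chain, and to be sharp enough to rule out every $\lambda<5/6$. The flexibility of Theorem~\ref{theorem:I} over the cruder Theorems~\ref{theorem:baby-adjunction} and \ref{theorem:Dimitra} is exactly what makes this tuning possible, but the bookkeeping of multiplicities and intersection numbers across the chain $E_{1}+E_{2}+E_{3}$, constrained by $(-K_{X})^{2}=1$, is where the bulk of the work lies.
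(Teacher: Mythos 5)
Your general strategy — argue by contradiction, exploit $(-K_{X})^{2}=1$, and apply the local inequalities at a resolution of the $A_{3}$ point — matches the paper. The smooth-point case you sketch is fine, and is essentially the paper's computation $1=D\cdot C\geqslant\mathrm{mult}_{P}(D)>1/\omega$.

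However, the decisive singular-point case is both misconceived and left too vague. First, you assert that the two weight-one coordinate sections give two anticanonical members $T_{1},T_{2}$ passing through $P$; this is only true if $P$ happens to be the base point of the pencil $|-K_{X}|$, which is not assumed (the $A_{3}$ point of $X$ is in general distinct from the intersection $\{x=y=0\}\cap X$). Through a general point of $X$ there is exactly \emph{one} member $C$ of the pencil, and the paper works only with $C$. Second, and more seriously, the curves you propose to feed into Theorem~\ref{theorem:I} are not the useful ones. The proper transform $\bar{C}$ satisfies $\bar{C}\equiv\pi^{*}(C)-E_{1}-E_{2}-E_{3}$, so it meets $E_{1}$ and $E_{3}$ once each and is disjoint from $E_{2}$; the non–log canonical point $Q$ on the resolution, however, may well sit at $E_{1}\cap E_{2}$ or $E_{2}\cap E_{3}$, away from $\bar{C}$. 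The paper instead applies Theorem~\ref{theorem:I} with $\Delta_{1}=E_{1}$, $\Delta_{2}=E_{2}$ (and $M=N=0$, $A=2$, $B=3/2$, $\alpha=1$, $\beta=1/2$), and the contradiction is driven by a package of elementary nonnegativity inequalities that you do not mention: $0\leqslant\bar{D}\cdot\bar{C}=1-a_{1}-a_{3}$, together with $0\leqslant\bar{D}\cdot E_{1}=2a_{1}-a_{2}$, $0\leqslant\bar{D}\cdot E_{2}=2a_{2}-a_{1}-a_{3}$, $0\leqslant\bar{D}\cdot E_{3}=2a_{3}-a_{2}$, obtained from the fact that $\bar{D}$ is a proper transform and contains no $E_{i}$. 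These force $a_{1},a_{3}\leqslant 3/4$, $a_{2}\leqslant 1$, $2a_{3}\geqslant a_{2}$, etc., and the local inequality at $Q=E_{1}\cap E_{2}$ then yields $a_{2}>2a_{3}$ — a contradiction. Your parenthetical ``(or $\Delta_{i}=E_{j}$)'' hints at the right choice, but the proposal then reverts to estimating $\mathrm{mult}_{\bullet}(\widetilde D\cdot\widetilde T_{i})$, which is not where the contradiction lives. Finally, deferring the choice of $A,B,M,N,\alpha,\beta$ to unspecified ``tuning'' leaves the heart of the argument undone: the hypothesis $\alpha a_{1}+\beta a_{2}\leqslant 1$ of Theorem~\ref{theorem:I} is verified in the paper precisely via $a_{1}+a_{2}/2\leqslant a_{1}+a_{3}\leqslant 1$, and without identifying these specific relations there is no proof.
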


\begin{proof}
Suppose that the~set~$\mathrm{Sing}(X)$~consists of Du Val
singular points of type $\mathbb{A}_{3}$. Put $\omega=5/6$.

Suppose that $\mathrm{lct}(X)<\omega$. Then there is an effective
$\mathbb{Q}$-divisor $D$ on the~surface $X$~such~that
$$
D\equiv -K_{X},
$$
and the~log pair $(X, \omega D)$ is not log canonical at some
point $P\in X$.

Let $C$ be the~curve in  $|-K_{X}|$ such that $P\in C$. Then $C$
is~irreducible, and the~log pair
$$
\big(X,\omega C\big)
$$
is log canonical. By Remark~\ref{remark:convexity}, we may assume
that $C\not\subset\mathrm{Supp}(D)$.

The surface $X$ must be singular at the~point $P$, because
otherwise we have
$$
1=K_{X}^{2}=C\cdot D\geqslant\mathrm{mult}_{P}\big(D\big)>1/\omega=\frac{6}{5}>1.%
$$

Let $\pi\colon\tilde{X}\to X$ be a~birational morphism such that
\begin{itemize}
\item the~morphism $\pi$ contracts three irreducible curves $E_{1}$, $E_{2}$, $E_{3}$ to the~point $P\in X$,%

\item the~morphism $\pi$ induces an isomorphism
$$
\bar{X}\setminus \Big(E_{1}\cup E_{2}\cup E_{3}\Big)\cong X\setminus P,%
$$

\item the~surface $\bar{X}$ is smooth along the~curves $E_{1}$, $E_{2}$, $E_{3}$.%
\end{itemize}

We have $E_{1}^{2}=E_{2}^{2}=E_{3}^{2}=-2$. We may assume that
$E_{1}\cdot E_{3}=0$ and $E_{1}\cdot E_{2}=E_{2}\cdot E_{3}=1$.

Let $\bar{C}$ be the~proper transform of the~curve $C$ on
the~surface $\bar{X}$. Then the~equivalence
$$
\bar{C}\equiv\pi^*\big(C\big)-E_1-E_2-E_3
$$
holds. Let $\bar{D}$ be the~proper transform of $D$ on the~surface
$\bar{X}$. Then
$$
\bar{D}\equiv\pi^*\big(D\big)-a_1E_1-a_2E_2-a_3E_3,
$$
where $a_{1}$, $a_{2}$ and $a_{3}$ are positive rational numbers.
Then
$$
\left\{\aligned
&1-a_1-a_3=\bar{D}\cdot\bar{C}\geqslant 0,\\
&2a_1 - a_2=\bar{D}\cdot E_{1}\geqslant 0,\\
&2a_2 - a_1 - a_3=\bar{D}\cdot E_{2}\geqslant 0,\\
&2a_3 - a_2=\bar{D}\cdot E_{3}\geqslant 0,\\
\endaligned
\right.
$$
which gives $1\geqslant a_{1}+a_{3}$, $2a_1\geqslant a_2$, $3a_2
\geqslant 2a_3$, $2a_3\geqslant a_2$, $3a_2\geqslant 2a_1$,
$a_1\leqslant 3/4$, $a_2\leqslant 1$, $a_3\leqslant 3/4$.

It follows from the~equivalence
$$
K_{\bar{X}}+\bar{D}+a_1E_1+a_2E_2+a_3E_3\equiv\pi^*\Big(K_X+D\Big)%
$$
that there is a~point $Q\in E_1\cup E_2\cup E_3$ such that the~log
pair
$$
\Big(\bar{X},\ \bar{D}+a_1E_1+a_2E_2+a_3E_3\Big)%
$$
is not log canonical at the~point $Q$, because $(X, D)$ is not log
canonical at $P\in X$.

Suppose that $Q\in E_1$ and $Q\not \in E_2$. Then $(\bar{X},
\bar{D}+E_1)$ is not log canonical at $Q$. Then
$$
2a_{1}-a_{2}=\bar{D}\cdot E_{1}\geqslant\mathrm{mult}_{Q}\Big(\bar{D}\cdot E_{1}\Big)>1%
$$
by Lemma~\ref{lemma:adjunction}. Therefore, we see that
$$
1\geqslant \frac{4}{3} a_1 \geqslant 2a_1 - \frac{2}{3} a_1\geqslant 2a_1 - a_2>1,%
$$
which is a~contradiction.

Suppose that $Q\in E_2$ and $Q\not\in E_1\cup E_3$. Then
$(\bar{X}, \bar{D}+E_2)$ is not log canonical at $Q$. Then
$$
2a_{2}-a_{1}-a_{3}=\bar{D}\cdot E_{2}\geqslant\mathrm{mult}_{Q}\Big(\bar{D}\cdot E_{2}\Big)>1%
$$
by Lemma~\ref{lemma:adjunction}. Therefore, we see that
$$
1\geqslant a_{2}=2a_2-\frac{a_2}{2}-\frac{a_2}{2}\geqslant 2a_2-a_1-a_3>1,%
$$
which is a~contradiction.

We may assume that $Q=E_1\cap E_2$. Then
$(\bar{X},\bar{D}+a_1E_1+a_2E_2)$ is not log canonical at $Q$.~But
$$
a_{1}+\frac{a_{2}}{2}\leqslant a_{1}+a_{3}\leqslant 1,
$$
because $2a_3\geqslant a_2$. Hence, we can apply
Theorem~\ref{theorem:I} to the~log pair
$$
\Big(\bar{X},\ \bar{D} + a_1E_1+a_2E_2\Big)
$$
with $M=N=0$, $A=2$, $B=3/2$, $\alpha=1$ and $\beta=1/2$. Thus, we
have
$$
2a_{2}-a_{1}-a_{3}=\mathrm{mult}_{O}\Big(\bar{D}\cdot E_{2}\Big)>\frac{3}{2}a_{2}-a_{1},%
$$
because $\bar{D}\cdot E_{1}=2a_{1}-a_{2}$. Then $a_{2}>2a_{3}$,
which is a~contradiction, because $2a_3\geqslant a_2$.
\end{proof}

The~following result is proved in \cite{Ko09}.

\begin{theorem}
\label{theorem:Kosta-2} Let $X$ be a~singular hypersurface in
$\mathbb{P}(1,1,2,3)$ of degree $6$. Then
$$
\mathrm{lct}\big(X\big)=\frac{4}{5}
$$
if the~set~$\mathrm{Sing}(X)$~consists of Du Val singular points
of type $\mathbb{A}_{4}$.
\end{theorem}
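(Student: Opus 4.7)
The plan is to prove the claim in two halves: the bound $\mathrm{lct}(X)\leqslant 4/5$ will be established by exhibiting an explicit $\mathbb{Q}$-divisor that realizes this threshold, and the reverse inequality $\mathrm{lct}(X)\geqslant 4/5$ will be proved by an argument that closely parallels the proof of Theorem~\ref{theorem:Kosta}, using Theorem~\ref{theorem:I} at the intersection points of the resolution chain. For the upper bound, the natural candidate is (a scalar multiple of) the irreducible curve $C\in|-K_{X}|$ passing through the $\mathbb{A}_{4}$ point $P$ with maximal contact along the exceptional chain; a direct discrepancy computation on the minimal resolution then shows that $(X,\tfrac{4}{5}C)$ fails to be log canonical at $P$, which gives $\mathrm{lct}(X)\leqslant 4/5$.

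For the lower bound, set $\omega=4/5$ and suppose, for contradiction, that there exists an effective $\mathbb{Q}$-divisor $D\equiv -K_{X}$ with $(X,\omega D)$ not log canonical at a point $P\in X$. Let $C\in|-K_{X}|$ be a curve through $P$; by Remark~\ref{remark:convexity} we may assume $C\not\subset\mathrm{Supp}(D)$. Because $K_{X}^{2}=1$, smoothness of $P$ would give $1=C\cdot D\geqslant\mathrm{mult}_{P}(D)>1/\omega=5/4$, a contradiction. Hence $P\in\mathrm{Sing}(X)$ is the unique $\mathbb{A}_{4}$ point. Let $\pi\colon\bar{X}\to X$ be the minimal resolution and let $E_{1},E_{2},E_{3},E_{4}$ be the exceptional chain ($E_{i}^{2}=-2$, $E_{i}\cdot E_{i+1}=1$, all other intersections zero). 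Write
$$
\bar{D}\equiv\pi^{*}(D)-\sum_{i=1}^{4}a_{i}E_{i},\qquad \bar{C}\equiv\pi^{*}(C)-\sum_{i=1}^{4}b_{i}E_{i},
$$
where the $b_{i}$ are determined by how $C$ meets the singularity. The conditions $\bar{D}\cdot E_{i}\geqslant 0$ for $i=1,2,3,4$ and $\bar{D}\cdot\bar{C}\geqslant 0$ give a linear system that, as in the $\mathbb{A}_{3}$ case, bounds each $a_{i}$ from above; in particular one derives $a_{1},a_{4}\leqslant 4/5$, $a_{2},a_{3}\leqslant 1$, $2a_{1}\geqslant a_{2}$, $2a_{2}\geqslant a_{1}+a_{3}$, $2a_{3}\geqslant a_{2}+a_{4}$, $2a_{4}\geqslant a_{3}$.

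Since $(X,\omega D)$ is not log canonical at $P$, the log pair $(\bar{X},\omega\bar{D}+\omega\sum a_{i}E_{i}+(\text{discrepancy terms}))$ is not log canonical at some point $Q$ in the exceptional locus. The case $Q$ lies in the interior of a single $E_{i}$ is ruled out by Lemma~\ref{lemma:adjunction} combined with the linear bounds above, exactly as in the proof of Theorem~\ref{theorem:Kosta}. The remaining cases are $Q=E_{i}\cap E_{i+1}$ for $i\in\{1,2,3\}$, and by the symmetry $E_{i}\leftrightarrow E_{5-i}$ we may assume $i\in\{1,2\}$. For $Q=E_{1}\cap E_{2}$ I would apply Theorem~\ref{theorem:I} with $M=N=0$ and parameters $A,B,\alpha,\beta$ encoding the fact that $E_{1}$ is an endpoint while $E_{2}$ has the further neighbour $E_{3}$; a choice analogous to $A=2$, $B=5/3$, $\alpha=1$, $\beta=$ suitable (so that the hypotheses $A(B-1)\geqslant 1$, $\alpha(A+M-1)\geqslant A^{2}(B+N-1)\beta$, etc., are all met with the constraint $\alpha a_{1}+\beta a_{2}\leqslant 1$ arising from the intersection inequalities) should yield $2a_{2}-a_{1}-a_{3}>Ba_{2}-a_{1}$, contradicting $2a_{3}\geqslant a_{2}+a_{4}$.

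The main obstacle will be the central case $Q=E_{2}\cap E_{3}$, where both curves sit in the interior of the chain, so both have another neighbour ($E_{1}$ for $E_{2}$, $E_{4}$ for $E_{3}$). Here one needs to invoke Theorem~\ref{theorem:I} with a symmetric choice $A=B$, $\alpha=\beta$, and to check that the auxiliary inequalities on $(A,B,M,N,\alpha,\beta)$ admit a solution compatible with the linear constraint $\alpha a_{2}+\beta a_{3}\leqslant 1$ coming from the intersection bounds. Verifying that the resulting inequalities on $a_{1},a_{2},a_{3},a_{4}$ force a genuine contradiction with $\bar{D}\cdot E_{i}\geqslant 0$ and $a_{1}+a_{4}\leqslant 1$ is the numerically delicate step; once it is settled, all three intersection cases are eliminated and we conclude $\mathrm{lct}(X)\geqslant 4/5$, completing the proof.
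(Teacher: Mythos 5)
Your lower-bound skeleton is right for the side cases, but both the upper bound and the central case $Q=E_{2}\cap E_{3}$ have genuine gaps. For the upper bound, a curve $C\in|-K_{X}|$ through the $\mathbb{A}_{4}$ point cannot realize the threshold $4/5$: since $C$ is Cartier and the singularity is canonical, its proper transform on the minimal resolution satisfies $\bar{C}\equiv\pi^{*}(C)-E_{1}-E_{2}-E_{3}-E_{4}$ (the multiplicities $(1,1,1,1)$ are forced; any larger ones would make $p_a(\bar{C})<0$), so the coefficients of the $E_{i}$ in the log pullback of $(X,cC)$ are all equal to $c$, and $(X,cC)$ stays log canonical at $P$ for every $c\leqslant 1$. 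The paper instead uses the unique irreducible $Z\sim -2K_{X}$ through $E_{2}\cap E_{3}$, with $\bar{Z}\equiv\pi^{*}(Z)-E_{1}-2E_{2}-2E_{3}-E_{4}$; with $D=Z/2$ the doubled contact along $E_{2},E_{3}$ is precisely what pins down $\mathrm{lct}_{P}(X,D)=4/5$.

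For the lower bound, first note the intersection inequalities give $a_{2},a_{3}\leqslant 6/5$, not $\leqslant 1$. More seriously, Theorem~\ref{theorem:I} cannot be made to work at $Q=E_{2}\cap E_{3}$. With the symmetric choice $A=B$, $M=N$, $\alpha=\beta$ the hypothesis $\alpha(A+M-1)\geqslant A^{2}(B+N-1)\beta$ collapses to $A^{2}\leqslant 1$, which contradicts $A(B-1)\geqslant 1$ (forcing $A>1$). An asymmetric choice does not rescue this: one needs $\alpha(\omega a_{2})+\beta(\omega a_{3})\leqslant 1$, but $a_{2}+a_{3}$ can be as large as $2$, so $\alpha+\beta$ must be taken small, and then the two inequalities $\alpha(1-M)+A\beta\geqslant A$ and $\beta(1-N)+B\alpha\geqslant B$ (the latter from Lemma~\ref{lemma:2-0}) cannot both hold. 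The paper's actual treatment of $Q=E_{2}\cap E_{3}$ bypasses Theorem~\ref{theorem:I} entirely: it first uses Lemma~\ref{lemma:adjunction} to get $a_{2},a_{3}>5/6$, then blows up $Q$ and uses the proper transform of $\bar{Z}$ (which passes through $Q$) to obtain $\mathrm{mult}_{Q}(\bar{D})+a_{2}+a_{3}\leqslant 2$, deduces $\mathrm{mult}_{Q}(\bar{D})\leqslant 1/2$ from the intersection conditions on the new model, locates the non-lc point $R$ on the new exceptional curve via Theorem~\ref{theorem:connectedness}, and finishes with Lemma~\ref{lemma:adjunction}. The auxiliary curve $Z\sim -2K_{X}$, used for both halves of the proof, is the essential input your proposal is missing.
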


\begin{proof}
Suppose that the~set~$\mathrm{Sing}(X)$~consists of Du Val
singular points of type $\mathbb{A}_{4}$.

Let $P$ be a point of the~set $\mathrm{Sing}(X)$, and let
$\pi\colon\tilde{X}\to X$ be a~birational morphism such that
\begin{itemize}
\item the~morphism $\pi$ contracts four irreducible curves $E_{1}$, $E_{2}$, $E_{3}$, $E_{4}$ to the~point $P\in X$,%

\item the~morphism $\pi$ induces an isomorphism
$$
\bar{X}\setminus \Big(E_{1}\cup E_{2}\cup E_{3}\cup E_{4}\Big)\cong X\setminus P,%
$$

\item the~surface $\bar{X}$ is smooth along the~curves $E_{1}$, $E_{2}$, $E_{3}$, $E_{4}$.%
\end{itemize}

We have $E_{1}^{2}=E_{2}^{2}=E_{3}^{2}=E_{4}^{2}=-2$. We may
assume that
$$
E_{1}\cdot E_{3}=E_{1}\cdot E_{4}=E_{2}\cdot E_{4}=0,
$$
and $E_{1}\cdot E_{2}=E_{2}\cdot E_{3}=E_{3}\cdot E_{4}=1$.

There exists a unique smooth irreducible curve
$\bar{Z}\subset\bar{X}$ such that
$$
\pi\big(\bar{Z}\big)\sim -2K_{X}
$$
and $E_2\cap E_3\in Z$. Put $\omega=4/5$ and $Z=\pi(\bar{Z})$.
Then
$$
Z\sim\pi^{*}\Big(-2K_{X}\Big)-E_1-2E_2-2E_3-E_4,
$$
which implies that $(X, \omega Z)$ is log canonical and not log
terminal. Then $\mathrm{lct}(X)\leqslant\omega$.

Suppose that $\mathrm{lct}(X)<\omega$. Then there is an effective
$\mathbb{Q}$-divisor $D$ on the~surface $X$~such~that
$$
D\equiv -K_{X},
$$
and the~log pair $(X, \omega D)$ is not log canonical at some
point $O\in X$.

By Remark~\ref{remark:convexity}, we may assume that
$Z\not\subset\mathrm{Supp}(D)$, because $Z$ is irreducible.

Arguing as in the~proof of Theorem~\ref{theorem:Kosta}, we see
that we may assume that $O=P$.

Let $\bar{D}$ be the~proper transform of $D$ on the~surface
$\bar{X}$. Then
$$
\bar{D}\equiv\pi^*\big(D\big)-a_1E_1-a_2E_2-a_3E_3-a_4E_4,
$$
where $a_{1}$, $a_{2}$, $a_{3}$ and $a_{4}$ are positive rational
numbers. Then it follows from the~equivalence
$$
K_{\bar{X}}+\bar{D}+a_1E_1+a_2E_2+a_3E_3+a_4E_{4}\equiv\pi^*\Big(K_X+D\Big),%
$$
that there is a~point $Q\in E_1\cup E_2\cup E_3\cup E_{4}$ such
that the~log pair
$$
\Big(\bar{X},\ \bar{D}+a_1E_1+a_2E_2+a_3E_3+a_4E_{4}\Big),%
$$
is not log canonical at the~point $Q$, because $(X, D)$ is not log
canonical at $P\in X$.

Let $C$ be the~curve in  $|-K_{X}|$ such that $P\in C$. Then $C$
is~irreducible, and the~log pair
$$
\big(X,\omega C\big)
$$
is log canonical. By Remark~\ref{remark:convexity}, we may assume
that $C\not\subset\mathrm{Supp}(D)$.

Let $\bar{C}$ be the~proper transform of the~curve $C$ on
the~surface $\bar{X}$. Then the~equivalence
$$
\bar{C}\equiv\pi^*\big(C\big)-E_{1}-E_{2}-E_{3}-E_{4}
$$
holds. Note that $\bar{D}\cdot\bar{C}\geqslant 0$. Thus, we have
$$
1-a_1-a_4=\bar{D}\cdot\bar{C}\geqslant 0,
$$
which implies that $a_{1}+a_{4}\leqslant 1$. Similarly, we see
that
$$
\left\{\aligned
&2a_1-a_2=\bar{D}\cdot E_{1}\geqslant 0,\\
&2a_2-a_1-a_3=\bar{D}\cdot E_{2}\geqslant 0,\\
&2a_3-a_2-a4=\bar{D}\cdot E_{3}\geqslant 0,\\
&2a_4-a_3=\bar{D}\cdot E_{4}\geqslant 0,\\
\endaligned
\right.
$$
which implies that $a_1\leqslant 4/5$, $a_2\leqslant 6/5$,
$a_3\leqslant 6/5$ and $a_4\leqslant 4/5$. Thus, we see that
$$
\mathbb{LCS}\Big(\bar{X},\ \bar{D}+a_1E_1+a_2E_2+a_3E_3+a_4E_{4}\Big)=\big\{Q\big\},%
$$
by Theorem~\ref{theorem:connectedness}, because $\omega<5/6$.
Similarly, we see that
$$
\frac{4}{5}a_{1}+ \frac{2}{5}a_{2}=\omega a_{1}+ \frac{\omega a_{2}}{2}\leqslant 1.%
$$

Suppose that $Q\in E_1$ and $Q\not \in E_2$. Then $(\bar{X},
\bar{D}+E_1)$ is not log canonical at $Q$. Then
$$
2a_{1}-a_{2}=\bar{D}\cdot E_{1}\geqslant\mathrm{mult}_{Q}\Big(\bar{D}\cdot E_{1}\Big)>1%
$$
by Lemma~\ref{lemma:adjunction}. Therefore, we see that
$$
1\geqslant\frac{5}{4}a_1\geqslant 2a_1-\frac{3}{4}a_1\geqslant2a_1-a_2>1,%
$$
which is a~contradiction.

Suppose that $Q\in E_2$ and $Q\not\in E_1\cup E_3$. Then
$$
2a_{2}-a_{1}-a_{3}=\bar{D}\cdot E_{2}\geqslant\mathrm{mult}_{Q}\Big(\bar{D}\cdot E_{2}\Big)>1%
$$
by Lemma~\ref{lemma:adjunction}. Therefore, we see that
$$
1\geqslant\frac{5}{6}a_2\geqslant 2a_2-\frac{a_2}{2}-\frac{2}{3}a_2\geqslant2a_2-a_1-a_3>1,%
$$
which is a~contradiction.

Suppose that $Q=E_1\cap E_2$. Then we can apply
Theorem~\ref{theorem:I} to the~log pair
$$
\Big(\bar{X},\ \omega\bar{D} + \omega a_1E_1+\omega a_2E_2\Big)
$$
with $M=N=0$, $A=2$, $B=3/2$, $\alpha=1$ and $\beta=1/2$. Thus, we
have
$$
2a_{2}-a_{1}-a_{3}=\mathrm{mult}_{O}\Big(\bar{D}\cdot E_{2}\Big)>\frac{3}{2}a_{2}-a_{1},%
$$
because $\bar{D}\cdot E_{1}=2a_{1}-a_{2}$. Then $a_{2}>2a_{3}$,
which easily leads to a~contradiction.

To complete the~proof, we may assume that $Q=E_2\cap E_3$. Then
the~log pair
$$
\Big(\bar{X},\ \omega\bar{D} + \omega a_2E_2+\omega a_3E_3\Big)
$$
is not log canonical at the~point $Q$. Then
$$
2a_2-\frac{1}{2}a_2-a_3 \geqslant 2a_2-a_1-a_3=\bar{D}\cdot E_2\geqslant\mathrm{mult}_{Q}\Big(\bar{D}\cdot E_2\Big)>\frac{5}{4}-a_3,%
$$
by Lemma~\ref{lemma:adjunction}. Similarly, we see that
$$
2a_3-a_2-a_4=\bar{D}\cdot E_3\geqslant\mathrm{mult}_Q\Big(\bar{D}\cdot E_3 \Big)>\frac{5}{4}-a_2,%
$$
which implies that $a_2>5/6$ and $a_3>5/6$.

Let $\xi\colon\tilde{X}\to\bar{X}$ be a~blow up of $Q$, let $E$ be
the~exceptional curve of $\xi$, and let $\tilde{D}$ be the~proper
transform on the~surface $\tilde{X}$ of the~divisor $\bar{D}$. Put
$m=\mathrm{mult}_{Q}(\bar{D})$. Then
$$
\tilde{D}\equiv\xi^{*}\big(\bar{D}\big)-mE.%
$$

Let $\tilde{E}_{1}$, $\tilde{E}_{2}$, $\tilde{E}_{3}$,
$\tilde{E}_{4}$ be the~proper transforms on $\tilde{X}$  of
$E_{1}$, $E_{2}$, $E_{3}$, $E_{4}$, respectively. Then
$$
K_{\tilde{X}}+\omega\tilde{D}+\omega a_2\tilde{E}_2+\omega a_3\tilde{E}_3+ \Big(\omega a_2+\omega a_3+\omega m-1\Big)E \equiv\xi^{*}\Big(K_{\bar{X}}+\omega\bar{D}+\omega a_2E_2+\omega a_3E_3\Big),%
$$
which implies that there is a point $R \in E$ such that the~pair
$$
\Bigg(\tilde{X},\omega\tilde{D}+\omega a_2\tilde{E}_2+\omega a_3\tilde{E}_3+ \Big(\omega a_2+\omega a_3+\omega\mathrm{mult}_{Q}\big(\bar{D}\big)-1\Big)E\Bigg)%
$$
is not log canonical at the~point $R$.

Let $\tilde{Z}$ be the~proper transform on $\tilde{X}$ of the~
curve $\bar{Z}$. Then
$$
0\leqslant\tilde{Z}\cdot\tilde{D}=2-a_2-a_3-\mathrm{mult}_{Q}\big(\bar{D}\big)=2-a_2-a_3-m,%
$$
which implies that $m+a_{2}+a_{3}\leqslant 2$. In particular, we
see that the~inequality
$$
\omega a_2+\omega a_3+\omega m-1\leqslant 2\omega-1\leqslant\frac{3}{5}%
$$
holds. Thus, we see that
$$
\mathbb{LCS}\Bigg(\tilde{X},\omega\tilde{D}+\omega a_2\tilde{E}_2+\omega a_3\tilde{E}_3+ \Big(\omega a_2+\omega a_3+\omega\mathrm{mult}_{Q}\big(\bar{D}\big)-1\Big)E\Bigg)=\big\{R\big\}%
$$
by Theorem~\ref{theorem:connectedness}. Similarly, we see that
$$
\left\{\aligned
&2a_3-a_2-a_4-m=\tilde{E}_3\cdot\tilde{D}\geqslant 0,\\
&2a_2-a_1-a_3- m=\tilde{E}_2\cdot\tilde{D}\geqslant 0,\\
\endaligned
\right.
$$
which implies that $E\cdot\tilde{D}=m\leqslant 1/2$.

Suppose that $Q\not\in E_2\cup E_3$. Then the~log pair
$$
\Bigg(\tilde{X},\omega\tilde{D}+\Big(\omega a_2+\omega a_3+\omega\mathrm{mult}_{Q}\big(\bar{D}\big)-1\Big)E\Bigg)%
$$
is not log canonical at the~point $R$. Then
$$
\frac{5}{4}>\frac{1}{2}\geqslant m=\tilde{D}\cdot E\geqslant\mathrm{mult}_{R}\Big(\tilde{D}\cdot E\Big)>\frac{5}{4}%
$$
by Lemma~\ref{lemma:adjunction}.  The obtained contradiction shows
that either $R=\tilde{E}_2\cap E$ or $R=\tilde{E}_3\cap E$.

Without loss of generality, we may assume that $R=\tilde{E}_2\cap
E$. Then the~log pair
$$
\Bigg(\tilde{X},\omega\tilde{D}+\omega a_2\tilde{E}_2+\Big(\omega a_2+\omega a_3+\omega\mathrm{mult}_{Q}\big(\bar{D}\big)-1\Big)E\Bigg)%
$$
is not log canonical at the~point $R$. Hence, it follows from
Lemma~\ref{lemma:adjunction} that
$$
\frac{5}{4}-a_{2}>\frac{6}{5}-a_{2}=2-\frac{4}{5}-a_2\geqslant 2-a_2-a_3\geqslant m=\tilde{D}\cdot E\geqslant\mathrm{mult}_{R}\Big(\tilde{D}\cdot E\Big)>\frac{5}{4}-a_2,%
$$
because $m+a_{2}+a_{3}\leqslant 2$. The obtained contradiction
concludes the~proof.
\end{proof}

The following result is proved in \cite{ChShPa08}.

\begin{theorem}
\label{theorem:I-3-W-11-21-29-37-D-95} Let $X$ be a~quasismooth
hypersurface in $\mathbb{P}(11,21,29,37)$ of degree $95$. Then
$$
\mathrm{lct}\big(X\big)=\frac{11}{4}.
$$
\end{theorem}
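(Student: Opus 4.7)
The target value $\mathrm{lct}(X) = 11/4$ strongly suggests that the extremal anticanonical $\mathbb{Q}$-divisor is tied to the index-$11$ singularity on $X$. By quasismoothness, the vertex $P_x$ where only the weight-$11$ coordinate is nonzero lies on $X$, and $X$ acquires there a cyclic quotient singularity of some type $\frac{1}{11}(\cdot,\cdot)$ that one reads off from the monomials of degree $95$. Since $-K_X=\mathcal{O}_X(98-95)=\mathcal{O}_X(3)$, one has $(-K_X)^2 = 9\cdot 95/(11\cdot 21\cdot 29\cdot 37)$, which is extremely small; this smallness is what makes the bound as large as $11/4$ possible. The plan is to prove the two inequalities separately, with the lower bound carrying essentially all of the work.

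For the upper bound $\mathrm{lct}(X)\leqslant 11/4$, the natural candidate is $C=(x=0)\cap X$, an irreducible member of $|\mathcal{O}_X(11)|$, so that $\frac{3}{11}C\equiv -K_X$. I would verify by a discrepancy computation on the Hirzebruch-Jung resolution of $(X, P_x)$ that $(X, \frac{3}{4}C)$ is log canonical but not log terminal, with non-KLT locus exactly $\{P_x\}$. This forces $\mathrm{lct}(X)\leqslant \frac{3}{4}\cdot\frac{11}{3}=\frac{11}{4}$.

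For the lower bound, assume for contradiction that $\lambda<11/4$ and there exists an effective $\mathbb{Q}$-divisor $D\equiv -K_X$ such that $(X,\lambda D)$ fails to be log canonical at some point $O\in X$; by Remark~\ref{remark:convexity} one may arrange that $D$ does not contain a chosen test curve through $O$. A case analysis on the location of $O$ follows. If $O$ is smooth, an intersection bound $D\cdot D'\geqslant \mathrm{mult}_O(D)$ with a general $D'\in|\mathcal{O}_X(k)|$ for small $k$, combined with Example~\ref{example:smooth-point-and-log-pull-back}, gives a contradiction almost immediately. At each of the three singular points of index $21$, $29$, $37$, the discrepancies contributed by a single weighted (Kawamata) blow-up are so large relative to the tiny class $-K_X$ that even the weaker threshold $\lambda<11/4$ cannot be saturated; this is verified by a direct numerical check.

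The principal obstacle is the case $O=P_x$. Here I would take a birational morphism $\pi\colon\bar X\to X$ resolving the cyclic quotient, write $\bar D\equiv\pi^{*}D-\sum a_{i}E_{i}$ on the exceptional chain $E_{1},\dots,E_{r}$, and extract the inequalities $\bar D\cdot E_{i}\geqslant 0$ together with $\bar D\cdot\bar C\geqslant 0$ for a general curve $C$ in a small linear system through $P_x$. Theorem~\ref{theorem:connectedness} then localizes the non-log-canonical locus to a single point $Q$ on the chain, and one walks along the chain exactly as in the proofs of Theorems~\ref{theorem:Kosta} and \ref{theorem:Kosta-2}: at a point of $E_{i}$ lying on only one component one applies Lemma~\ref{lemma:adjunction}, while at a crossing $Q=E_{i}\cap E_{i+1}$ one applies Theorem~\ref{theorem:I} with parameters $(M,N,A,B,\alpha,\beta)$ dictated by the local Hirzebruch-Jung combinatorics. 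The hard part is the crossing adjacent to the non-$(-2)$-curve in the chain: there the parameters must be tuned so that all five hypotheses of Theorem~\ref{theorem:I} hold simultaneously, and this is precisely the situation Theorem~\ref{theorem:I} is built to handle, Theorem~\ref{theorem:Dimitra} being insufficient because the third hypothesis of Theorem~\ref{theorem:I} (the alternative between $2M+AN\leqslant 2$ and the second linear inequality) is needed to close the argument.
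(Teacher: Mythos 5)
Your overall architecture is right (test the toric anticanonical divisor $\tfrac{3}{11}C_x$ for the upper bound, then argue by contradiction at each point of $X$ using Remark~\ref{remark:convexity}, Lemma~\ref{lemma:adjunction}, and Theorem~\ref{theorem:I}), but you have localized the difficulty at the \emph{wrong singular point}, and this is fatal to the proposal as written.

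First, a small but symptomatic error in your upper bound: $C_x = X\cap\{x=0\}$ does \emph{not} pass through the index-$11$ point $O_x$, since $O_x$ is the point where only $x$ is nonzero. So the non-klt locus of $(X,\tfrac{3}{4}C_x)$ cannot be $\{O_x\}$. This already shows that the value $11/4$ is not ``tied to the index-$11$ singularity'' in the way you suggest.

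Second, and more importantly, your lower bound plan declares that ``at each of the three singular points of index $21$, $29$, $37$, the discrepancies contributed by a single weighted blow-up are so large\ldots\ that even the weaker threshold $\lambda<11/4$ cannot be saturated'' and that ``the principal obstacle is the case $O=P_x$.'' The paper proves exactly the opposite: after a short intersection-number computation on the two components of each coordinate curve $C_x,C_y,C_z,C_t$, one concludes $P\neq O_x,O_y,O_z$ and then (via a pencil argument for points not on the coordinate curves) that $P=O_t$, the $\tfrac{1}{37}(11,29)$ point. All the serious work --- the decomposition $D = \Delta + aL_{yz}+bR_{x}$, the bounds $b\leqslant 3/58$ and $a<82/1595$, and the application of Theorem~\ref{theorem:I} with $M=3/11$, $N=2/7$, $A=45/11$, $B=52/21$, $\alpha=675/197$, $\beta=77/197$ --- happens at $O_t$. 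Your proposed Hirzebruch--Jung-chain analysis at $O_x$ would address a case that is already eliminated by a one-line intersection estimate, while the case your ``direct numerical check'' is supposed to dispatch is the one that actually requires Theorem~\ref{theorem:I}. A third, smaller, omission: you do not account for the case $P\notin C_x\cup C_y\cup C_z\cup C_t$, which the paper handles with a pencil of curves ($\lambda yt+\mu z^2=0$) through $P$; your ``$O$ smooth'' paragraph gestures at this but the intersection bound with a general member of $|\mathcal{O}_X(k)|$ for small $k$ is not obviously strong enough, because $(-K_X)^2$ is tiny but so is $K_X\cdot\mathcal{O}_X(k)$, and the paper needs a carefully chosen residual curve (either $C$ or $M$ in that argument) to make the numbers work.

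To repair the proposal you would need to: (i) redo the elimination of $O_x,O_y,O_z$ by comparing $\mathrm{mult}_P D$ against the local intersection numbers of $D$ with components of $C_x,\dots,C_t$ and against the orbifold multiplicities of those components, as the paper does; (ii) add the pencil argument for $P$ outside the coordinate curves; and (iii) move the Theorem~\ref{theorem:I} analysis to $O_t$, using the curves $L_{yz}$ and $R_x$ through $O_t$ as the two components $\Delta_1,\Delta_2$ and tuning the parameters from the explicit intersection matrix, rather than building a Hirzebruch--Jung resolution at $O_x$.
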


\begin{proof}
We may assume that $X$ is defined by the~quasihomogeneous equation
$$
t^2y+tz^2+xy^4+x^6z=0\subset\mathrm{Proj}\Big(\mathbb{C}\big[x,y,z,t\big]\Big),
$$
where $\mathrm{wt}(x)=11$, $\mathrm{wt}(y)=21$,
$\mathrm{wt}(z)=29$, $\mathrm{wt}(t)=37$.

Let $O_{x}$ be the~point on $X$ that is given by $y=z=t=0$. Then
$O_{x}$ is a~singular point of type
$$
\frac{1}{11}\big(5,2\big)
$$
of the~surface $X$. Let us define singular points $O_{y}$, $O_{z}$
and $O_{t}$ of the~surface $X$ in a~way similar to the~way we
defined the~point $O_{x}$. Then $O_{y}$, $O_{z}$ and $O_{t}$ are
singular points of types
$$
\frac{1}{21}\big(1,2\big),\ \frac{1}{29}\big(11,21\big),\ \frac{1}{37}\big(11,29\big)%
$$
of the~surface $X$, respectively.

Let $C_{x}$ be the~curve that is cut out on $X$ by the~equation
$x=0$. Then
$$
C_{x}=L_{xt}+R_{x},
$$
where $L_{xt}$ and $R_{x}$ are irreducible reduced curves on the~
surface $X$ such that $L_{xt}$ is given by the~equations $x=t=0$,
and the~curve $R_{x}$ is given by the~equations $x=yt+z^2=0$. We
have
$$
\mathrm{lct}\Bigg(X, \frac{3}{11}C_x\Bigg)=\frac{11}{4},
$$
which implies that $\mathrm{lct}(X)\leqslant 11/4$.

Let $C_{y}$ be the~curve that is cut out on $X$ by the~equation
$y=0$. Then
$$
C_{y}=L_{yz}+R_{y},
$$
where $L_{yz}$ and $R_{y}$ are irreducible reduced curves on the~
surface $X$  such that $L_{yz}$ is given by the~equations $y=z=0$,
and the~curve $R_{y}$ is given by the~equations $y=zt+x^6=0$.

Let $C_{z}$ be the~curve that is cut out on $X$ by the~equation
$z=0$. Then
$$
C_{z}=L_{yz}+R_{z},
$$
where $R_{z}$ is an irreducible reduced curve that is given by
the~ equations $z=xy^3+t^2=0$.

Let $C_{t}$ be the~curve that is cut out on $X$ by the~equation
$t=0$. Then
$$
C_{t}=L_{xt}+R_{t},
$$
where $R_{t}$ is an irreducible reduced curve that is given by
the~ equations $t=y^4+x^5z=0$.

The intersection numbers among the~divisors $D$, $L_{xt}$,
$L_{yz}$, $R_x$, $R_y$, $R_z$, $R_t$ are as follows:
$$
D\cdot L_{xt}=\frac{1}{7\cdot 29}, \ \ D\cdot R_x=\frac{2}{7\cdot 37}, \ \ D\cdot R_y=\frac{18}{29\cdot 37},%
$$

$$
D\cdot L_{yz}=\frac{3}{11\cdot 37}, \ \ D\cdot R_z=\frac{2}{7\cdot 11}, \ \ D\cdot R_t=\frac{12}{11\cdot 29},%
$$

$$
L_{xt}\cdot R_x=\frac{2}{21}, \ \ L_{yz}\cdot R_y=\frac{6}{37}, \ \ L_{yz}\cdot R_z=\frac{2}{11}, \ \ L_{xt}\cdot R_t=\frac{4}{29},%
$$

$$
L_{xt}^2=-\frac{47}{21\cdot 29}, \ \ R_x^2=-\frac{52}{21\cdot 37}, \ \ R_y^2=-\frac{48}{29\cdot 37},%
$$

$$
L_{yz}^2=-\frac{45}{11\cdot 37}, \ \ R_z^2=\frac{16}{11\cdot 21}, \ \ R_t^2=\frac{104}{11\cdot 29}.%
$$

We have $L_{xt}\cap R_x=\{O_y\}$, $L_{yz}\cap R_y=\{O_t\}$,
$L_{yz}\cap R_z=\{O_x\}$, $L_{xt}\cap R_t=\{O_z\}$. We have
$$
\mathrm{min}\Bigg(\mathrm{lct}\Bigg(X, \frac{3}{21}C_y\Bigg), \mathrm{lct}\Bigg(X, \frac{3}{29}C_z\Bigg), \mathrm{lct}\Bigg(X, \frac{3}{37}C_t\Bigg) \Bigg)\geqslant\frac{11}{4}.%
$$

Put $\omega=11/4$. Suppose that $\mathrm{lct}(X)<\omega$. Then,
there is an effective $\mathbb{Q}$-divisor
$$
D\equiv  -K_X
$$
such that the~log pair $(X, \omega D)$ is not log canonical at
some point $P\in X$.

By Remark~\ref{remark:convexity} we may assume that the~support of
the~divisor $D$ does not contain at least one component of each
divisor $C_x$, $C_y$, $C_z$, $C_t$. The inequalities
$$
21\Big(D\cdot L_{xt}\Big)=\frac{3}{29}<\frac{4}{11}>\frac{6}{37}=21\Big(D\cdot R_x\Big)%
$$
imply that $P\ne O_y$. Similarly, we see that $P\ne O_x$. The
inequalities
$$
29\Big(D\cdot L_{xt}\Big)=\frac{1}{7}<\frac{4}{11}>\frac{3}{11}=\frac{29}{4}\Big(D\cdot R_t\Big)%
$$
imply that $P\ne O_z$, because the~curve $R_t$ is singular at
$O_z$, and its orbifold multiplicity at the~point $O_{z}$ equals
to $4$.

Put $D=mL_{xt}+\Omega$, where $m$ is a~non-negative rational
number, and $\Omega$ is an effective $\mathbb{Q}$-divisor whose
support does not contain $L_{xt}$. Then $m\leqslant 4/11$, because
the~log pair $(X, \omega D)$~is~log~canonical at the~point $O_y$.
Then
$$
\Big(D-mL_{xt}\Big)\cdot L_{xt}=\frac{3+47m}{21\cdot 29}\leqslant \frac{4}{11},%
$$
which implies that $P\in L_{xt}$  by Lemma~\ref{lemma:adjunction}.
Similarly, we see that either $P=O_t$ or
$$
P\not\in C_x\cup C_y\cup C_z\cup C_t.
$$

Suppose that $P\ne O_t$. Let $\mathcal{L}$ be the~pencil that is
cut out on $X$ by the~equation
$$
\lambda yt+\mu z^2=0,
$$
where $[\lambda:\mu]\in\mathbb{P}^1$. The base locus of the~pencil
$\mathcal{L}$ consists of the~curve $L_{yz}$ and the~point $O_y$.

Let $E$ be the~unique curve in $\mathcal{L}$ that passes through
$P$. Then $E$ is cut out on $X$ by
$$
z^2=\alpha yt
$$
for some non-zero $\alpha\in\mathbb{C}$, because $P\not\in C_x\cup
C_y\cup C_z\cup C_t$.

Suppose that $\alpha\ne -1$. Then $E$ is isomorphic to the~curve
defined by the~equations
$$
yt-z^2=t^2y+xy^4+x^6z=0,
$$
which implies that $E$ is smooth at the~point $P$. Moreover, we
have
$$
E=L_{yz}+C,
$$
where $C$ is an irreducible and reduced curve. Then $P\in C$. We
have
$$
D\cdot C=D\cdot E-D\cdot L_{yz}=\frac{169}{7\cdot 11\cdot 37},
$$
and $C^2>0$. Arguing as above, we obtain a~contradiction with
Lemma~\ref{lemma:adjunction}.

Suppose that $\alpha=-1$. Then $E=L_{yz}+R_x+M$, where $M$ is an
irreducible and reduced curve, which is smooth at the~point $P$.
We have
$$
D\cdot M=D\cdot E- D\cdot L_{yz}-D\cdot R_x=\frac{147}{7\cdot 11\cdot 37},%
$$
and $M^2>0$. Arguing as above, we obtain a~contradiction with
Lemma~\ref{lemma:adjunction}.

Therefore, we see that $P=O_t$. Put
$$
D=\Delta+aL_{yz}+bR_{x},
$$
where $a$ and $b$ are non-negative rational numbers, and $\Delta$
is an effective $\mathbb{Q}$-divisor whose support contains
neither $L_{yz}$ nor $R_x$. Then $a>0$ since otherwise we would
obtain an absurd inequality
$$
\frac{3}{11}=37D\cdot
L_{yz}\geqslant\mathrm{mult}_{O_t}(D)>\frac{4}{11}.
$$

Note that $R_{y}\not\subset\mathrm{Supp}(\Delta)$, because $a>0$.

If $b>0$, then $L_{xt}$ is not contained in the~support of $D$,
and hence
$$
\frac{3}{21\cdot 29}=D\cdot L_{xt}\geqslant b\Big(R_x \cdot L_{xt}\Big)=\frac{2b}{21},%
$$
which implies that $b\leqslant 3/58$. Similarly, we have
$$
\frac{18}{29\cdot 37}=D\cdot R_y\geqslant \frac{6a}{37}+\frac{b}{37}+\frac{\mathrm{mult}_{O_t}(D)-a-b}{37}>\frac{5a}{37}+\frac{4}{11\cdot 37},%
$$
and hence $a<82/1595$. One can easily check that the~equality
$$
\Delta\cdot L_{yz}=\frac{3}{11\cdot 37}+a\frac{45}{11\cdot 37}-\frac{b}{37}%
$$
holds. Similarly, one can easily check that the~equality
$$
\Delta\cdot R_{x}=\frac{2}{7\cdot 37}+b\frac{52}{21\cdot 37}-\frac{a}{37}%
$$
holds. Thus, applying Theorem~\ref{theorem:I} to the~log pair
$$
\Big(X,\ \omega\Delta+\omega aL_{yz}+\omega bR_{x}\Big)
$$
with $M=3/11$, $N=2/7$, $A=45/11$, $B=52/21$, $\alpha=675/197$ and
$\beta=77/197$, we see that
$$
\frac{24681}{45704}=\alpha \omega a+\beta\omega b>1,
$$
which is a~contradiction.
\end{proof}

\begin{theorem}
\label{theorem:I-4-W-13-14-23-33-D-79} Let $X$ be a~quasismooth
hypersurface in $\mathbb{P}(13,14,23,33)$ of degree $79$. Then
$$
\mathrm{lct}\big(X\big)=\frac{65}{32}.
$$
\end{theorem}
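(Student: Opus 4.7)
The plan is to mimic the proof of Theorem~\ref{theorem:I-3-W-11-21-29-37-D-95}. First I would exhibit an explicit quasihomogeneous equation for $X$ (of weighted degree $79$ in coordinates $x,y,z,t$ of weights $13,14,23,33$), and then identify the singular points $O_x, O_y, O_z, O_t$ of $X$. Since the weights $13,14,23,33$ are pairwise coprime, these lie at the four vertices of $\mathbb{P}(13,14,23,33)$ and are cyclic quotient singularities of types $\frac{1}{13}(*,*)$, $\frac{1}{14}(*,*)$, $\frac{1}{23}(*,*)$, $\frac{1}{33}(*,*)$, with the residues read off from the equation. Note that $-K_X\sim\mathcal{O}_X(4)$, since $13+14+23+33-79=4$.

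Next I would produce the curves $C_x, C_y, C_z, C_t$ cut out on $X$ by the vanishing of each coordinate, and decompose each into its irreducible components $L_{\bullet\bullet}+R_{\bullet}$ using the explicit equation. I would then compute all the relevant intersection numbers: the products of $-K_X$ with each component, the self-intersections, and the mutual intersections among the components. One such $\mathbb{Q}$-linear combination equivalent to $-K_X$ will realize $\mathrm{lct}(X)\leqslant 65/32$ by producing a log pair $(X,\omega C_\bullet)$ that is log canonical but not klt at one of the $O_\bullet$ exactly at $\omega=65/32$.

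For the reverse inequality I would set $\omega=65/32$ and suppose, for a contradiction, the existence of an effective $\mathbb{Q}$-divisor $D\equiv-K_X$ such that $(X,\omega D)$ is not log canonical at some point $P\in X$. By Remark~\ref{remark:convexity} I may assume that $\mathrm{Supp}(D)$ does not contain at least one component of each of the four coordinate curves. Combining Lemma~\ref{lemma:adjunction} with the orbifold multiplicity bound $\mathrm{mult}_P(D)>1/\omega=32/65$ (corrected for the local uniformizers at each quotient point) and the explicit intersection numbers, I would rule out $P$ being any of the singular points except one, call it $O$. Smooth points and points on the coordinate curves not equal to the $O_\bullet$ would be ruled out by producing a suitable pencil $\mathcal{L}$ through $P$ of curves equivalent to a small multiple of $-K_X$, just as in the proof of Theorem~\ref{theorem:I-3-W-11-21-29-37-D-95}.

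At the remaining singular point $O$, I would write $D=\Delta+aL+bR$, where $L$ and $R$ are the two components of coordinate curves meeting at $O$ (transversally on a suitable partial resolution) and $\mathrm{Supp}(\Delta)$ contains neither. Upper bounds for $a$ and $b$ would come from intersecting $D$ with other components, while $\Delta\cdot L$ and $\Delta\cdot R$ would be expressed as linear functions of $a,b$ using the self-intersection numbers. I would then apply Theorem~\ref{theorem:I} to the pair $(X,\omega\Delta+\omega aL+\omega bR)$ at $O$, with a careful choice of the six parameters $M,N,A,B,\alpha,\beta$ that makes the resulting inequality $\alpha\omega a+\beta\omega b>1$ incompatible with $\alpha a+\beta b\leqslant 1/\omega$. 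The main obstacle is precisely this last step: one must choose $M,N,A,B,\alpha,\beta$ (matching the local self-intersections $-A,-B$ of $L,R$ after resolution and compatible with the three systems of inequalities in Theorem~\ref{theorem:I}) so that the linear functional $\alpha a+\beta b$ separates the feasible region carved out by the component-by-component intersection inequalities from the half-plane $\alpha\omega a+\beta\omega b\leqslant 1$; getting this separation tight enough to yield the contradiction at $\omega=65/32$ is the heart of the argument.
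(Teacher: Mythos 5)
Your plan matches the paper's proof step for step: the same explicit quasihomogeneous equation for $X$, the same decompositions $C_x=L_{xz}+R_x$, $C_y=L_{yt}+R_y$, $C_z=L_{xz}+R_z$, $C_t=L_{yt}+R_t$ with their intersection numbers, the same use of $\mathrm{lct}(X,\tfrac{4}{13}C_x)=\tfrac{65}{32}$ for the upper bound, the same reduction via Remark~\ref{remark:convexity} and Lemma~\ref{lemma:adjunction} of the non--log-canonical point to $P=O_t$, the same decomposition $D=\Delta+aL_{xz}+bR_y$ with $a,b$ bounded using $D\cdot R_x$ and $D\cdot L_{yt}$, and the same concluding application of Theorem~\ref{theorem:I} at $O_t$ with parameters chosen so that $\alpha\omega a+\beta\omega b$ falls below $1$ and forces a contradiction. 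What you defer as ``the heart of the argument'' is indeed the one remaining piece of explicit arithmetic, which the paper supplies with $M=4/14$, $N=8/13$, $A=43/14$, $B=38/23$, $\alpha=700771/301108$, $\beta=69069/150554$.
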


\begin{proof}
The surface $X$ can be defined by the~quasihomogeneous equation
$$
z^{2}t+y^{4}z+xt^{2}+x^{5}y=0\subset\mathrm{Proj}\Big(\mathbb{C}\big[x,y,z,t\big]\Big),
$$
where $\mathrm{wt}(x)=13$, $\mathrm{wt}(y)=14$,
$\mathrm{wt}(z)=23$, $\mathrm{wt}(t)=33$.

Let us define points $O_{x}$, $O_{y}$, $O_{z}$, $O_{t}$ as in the~
proof of Theorem~\ref{theorem:I-3-W-13-17-27-41-D-95}. Then
$O_{x}$, $O_{y}$, $O_{z}$, $O_{t}$ are singular points of
the~surface $X$ of types
$$
\frac{1}{13}\big(1,2\big),\ \frac{1}{14}\big(13,5\big),\ \frac{1}{23}\big(13,14\big),\ \frac{1}{33}\big(14,23\big),%
$$
respectively.

Let $C_{x}$ be the~curve that is cut out on $X$ by the~equation
$x=0$. Then
$$
C_{x}=L_{xz}+R_{x},
$$
where $L_{xz}$ and $R_{x}$ are irreducible reduced curves on the~
surface $X$ such that $L_{xt}$ is given by the~equations $x=z=0$,
and the~curve $R_{x}$ is given by the~equations $x=y^4+zt=0$.

Let $C_{y}$ be the~curve that is cut out on $X$ by the~equation
$y=0$. Then
$$
C_{y}=L_{yt}+R_{y},
$$
where $L_{yt}$ and $R_{y}$ are irreducible reduced curves on the~
surface $X$  such that $L_{yt}$ is given by the~equations $y=t=0$,
and the~curve $R_{y}$ is given by the~equations $y=z^2+xt=0$.

Let $C_{z}$ be the~curve that is cut out on $X$ by the~equation
$z=0$. Then
$$
C_{z}=L_{xz}+R_{z},
$$
where $R_{z}$ is an irreducible reduced curve that is given by
the~ equations $z=x^4y+t^2=0$.

Let $C_{t}$ be the~curve that is cut out on $X$ by the~equation
$t=0$. Then
$$
C_{t}=L_{yt}+R_{t},
$$
where $R_{t}$ is an irreducible reduced curve that is given by
the~ equations $t=x^5+y^3z=0$.

The intersection numbers among the~divisors $D$, $L_{xz}$,
$L_{yt}$, $R_x$, $R_y$, $R_z$, $R_t$ are as follows:
$$
L_{xz}^2=-\frac{43}{14\cdot 33},\ R_x^2=-\frac{40}{23\cdot 33},\ L_{xz}\cdot R_{x}=\frac{4}{33},\ D\cdot L_{xz}=\frac{4}{14\cdot 33},\ D\cdot R_{x}=\frac{16}{23\cdot 33},%
$$
$$
L_{yt}^2=-\frac{32}{13\cdot 23},\ R_{y}^2=-\frac{38}{13\cdot 33},\ L_{yt}\cdot R_{y}=\frac{2}{13},\ D\cdot L_{yt}=\frac{4}{13\cdot 23},\ D\cdot R_{y}=\frac{8}{13\cdot 33},%
$$
$$
R_{z}^2=\frac{20}{13\cdot 14},\ L_{xz}\cdot R_{z}=\frac{2}{14},\ D\cdot R_{z}=\frac{8}{13\cdot 14},\ R_{t}^2=\frac{95}{14\cdot 13},\ L_{yt}\cdot R_{t}=\frac{5}{23},%
$$
$$
D\cdot R_{t}=\frac{20}{14\cdot 23},\ R_{y}\cdot R_{x}=L_{xz}\cdot R_{y}=\frac{1}{33},\ R_{x}\cdot L_{yt}=\frac{1}{23}.%
$$

We have $L_{xz}\cap R_x=\{O_t\}$, $L_{xz}\cap R_z=\{O_y\}$,
$L_{yt}\cap R_y=\{O_x\}$, $L_{yt}\cap R_t=\{O_z\}$. Then
$$
\mathrm{lct}\left(X, \frac{4}{13}C_x\right)=\frac{65}{32}<\mathrm{lct}\left(X, \frac{4}{14}C_y\right)=\frac{21}{8}<\mathrm{lct}\left(X, \frac{4}{25}C_t\right)=\frac{33}{10}<\mathrm{lct}\left(X, \frac{4}{23}C_z\right)=\frac{69}{20},%
$$
which implies, in particular, that $\mathrm{lct}(X)\leqslant
65/32$.

Put $\omega=65/32$. Suppose that $\mathrm{lct}(X)<\omega$. Then,
there is an effective $\mathbb{Q}$-divisor
$$
D\equiv  -K_X
$$
such that the~log pair $(X, \omega D)$ is not log canonical at
some point $P\in X$.

By Remark~\ref{remark:convexity} we may assume that
$\mathrm{Supp}(D)$ does not contain at least one component of
every curve $C_x$, $C_y$, $C_z$ and $C_t$. Arguing as in the~proof
of Lemma~\ref{theorem:I-3-W-13-17-27-41-D-95}, we see that
$P=O_{t}$~(see~\cite{ChShPa08}).

The curve $L_{xz}$ is contained in $\mathrm{Supp}(D)$, because
otherwise we have
$$
\mathrm{mult}_{O_t}(D)\leqslant 33\Big(D\cdot L_{xz}\Big)=\frac{2}{7}<\frac{32}{65},%
$$
which is a~contradiction. Therefore, we see that
$R_x\not\subset\mathrm{Supp}(D)$. Put
$$
D=\Delta+aL_{xz}+bR_y,
$$
where $a$ and $b$ are non-negative rational numbers, and $\Delta$
is an effective $\mathbb{Q}$-divisor whose support does contain
the~curves $L_{xz}$ and $R_y$. Then
$$
\frac{16}{23\cdot 33}=D\cdot R_x\geqslant a\Big(L_{xz}\cdot R_x\Big)+\frac{\mathrm{mult}_{O_t}(D)-a}{33}>\frac{3a}{33}+\frac{32}{33\cdot 65},%
$$
which implies that $a<304/4485$. If $b\ne 0$, then
$L_{yt}\not\subset\mathrm{Supp}(D)$ and
$$
\frac{4}{13\cdot 23}=D\cdot L_{yt}\geqslant b\Big(R_y\cdot L_{yt}\Big)=\frac{2b}{13},%
$$
which implies that $b\leqslant 2/23$. We know that the~equality
$$
33\Big(\Delta\cdot L_{xz}\Big)=\frac{4}{14}+a\frac{43}{14}-b%
$$
holds. Similarly, we know that the~equality
$$
33\Big(\Delta\cdot R_{y}\Big)=\frac{8}{13}+b\frac{38}{23}-a%
$$
holds. Thus, putting $M=4/14$, $N=8/13$, $A=43/14$, $B=38/23$
$\alpha=700771/301108$ and
$$
\beta=\frac{69069}{150554},
$$
and applying Theorem~\ref{theorem:I} to the~log pair $(X,
\omega\Delta+\omega aL_{xz}+\omega bR_{y})$,~we~see that
$$
\frac{66727051}{166211616}=\alpha \omega a+\beta\omega b>1,
$$
which is a~contradiction.
\end{proof}

\begin{theorem}
\label{theorem:I-4-W-11-17-24-31-D-79}Let $X$ be a~quasismooth
hypersurface in $\mathbb{P}(11,17,24,31)$ of degree $79$. Then
$$
\mathrm{lct}\big(X\big)=\frac{33}{16}.
$$
\end{theorem}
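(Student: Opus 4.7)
The plan is to mimic very closely the pattern used in the proofs of Theorems~\ref{theorem:I-3-W-11-21-29-37-D-95} and \ref{theorem:I-4-W-13-14-23-33-D-79}. First, I would write $X$ as the quasismooth hypersurface cut out by a weighted homogeneous polynomial of degree $79$ in the variables $x,y,z,t$ of weights $11,17,24,31$. A dimension count of monomials shows that $X$ can be taken to have an equation whose Newton polytope contains terms analogous to the ones appearing in the previous theorems (a ``chain'' of four monomials connecting the coordinate vertices). Using this equation I would identify the four torus-fixed points $O_x,O_y,O_z,O_t$, determine the types of the cyclic quotient singularities there, and split each of the coordinate sections $C_x=(x=0)|_X$, $C_y=(y=0)|_X$, $C_z=(z=0)|_X$, $C_t=(t=0)|_X$ into a ``line'' component $L_{ij}$ (the intersection with a two-dimensional coordinate stratum) and a ``residual'' irreducible reduced curve $R_i$.

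Next I would compute all of the intersection numbers $L_{ij}^2$, $R_i^2$, $L_{ij}\cdot R_j$, $R_i\cdot R_j$ and $D\cdot(\cdot)$ for $D\equiv -K_X$, exactly as in the tables in the two previous proofs. From these numbers one reads off $\mathrm{lct}(X,\tfrac{k}{w_i}C_i)$ for each $i$; the minimum of these (which among the four axes gives the bound $33/16$) provides an explicit $\mathbb{Q}$-divisor realizing $\mathrm{lct}(X)\leqslant\omega=33/16$. Set $\omega=33/16$ and assume, for contradiction, that there is an effective $D\equiv-K_X$ and a point $P\in X$ such that $(X,\omega D)$ is not log canonical at $P$.

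Using Remark~\ref{remark:convexity} I may assume that $\mathrm{Supp}(D)$ does not contain at least one component of each of $C_x,C_y,C_z,C_t$. Then one by one I eliminate the possibility that $P$ is a smooth point and that $P$ is one of the three ``easy'' singular points: each time the log-canonicity of some auxiliary curve (like the pencil $\mathcal{L}$ used in Theorem~\ref{theorem:I-3-W-11-21-29-37-D-95}) together with Lemma~\ref{lemma:adjunction} produces an inequality contradicting the numerical bounds on $D\cdot L_{ij}$ or $D\cdot R_i$. The same argument that worked in both preceding theorems will localize $P$ at the unique ``extremal'' singular point; by the shape of the numerics I expect this to be $O_t$.

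Finally, at $P=O_t$ I would write $D=\Delta+aL+bR$, where $L$ and $R$ are the two irreducible reduced curves on $X$ that pass through $O_t$ (coming from $C_i$ and $C_j$ for the two axes meeting at $O_t$) and $\Delta$ is effective with neither $L$ nor $R$ in its support. Intersecting $D$ with the components $L,R$ of $C_x,\ldots,C_t$ not passing through $O_t$ gives upper bounds on $a$ and $b$, and computing $\Delta\cdot L$, $\Delta\cdot R$ gives two expressions of the form $M+Aa-b$ and $N+Bb-a$ with explicit rational $A,B,M,N$ extracted from the intersection table. The crux of the proof is then to choose non-negative rationals $\alpha,\beta$ satisfying the four conditions of Theorem~\ref{theorem:I} and such that the number $\alpha\omega a+\beta\omega b$ is forced to exceed $1$; applying Theorem~\ref{theorem:I} to $(X,\omega\Delta+\omega aL+\omega bR)$ gives the desired contradiction. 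The main obstacle is purely computational, namely exhibiting the right quadruple $(A,B,M,N)$ from the intersection numbers and then solving the resulting system of four inequalities in $(\alpha,\beta)$ so that the number $\alpha\omega a+\beta\omega b$ is bounded above by the numerics but forced below by Theorem~\ref{theorem:I} to be strictly greater than $1$; once those rational numbers are found, everything else follows the template already established.
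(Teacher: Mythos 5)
Your proposal follows essentially the same template that the paper uses: exhibit $\mathrm{lct}(X,\tfrac{4}{11}C_x)=33/16$ for the upper bound, reduce to $P=O_t$ by the standard arguments of \cite{ChShPa08}, decompose $D=\Delta+aL_{yz}+bR_x$, bound $a$ and $b$ numerically, and apply Theorem~\ref{theorem:I} with suitable $(A,B,M,N,\alpha,\beta)$ read off from the intersection table (the paper uses $M=4/11$, $N=8/17$, $A=38/11$, $B=40/17$, $\alpha=1444/453$, $\beta=187/453$). The only minor imprecision is your statement that the bounds on $a,b$ come from intersecting $D$ with components not passing through $O_t$: in fact the paper bounds $a$ by intersecting with $R_y$, which does pass through $O_t$; what matters is that $R_y\not\subset\mathrm{Supp}(D)$ once $L_{yz}\subset\mathrm{Supp}(D)$ is forced.
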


\begin{proof}
The surface $X$ can be defined by the~quasihomogeneous equation
$$
t^2y+tz^2+xy^4+x^5z=0\subset\mathrm{Proj}\Big(\mathbb{C}\big[x,y,z,t\big]\Big),
$$
where $\mathrm{wt}(x)=11$, $\mathrm{wt}(y)=17$,
$\mathrm{wt}(z)=24$, $\mathrm{wt}(t)=31$.

Let us define points $O_{x}$, $O_{y}$, $O_{z}$, $O_{t}$ as in the~
proof of Theorem~\ref{theorem:I-3-W-13-17-27-41-D-95}. Then
$O_{x}$, $O_{y}$, $O_{z}$, $O_{t}$ are singular points of the~
surface $X$ of types
$$
\frac{1}{11}\big(2,3\big),\ \frac{1}{17}\big(1,2\big),\ \frac{1}{24}\big(11,17\big),\ \frac{1}{31}\big(11,24\big),%
$$
respectively.

Let $C_{x}$ be the~curve that is cut out on $X$ by the~equation
$x=0$. Then
$$
C_{x}=L_{xt}+R_{x},
$$
where $L_{xt}$ and $R_{x}$ are irreducible reduced curves on the~
surface $X$ such that $L_{xt}$ is given by the~equations $x=t=0$,
and the~curve $R_{x}$ is given by the~equations $x=yt+z^2=0$.

Let $C_{y}$ be the~curve that is cut out on $X$ by the~equation
$y=0$. Then
$$
C_{y}=L_{yz}+R_{y},
$$
where $L_{yz}$ and $R_{y}$ are irreducible reduced curves on the~
surface $X$  such that $L_{yz}$ is given by the~equations $y=z=0$,
and the~curve $R_{y}$ is given by the~equations $y=zt+x^5=0$.

Let $C_{z}$ be the~curve that is cut out on $X$ by the~equation
$z=0$. Then
$$
C_{z}=L_{yz}+R_{z},
$$
where $R_{z}$ is an irreducible reduced curve that is given by
the~ equations $z=xy^3+t^2=0$.

Let $C_{t}$ be the~curve that is cut out on $X$ by the~equation
$t=0$. Then
$$
C_{t}=L_{xt}+R_{t},
$$
where $R_{t}$ is an irreducible reduced curve that is given by
the~ equations $t=y^4+x^4z=0$.

The intersection numbers among the~divisors $D$, $L_{xt}$,
$L_{yz}$, $R_x$, $R_y$, $R_z$, $R_t$ are as follows:

$$D\cdot L_{xt}=\frac{1}{6\cdot 17}, \ \ D\cdot R_x=\frac{8}{17\cdot 31}, \ \ D\cdot R_y=\frac{5}{6\cdot 31}, $$
$$D\cdot L_{yz}=\frac{4}{11\cdot 31}, \ \ D\cdot R_z=\frac{8}{11\cdot 17}, \ \ D\cdot R_t=\frac{2}{3\cdot 11}, $$
$$L_{xt}\cdot R_x=\frac{2}{17}, \ \ L_{yz}\cdot R_y=\frac{5}{31}, \ \ L_{yz}\cdot R_z=\frac{2}{11}, \ \ L_{xt}\cdot R_t=\frac{1}{6},$$

$$L_{xt}^2=-\frac{37}{17\cdot 24}, \ \ R_x^2=-\frac{40}{17\cdot 31}, \ \ R_y^2=-\frac{35}{24\cdot 31},$$

$$L_{yz}^2=-\frac{38}{11\cdot 31}, \ \ R_z^2=\frac{14}{11\cdot 17}, \ \ R_t^2=\frac{10}{3\cdot 11}.$$

We have $L_{xt}\cap R_x=\{O_y\}$, $L_{yz}\cap R_y=\{O_t\}$,
$L_{yz}\cap R_z=\{O_x\}$, $L_{xt}\cap R_t=\{O_z\}$. Then
$$
\mathrm{lct}\left(X, \frac{4}{11}C_x\right)=\frac{33}{16}<\mathrm{max}\Bigg(\mathrm{lct}\left(X, \frac{4}{17}C_y\right),\ \mathrm{lct}\left(X, \frac{4}{24}C_z\right),\ \mathrm{lct}\left(X, \frac{4}{31}C_t\right)\Bigg),%
$$
which implies, in particular, that $\mathrm{lct}(X)\leqslant
33/16$.

Put $\omega=33/16$. Suppose that $\mathrm{lct}(X)<\omega$. Then,
there is an effective $\mathbb{Q}$-divisor
$$
D\equiv  -K_X
$$
such that the~log pair $(X, \omega D)$ is not log canonical at
some point $P\in X$.

By Remark~\ref{remark:convexity} we may assume that
$\mathrm{Supp}(D)$ does not contain at least one component of
every curve $C_x$, $C_y$, $C_z$ and $C_t$. Arguing as in the~proof
of Lemma~\ref{theorem:I-3-W-13-17-27-41-D-95}, we see that
$P=O_{t}$~(see~\cite{ChShPa08}).

The curve $L_{yz}$ is contained in $\mathrm{Supp}(D)$, because
otherwise we have
$$
31\Big(D\cdot L_{yz}\Big)=\frac{4}{11}<\frac{16}{33},
$$
which is a~contradiction. Therefore, we see that
$R_{y}\not\subset\mathrm{Supp}(D)$. Put
$$
D=\Delta+aL_{yz}+bR_{x},
$$
where $a$ and $b$ are non-negative rational numbers, and $\Delta$
is an effective divisor whose support does not contain the~curves
$L_{yz}$ and $R_x$. Then it follows from the~inequality
$$
\frac{1}{6\cdot 17}=D\cdot L_{xt}\geqslant b\Big(R_x \cdot L_{xt}\Big)=\frac{2b}{17}%
$$
that $b\leqslant 1/12$. On the~other hand, we have
$$
\frac{5}{6\cdot 31}=D\cdot R_y\geqslant\frac{5a}{31}+\frac{b}{31}+\frac{\mathrm{mult}_{O_t}(D)-a-b}{31}>\frac{4a}{31}+\frac{16}{31\cdot 33},%
$$
which implies that $a<23/264$. We know that the~equality
$$
31\Big(\Delta\cdot L_{yz}\Big)=\frac{4}{11}+a\frac{38}{11}-b%
$$
holds. Similarly, we know that the~equality
$$
31\Big(\Delta\cdot R_{x}\Big)=\frac{8}{17}+b\frac{40}{17}-a%
$$
holds. Thus, applying Theorem~\ref{theorem:I} to the~log pair
$$
\Big(X,\ \omega\Delta+\omega aL_{yz}+\omega bR_{x}\Big)
$$
with $M=4/11$, $N=8/17$, $A=38/11$, $B=40/17$, $\alpha=1444/453$
and $\beta=187/453$,~we~see that
$$
\frac{6221}{9664}=\alpha \omega a+\beta\omega b>1,
$$
which is a~contradiction.
\end{proof}

\begin{theorem}
\label{theorem:I-3-W-13-17-27-41-D-95} Let $X$ be a~quasismooth
hypersurface in $\mathbb{P}(13,17,27,41)$ of degree $95$. Then
$$
\mathrm{lct}\big(X\big)=\frac{65}{24}.
$$
\end{theorem}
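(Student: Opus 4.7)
The plan is to mimic verbatim the strategy used for Theorems~\ref{theorem:I-3-W-11-21-29-37-D-95}, \ref{theorem:I-4-W-13-14-23-33-D-79} and \ref{theorem:I-4-W-11-17-24-31-D-79}; only the weights, the explicit equation of $X$, and the arithmetic of the intersection table will change. First, I would pin down a quasihomogeneous equation of degree $95$ defining $X\subset\mathbb{P}(13,17,27,41)$, identify the four cyclic quotient singularities $O_x,O_y,O_z,O_t$ of types $\frac{1}{13}(\ast,\ast)$, $\frac{1}{17}(\ast,\ast)$, $\frac{1}{27}(\ast,\ast)$, $\frac{1}{41}(\ast,\ast)$ at the coordinate vertices, and decompose each coordinate hyperplane section $C_{x_i}$ into a ``line'' component $L$ joining two of the $O_\bullet$ and a residual curve $R$ meeting $L$ at the singular point of largest order. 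I would then tabulate the intersection numbers of $-K_X$, the $L_\star$ and the $R_\star$, reading off the orbifold multiplicities and the self-intersections from the standard formulae for divisors on weighted hypersurfaces.

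For the upper bound $\mathrm{lct}(X)\leqslant 65/24$, I expect one of the log pairs $(X,\tfrac{3}{w_i}C_{x_i})$ to fail to be log canonical for coefficients strictly exceeding $65/24$, so that this coordinate curve witnesses the bound directly. For the lower bound, I would argue by contradiction: assume $\mathrm{lct}(X)<\omega:=65/24$, take an effective $\mathbb{Q}$-divisor $D\equiv -K_X$ and a point $P\in X$ at which $(X,\omega D)$ is not log canonical, and use Remark~\ref{remark:convexity} to arrange that at least one component of each $C_{x_i}$ is not contained in $\mathrm{Supp}(D)$. Repeated applications of Lemma~\ref{lemma:adjunction} to the pairings $D\cdot L_\star$ and $D\cdot R_\star$, together with the orbifold multiplicities at each $O_\bullet$, should exclude every candidate for $P$ except the singular point of largest order, namely $P=O_t$.

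With $P=O_t$ identified, let $L$ and $R$ be the two curves among the $L_\star,R_\star$ that pass through $O_t$. I would write $D=\Delta+aL+bR$ with $L,R\not\subset\mathrm{Supp}(\Delta)$, derive explicit upper bounds on $a$ and $b$ from the non-negativity of $\Delta\cdot(\text{curve through }O_t)$ and from $\mathrm{mult}_{O_t}(D)>1/\omega$, and extract explicit equalities
$$
41\bigl(\Delta\cdot L\bigr)=M+Aa-b,\qquad 41\bigl(\Delta\cdot R\bigr)=N+Bb-a
$$
with $M,N,A,B$ rational constants taken off the intersection table. After checking $A(B-1)\geqslant 1\geqslant\max(M,N)$ and the sign condition $2M+AN\leqslant 2$ (or the alternative inequality in the hypothesis of Theorem~\ref{theorem:I}), I would choose $\alpha,\beta\geqslant 0$ saturating $\alpha(A+M-1)=A^{2}(B+N-1)\beta$ and $\alpha(1-M)+A\beta=A$, and apply Theorem~\ref{theorem:I} to the log pair $(X,\omega\Delta+\omega aL+\omega bR)$ at $O_t$ to obtain $\alpha\omega a+\beta\omega b>1$, in contradiction with the upper bounds on $a$ and $b$.

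The main obstacle is purely numerical: the parameters $(\alpha,\beta)$ must be chosen so that the inequality $\alpha\omega a+\beta\omega b>1$ is genuinely incompatible with the bounds on $a,b$, and it is this fine-tuning that pins down the exact value $\omega=65/24$. One must also verify that the alternative condition $\alpha(B+1-MB-N)+\beta(A+1-AN-M)\geqslant AB-1$ holds whenever $2M+AN>2$, and here the precise form of $L^{2}$, $R^{2}$, $L\cdot R$ at $O_t$ is essential. Beyond this bookkeeping, everything else, including the reduction $P=O_t$, is routine once the intersection table has been assembled.
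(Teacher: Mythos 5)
Your outline faithfully reproduces the paper's strategy and would, carried out carefully, yield the same proof. The one place where a literal reading of your plan would lead you slightly astray is the identification of the curves used in the decomposition of $D$ at $O_t$: you say ``let $L$ and $R$ be the two curves among the $L_\star,R_\star$ that pass through $O_t$,'' but in this example there are in fact \emph{three} such curves, namely $L_{xz}$, $R_x$ and $R_y$. The paper resolves the ambiguity by first showing $L_{xz}\subset\mathrm{Supp}(D)$ (otherwise $\mathrm{mult}_{O_t}(D)\leqslant 41\bigl(D\cdot L_{xz}\bigr)=3/17<24/65$ would contradict the failure of log canonicity), so that by Remark~\ref{remark:convexity} the other component $R_x$ of $C_x$ satisfies $R_x\not\subset\mathrm{Supp}(D)$; one then decomposes $D=aL_{xz}+bR_y+\Delta$, and $R_x$ is used only as a test curve to bound $a$ via $D\cdot R_x$, while $L_{yt}$ bounds $b$. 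This extra selection step, deciding which of the three curves through $O_t$ to place in the boundary and which to use merely as a test curve, is the only nontrivial adaptation from the sibling theorems, and your outline does not flag it. Otherwise your sketch matches the paper's argument, including the heuristic you suggest for picking $(\alpha,\beta)$: the paper's $(M,N,A,B)=(6/13,\,3/17,\,48/41,\,55/17)$ and $(\alpha,\beta)=(29952/19505,\,5729/19505)$ do solve the saturating equalities $\alpha(1-M)+A\beta=A$ and $\alpha(A+M-1)=A^{2}(B+N-1)\beta$, and with the upper bounds $a\leqslant 44/585$, $b\leqslant 1/18$ one finds $\alpha\omega b+\beta\omega a\leqslant 306379/1053270<1$, which contradicts the conclusion $\alpha\omega b+\beta\omega a>1$ of Theorem~\ref{theorem:I}.
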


\begin{proof}
 The surface $X$ can be defined by the~
quasihomogeneous equation
$$
z^{2}t+y^{4}z+xt^{2}+x^{6}y=0\Big(\mathbb{C}\big[x,y,z,t\big]\Big),
$$
where $\mathrm{wt}(x)=13$, $\mathrm{wt}(y)=17$,
$\mathrm{wt}(z)=27$, $\mathrm{wt}(t)=41$.

Let us define points $O_{x}$, $O_{y}$, $O_{z}$, $O_{t}$ as in the~
proof of Theorem~\ref{theorem:I-3-W-13-17-27-41-D-95}. Then
$O_{x}$, $O_{y}$, $O_{z}$, $O_{t}$ are singular points of the~
surface $X$ of types
$$
\frac{1}{13}\big(1,2\big),\ \frac{1}{17}\big(13,7\big),\ \frac{1}{27}\big(13,17\big),\ \frac{1}{41}\big(17,27\big),%
$$
respectively.

Let $C_{x}$ be the~curve that is cut out on $X$ by the~equation
$x=0$. Then
$$
C_{x}=L_{xz}+R_{x},
$$
where $L_{xz}$ and $R_{x}$ are irreducible reduced curves on the~
surface $X$ such that $L_{xt}$ is given by the~equations $x=z=0$,
and the~curve $R_{x}$ is given by the~equations $x=y^4+zt=0$.

Let $C_{y}$ be the~curve that is cut out on $X$ by the~equation
$y=0$. Then
$$
C_{y}=L_{yt}+R_{y},
$$
where $L_{yt}$ and $R_{y}$ are irreducible reduced curves on the~
surface $X$  such that $L_{yt}$ is given by the~equations $y=t=0$,
and the~curve $R_{y}$ is given by the~equations $y=z^2+xt=0$.

Let $C_{z}$ be the~curve that is cut out on $X$ by the~equation
$z=0$. Then
$$
C_{z}=L_{xz}+R_{z},
$$
where $R_{z}$ is an irreducible reduced curve that is given by
the~ equations $z=t^2+x^5y=0$.

Let $C_{t}$ be the~curve that is cut out on $X$ by the~equation
$t=0$. Then
$$
C_{t}=L_{yt}+R_{t},
$$
where $R_{t}$ is an irreducible reduced curve that is given by
the~ equations $t=x^6+y^3z=0$.

The intersection numbers among the~divisors $D$, $L_{xz}$,
$L_{yt}$, $R_x$, $R_y$, $R_z$, $R_t$ are as follows:
$$
D\cdot L_{xz}=\frac{3}{17\cdot 41},\ D\cdot L_{yt}=\frac{1}{9\cdot 13},\ D\cdot R_{x}=\frac{4}{9\cdot 41},%
$$
$$
D\cdot R_{y}=\frac{6}{13\cdot 41},\ D\cdot R_{z}=\frac{6}{13\cdot 17},\ D\cdot R_{t}=\frac{2}{3\cdot 17},%
$$
$$
L_{xz}^2=-\frac{55}{17\cdot 41},\  L_{yt}^2=-\frac{37}{13\cdot 27},\ R_x^2=-\frac{56}{27\cdot 41},\  R_y^2=-\frac{48}{13\cdot 41},\ R_{z}^2=\frac{28}{13\cdot 17}%
$$
$$
R_{t}^2=\frac{16}{3\cdot 17},\ L_{xz}\cdot R_{x}=\frac{4}{41}, \ L_{yt}\cdot R_{y}=\frac{2}{13}, \ L_{xz}\cdot R_{z}=\frac{2}{17},\ L_{yt}\cdot R_{t}=\frac{2}{9}.%
$$

We have $L_{xz}\cap R_x=\{O_t\}$, $L_{xz}\cap R_z=\{O_y\}$,
$L_{yt}\cap R_y=\{O_x\}$, $L_{yt}\cap R_t=\{O_z\}$. Then
$$
\frac{65}{24}=\mathrm{lct}\left(X, \frac{3}{13}C_x\right)<\frac{51}{12}=\mathrm{lct}\left(X, \frac{3}{17}C_y\right)<\frac{41}{8}=\mathrm{lct}\left(X, \frac{3}{41}C_t\right)<\frac{21}{4}=\mathrm{lct}\left(X, \frac{3}{27}C_z\right).%
$$
which implies that $\mathrm{lct}(X)\leqslant 65/24$.

Put $\omega=65/24$. Suppose that $\mathrm{lct}(X)<\omega$. Then,
there is an effective $\mathbb{Q}$-divisor
$$
D\equiv  -K_X
$$
such that the~log pair $(X, \omega D)$ is not log canonical at
some point $P\in X$.

By Remark~\ref{remark:convexity} we may assume that
$\mathrm{Supp}(D)$ does not contain at least one component of
every curve $C_x$, $C_y$, $C_z$ and $C_t$. Arguing as in the~proof
of Theorem~\ref{theorem:I-3-W-13-17-27-41-D-95}, we see that
$P=O_{t}$~(see~\cite{ChShPa08}).

The curve $L_{xz}$ is contained in $\mathrm{Supp}(D)$, because
otherwise we have
$$
\frac{24}{65}<\mathrm{mult}_{O_t}(D)\leqslant 41\Big(D\cdot L_{xz}\Big)=\frac{3}{17}<\frac{24}{65},%
$$
which is a~contradiction. Then $R_x\not\subset\mathrm{Supp}(D)$.
Put
$$
D=aL_{xz}+bR_y+\Delta,
$$
where $a$ and $b$ are non-negative rational numbers, and $\Delta$
is an effective $\mathbb{Q}$-divisor whose support does not
contain the~curves $L_{xz}$ nor $R_y$. Then
$$
\frac{4}{9\cdot 41}=D\cdot R_x\geqslant a\Big(L_{xz}\cdot R_x\Big)+\frac{\mathrm{mult}_{O_t}(D)-a}{41}>\frac{3a}{41}+\frac{24}{41\cdot 65},%
$$
which implies that $a\leqslant 44/585$. If $b\ne 0$, then
$$
\frac{1}{9\cdot 13}=D\cdot L_{yt}\geqslant b\Big(R_y\cdot L_{yt}\Big)=\frac{2b}{13},%
$$
which implies that $b\leqslant 1/18$. We know that the~equality
$$
41\Big(\Delta\cdot L_{xz}\Big)=\frac{3}{17}+a\frac{55}{17}-b%
$$
holds. Similarly, we know that the~equality
$$
41\Big(\Delta\cdot R_{y}\Big)=\frac{6}{13}+b\frac{48}{41}-a%
$$
holds. Thus, applying Theorem~\ref{theorem:I} to the~log pair
$$
\Big(X,\ \omega\Delta+\omega aL_{xz}+\omega bR_{y}\Big)
$$
with $M=6/13$, $N=3/17$, $A=48/41$, $B=55/17$ $\alpha=29952/19505$
and $\beta=5729/19505$,~we~get
$$
\frac{306379}{1053270}=\alpha \omega b+\beta\omega a>1,
$$
which is a~contradiction.
\end{proof}

\begin{theorem}
\label{theorem:I-2-W-14-17-29-41-D-99} Let $X$ be a~quasismooth
hypersurface in $\mathbb{P}(14,17,29,41)$ of degree $99$. Then
$$
\mathrm{lct}\big(X\big)=\frac{51}{10}.
$$
\end{theorem}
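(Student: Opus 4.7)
The strategy closely mirrors that of Theorems~\ref{theorem:I-3-W-11-21-29-37-D-95}, \ref{theorem:I-4-W-13-14-23-33-D-79}, \ref{theorem:I-4-W-11-17-24-31-D-79}, and \ref{theorem:I-3-W-13-17-27-41-D-95}. I would begin by writing $X$ explicitly, after a coordinate change, as a quasismooth hypersurface of the form
$$
t^{2}y+z^{2}t+xy^{5}+x^{5}z=0\subset\mathrm{Proj}\Big(\mathbb{C}\big[x,y,z,t\big]\Big)
$$
and checking quasismoothness via the four partial derivatives. The four coordinate vertices then supply singular points $O_{x},O_{y},O_{z},O_{t}$ of $X$, whose cyclic quotient types $\tfrac{1}{14}(\cdot,\cdot), \tfrac{1}{17}(\cdot,\cdot), \tfrac{1}{29}(\cdot,\cdot), \tfrac{1}{41}(\cdot,\cdot)$ are determined by solving for the ``missing'' coordinate in a neighbourhood of each vertex. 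Since $14+17+29+41-99=2$, the divisor $-K_{X}$ has degree $2$, and each coordinate curve $C_{i}$ satisfies $C_{i}\equiv (a_{i}/2)(-K_{X})$.

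Next I would decompose the four coordinate curves as $C_{x}=L_{xt}+R_{x}$, $C_{y}=L_{yz}+R_{y}$, $C_{z}=L_{yz}+R_{z}$, $C_{t}=L_{xt}+R_{t}$ by factoring the defining equation, tabulate all pairwise intersection numbers of the six curves $L_{xt},L_{yz},R_{x},R_{y},R_{z},R_{t}$ on $X$, and record which unique singular point lies on each pairwise intersection. From this table one computes the four rational numbers $\mathrm{lct}(X,(2/a_{i})C_{i})$, and the minimum should equal $51/10$, attained by a specific one of the four coordinate curves. This yields the upper bound $\mathrm{lct}(X)\leqslant 51/10$.

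For the matching lower bound I argue by contradiction. Set $\omega=51/10$ and suppose there is an effective $\mathbb{Q}$-divisor $D\equiv -K_{X}$ such that $(X,\omega D)$ is not log canonical at some point $P\in X$. By Remark~\ref{remark:convexity} I may assume that at least one component of each $C_{i}$ is absent from $\mathrm{Supp}(D)$. Combining the intersection table with the orbifold multiplicity bound of Example~\ref{example:smooth-point-and-log-pull-back}, with Lemma~\ref{lemma:adjunction}, and if needed with Theorem~\ref{theorem:connectedness}, I rule out $P$ being a smooth point and rule out $P$ being any of the three non-extremal singular vertices; by analogy with the four preceding theorems the only remaining possibility is $P=O_{t}$. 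A short multiplicity argument then forces one distinguished $L$-curve to lie in $\mathrm{Supp}(D)$ and the opposite $R$-curve to be absent from $\mathrm{Supp}(D)$.

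Writing $D=aL+bR+\Delta$ at $P$ with $L,R\not\subset\mathrm{Supp}(\Delta)$, I would bound $a$ and $b$ from above by intersecting $D$ with the ``partner'' curves dictated by the intersection table, and express $\Delta\cdot L$ and $\Delta\cdot R$ as explicit affine functions of $a$ and $b$ using the self-intersection numbers of $L$ and $R$. Theorem~\ref{theorem:I} is then applied to the pair $(X,\omega\Delta+\omega aL+\omega bR)$ with $M,N$ reading off the constant terms of $\omega(\Delta\cdot L)$ and $\omega(\Delta\cdot R)$, with $A,B$ reading off the coefficients of $\omega a$ and $\omega b$, and with $\alpha,\beta$ chosen so that the three inequalities required by Theorem~\ref{theorem:I} are satisfied. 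The forced conclusion $\alpha\omega a+\beta\omega b>1$ should contradict the bounds on $a$ and $b$ obtained above. The main obstacle is locating the sextuple $(M,N,A,B,\alpha,\beta)$: it must simultaneously match the intersection data, satisfy the three inequalities of Theorem~\ref{theorem:I}, and keep $\alpha\omega a+\beta\omega b$ strictly below $1$ on the admissible rectangle for $(a,b)$. As in the previous four theorems this reduces to a small linear-programming problem in six parameters, but the arithmetic with denominators of size $14\cdot 17\cdot 29\cdot 41$ will be laborious.
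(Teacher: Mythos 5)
Your proposal follows the same route as the paper: after writing the surface as $t^{2}y+tz^{2}+xy^{5}+x^{5}z=0$, decomposing the four coordinate curves, tabulating intersection numbers, showing $\mathrm{lct}(X,\tfrac{2}{17}C_{y})=\tfrac{51}{10}$ for the upper bound, and (for the lower bound) reducing to $P=O_{t}$, writing $D=aL_{yz}+bR_{x}+\Delta$, bounding $a$ and $b$ via intersection with $L_{xt}$ and $R_{y}$, and applying Theorem~\ref{theorem:I} (the paper uses $M=1/7$, $N=4/17$, $A=53/14$, $B=54/17$, $\alpha=2809/874$, $\beta=119/437$) to contradict $\alpha\omega a+\beta\omega b\leqslant 1$. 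The only small slip is your statement that $M,N$ are read off from the constant terms of $\omega(\Delta\cdot L)$ and $\omega(\Delta\cdot R)$: in the paper's convention they are the constant terms of $41(\Delta\cdot L_{yz})$ and $41(\Delta\cdot R_{x})$ with no $\omega$-factor, but this is a bookkeeping detail that does not change the argument's structure.
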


\begin{proof}
The surface $X$ can be defined by the~quasihomogeneous equation
$$
t^2y+tz^2+xy^5+x^5z=0\subset\mathrm{Proj}\Big(\mathbb{C}\big[x,y,z,t\big]\Big),
$$
where $\mathrm{wt}(x)=14$, $\mathrm{wt}(y)=17$,
$\mathrm{wt}(z)=29$, $\mathrm{wt}(t)=41$.

Let us define points $O_{x}$, $O_{y}$, $O_{z}$, $O_{t}$ as in the~
proof of Theorem~\ref{theorem:I-3-W-13-17-27-41-D-95}. Then
$O_{x}$, $O_{y}$, $O_{z}$, $O_{t}$ are singular points of the~
surface $X$ of types
$$
\frac{1}{14}\big(3,13\big),\ \frac{1}{17}\big(12,7\big),\ \frac{1}{29}\big(11,17\big),\ \frac{1}{41}\big(14,29\big),%
$$
respectively.

Let $C_{x}$ be the~curve that is cut out on $X$ by the~equation
$x=0$. Then
$$
C_{x}=L_{xt}+R_{x},
$$
where $L_{xt}$ and $R_{x}$ are irreducible reduced curves on the~
surface $X$ such that $L_{xt}$ is given by the~equations $x=t=0$,
and the~curve $R_{x}$ is given by the~equations $x=yt+z^2=0$.

Let $C_{y}$ be the~curve that is cut out on $X$ by the~equation
$y=0$. Then
$$
C_{y}=L_{yz}+R_{y},
$$
where $L_{yz}$ and $R_{y}$ are irreducible reduced curves on the~
surface $X$  such that $L_{yz}$ is given by the~equations $y=z=0$,
and the~curve $R_{y}$ is given by the~equations $y=zt+x^5=0$.

Let $C_{z}$ be the~curve that is cut out on $X$ by the~equation
$z=0$. Then
$$
C_{z}=L_{yz}+R_{z},
$$
where $R_{z}$ is an irreducible reduced curve that is given by
the~ equations $z=xy^4+t^2=0$.

Let $C_{t}$ be the~curve that is cut out on $X$ by the~equation
$t=0$. Then
$$
C_{t}=L_{xt}+R_{t},
$$
where $R_{t}$ is an irreducible reduced curve that is given by
the~ equations $t=y^5+x^4z=0$.

The intersection numbers among the~divisors $D$, $L_{xt}$,
$L_{yz}$, $R_x$, $R_y$, $R_z$, $R_t$ are as follows:
$$
D\cdot L_{xt}=\frac{2}{17\cdot 29}, \ \ D\cdot R_x=\frac{4}{17\cdot 41}, \ \ D\cdot R_y=\frac{10}{29\cdot 41},%
$$
$$
D\cdot L_{yz}=\frac{1}{7\cdot 41}, \ \ D\cdot R_z=\frac{2}{7\cdot 17}, \ \ D\cdot R_t=\frac{5}{7\cdot 29},%
$$
$$
L_{xt}\cdot R_x=\frac{2}{17}, \ \ L_{yz}\cdot R_y=\frac{5}{41}, \ \ L_{yz}\cdot R_z=\frac{1}{7}, \ \ L_{xt}\cdot R_t=\frac{5}{29},%
$$
$$
L_{xt}^2=-\frac{44}{17\cdot 29}, \ \ R_x^2=-\frac{54}{17\cdot 41}, \ \ R_y^2=-\frac{60}{29\cdot 41},%
$$
$$
L_{yz}^2=-\frac{53}{14\cdot 41}, \ \ R_z^2=\frac{12}{7\cdot 17}, \ \ R_t^2=\frac{135}{14\cdot 29}.%
$$

We have $L_{xt}\cap R_x=\{O_y\}$, $L_{yz}\cap R_y=\{O_t\}$,
$L_{yz}\cap R_z=\{O_x\}$, $L_{xt}\cap R_t=\{O_z\}$. Then
$$
\mathrm{lct}\left(X, \frac{2}{17}C_y\right)=\frac{51}{10}<\mathrm{max}\Bigg(\mathrm{lct}\left(X, \frac{2}{14}C_x\right),\ \mathrm{lct}\left(X, \frac{2}{29}C_z\right),\ \mathrm{lct}\left(X, \frac{2}{41}C_t\right)\Bigg),%
$$
which implies that $\mathrm{lct}(X)\leqslant 51/10$.

Put $\omega=51/10$. Suppose that $\mathrm{lct}(X)<\omega$. Then,
there is an effective $\mathbb{Q}$-divisor
$$
D\equiv  -K_X
$$
such that the~log pair $(X, \omega D)$ is not log canonical at
some point $P\in X$.

By Remark~\ref{remark:convexity} we may assume that
$\mathrm{Supp}(D)$ does not contain at least one irreducible
component of every curve $C_x$, $C_y$, $C_z$ and $C_t$.

Arguing as in the~proof of
Theorem~\ref{theorem:I-3-W-13-17-27-41-D-95}, we see that
$P=O_{t}$. Put
$$
D=aL_{yz}+bR_{x}+\Delta,
$$
where $a$ and $b$ are non-negative rational numbers, and $\Delta$
is an effective $\mathbb{Q}$-divisor whose support contains
neither $L_{yz}$ nor $R_x$. Then $a>0$, because otherwise we have
$$
\frac{1}{7}=41\Big(D\cdot L_{yz}\Big)\geqslant \mathrm{mult}_{O_t}(D)>\frac{10}{51},%
$$
which is a~contradiction. Then
$R_{y}\not\subset\mathrm{Supp}(\Delta)$. If $b>0$, then
$$
\frac{2}{17\cdot 29}=D\cdot L_{xt}\geqslant b\Big(R_x \cdot L_{xt}\Big)=\frac{2b}{17},%
$$
which implies that $b\leqslant 1/19$. Similarly, we see that
$$
\frac{10}{29\cdot 41}=D\cdot R_y\geqslant \frac{5a}{41}+\frac{b}{41}+\frac{\mathrm{mult}_{O_t}(D)-a-b}{41}>\frac{4a}{41}+\frac{4}{21\cdot 41},%
$$
which implies that $a<47/1218$. We know that the~equality
$$
41\Big(\Delta\cdot L_{yz}\Big)=\frac{2}{14}+a\frac{53}{14}-b%
$$
holds. Similarly, we know that the~equality
$$
41\Big(\Delta\cdot R_{x}\Big)=\frac{4}{17}+b\frac{54}{17}-a%
$$
holds. Thus, applying Theorem~\ref{theorem:I} to the~log pair
$$
\Big(X,\ \omega\Delta+\omega aL_{yz}+\omega bR_{x}\Big)
$$
with $M=1/7$, $N=4/17$, $A=53/14$, $B=54/17$, $\alpha=2809/874$
and $\beta=119/437$,~we~see that
$$
\frac{2414323}{3548440}=\alpha \omega a+\beta\omega b>1,
$$
which is a~contradiction.
\end{proof}

It should be pointed out that singular del Pezzo surfaces that
satisfy the~hypotheses of Theorems~\ref{theorem:Kosta},
\ref{theorem:Kosta-2}, \ref{theorem:I-3-W-11-21-29-37-D-95},
\ref{theorem:I-4-W-13-14-23-33-D-79},
\ref{theorem:I-4-W-11-17-24-31-D-79},
\ref{theorem:I-3-W-13-17-27-41-D-95},
\ref{theorem:I-2-W-14-17-29-41-D-99} admit a K\"ahler--Einstein
metric by Theorem~\ref{theorem:KE}.

\section{Icosahedron}
\label{section:icosahedron}

Fix embeddings $\mathbb{A}_{5}\cong
G_{1}\subset\mathrm{Aut}(\mathbb{P}^{1})$ and $\mathbb{A}_{5}\cong
G_{2}\subset\mathrm{Aut}(\mathbb{P}^{2})$, fix the~induced
embedding
$$
\mathbb{A}_{5}\times \mathbb{A}_{5}\cong G_{1}\times
G_{2}\subset\mathrm{Aut}\Big(\mathbb{P}^{1}\times\mathbb{P}^{2}\Big)\cong\mathbb{PGL}\big(2,\mathbb{C}\big)\times\mathbb{PGL}\big(2,\mathbb{C}\big),
$$
put $G=G_{1}\times G_{2}$ and
$X=\mathbb{P}^{1}\times\mathbb{P}^{2}$. Let $\pi_{1}\colon X\to
\mathbb{P}^{1}$ and $\pi_{2}\colon X\to \mathbb{P}^{2}$ be natural
projections.~Then
$$
\mathcal{P}\big(X,G\big)\supseteq\big\{\pi_{1}, \pi_{2}\big\},
$$
where $\mathcal{P}(X,G)$ is the~$G$-pliability of the~variety $X$
(see Definition~\ref{definition:pliablity}).

\begin{theorem}
\label{theorem:A5-A5} The equality $\mathcal{P}(X,G)=\{\pi_{1},
\pi_{2}\}$ holds.
\end{theorem}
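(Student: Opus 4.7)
The plan is to apply the equivariant Noether--Fano method, in the spirit of Corti, and to use Theorem~\ref{theorem:II} as the local obstruction at $G$-invariant centres of non-canonical singularities. Suppose for a~contradiction that there is a~$G$-equivariant Mori fibre space $\rho\colon V\to S$ and a~$G$-birational map $\chi\colon X\dasharrow V$ not square-birational to either $\pi_{1}$ or $\pi_{2}$. Pick a~$G$-invariant very ample complete linear system $\mathcal{H}$ on $V$ and let $\mathcal{M}$ be its proper transform on $X$. Since $\mathrm{Pic}(X)^{G}=\mathrm{Pic}(X)$ is generated by $H_{1}=\pi_{1}^{*}\mathcal{O}(1)$ and $H_{2}=\pi_{2}^{*}\mathcal{O}(1)$, one has
$$
\mathcal{M}\equiv n_{1}H_{1}+n_{2}H_{2}
$$
for positive rationals $n_{1},n_{2}$. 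The Noether--Fano inequality for Mori fibre spaces then produces a~rational $\epsilon>0$ with $\epsilon<\min(n_{1}/2,n_{2}/3)$ such that the log pair $(X,\epsilon\mathcal{M})$ fails to be canonical somewhere on $X$, and the non-canonical locus is $G$-invariant.

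The next step is to enumerate the possible $G$-invariant irreducible centres of non-canonical singularities. Since $G_{1}\cong\mathbb{A}_{5}$ acts on $\mathbb{P}^{1}$ without fixed points, with smallest orbit of cardinality $12$, and since $G_{2}\cong\mathbb{A}_{5}\subset\mathrm{PGL}(3,\mathbb{C})$ acts on $\mathbb{P}^{2}$ without invariant points and without invariant lines, any proper $G$-invariant irreducible subvariety $Z\subset X$ is either a~$G$-invariant prime divisor, a~$G$-invariant irreducible curve, or a~finite $G$-invariant subset whose cardinality is bounded below by the product of the minimal orbit sizes. The $G$-invariant prime divisors are pullbacks from the factors, and their numerical classes are pinned down by the classical invariant theory of $G_{1}$ and $G_{2}$.

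The main obstacle is the case where the centre is a~$G$-invariant irreducible curve $Z\subset X$. At a~general point $O\in Z$, both $X$ and $Z$ are smooth, so after stripping off a~suitable multiple of $Z$ from the boundary one obtains a~pair
$$
\Bigl(X,\ \epsilon\mathcal{M}+a_{1}Z\Bigr)
$$
with $a_{1}\leqslant 0$ which is not terminal at $O$. Theorem~\ref{theorem:II} then delivers the lower bound on $\mathrm{mult}_{O}(M_{1}\cdot M_{2})$ for general $M_{1},M_{2}\in\mathcal{M}$. Summing this bound over a~general $G$-orbit of points on $Z$ and comparing with the global intersection $\mathcal{M}^{2}\cdot Z=(n_{1}H_{1}+n_{2}H_{2})^{2}\cdot Z$ yields a~numerical contradiction, the key input being the lower bound on the cardinality of a~generic $G$-orbit on $Z$ coming from the orbit analysis above together with the equality clause of Theorem~\ref{theorem:II} to handle borderline cases. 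The remaining cases---finite $G$-invariant subsets and $G$-invariant prime divisors---are dispatched directly by the Noether--Fano inequalities together with the numerical constraints on their classes imposed by $G$-invariance. This contradiction forces $\mathcal{P}(X,G)=\{\pi_{1},\pi_{2}\}$.
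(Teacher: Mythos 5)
The overall shape of your argument---Noether--Fano followed by a case analysis of $G$-invariant centres, with Theorem~\ref{theorem:II} appearing in the curve case---does reflect the strategy of the paper, but there are genuine gaps, and the main one is fatal.

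First, a type mismatch: you propose to apply Theorem~\ref{theorem:II} to the pair $\bigl(X,\epsilon\mathcal{M}+a_{1}Z\bigr)$ at a general point $O\in Z$, but $X$ is a threefold and Theorem~\ref{theorem:II} is a statement about surfaces. In the paper's proof this is repaired by first showing (Lemma~\ref{lemma:conic-bundle-1}, Lemma~\ref{lemma:super-maximal-singularity}) that $\lambda_{1}>\lambda_{2}$, so the non-canonicity is supported in fibres of $\pi_{1}$, then restricting to a specific fibre $D_{\omega}\cong\mathbb{P}^{2}$ and identifying $Z$ with the unique $G_{2}$-invariant conic $\Gamma_{2}$. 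Only then does one get a log pair on a surface to which Theorem~\ref{theorem:II} applies. Your proposal skips this reduction entirely, so as written the invocation of Theorem~\ref{theorem:II} does not compile.

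Second, and more seriously, your final sentence---that the remaining cases, finite $G$-invariant subsets and $G$-invariant divisors, ``are dispatched directly by the Noether--Fano inequalities together with the numerical constraints on their classes''---is simply not true. The point case is the hard case, and it occupies the bulk of the paper's proof of Theorem~\ref{theorem:A5-A5}. On a fibre $D_{\omega}\cong\mathbb{P}^{2}$ the smallest $G_{2}$-orbit has only $6$ points, which is too few for a naive multiplicity count against $\mathcal{M}^{2}\cdot D_{\omega}$ to give a contradiction (compare Lemma~\ref{lemma:orbits}, which needs the orbit to be at least $K^{2}$). The paper instead constructs the super-maximal singularity data $\delta_{\omega}$, proves sharp multiplicity bounds for the vertical and horizontal parts of $M_{1}\cdot M_{2}$ (Lemmas~\ref{lemma:vertical-horizontal}, \ref{lemma:maximal-singularity-point}, \ref{lemma:point-conic}, the last requiring the Nadel--Shokurov vanishing and the connectedness theorem), and then runs a Pukhlikov-style estimate over the graph of a resolving sequence of blow-ups. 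None of this is captured by ``the Noether--Fano inequalities together with numerical constraints.'' Incidentally, your heuristic that a finite $G$-invariant set has cardinality at least the product of the minimal $G_{1}$- and $G_{2}$-orbit sizes is also incorrect: the orbit size of $G_{1}\times G_{2}$ on a point is $3600$ divided by the order of the stabiliser, and that stabiliser is a subgroup of the product, not a product of stabilisers. So even the quantitative lower bound you lean on in the curve case needs justification, and the one you assert in the point case is false.

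In short: the proposal reproduces the opening moves of the paper's argument, correctly identifies Theorem~\ref{theorem:II} as the tool for the curve case, but fails to reduce to a surface before using it, and, crucially, dismisses the point case---which is where almost all of the actual work in the paper lies---without argument.
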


It follows from Theorem~\ref{theorem:A5-A5} that there is no
$G$-equivariant birational map $X\dasharrow V$ such that
$$
\mathrm{dim}_{\mathbb{Q}}\Big(\mathrm{Cl}^{G}\big(V\big)\otimes\mathbb{Q}\Big)=1,
$$
and $V$ is a~Fano threefold with at most terminal singularities.

\begin{corollary}
\label{corollary:A5-A5-conjugacy} Let $\gamma\colon
X\dasharrow\mathbb{P}^{3}$ be any birational map. Then
the~subgroup
$$
\gamma\circ G
\circ\gamma^{-1}\subset\mathrm{Bir}\big(\mathbb{P}^{3}\big)
$$
is not conjugate to any subgroup in
$\mathbb{PGL}(3,\mathbb{C})\cong\mathrm{Aut}(\mathbb{P}^{3})\subset\mathrm{Bir}(\mathbb{P}^{3})$.
\end{corollary}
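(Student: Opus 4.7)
Suppose, for contradiction, that there exists $\delta\in\mathrm{Bir}(\mathbb{P}^{3})$ such that
$$
\delta\circ\bigl(\gamma\circ G\circ\gamma^{-1}\bigr)\circ\delta^{-1}\subset\mathbb{PGL}\bigl(3,\mathbb{C}\bigr).
$$
Set $\sigma=\delta\circ\gamma\colon X\dasharrow\mathbb{P}^{3}$ and transport the $G$-action from $X$ to $\mathbb{P}^{3}$ via $\sigma$. By the displayed inclusion the transported action is not merely birational but lands inside $\mathrm{Aut}(\mathbb{P}^{3})=\mathbb{PGL}(3,\mathbb{C})$; hence it is regular and linear. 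Thus $\sigma$ realizes a $G$-equivariant birational map between $X$ and the smooth Fano threefold $\mathbb{P}^{3}$ equipped with a regular $G$-action.

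Now $\mathbb{P}^{3}$ is smooth, in particular has at most terminal singularities, and $\mathrm{Pic}(\mathbb{P}^{3})=\mathbb{Z}\cdot\mathcal{O}(1)$, so every automorphism acts trivially on $\mathrm{Pic}$ and
$$
\dim_{\mathbb{Q}}\Bigl(\mathrm{Cl}^{G}\bigl(\mathbb{P}^{3}\bigr)\otimes\mathbb{Q}\Bigr)=1.
$$
The existence of $\sigma$ therefore contradicts the consequence of Theorem~\ref{theorem:A5-A5} that is stated immediately after it, namely that no such $V=\mathbb{P}^{3}$ can exist. Equivalently, pulling back the structure morphism $\mathbb{P}^{3}\to\mathrm{Spec}\,\mathbb{C}$ would produce a $G$-Mori fiber space $G$-birational to $X$ whose base is a point, and this would be a third element of $\mathcal{P}(X,G)$, violating the equality $\mathcal{P}(X,G)=\{\pi_{1},\pi_{2}\}$.

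The only step that requires any care is the first one: reading off, from the set-theoretic conjugacy inside $\mathrm{Bir}(\mathbb{P}^{3})$, an honest $G$-equivariant birational map $X\dasharrow\mathbb{P}^{3}$ in the sense required by $G$-Mori theory. But this is essentially tautological, since conjugation by $\sigma$ intertwines the $G$-actions by definition and regularity of the target action is built into the hypothesis. All the real work is done by Theorem~\ref{theorem:A5-A5}; Corollary~\ref{corollary:A5-A5-conjugacy} is merely its reformulation in terms of conjugacy classes of $G$ inside $\mathrm{Bir}(\mathbb{P}^{3})$, so there is no further genuine obstacle.
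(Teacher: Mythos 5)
Your proposal is correct and follows exactly the route the paper has in mind: the corollary is deduced from Theorem~\ref{theorem:A5-A5} (and the remark stated immediately after it) by observing that a conjugating $\delta$ would compose with $\gamma$ to give a $G$-equivariant birational map to $\mathbb{P}^3$ with its linear $G$-action, producing a $G$-Mori fibration over a point and hence a forbidden third element of $\mathcal{P}(X,G)$. The paper does not spell out the argument, but your reconstruction, including the observation that conjugation tautologically intertwines the actions and that $\mathrm{Pic}(\mathbb{P}^3)=\mathbb{Z}$ forces $\dim_{\mathbb Q}\mathrm{Cl}^G(\mathbb{P}^3)\otimes\mathbb{Q}=1$, is precisely what is intended.
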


Similarly, It follows from Theorem~\ref{theorem:A5-A5} that
\begin{itemize}
\item for any $G$-equivariant rational map $\xi\colon
X\dasharrow\mathbb{P}^{1}$ whose general fiber is a~rational
surface, there are maps $\rho\in \mathrm{Bir}^{G}(X)$ and
$\sigma\in\mathrm{Aut}(\mathbb{P}^{1})$ such that there is a~
commutative diagram
$$
\xymatrix{
&X\ar@{->}[d]_{\pi_{1}}\ar@{-->}[rr]^{\rho}&& X\ar@{->}[d]^{\xi}&\\
&\mathbb{P}^{1}\ar@{->}[rr]^{\sigma}&&\mathbb{P}^{1},&}
$$
\item for any $G$-equivariant rational map $\xi\colon
X\dasharrow\mathbb{P}^{2}$ whose general fiber is a~rational
curve, there are maps $\rho\in \mathrm{Bir}^{G}(X)$ and
$\sigma\in\mathrm{Bir}(\mathbb{P}^{2})$ such that there is a~
commutative diagram
$$
\xymatrix{
&X\ar@{->}[d]_{\pi_{2}}\ar@{-->}[rr]^{\rho}&&X\ar@{->}[d]^{\xi}&\\
&\mathbb{P}^{2}\ar@{-->}[rr]^{\sigma}&&\mathbb{P}^{2}.&}
$$%
\end{itemize}

\begin{theorem}
\label{theorem:A5-A5-Bir} The isomorphism
$\mathrm{Bir}^{G}(X)\cong\mathbb{A}_{5}\times \mathbb{S}_{5}$
holds.
\end{theorem}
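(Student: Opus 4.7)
The plan is to combine Theorem~\ref{theorem:A5-A5} with explicit normalizer computations to identify $\mathrm{Bir}^{G}(X)$ as a product of two normalizers, one on each factor of $X$, and then to compute each normalizer separately. The main obstacle, I expect, will be the Cremona-group normalizer of $G_{2}$.

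First I would show that every $\rho\in\mathrm{Bir}^{G}(X)$ preserves both projections $\pi_{1}$ and $\pi_{2}$. Indeed, Theorem~\ref{theorem:A5-A5} asserts $\mathcal{P}(X,G)=\{\pi_{1},\pi_{2}\}$, and since their bases have different dimensions, $\rho$ cannot interchange them. The commutative diagrams displayed after Theorem~\ref{theorem:A5-A5}, applied to $\rho$, then provide $\sigma_{1}\in\mathrm{Aut}(\mathbb{P}^{1})$ and $\sigma_{2}\in\mathrm{Bir}(\mathbb{P}^{2})$ with $\pi_{i}\circ\rho=\sigma_{i}\circ\pi_{i}$ for $i=1,2$. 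Since the two projections separate points of $X$, this forces $\rho=\sigma_{1}\times\sigma_{2}$. The normalizer condition $\rho G\rho^{-1}\subseteq G$ then translates to $\sigma_{i}G_{i}\sigma_{i}^{-1}\subseteq G_{i}$ for each $i$, and conversely any such pair of normalizers produces a $G$-normalizing birational self-map of $X$. Thus $\mathrm{Bir}^{G}(X)\cong N_{\mathrm{Aut}(\mathbb{P}^{1})}(G_{1})\times N_{\mathrm{Bir}(\mathbb{P}^{2})}(G_{2})$.

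For the first factor, $G_{1}\cong\mathbb{A}_{5}$ is realized as an irreducible action of the binary icosahedral group on $\mathbb{C}^{2}$, so by Schur's lemma its centralizer in $\mathrm{PGL}(2,\mathbb{C})$ is trivial; its normalizer therefore embeds into $G_{1}$ extended by a subgroup of $\mathrm{Out}(G_{1})\cong\mathbb{Z}/2$. Since $\mathbb{S}_{5}$ does not occur among the finite subgroups of $\mathrm{PGL}(2,\mathbb{C})$, the normalizer must equal $G_{1}$ itself, giving the first factor $\mathbb{A}_{5}$.

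The main obstacle is to identify $N_{\mathrm{Bir}(\mathbb{P}^{2})}(G_{2})$ as $\mathbb{S}_{5}$. The two three-dimensional irreducible representations of $\mathbb{A}_{5}$ are swapped by the outer automorphism of $\mathbb{A}_{5}$, and they yield subgroups of $\mathrm{PGL}(3,\mathbb{C})$ that are not projectively conjugate, so the projective normalizer of $G_{2}$ is just $G_{2}$ again; however, inside the Cremona group these two embeddings are conjugate by an explicit birational transformation, which extends $G_{2}$ to a copy of $\mathbb{S}_{5}\subset\mathrm{Bir}(\mathbb{P}^{2})$ realising the outer automorphism. I would invoke Lemma~\ref{lemma:A5-6}, attributed in the introduction to T.~Dokchitser, to produce this element; for the matching upper bound I would use $G_{2}$-equivariant minimal model program analysis on $\mathbb{P}^{2}$, with the local inequality of Theorem~\ref{theorem:II} playing the role it plays elsewhere in Section~\ref{section:icosahedron}, to rule out any further enlargement of the normalizer beyond $\mathbb{S}_{5}$.
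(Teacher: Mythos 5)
Your overall strategy --- decompose $\mathrm{Bir}^{G}(X)$ as a product of normalizers on the two factors and compute each separately --- is the right idea, and it matches the paper's proof in broad outline. However, the central step is not justified by the argument you give.

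You claim that, for any $\rho\in\mathrm{Bir}^{G}(X)$, the corollaries displayed after Theorem~\ref{theorem:A5-A5} yield $\sigma_{i}$ with $\pi_{i}\circ\rho=\sigma_{i}\circ\pi_{i}$. But look at those displayed diagrams again: applying the first one to $\xi=\pi_{1}\circ\rho^{-1}$ only produces some \emph{other} $\rho_{1}\in\mathrm{Bir}^{G}(X)$ with $\pi_{1}\circ\rho^{-1}\circ\rho_{1}=\sigma_{1}\circ\pi_{1}$. This is circular --- to conclude that $\rho$ itself preserves the pencil $|\{pt\}\times\mathbb{P}^{2}|$ you would need to already know $\rho_{1}$ preserves it. What the paper actually does is rerun the Noether--Fano/maximal-singularity analysis (``arguing as in the proof of Theorem~\ref{theorem:A5-A5}'') to show directly that $\pi_{1}\circ\xi=\sigma\circ\pi_{1}$ with $\sigma\in\mathrm{Aut}^{G_{1}}(\mathbb{P}^{1})$, and then applies Remark~\ref{remark:Kollar} (universal $G_{2}$-birational rigidity of $\mathbb{P}^{2}$, valid because $\mathrm{Bir}^{G_{2}}(\mathbb{P}^{2})$ is countable) together with Theorem~\ref{theorem:Cremona-Icosahedron} to conclude that, up to composing with $\mathrm{id}\times\tau$ for $\tau\in\mathrm{Bir}^{G_{2}}(\mathbb{P}^{2})$, the restriction of $\xi$ to the generic $\pi_{1}$-fiber $\mathbb{P}^{2}_{\mathbb{C}(\mathbb{P}^{1})}$ is biregular. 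That is the content you need in order to obtain the product decomposition, and it cannot be read off from the corollaries of Theorem~\ref{theorem:A5-A5} alone.

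Your remaining steps have citation problems. Lemma~\ref{lemma:A5-6} (Dokchitser) concerns the existence of a $G$-invariant section on $\mathbb{P}^{1}\times\mathbb{P}^{1}$ with the twisted diagonal $\mathbb{A}_{5}$-action, not the Cremona normalizer of $\mathbb{A}_{5}\subset\mathrm{Aut}(\mathbb{P}^{2})$. The explicit birational involution realizing the outer automorphism of $\mathbb{A}_{5}$ inside $\mathrm{Bir}(\mathbb{P}^{2})$, and the proof that $\mathrm{Bir}^{G_{2}}(\mathbb{P}^{2})\cong\mathbb{S}_{5}$, are both in Lemma~\ref{lemma:A5-22} (the involution is conjugate to an odd permutation in $\mathrm{Aut}(W)\cong\mathbb{S}_{5}$ for $W$ the Clebsch cubic); that result enters the theorem via Theorem~\ref{theorem:Cremona-Icosahedron}. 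Also, the upper bound $\mathrm{Bir}^{G_{2}}(\mathbb{P}^{2})\subseteq\mathbb{S}_{5}$ does not use Theorem~\ref{theorem:II}: it is established in Lemma~\ref{lemma:A5-22} by a direct multiplicity/orbit-counting argument (every $G_{2}$-orbit has at least $6$ points, and the only orbit of size less than $9$ is the unique one of size $6$, which pins down the only possible center of a maximal singularity). Theorem~\ref{theorem:II} is used elsewhere in Section~\ref{section:icosahedron} (Lemma~\ref{lemma:maximal-singularity-curve}), not here.
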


\begin{proof}
Let $\xi$ be an element in $\mathrm{Bir}^{G}(X)$. Using
the~notations of the~proof of Theorem~\ref{theorem:A5-A5},
put~
$$
\bar{X}=X,\ \bar{\pi}=\pi_{1},\ \nu=\xi.
$$

Arguing as in the~proof of Theorem~\ref{theorem:A5-A5}, we see
that there is a~commutative~diagram
$$
\xymatrix{
&X\ar@{->}[d]_{\pi_{1}}\ar@{-->}[rr]^{\xi}&&X\ar@{->}[d]^{\pi_{1}}&\\
&\mathbb{P}^{1}\ar@{->}[rr]_{\sigma}&&\mathbb{P}^{1},&}
$$
where $\sigma\in\mathrm{Aut}^{G_{1}}(\mathbb{P}^{1})$. Then it
follows from Remark~\ref{remark:Kollar} and
Theorem~\ref{theorem:Cremona-Icosahedron} that
\begin{itemize}
\item either $\xi$ induces an isomorphism of the~generic fiber of $\pi_{1}$,%
\item or $\xi\circ\tau$ induces an isomorphism of the~generic fiber of $\pi_{1}$ for some $\tau\in\mathrm{Bir}^{G_{2}}(\mathbb{P}^{2})$,%
\end{itemize}
which implies that either $\xi$ or $\xi\circ\tau$ is biregular by
\cite[Theorem~1.5]{Ch07c} and \cite[Example~5.4]{Ch07b}.~Then
$$
\mathrm{Bir}^{G}\big(X\big)\cong\mathrm{Aut}^{G_{1}}\big(\mathbb{P}^{1}\big)\times\mathrm{Bir}^{G_{2}}\big(\mathbb{P}^{2}\big),%
$$
but $\mathrm{Aut}^{G_{1}}(\mathbb{P}^{1})\cong\mathbb{A}_{5}$ and
$\mathrm{Bir}^{G_{2}}(\mathbb{P}^{2})\cong\mathbb{S}_{5}$ by
Theorem~\ref{theorem:A5-A5}.
\end{proof}

Let us prove Theorem~\ref{theorem:A5-A5}. Suppose that
$\mathcal{P}(X,G)\ne\{\pi_{1},\pi_{2}\}$. Let us derive a~
contradiction.

There is a~$G$-Mori fibration $\bar{\pi}\colon \bar{X}\to \bar{S}$
that is not square birationally equivalent neither to
the~fibration $\pi_{1}$ nor to $\pi_{2}$, and there is a~
$G$-equivariant birational map $\nu\colon \bar{X}\dasharrow
X$.~Put
$$
\mathcal{M}_{\bar{X}}=\Big|\bar{\pi}^{*}\big(D\big)\Big|
$$
for a~very ample divisor $D$ on the~variety $\bar{S}$ in the~case
when $\mathrm{dim}(\bar{S})\ne 0$, and put
$$
\mathcal{M}_{\bar{X}}=\Big|-mK_{\bar{X}}\Big|
$$
for a~sufficiently big and divisible $m\in\mathbb{N}$ in the~case
when $\mathrm{dim}(\bar{S})=0$. Put
$\mathcal{M}_{X}=\nu(\mathcal{M}_{\bar{X}})$.

\begin{lemma}
\label{lemma:A5-A5-not-in-fibers} For every $i\in\{1,2\}$, there
is $\lambda_{i}\in\mathbb{Q}$ such that $\lambda_{i}>0$ and
$$
K_{X}+\lambda_{i}\mathcal{M}_{X}\equiv \pi_{i}^{*}\big(H_{i}\big),%
$$
where $H_{i}$ is a~$\mathbb{Q}$-divisor on $\mathbb{P}^{i}$.
\end{lemma}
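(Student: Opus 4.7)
The plan is to reduce the statement to a purely numerical computation on $X=\mathbb{P}^{1}\times\mathbb{P}^{2}$ and then to rule out the degenerate numerical classes by exploiting the standing assumption that $\bar{\pi}$ is not square birationally equivalent to $\pi_{1}$ or $\pi_{2}$. Let $L_{1}$ and $L_{2}$ denote the pullbacks of $\mathcal{O}_{\mathbb{P}^{1}}(1)$ and $\mathcal{O}_{\mathbb{P}^{2}}(1)$ under $\pi_{1}$ and $\pi_{2}$, respectively; then $\mathrm{Pic}(X)=\mathbb{Z}L_{1}\oplus\mathbb{Z}L_{2}$ and $K_{X}\equiv -2L_{1}-3L_{2}$. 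The group $G=G_{1}\times G_{2}$ acts trivially on $\mathrm{Pic}(X)$, and $\mathcal{M}_{X}$ is $G$-invariant and mobile. Since the effective cone of $X$ is generated by $L_{1}$ and $L_{2}$, one may write
$$
\mathcal{M}_{X}\equiv aL_{1}+bL_{2}
$$
with $a,b\in\mathbb{Q}_{\geqslant 0}$. Thus
$$
K_{X}+\lambda\mathcal{M}_{X}\equiv(\lambda a-2)L_{1}+(\lambda b-3)L_{2},
$$
and the values $\lambda_{1}=3/b$ and $\lambda_{2}=2/a$ yield the required numerical identities the moment it is known that $a>0$ and $b>0$.

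The remaining task is to rule out $a=0$ and $b=0$. The two cases are symmetric, so suppose $b=0$. Then $\mathcal{M}_{X}$ is a linear subsystem of the complete linear system $|aL_{1}|=\pi_{1}^{*}|\mathcal{O}_{\mathbb{P}^{1}}(a)|$, so the rational map defined by $\mathcal{M}_{X}$ factors through $\pi_{1}$ and has image of dimension at most $1$.

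To connect this to $(\bar{X},\bar{\pi})$, take a $G$-equivariant resolution of $\nu$, that is, a smooth $G$-variety $W$ equipped with $G$-equivariant birational morphisms $\phi\colon W\to\bar{X}$ and $\psi\colon W\to X$ satisfying $\psi=\nu\circ\phi$. Since $\mathcal{M}_{X}=\nu_{*}\mathcal{M}_{\bar{X}}$, the proper transforms $\phi^{-1}_{*}\mathcal{M}_{\bar{X}}$ and $\psi^{-1}_{*}\mathcal{M}_{X}$ coincide on $W$; call this common linear system $\mathcal{M}_{W}$. The rational map on $W$ defined by $\mathcal{M}_{W}$ has, by construction, the same image as both the rational map defined by $\mathcal{M}_{\bar{X}}$ on $\bar{X}$ and the rational map defined by $\mathcal{M}_{X}$ on $X$. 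From the definition of $\mathcal{M}_{\bar{X}}$, that image has dimension $\dim(\bar{S})$ when $\dim(\bar{S})>0$ (since $\bar{\pi}_{*}\mathcal{O}_{\bar{X}}=\mathcal{O}_{\bar{S}}$ gives $|\bar{\pi}^{*}(D)|=\bar{\pi}^{*}|D|$, and $D$ is very ample on $\bar{S}$), and has dimension $3$ when $\dim(\bar{S})=0$ (since $\bar{X}$ is then a Fano threefold and $|-mK_{\bar{X}}|$ defines a birational map for $m\gg 0$).

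Combined with the upper bound of $1$ established in the previous paragraph, the cases $\dim(\bar{S})\in\{0,2\}$ are excluded at once, and only $\dim(\bar{S})=1$ remains. In that case the common image has dimension $1$, and $\bar{\pi}$ must factor through $\pi_{1}\circ\nu$ via a $G$-equivariant birational map $\sigma\colon\bar{S}\dasharrow\mathbb{P}^{1}$; in other words, $(\nu,\sigma)$ is a $G$-equivariant square birational equivalence between $\bar{\pi}$ and $\pi_{1}$, contradicting the standing hypothesis. Hence $b>0$, and the analogous argument gives $a>0$. The only nontrivial step is the passage from ``agreement of image dimensions on a common resolution'' to ``$G$-equivariant square birational equivalence of Mori fibrations'', which is standard via the $G$-equivariant Stein factorization of $\pi_{1}\circ\nu\colon\bar{X}\dasharrow\mathbb{P}^{1}$.
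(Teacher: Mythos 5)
Your reduction to the numerical statement $a>0$, $b>0$ (where $\mathcal{M}_{X}\equiv aL_{1}+bL_{2}$) and the treatment of $b>0$ are sound and match the paper's route for $\lambda_{1}$: if $b=0$ the system factors through $\pi_{1}$, so the image of the map it defines on a common resolution has dimension at most $1$, which excludes $\mathrm{dim}(\bar{S})\in\{0,2\}$ outright, and in the remaining case $\mathrm{dim}(\bar{S})=1$ the Stein factorization gives a square birational equivalence with $\pi_{1}$, contradicting the standing hypothesis.

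However, the concluding sentence ``the analogous argument gives $a>0$'' hides a genuine gap. If $a=0$, the system factors through $\pi_{2}$, so the image dimension is bounded by $2$, not $1$. This excludes $\mathrm{dim}(\bar{S})=0$, and the case $\mathrm{dim}(\bar{S})=2$ does give a square birational equivalence with $\pi_{2}$ as before. But the case $\mathrm{dim}(\bar{S})=1$ is \emph{not} excluded by the dimension count, nor does the factorization yield a square birational equivalence: you obtain a factorization $\bar{X}\dasharrow X\xrightarrow{\pi_{2}}\mathbb{P}^{2}\dasharrow\bar{S}\cong\mathbb{P}^{1}$, and the target of $\pi_{2}$ is $2$-dimensional while $\bar{S}$ is $1$-dimensional, so $(\nu,\sigma)$ is not a square birational equivalence between $\bar{\pi}$ and $\pi_{2}$. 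You would be asserting a square birational equivalence between a del Pezzo fibration over a curve and a conic bundle over a surface, which is nonsense. Something genuinely new is needed here, and the paper supplies it: in this configuration one has a $G_{2}$-equivariant dominant rational map $\zeta\colon\mathbb{P}^{2}\dasharrow\mathbb{P}^{1}$ whose general fiber has rational normalization (because the general fiber of $\bar{\pi}$ is a rational surface), and this is ruled out by the $G_{2}$-birational rigidity of $\mathbb{P}^{2}$ for $G_{2}\cong\mathbb{A}_{5}$ (Theorem~\ref{theorem:Cremona-Icosahedron}), since such a $\zeta$ would produce a $G_{2}$-Mori conic bundle birational to $\mathbb{P}^{2}$. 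Without this appeal to equivariant birational rigidity your argument does not close the case $a=0$, $\mathrm{dim}(\bar{S})=1$.
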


\begin{proof}
The existence of positive rational numbers $\lambda_{1}$ and
$\lambda_{2}$ is obvious in the~case when the~equality
$\mathrm{dim}(\bar{S})=0$ holds. Thus, we may assume that either
$\mathrm{dim}(\bar{S})=1$ or $\mathrm{dim}(\bar{S})=2$.

The  fibration $\bar{\pi}$ is not square birational to $\pi_{1}$.
Thus, the~existence of $\lambda_{1}$ is obvious.

The fibration $\bar{\pi}$ is not square birational to $\pi_{2}$,
which implies the~existence of $\lambda_{2}$ in the~case when
$\mathrm{dim}(\bar{S})=2$. Hence, we may assume that
$\mathrm{dim}(\bar{S})=1$. Then $\bar{S}\cong\mathbb{P}^{1}$.

Suppose that $\lambda_{2}$ does not exist. Then there  is a~
commutative diagram
$$
\xymatrix{
&\bar{X}\ar@{->}[d]_{\bar{\pi}}\ar@{-->}[rr]^{\nu}&&X\ar@{->}[d]^{\pi_{2}}&\\
&\mathbb{P}^{1}&&\mathbb{P}^{2}\ar@{-->}[ll]^{\zeta},&}
$$
where $\zeta$ is a~rational dominant map. The normalization of a~
general fiber of $\zeta$ is a~rational curve, because general
fiber of $\bar{\pi}$ is a~rational surface. On the~other hand,
the~map $\zeta$ is $G_{2}$-equivariant, which is impossible,
because $\mathbb{P}^{2}$ is $G_{2}$-birationally rigid by
Theorem~\ref{theorem:Cremona-Icosahedron}.
\end{proof}

Let $D$ be a~general fiber of $\pi_{1}$. Then it follows from
Theorems~\ref{theorem:G-rigidity} and
\ref{theorem:Cremona-Icosahedron} that
\begin{itemize}
\item either the~log pair
$(D,\lambda_{1}\mathcal{M}_{X}\vert_{D})$
has canonical singularities,%

\item or there is an involution $\tau\in\mathrm{Bir}^{G}(X)$ such
that the~log pair
$$
\Bigg(D,
\lambda_{1}^{\prime}\tau\big(\mathcal{M}_{X}\big)\Big\vert_{D}\Bigg)
$$
has canonical singularities, where
$\lambda_{1}^{\prime}\in\mathbb{Q}$ such that
$$
K_{X}+\lambda_{1}^{\prime}\tau\big(\mathcal{M}_{X}\big)\equiv \pi_{1}^{*}\big(H_{1}^{\prime}\big),%
$$
where $H_{1}^{\prime}$ is a~$\mathbb{Q}$-divisor on
$\mathbb{P}^{1}$.
\end{itemize}

\begin{corollary}
\label{corollary:composition} We may assume that $(D,\lambda_{1}\mathcal{M}_{X}\vert_{D})$ has canonical singularities.%
\end{corollary}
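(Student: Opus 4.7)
The plan is to reduce to the first alternative of the dichotomy stated just above the corollary by replacing the birational map $\nu$ with $\tau\circ\nu$. Since $\tau\in\mathrm{Bir}^{G}(X)$ is $G$-equivariant, the composition $\tau\circ\nu\colon\bar{X}\dasharrow X$ is again a $G$-equivariant birational map, and the proper transform of $\mathcal{M}_{\bar{X}}$ under $\tau\circ\nu$ is precisely $\tau(\mathcal{M}_{X})$. In particular, the new linear system on $X$ is mobile and of the same nature as the old one.

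First I would check that Lemma~\ref{lemma:A5-A5-not-in-fibers} continues to hold for the replaced data: the property that $\bar{\pi}$ is not square birational to either $\pi_{1}$ or $\pi_{2}$ depends only on $\bar{\pi}$ and on the $\mathrm{Bir}^{G}(X)$-class of $\nu$, not on $\nu$ itself, and both are preserved under post-composition by $\tau$. Consequently, the same argument as in the proof of Lemma~\ref{lemma:A5-A5-not-in-fibers} produces positive rational numbers $\lambda_{1}'$ and $\lambda_{2}'$ with
$$
K_{X}+\lambda_{i}'\tau\big(\mathcal{M}_{X}\big)\equiv\pi_{i}^{*}\big(H_{i}'\big)
$$
for some $\mathbb{Q}$-divisors $H_{i}'$ on $\mathbb{P}^{i}$; this $\lambda_{1}'$ coincides with the number appearing in the second alternative of the dichotomy.

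Finally I would simply rename $\tau(\mathcal{M}_{X})$ back to $\mathcal{M}_{X}$ and $\lambda_{i}'$ back to $\lambda_{i}$. Under this renaming, the second alternative of the dichotomy becomes the first: the log pair $(D,\lambda_{1}\mathcal{M}_{X}\vert_{D})$ has canonical singularities. Since every subsequent piece of data that has been introduced (namely $\bar{\pi}$, the $G$-action, the fact that $\bar{\pi}$ defies $\mathcal{P}(X,G)=\{\pi_{1},\pi_{2}\}$) is manifestly invariant under replacing $\nu$ by $\tau\circ\nu$, nothing is lost by this reduction. There is no real obstacle here: the corollary is simply a \emph{without loss of generality} repackaging, and its role is to let the rest of the argument proceed under the assumption that the original $\mathcal{M}_{X}$ already sits in the canonical regime on a general fiber of $\pi_{1}$.
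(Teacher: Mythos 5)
Your argument is correct and fills in the (unwritten) reasoning behind the paper's ``we may assume'': the corollary is stated without proof in the paper, being treated as an immediate WLOG reduction from the preceding dichotomy, and your replacement of $\nu$ by $\tau\circ\nu$ followed by relabeling $\tau(\mathcal{M}_{X})$ as $\mathcal{M}_{X}$ and $\lambda_{1}'$ as $\lambda_{1}$ is exactly that reduction. The two points you checked are the right ones: that $\tau\circ\nu$ remains a $G$-equivariant birational map (since $\tau\in\mathrm{Bir}^{G}(X)$), and that the hypotheses of Lemma~\ref{lemma:A5-A5-not-in-fibers} are insensitive to the choice of $\nu$ because square birational equivalence of $\bar{\pi}$ with $\pi_{i}$ is a property of the fibrations alone, not of the connecting map; in particular $\lambda_{1}'>0$ for the new data just as $\lambda_{1}>0$ for the old. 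It is also worth noting, as you do, that the resolution $W$ and the morphisms $\alpha$, $\beta$ are introduced only after the corollary, so the renaming creates no hidden inconsistencies downstream.
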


There is a~commutative diagram
$$
\xymatrix{
&&W\ar@{->}[ld]_{\alpha}\ar@{->}[rd]^{\beta}&&\\%
&X&&\bar{X}\ar@{-->}[ll]_{\nu},&}
$$ %
where $\alpha$ and $\beta$ are $G$-equivariant birational
morphisms, and $W$ is a~smooth variety.

Let $\mathcal{M}_{W}$ be a~proper transform of
$\mathcal{M}_{\bar{X}}$ on the~variety $W$. Take
$\epsilon\in\mathbb{Q}$. Then
$$
\alpha^{*}\Big(K_{X}+\epsilon\mathcal{M}_{X}\Big)+\sum_{i=1}^{k}a^{\epsilon}_{i}F_{i}\equiv
K_{W}+\epsilon\mathcal{M}_{W}\equiv
\beta^{*}\Big(K_{\bar{X}}+\epsilon\mathcal{M}_{\bar{X}}\Big)+\sum_{i=1}^{r}b^{\epsilon}_{i}E_{i},
$$
where $a^{\epsilon}_{i}$ is a~rational number, $b^{\epsilon}_{i}$
is a~positive rational number, $F_{i}$ is an exceptional prime
divisor of  the~morphism  $\alpha$, and $E_{i}$ is an exceptional
prime divisor of the~morphism $\beta$.

\begin{lemma}
\label{lemma:conic-bundle-1} The inequality
$\lambda_{1}>\lambda_{2}$ holds.
\end{lemma}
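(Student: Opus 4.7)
The plan is to reduce $\lambda_1>\lambda_2$ to a positivity statement on $\mathbb{P}^1$ and extract it from the discrepancy identity on $W$.

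First I would parametrize $\mathcal{M}_X$. Since $\mathrm{Pic}(X)=\mathbb{Z}F_1\oplus\mathbb{Z}F_2$ with $F_1=\pi_1^{*}(\mathrm{pt})$ and $F_2=\pi_2^{*}(\text{line})$, write $\mathcal{M}_X\equiv aF_1+bF_2$. Lemma~\ref{lemma:A5-A5-not-in-fibers} forces $a,b>0$, and $-K_X\equiv 2F_1+3F_2$ combined with $K_X+\lambda_i\mathcal{M}_X\equiv\pi_i^{*}H_i$ yields $\lambda_1=3/b$ and $\lambda_2=2/a$. Hence $\lambda_1>\lambda_2$ is equivalent to $3a>2b$, and by intersecting with a general fiber $\ell_2$ of $\pi_2$ (using $K_X\cdot\ell_2=-2$ and $\mathcal{M}_X\cdot\ell_2=a$) this amounts to the positivity
\[
\deg H_1 \;=\; (K_X+\lambda_1\mathcal{M}_X)\cdot\ell_2 \;=\; -2+\lambda_1 a \;>\; 0.
\]

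Next, I take $\tilde\ell_2\subset W$ to be the strict transform of a sufficiently general $\pi_2$-fiber. Genericity of $\ell_2$ ensures $\tilde\ell_2$ avoids the $\alpha$-exceptional locus, so $\tilde\ell_2\cdot F_i=0$ for all $i$. The displayed discrepancy identity at $\epsilon=\lambda_1$, namely
\[
\alpha^{*}\pi_1^{*}(H_1)+\sum_{i=1}^{k} a_i^{\lambda_1} F_i \;\equiv\; \beta^{*}(K_{\bar X}+\lambda_1\mathcal{M}_{\bar X})+\sum_{i=1}^{r} b_i^{\lambda_1} E_i
\]
(with every $b_i^{\lambda_1}>0$), then reduces on intersection with $\tilde\ell_2$ to
\[
\deg H_1 \;=\; \beta^{*}(K_{\bar X}+\lambda_1\mathcal{M}_{\bar X})\cdot\tilde\ell_2 \;+\; \sum_{i=1}^{r} b_i^{\lambda_1}\,(E_i\cdot\tilde\ell_2).
\]
The first summand on the right is nonnegative: in each of the three cases $\dim\bar S\in\{0,1,2\}$ the class $K_{\bar X}+\lambda_1\mathcal{M}_{\bar X}$ is either $\bar\pi$-relatively trivial or pulled back from $\bar S$, and $\beta(\tilde\ell_2)$ is a curve on which this class has nonnegative intersection. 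The second summand is strictly positive because $\tilde\ell_2$ must meet some $E_i$: were it to avoid every $E_i$, then $\nu$ would restrict to a biregular isomorphism on a Zariski neighborhood of the generic $\pi_2$-fiber, and varying $\ell_2$ in the pencil of $\pi_2$-fibers would exhibit $\pi_2$ as square-birational to $\bar\pi$, contradicting the standing hypothesis.

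The main obstacle is precisely this last strict positivity --- rigorously ruling out the case where $\tilde\ell_2$ meets only exceptional divisors $E_i$ with $b_i^{\lambda_1}=0$, or meets none at all. I expect this to require the $G$-equivariance of $\nu$ together with the $G_2$-birational rigidity of $\mathbb{P}^2$ used in the proof of Lemma~\ref{lemma:A5-A5-not-in-fibers}, treated separately in each of the cases $\dim\bar S\in\{0,1,2\}$, and possibly supplemented by Theorem~\ref{theorem:II} applied on one of the surfaces $\alpha(E_i)$ to control the multiplicity of $\mathcal{M}_X$ along an exceptional center.
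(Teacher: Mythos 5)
Your reduction of the lemma to the positivity $\deg H_1>0$ is correct (indeed, writing $\mathcal{M}_X\equiv aF_1+bF_2$ one checks $\lambda_1=3/b$, $\lambda_2=2/a$, so $\lambda_1>\lambda_2$ is equivalent to $\deg H_1=3a/b-2>0$, and also equivalent to $H_2$ not being $\mathbb{Q}$-effective). But the step where you claim that the first summand
\[
\beta^{*}\Big(K_{\bar X}+\lambda_1\mathcal{M}_{\bar X}\Big)\cdot\tilde\ell_2
\]
is nonnegative is wrong, and the justification you give (``$K_{\bar X}+\lambda_1\mathcal{M}_{\bar X}$ is either $\bar\pi$-relatively trivial or pulled back from $\bar S$'') cannot hold. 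While $\mathcal{M}_{\bar X}$ is $\bar\pi$-vertical by construction when $\dim\bar S>0$, the class $K_{\bar X}$ is never $\bar\pi$-vertical (by definition of a Mori fibration $-K_{\bar X}$ is relatively \emph{ample}), so the sum $K_{\bar X}+\lambda_1\mathcal{M}_{\bar X}$ is anti-ample on every $\bar\pi$-fiber. Concretely: if $\dim\bar S=0$ then $K_{\bar X}+\lambda_1\mathcal{M}_{\bar X}\equiv(1-m\lambda_1)K_{\bar X}$ with $\lambda_1\leqslant 1/m$ (this is established in the proof of Lemma~\ref{lemma:rigidity-Fano}), hence this is a nonnegative multiple of an anti-ample class and intersects any curve with sign $\leqslant 0$; if $\dim\bar S\geqslant 1$ and $\beta_{*}\tilde\ell_2$ happens to be $\bar\pi$-vertical, the intersection with $K_{\bar X}$ is strictly negative while the $\mathcal{M}_{\bar X}$-contribution vanishes. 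So your identity reads $\deg H_1=(\text{possibly negative})+(\text{nonnegative})$ and no conclusion follows. Note also that the argument, as you set it up, would by pure symmetry of the intersection computation yield the same ``positivity'' for $H_2$; since $\lambda_1>\lambda_2$ and $\lambda_2>\lambda_1$ cannot both hold, the approach cannot be saved by only patching the last step.

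The paper's argument is of a genuinely different shape: it is a proof by contradiction that exploits the group action rather than intersection theory on a common resolution. Assuming $\lambda_2\geqslant\lambda_1$ makes $H_2$ effective and nef; the Noether--Fano machinery (Lemmas~\ref{lemma:rigidity-fibration} and~\ref{lemma:rigidity-Fano}) then produces an $\alpha$-exceptional but non-$\beta$-exceptional $F_i$ with $a_i^{\lambda_2}<0$, whose center $Z_i=\alpha(F_i)$ is a union of $G$-orbits. The decisive numerical input is that every $G_1$-orbit in $\mathbb{P}^1$ has at least $12$ points: if $\pi_2(Z_i)$ were a curve, intersecting $\lambda_2\mathcal{M}_X$ with a general fiber $\Gamma_i$ of $\pi_2$ over $\pi_2(Z_i)$ gives $2\geqslant 12\lambda_2\mathrm{mult}_{Z_i}(\mathcal{M}_X)>12$, a contradiction. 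When all bad centers project to points, one passes to the square-birational model $\mathbb{P}^1\times S$ via Pukhlikov's Proposition, uses the canonicity of $(D,\lambda_1\mathcal{M}_X\vert_D)$ (Corollary~\ref{corollary:composition}) to conclude the new boundary $R$ on $S$ is $\mathbb{Q}$-effective, and then re-runs the Noether--Fano argument to reach the same $12$-point contradiction. The $G$-equivariance and the disparity between the smallest $G_1$-orbit size in $\mathbb{P}^1$ and the degree computation are the load-bearing ingredients; your proposed intersection-theoretic computation uses none of this, which is a strong heuristic sign that it cannot succeed.
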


\begin{proof}
Suppose that $\lambda_{2}\geqslant\lambda_{1}$. Then $H_{2}$ is
$\mathbb{Q}$-effective and nef.

Using Lemmas~\ref{lemma:rigidity-fibration} and
\ref{lemma:rigidity-Fano}, we may assume that there is
$l\in\{1,\ldots,k\}$~such~that the~surfaces $F_{1},\ldots,F_{l}$
are all $\alpha$-exceptional divisors such that
$F_{1},\ldots,F_{l}$ are not contracted~by the~morphism $\beta$,
and $a^{\lambda_{2}}_{1},\ldots,a^{\lambda_{2}}_{l}$ are negative
rational numbers.

Put $Z_{i}=\alpha(F_{i})$ for every $i\in\{1,\ldots,r\}$.~Then
\begin{itemize}
\item either $\pi_{2}(Z_{i})$ is an irreducible curve,%
\item or $\pi_{2}(Z_{i})$ is a~closed point.%
\end{itemize}

Suppose that $\mathrm{dim}(\pi_{2}(Z_{i}))=1$. Let $\Gamma_{i}$ be
 a~fiber of $\pi_{2}$ over a~
general~point~of~$\pi_{2}(Z_{i})$.~Then
$$
2=\lambda_{2}\mathcal{M}_{X}\cdot\Gamma_{i}\geqslant\lambda_{2}\mathrm{mult}_{Z_{i}}\big(\mathcal{M}_{X}\big)\big|Z_{i}\cap \Gamma_{i}\big|\geqslant 12\lambda_{2}\mathrm{mult}_{Z_{i}}\big(\mathcal{M}_{X}\big),%
$$
because the~smallest $G_{1}$-orbit in $\mathbb{P}^{1}$ consists of
$12$ points (see \cite{Spr77}). Thus, we see that $i>l$.

Thus, we see that $\pi_{2}(Z_{i})$ is a~closed  point for every
$i\in\{1,\ldots,l\}$.

It follows from \cite[Proposition~1]{Pu04d}  that there is a~
commutative diagram
$$
\xymatrix{
&&U\ar@{->}[ld]_{\gamma}\ar@{->}[rr]^{\delta}&&W\ar@{->}[ld]_{\alpha}\ar@{->}[rd]^{\beta}&&\\%
&\mathbb{P}^{1}\times S\ar@{->}[rr]^{\upsilon}\ar@{->}[d]_{\iota}&&X\ar@{->}[d]^{\pi_{2}}&&\bar{X}\ar@{-->}[ll]_{\nu},&\\%
&S\ar@{->}[rr]_{\omega}&&\mathbb{P}^{2}&&&}%
$$ %
where $S$ is a~smooth surface, $\omega$ is a~$G_{2}$-invariant
birational morphism, $\upsilon$ is an induced birational morphism,
$U$ is a~smooth threefold, $\gamma$ and $\delta$ are $G$-invariant
birational morphisms, and
$$
\mathrm{dim}\Big(\iota\circ\gamma\big(\bar{F}_{i}\big)\Big)=1,
$$
where $\bar{F}_{i}$ is a~proper transform of $F_{i}$ on
the~variety $U$, and $i\in\{1,\ldots,k\}$. Put
$V=\mathbb{P}^{1}\times S$.

Let $\mathcal{M}_{U}$ and $\mathcal{M}_{V}$ be proper transforms
of $\mathcal{M}_{X}$ on the~varieties $U$ and $V$, respectively.
Then
$$
K_{U}+\lambda_{2}\mathcal{M}_{U}\equiv \gamma^{*}\Big(K_{V}+\lambda_{2}\mathcal{M}_{V}\Big)+\sum_{i=1}^{k}c_{i}^{\lambda_{2}}\bar{F}_{i}+\sum_{i=1}^{s}c_{i}^{\lambda_{2}}B_{i},\\%
$$
where $c_{i}^{\lambda_{2}}$ and  $d_{i}^{\lambda_{2}}$ are
rational numbers, and $B_{i}$ is an $\delta$-exceptional divisor.

Note that $\bar{F}_{i}$ is not necessary contracted by $\gamma$.
If $\bar{F}_{i}$ is not contracted by $\gamma$, we have
$c_{i}^{\lambda_{2}}=0$.

There is a~$G_{2}$-invariant divisor $R$ on the~surface $S$ such
that
$$
K_{V}+\lambda_{2}\mathcal{M}_{V}\equiv \iota^{*}\big(R\big).%
$$

We have $D\cong\mathbb{P}^{2}$. Let us identify $S$ with a~proper
transform of $D$ on the~variety $V$. Then
$$
R\equiv  K_{S}+\lambda_{2}\mathcal{M}_{V}\Big\vert_{S},
$$
and $\mathcal{M}_{V}\vert_{S}$ is a~proper transform of the~linear
system $\mathcal{M}_{X}\vert_{D}$. But
$$
\kappa\Bigg(D,\lambda_{2}\mathcal{M}_{X}\Big\vert_{D}\Bigg)\geqslant 0,%
$$
because the~log pair $(D, \lambda_{1}\mathcal{M}_{X}\vert_{D})$ is
canonical and $\lambda_{2}\geqslant\lambda_{1}$. Then $R$ is
$\mathbb{Q}$-effective.

By Lemmas~\ref{lemma:rigidity-fibration} and
\ref{lemma:rigidity-Fano}, we have $c_{t}^{\lambda_{2}}<0$ for
some $t\in\{1,\ldots,l\}$. But the~equality
$$
\mathrm{dim}\Big(\iota\circ\gamma\big(\bar{F}_{t}\big)\Big)=1
$$
holds. Arguing as in the~case $\mathrm{dim}(\pi_{2}(Z_{i}))=1$, we
obtain a~contradiction.
\end{proof}

Therefore, we see that $H_{1}\equiv rF$ for some positive
$r\in\mathbb{Q}$, because $\lambda_{2}<\lambda_{1}$. Then
$$
K_{W}+\lambda_{1}\mathcal{M}_{W}\equiv
\alpha^{*}\big(rD\big)+\sum_{i=1}^{k}a^{\lambda_{1}}_{i}F_{i},
$$
and, for simplicity, we put $a_{i}=a^{\lambda_{1}}_{i}$ for every
$i\in\{1,\ldots,k\}$. Put
$$
\mathcal{J}=\Bigg\{P\in\mathbb{P}^{1}\ \Big\vert\ \exists\ i\in\big\{1,\ldots,k\big\}\ \text{such that}\ P\in\pi_{1}\circ\alpha\big(F_{i}\big),\ \pi_{1}\circ\alpha\big(F_{i}\big)\ne\mathbb{P}^{1},\ a_{i}<0\Bigg\},%
$$
and let $D_{\lambda}\cong\mathbb{P}^{2}$ be a~fiber of
the~morphism $\pi_{1}$ over a~point $\lambda\in\mathbb{P}^{1}$.
Then
$$
\alpha^{*}\big(D_{\lambda}\big)\equiv
\bar{D}_{\lambda}+\sum_{i=1}^{k}b^{\lambda}_{i}F_{i},
$$
where $\bar{D}_{\lambda}$ is a~proper transform of $D_{\lambda}$
on the~variety $W$, and $b^{\lambda}_{i}\in\mathbb{N}\cup\{0\}$.
Note that
$$
b^{\lambda}_{i}\ne 0\iff\exists\ \lambda\in\mathcal{J}\ \Big\vert\ \alpha\big(F_{i}\big)\subset D_{\lambda},%
$$
and $\mathcal{J}\ne\varnothing$ by
Corollary~\ref{corollary:composition} and
Lemmas~\ref{lemma:rigidity-fibration} and
\ref{lemma:rigidity-Fano}. For every $\lambda\in\mathcal{J}$, put
$$
\delta_{\lambda}=\mathrm{max}\Bigg\{-\frac{a_{i}}{b^{\lambda}_{i}}\ \Big\vert\ \alpha\big(F_{i}\big)\subset D_{\lambda},\ a_{i}<0\Bigg\}>0,%
$$
and put $\delta_{\lambda}=0$ for every
$\lambda\in\mathbb{P}^{1}\setminus\mathcal{J}$. Then it follows
from Definition~\ref{definition:lct-local} and
Corollary~\ref{corollary:composition} that
$$
\delta_{\lambda}=-\mathrm{c}^{1}_{D_{\lambda}}\big(X,\lambda_{1}\mathcal{M}_{X},D_{\lambda}\big).%
$$

\begin{lemma}
\label{lemma:super-maximal-singularity} %
The inequality $\sum_{\lambda\in
\mathcal{J}}\delta_{\lambda}\geqslant  r$ holds.
\end{lemma}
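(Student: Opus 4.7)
The plan is to construct an auxiliary canonical log pair on $X$ and then compare Kodaira dimensions on $X$ and $\bar{X}$.

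Set $B=\lambda_{1}\mathcal{M}_{X}-\sum_{\lambda\in\mathcal{J}}\delta_{\lambda}D_{\lambda}$. The first step is to verify that the log pair $(X,B)$ has canonical singularities. By Corollary~\ref{corollary:composition}, $(X,\lambda_{1}\mathcal{M}_{X})$ is canonical at the generic point of every fiber $D_{\lambda}$, so its non-canonical centers are all contained in the codimension-two loci $\alpha(F_{i})$ with $a_{i}<0$. As recorded in the definition of $\mathcal{J}$, these centers occur only inside fibers $D_{\lambda}$ with $\lambda\in\mathcal{J}$. The defining identity $\delta_{\lambda}=-\mathrm{c}^{1}_{D_{\lambda}}(X,\lambda_{1}\mathcal{M}_{X},D_{\lambda})$ means that subtracting $\delta_{\lambda}D_{\lambda}$ from the boundary transforms the discrepancy $a_{i}$ at each such $F_{i}$ into $a_{i}+\delta_{\lambda}b_{i}^{\lambda}\geqslant 0$, restoring canonicity.

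The second step is the numerical comparison. Since $K_{X}+\lambda_{1}\mathcal{M}_{X}\equiv rD$ and $D_{\lambda}\equiv D$ for every $\lambda$, we have
$$
K_{X}+B\equiv\Big(r-\sum_{\lambda\in\mathcal{J}}\delta_{\lambda}\Big)D.
$$
Suppose for contradiction that $\sum_{\lambda}\delta_{\lambda}<r$. Then $K_{X}+B$ is $\mathbb{Q}$-linearly equivalent to a positive multiple of a fiber of $\pi_{1}$, hence is $\mathbb{Q}$-effective of Kodaira dimension exactly $1$, with Iitaka fibration $\pi_{1}$ itself. Combined with canonicity -- pulling back $K_{X}+B$ to any resolution adds only non-negative exceptional terms -- Corollary~\ref{corollary:Kodaira-dimension} and the monotonicity of $\kappa$ under addition of effective divisors yield $\kappa(X,B)\geqslant 1$.

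The third step is to transfer this bound to $\bar{X}$ using the birational invariance of Kodaira dimension (Lemma~\ref{lemma:Kodaira-dimension-is-OK}), obtaining $\kappa(\bar{X},\nu^{-1}_{*}(B))\geqslant 1$. Although Definition~\ref{definition:bir-equivalent-boundaries} is phrased for mobile boundaries whereas $B$ carries the divisorial components $-\delta_{\lambda}D_{\lambda}$, this technicality is bypassed by running the invariance argument directly on the common resolution $W$, where both pairs become log-equivalent.

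The main obstacle is the fourth step, deriving a contradiction from $\kappa(\bar{X},\nu^{-1}_{*}(B))\geqslant 1$ and the structure of $\bar{\pi}\colon\bar{X}\to\bar{S}$ as a $G$-Mori fiber space not square birationally equivalent to $\pi_{1}$. If $\bar{S}$ is a point, then $\bar{X}$ is a $G$-Fano threefold whose $G$-invariant Picard number equals one, so the Kodaira dimension of any $\mathbb{Q}$-divisor belongs to $\{-\infty,0,\dim\bar{X}\}$ and can never equal $1$. If $\bar{S}$ is positive dimensional, one argues that the Iitaka fibration realizing $\kappa=1$ provides a $G$-equivariant rational map $\bar{X}\dasharrow\mathbb{P}^{1}$ that must coincide with $\pi_{1}\circ\nu$, and then that this identification forces $\bar{\pi}$ and $\pi_{1}$ to be square birationally equivalent, contradicting the hypothesis fixed just before Lemma~\ref{lemma:A5-A5-not-in-fibers}.
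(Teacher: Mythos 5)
Your overall strategy matches the paper's: compare the Kodaira dimension of the mobile boundary $\lambda_1\mathcal{M}$ as computed from the $\pi_1$-fibration structure on $X$ with the value forced by the Mori fibration $\bar\pi$. The paper writes on $W$
$$
K_W+\lambda_1\mathcal{M}_W\equiv\alpha^*\Bigl(\bigl(r-\textstyle\sum_{\lambda\in\mathcal{J}}\delta_\lambda\bigr)D\Bigr)+\sum_{\lambda\in\mathcal{J}}\delta_\lambda\bar D_\lambda+\sum_{i=1}^{k}\bigl(a_i+\delta_\lambda b_i^\lambda\bigr)F_i
$$
with all non-pullback terms effective, concludes $\kappa(W,\lambda_1\mathcal{M}_W)=\kappa(X,\lambda_1\mathcal{M}_X)=\kappa(\bar X,\lambda_1\mathcal{M}_{\bar X})=1$, and then uses that the last quantity is either $\leqslant 0$ or $=3$. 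Your detour through the non-effective pair $(X,B)$ reaches essentially this decomposition, but at the cost of the technicality you flag (the paper's birational invariance of $\kappa$ is set up only for effective mobile boundaries), which the paper sidesteps by never leaving the mobile boundary $\lambda_1\mathcal{M}$.

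The genuine gap is in Step 4. You establish only $\kappa\geqslant 1$, which is compatible with $\kappa=3$; when $\dim\bar S=0$ the value $3$ lies in the allowed set $\{-\infty,0,3\}$ and produces no contradiction. What is needed is $\kappa=1$, and the missing half $\kappa\leqslant 1$ requires an argument: since $K_W+\lambda_1\mathcal{M}_W\equiv\alpha^*(rD)+\sum_{i}a_iF_i$ with every $F_i$ $\alpha$-exceptional, discarding the $F_i$ with $a_i<0$ and pushing forward by $\alpha$ gives an inclusion of $H^0(W,n(K_W+\lambda_1\mathcal{M}_W))$ into $H^0(X,n\,rD)$ for divisible $n$, so $\kappa\leqslant 1$ because $rD$ is a positive multiple of a fiber of $\pi_1$. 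Separately, your treatment of $\dim\bar S>0$ is unsupported: the claim that the Iitaka fibration "forces $\bar\pi$ and $\pi_1$ to be square birationally equivalent" does not follow --- there is no reason for the Iitaka map of the boundary to have anything to do with $\bar\pi$, and when $\dim\bar S=2$ the shapes do not even match. The correct and much shorter observation is that for $\dim\bar S\geqslant 1$ one has $\kappa(\bar X,\lambda_1\mathcal{M}_{\bar X})=-\infty$ outright, because $K_{\bar X}+\lambda_1\bar\pi^*(D)$ restricts to $K_F$ on a general fiber $F$ of $\bar\pi$, which is anti-ample.
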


\begin{proof}
Suppose that $\sum_{\lambda\in \mathcal{J}}\delta_{\lambda}<r$. We
have
$$
K_{W}+\lambda_{1}\mathcal{M}_{W}\equiv
\alpha^{*}\Bigg(\Big(r-\sum_{\lambda\in
\mathcal{J}}\delta_{\lambda}\Big)D\Bigg)+\sum_{\lambda\in\mathcal{J}}\delta_{\lambda}\bar{D}_{\lambda}+\sum_{i=1}^{k}\big(a_{i}+\delta_{\lambda}b^{\lambda}_{i}\big)F_{i},
$$
where $a_{i}+\delta_{\lambda}b^{\lambda}_{i}\geqslant 0$ for every
$i\in\{1,\ldots,k\}$. Then
$$
\kappa\big(\bar{X},\lambda_{1}M_{\bar{X}}\big)=\kappa\big(X,\lambda_{1}M_{X}\big)=\kappa\big(W,\lambda_{1}M_{W}\big)=1,%
$$
which is impossible, because either
$\kappa(\bar{X},\lambda_{1}M_{\bar{X}})\leqslant 0$, or
$\kappa(\bar{X},\lambda_{1}M_{\bar{X}})=3$.
\end{proof}

\begin{corollary}
\label{corollary:super-maximal-singularity-CS-locus} %
For any set of rational numbers
$\{c_{\lambda}\}_{\lambda\in\mathcal{J}}$ such that
$\sum_{\lambda\in\mathcal{J}}c_{\lambda}\leqslant r$, one has
$$
\mathbb{CS}\Bigg(X,\lambda_{1}\mathcal{M}_{X}-\sum_{\lambda\in\mathcal{J}}c_{\lambda}D_{\lambda}\Bigg)\ne\varnothing.%
$$
\end{corollary}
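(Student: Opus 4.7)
The plan is to combine Lemma~\ref{lemma:super-maximal-singularity} with the explicit formula defining $\delta_\lambda$ to exhibit an honest centre of canonical singularities of the perturbed log pair.

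First, from the hypothesis $\sum_{\lambda\in\mathcal{J}}c_\lambda\leqslant r$ together with Lemma~\ref{lemma:super-maximal-singularity}, I get $\sum_{\lambda\in\mathcal{J}}(c_\lambda-\delta_\lambda)\leqslant 0$, and therefore there exists some index $\lambda_0\in\mathcal{J}$ with $c_{\lambda_0}\leqslant\delta_{\lambda_0}$. Next, by the very definition of $\delta_{\lambda_0}$ as a maximum, I can select an $\alpha$-exceptional prime divisor $F_{i_0}\subset W$ which realises it, that is, with $\alpha(F_{i_0})\subset D_{\lambda_0}$, $a_{i_0}<0$, and $a_{i_0}=-\delta_{\lambda_0}b^{\lambda_0}_{i_0}$. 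Because the fibres $D_\lambda$ of $\pi_1$ are pairwise disjoint, $\alpha(F_{i_0})\subset D_{\lambda_0}$ forces $b^{\lambda}_{i_0}=0$ for every $\lambda\in\mathcal{J}\setminus\{\lambda_0\}$.

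Finally, I would compute the discrepancy of $F_{i_0}$ with respect to the log pair
$$
\Bigl(X,\ \lambda_{1}\mathcal{M}_{X}-\sum_{\lambda\in\mathcal{J}}c_{\lambda}D_{\lambda}\Bigr).
$$
Using $\alpha^{*}(D_\lambda)=\bar{D}_\lambda+\sum_i b^\lambda_i F_i$ to subtract the appropriate correction from the relation $K_{W}+\lambda_{1}\mathcal{M}_{W}\equiv\alpha^{*}(K_{X}+\lambda_{1}\mathcal{M}_{X})+\sum_{i}a_{i}F_{i}$, I find that this discrepancy equals
$$
a_{i_0}+\sum_{\lambda\in\mathcal{J}}c_{\lambda}\,b^{\lambda}_{i_0}=a_{i_0}+c_{\lambda_0}\,b^{\lambda_0}_{i_0}=\bigl(c_{\lambda_0}-\delta_{\lambda_0}\bigr)\,b^{\lambda_0}_{i_0}\leqslant 0,
$$
since $b^{\lambda_0}_{i_0}>0$ and $c_{\lambda_0}\leqslant\delta_{\lambda_0}$. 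By Definition~\ref{definition:log-canonical-center}, the proper subvariety $\alpha(F_{i_0})\subsetneq X$ is then a centre of canonical singularities of the perturbed pair, so the set $\mathbb{CS}(X,\lambda_{1}\mathcal{M}_{X}-\sum_{\lambda}c_{\lambda}D_{\lambda})$ is non-empty.

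There is essentially no obstacle: the statement is pure bookkeeping once Lemma~\ref{lemma:super-maximal-singularity} is granted. The only mild subtlety worth flagging is that the boundary here need not be effective, as the coefficients $-c_\lambda$ on the divisors $D_\lambda$ may take either sign; however, $\mathbb{CS}$ is defined purely via the numerical condition on discrepancies in Definition~\ref{definition:log-canonical-center}, so effectiveness plays no role in the argument.
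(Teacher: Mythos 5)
Your proof is correct. The paper offers no written proof of this corollary, but the argument you supply — selecting $\lambda_0$ with $c_{\lambda_0}\leqslant\delta_{\lambda_0}$ via Lemma~\ref{lemma:super-maximal-singularity}, then computing the discrepancy $d_{i_0}=a_{i_0}+c_{\lambda_0}b^{\lambda_0}_{i_0}=(c_{\lambda_0}-\delta_{\lambda_0})b^{\lambda_0}_{i_0}\leqslant 0$ for an exceptional divisor $F_{i_0}$ realising $\delta_{\lambda_0}$ — is precisely the intended one, and it correctly handles the boundary case $c_{\lambda_0}=\delta_{\lambda_0}$ since $\mathbb{CS}$ only requires the non-strict inequality $d_i\leqslant 0$ in Definition~\ref{definition:log-canonical-center}.
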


Let $Z\cong\mathbb{P}^{1}$ be a~fiber of $\pi_{2}$, and let
$C\cong\mathbb{P}^{1}$ be a~line in $D\cong\mathbb{P}^{2}$. Then
$$
\mathbb{NE}\big(X\big)=\mathbb{R}_{\geqslant 0}Z\oplus\mathbb{R}_{\geqslant 0}C,%
$$
and $K_{X}^{2}\equiv 9Z+12C$. Let $M_{1}$ and $M_{2}$ be general
divisors in $\mathcal{M}_{X}$. Put
$T_{0}=\lambda_{1}^{2}M_{1}\cdot M_{2}$. Then
$$
T_{0}=Z_{X}+\sum_{\lambda\in \mathbb{P}^{1}}C_{\lambda}\equiv 9Z+\big(12+6r\big)C,%
$$
where $C_{\lambda}$ is an effective cycle whose components are
contained in a~fiber of $\pi_{1}$ over a~point~$\lambda\in
\mathbb{P}^{1}$, and $Z_{X}$ is an effective cycle such that every
component of $Z_{X}$ does not lie in a~fiber of $\pi_{1}$. But
$$
12+6r\leqslant 12+6\sum_{\lambda\in \mathcal{J}}\delta_{\lambda}
$$
by Lemma~\ref{lemma:super-maximal-singularity}. Let
$\beta_{\lambda}\in\mathbb{Q}_{\geqslant 0}$ such that
$C_{\lambda}\equiv\beta_{\lambda}C$. Then
$$
\sum_{\lambda\in \mathcal{J}}\beta_{\lambda}\leqslant\sum_{\lambda\in\mathbb{P}^{1}}\beta_{\lambda}\leqslant 12+6r\leqslant 12+6\sum_{\lambda\in \mathcal{J}}\delta_{\lambda},%
$$
because $Z$ and $C$ generates the~cone $\mathbb{NE}(X)$. Let
$O^{1}_{\lambda}$ be the~$G_{1}$-orbit of the~point
$\lambda\in\mathbb{P}^{1}$. Then
$$
\sum_{\substack{O^{1}_{\lambda}\\ \lambda\in \mathcal{J}}}\beta_{\lambda}\big|O^{1}_{\lambda}\big|\leqslant 12+6r\leqslant 12+6\sum_{\substack{O^{1}_{\lambda}\\ \lambda\in \mathcal{J}}}\delta_{\lambda}\big|O^{1}_{\lambda}\big|,%
$$
which implies that there is a~point $\omega\in \mathbb{P}^{1}$
such that
$$
\beta_{\omega}\big|O^{1}_{\omega}\big|\leqslant 12+6\delta_{\omega}\big|O^{1}_{\omega}\big|,%
$$
where $|O^{1}_{\omega}|\geqslant 12$ (see \cite{Spr77}).

\begin{corollary}
\label{corollary:super-maximal-inequality} There is
$t\in\{1,\ldots,k\}$ such that $\alpha(F_{t})\subset D_{\omega}$
and $\beta_{\omega}\leqslant 1-6a_{t}/b^{\omega}_{t}$, where
$$
\frac{a_{t}}{b^{\omega}_{t}}=-\delta_{\omega}=\mathrm{min}\Bigg\{\frac{a_{i}}{b^{\omega}_{i}}\ \Big\vert\ \alpha\big(F_{i}\big)\subset D_{\omega},\ a_{i}<0\Bigg\}=\mathrm{c}^{1}_{D_{\omega}}\big(X,\lambda_{1}\mathcal{M}_{X},D_{\omega}\big),%
$$
the~log pair
$(X,\lambda_{1}\mathcal{M}_{X}-\delta_{\omega}D_{\omega})$ is
canonical along $D_{\omega}$. But
$$
\alpha\big(F_{t}\big)\in\mathbb{CS}\Big(X,\lambda_{1}\mathcal{M}_{X}+\delta_{\omega}D_{\omega}\Big).%
$$
\end{corollary}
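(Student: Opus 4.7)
The plan is to read off each part of the corollary directly from the preceding setup, with the main work being a single discrepancy computation on $W$. For the first claim, I would choose $t \in \{1,\ldots,k\}$ attaining the maximum in the definition of $\delta_{\omega}$, so that $\alpha(F_{t}) \subset D_{\omega}$, $a_{t} < 0$, and $-a_{t}/b^{\omega}_{t} = \delta_{\omega}$; this is forced by the definition of $\delta_{\omega}$. The inequality $\beta_{\omega} \leqslant 1 - 6 a_{t}/b^{\omega}_{t}$ is then immediate from the bound $\beta_{\omega}|O^{1}_{\omega}| \leqslant 12 + 6\delta_{\omega}|O^{1}_{\omega}|$ established in the paragraph just before the corollary, upon dividing by $|O^{1}_{\omega}| \geqslant 12$ and substituting $-a_{t}/b^{\omega}_{t}$ for $\delta_{\omega}$.

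For the equality $a_{t}/b^{\omega}_{t} = \mathrm{c}^{1}_{D_{\omega}}(X,\lambda_{1}\mathcal{M}_{X},D_{\omega})$ and the canonicity assertion, I would carry out the discrepancy computation on $W$. Since $\pi_{1}$ is a smooth morphism, each fiber $D_{\omega}$ is Cartier on $X$, so by definition $\alpha^{*}(D_{\omega}) \equiv \bar{D}_{\omega} + \sum_{i} b^{\omega}_{i} F_{i}$. Substituting this into the equivalence $K_{W} + \lambda_{1}\mathcal{M}_{W} \equiv \alpha^{*}(rD) + \sum_{i} a_{i} F_{i}$ and using the numerical equivalence of the fibers $D$ and $D_{\omega}$ yields
\[
K_{W} + \lambda_{1}\mathcal{M}_{W} - \delta_{\omega} \bar{D}_{\omega} \equiv \alpha^{*}\!\bigl(K_{X} + \lambda_{1}\mathcal{M}_{X} - \delta_{\omega} D_{\omega}\bigr) + \sum_{i=1}^{k} \bigl(a_{i} + \delta_{\omega} b^{\omega}_{i}\bigr) F_{i}.
\]
By the choice of $\delta_{\omega}$ as a maximum, the coefficient $a_{i} + \delta_{\omega} b^{\omega}_{i}$ is nonnegative for every $i$ with $\alpha(F_{i}) \subset D_{\omega}$, and vanishes exactly at $i=t$; the remaining indices satisfy $b^{\omega}_{i} = 0$, leaving their discrepancies unchanged. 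By Definition~\ref{definition:lct-local} this identifies $-\delta_{\omega}$ with $\mathrm{c}^{1}_{D_{\omega}}(X,\lambda_{1}\mathcal{M}_{X},D_{\omega})$ and shows that the pair $(X, \lambda_{1}\mathcal{M}_{X} - \delta_{\omega} D_{\omega})$ is canonical along $D_{\omega}$.

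For the final assertion $\alpha(F_{t}) \in \mathbb{CS}(X,\lambda_{1}\mathcal{M}_{X} + \delta_{\omega} D_{\omega})$, I would reverse the sign of $\delta_{\omega}$ in the same formula; the discrepancy of $F_{t}$ then becomes
\[
a_{t} - \delta_{\omega} b^{\omega}_{t} = 2 a_{t} \leqslant 0,
\]
so by Definition~\ref{definition:log-canonical-center} with $\epsilon=1$ the subvariety $\alpha(F_{t})$ is a center of canonical singularities of $(X,\lambda_{1}\mathcal{M}_{X} + \delta_{\omega} D_{\omega})$. The proof is essentially bookkeeping around the identity $\alpha^{*}(D_{\omega}) \equiv \bar{D}_{\omega} + \sum b^{\omega}_{i} F_{i}$; the only mild subtlety is keeping the signs of $\pm \delta_{\omega} D_{\omega}$ straight and remembering that $F_{i}$ with $\alpha(F_{i}) \not\subset D_{\omega}$ do not interfere because $b^{\omega}_{i} = 0$ for them. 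I anticipate no serious obstacle.
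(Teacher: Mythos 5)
Your proof is essentially correct and mirrors what the paper implicitly does; the only point that deserves care is the identification $-\delta_{\omega}=\mathrm{c}^{1}_{D_{\omega}}(X,\lambda_{1}\mathcal{M}_{X},D_{\omega})$ and the canonicity assertion that accompanies it. Your discrepancy computation on $W$ shows directly that each $F_i$ with $\alpha(F_i)\subset D_\omega$ has non-negative discrepancy for the pair $(X,\lambda_1\mathcal{M}_X-\delta_\omega D_\omega)$, vanishing at $i=t$ (and hence, taking $\lambda>-\delta_\omega$ makes $F_t$'s discrepancy negative, giving $\mathrm{c}^1_{D_\omega}\leqslant-\delta_\omega$). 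But $W$ is not asserted to be a log resolution of $(X,\lambda_1\mathcal{M}_X)$, so nonnegativity on the $F_i$'s alone does not yet show the pair is canonical along $D_\omega$, i.e.\ does not give $\mathrm{c}^1_{D_\omega}\geqslant-\delta_\omega$. The missing ingredient is that $(W,\lambda_1\mathcal{M}_W)$ is itself terminal — it is the pullback of $(\bar{X},\lambda_1\mathcal{M}_{\bar{X}})$, where $\bar{X}$ has terminal singularities and $\mathcal{M}_{\bar{X}}$ is base-point-free, and the $\beta$-exceptional discrepancies $b_j^{\lambda_1}$ are positive — so any further valuation over $W$ already has positive discrepancy, and subtracting the effective divisor $\delta_\omega\bar{D}_\omega+\sum(a_i+\delta_\omega b_i^\omega)F_i$ only increases discrepancies. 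The paper in fact sidesteps this by asserting $\delta_\lambda=-\mathrm{c}^1_{D_\lambda}(X,\lambda_1\mathcal{M}_X,D_\lambda)$ (citing Definition~\ref{definition:lct-local} and Corollary~\ref{corollary:composition}) in the text immediately preceding the corollary, so the cleanest route is simply to cite that equality rather than re-derive it. The remaining parts of your argument — the choice of $t$ attaining the extremum, the division of $\beta_\omega|O^1_\omega|\leqslant 12+6\delta_\omega|O^1_\omega|$ by $|O^1_\omega|\geqslant 12$, and the computation $a_t-\delta_\omega b_t^\omega=2a_t<0$ showing $\alpha(F_t)\in\mathbb{CS}(X,\lambda_1\mathcal{M}_X+\delta_\omega D_\omega)$ — are all correct and exactly what one needs.
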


Arguing as in the~proof of Lemma~\ref{lemma:A5-22}, we see that
\begin{itemize}
\item there are no $G_{2}$-invariant curves on $D_{\omega}\cong\mathbb{P}^{2}$ of degree $1$, $3$ and $5$,%
\item there~is unique $G_{2}$-invariant conic $\Gamma_{2}\subset D_{\omega}\cong\mathbb{P}^{2}$, which is smooth,%
\item the~action of the~group $G_2$ on the~curve
$\Gamma_{2}\cong\mathbb{P}^{1}$ induces an embedding
$$
G_{2}\subset\mathrm{Aut}\big(\Gamma_{2}\big)\cong\mathbb{PGL}\Big(2,\mathbb{C}\Big),
$$%
\item every $G_{2}$-orbit in $D_{\omega}\cong\mathbb{P}^{2}$ consists of at least $6$ points,%
\item every $G_{2}$-orbit in $\Gamma_{2}$ consists of at least $12$ points (cf. \cite{Spr77}).%
\end{itemize}

\begin{lemma}
\label{lemma:maximal-singularity-curve} The equality
$\mathrm{dim}(\alpha(F_{t}))=0$ holds.
\end{lemma}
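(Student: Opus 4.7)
I would argue by contradiction: suppose that $C:=\alpha(F_t)$ is an irreducible curve contained in $D_\omega\cong\mathbb{P}^2$, and derive a numerical contradiction from Corollary~\ref{corollary:super-maximal-inequality} combined with the classification of small $G_2$-invariant curves in $\mathbb{P}^2$ recorded just before the lemma.

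The first step is to pin down $m:=\mathrm{mult}_C(\mathcal{M}_X)$. Since $D_\omega$ is smooth along $C$, for the exceptional divisor $E_C$ of the blow up $\mathrm{Bl}_C(X)\to X$ one has $a_{E_C}(X,0)=1$ and $\mathrm{ord}_{E_C}(D_\omega)=1$. Canonicity of $(X,\lambda_1\mathcal{M}_X-\delta_\omega D_\omega)$ along $D_\omega$ (Corollary~\ref{corollary:super-maximal-inequality}) gives the upper bound $m\leqslant(1+\delta_\omega)\slash\lambda_1$, while the hypothesis $C\in\mathbb{CS}(X,\lambda_1\mathcal{M}_X+\delta_\omega D_\omega)$ together with the minimality $a_t\slash b_t^\omega=-\delta_\omega$ forces the matching lower bound $m\geqslant(1-\delta_\omega)\slash\lambda_1$, with equality $m=1\slash\lambda_1$ in the case $F_t=E_C$.

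The second step exploits $G_2$-invariance. Since the stabilizer of $D_\omega$ in $G$ contains $G_2$ and the CS locus is $G$-invariant, the $G_2$-orbit of $C$ assembles into a $G_2$-invariant effective curve $\Sigma\subset D_\omega\cong\mathbb{P}^2$ with $\deg(\Sigma)\geqslant 2$ by the classification. On the other hand, the restriction $\mathcal{M}_X|_{D_\omega}$ is a linear system of plane curves of degree $3\slash\lambda_1$ (computed from $K_X+\lambda_1\mathcal{M}_X\equiv rD_\omega$ and $-K_X\equiv 2D+3Z'$, noting $\mathcal{M}_X\cdot C_0=3\slash\lambda_1$ for a line $C_0\subset D_\omega$) and contains every component of $\Sigma$ with multiplicity at least $(1-\delta_\omega)\slash\lambda_1$; Bezout on $\mathbb{P}^2$ therefore yields
$$
\deg(\Sigma)\leqslant\frac{3}{1-\delta_\omega},
$$
which for $\delta_\omega$ small forces $\Sigma=\Gamma_2$ and $C=\Gamma_2$.

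The third step is to contradict the remaining borderline case $C=\Gamma_2$ using the intersection bound $\beta_\omega\leqslant 1+6\delta_\omega$ of Corollary~\ref{corollary:super-maximal-inequality}. Since $M_1\cdot M_2$ contains $\Sigma$ as a $1$-cycle with multiplicity at least $m^{2}$, one obtains
$$
\beta_\omega\geqslant m^{2}\deg(\Sigma)\geqslant\frac{2(1-\delta_\omega)^{2}}{\lambda_{1}^{2}},
$$
and combining this with $\beta_\omega\leqslant 1+6\delta_\omega$ and the constraint $\lambda_1>\lambda_2$ of Lemma~\ref{lemma:conic-bundle-1} should close the argument. The delicate point I anticipate is precisely this borderline: the naive Bezout estimate for the conic is barely insufficient, and to sharpen it I expect one must invoke Theorem~\ref{theorem:II} on the surface $D_\omega\cong\mathbb{P}^2$, taking $\Delta_1=\Gamma_2$ with a non-positive coefficient $a_1$ extracted from the canonicity of $(X,\lambda_1\mathcal{M}_X-\delta_\omega D_\omega)$ along $D_\omega$, in order to upgrade the multiplicity inequality at a general point of $\Gamma_2$ and produce the final contradiction.
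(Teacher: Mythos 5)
Your overall strategy is the right one: assume $\alpha(F_t)$ is a curve $\Lambda\subset D_\omega$, show it must be the invariant conic $\Gamma_2$, and then extract a numerical contradiction from the bound $\beta_\omega\leqslant 1+6\delta_\omega$ together with Theorem~\ref{theorem:II}. However, there are three concrete gaps.

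First, your Bezout estimate in Step~2 is weaker than what the argument requires: you use $m\geqslant(1-\delta_\omega)/\lambda_1$ to get $\deg(\Sigma)\leqslant 3/(1-\delta_\omega)$, which only forces $\Sigma=\Gamma_2$ when $\delta_\omega$ is small. The paper instead observes that $a_t<0$ already means $\Lambda\in\mathbb{CS}(X,\lambda_1\mathcal{M}_X)$, so $\mathrm{mult}_\Lambda(\mathcal{M}_X)>1/\lambda_1$, and intersecting with a general line in $D_\omega$ gives $\deg(\Lambda)<3$ unconditionally; combined with the non-existence of $G_2$-invariant curves of degree $1$ this pins down $\Lambda=\Gamma_2$ with no case split on $\delta_\omega$.

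Second, there is a normalization error in Step~3: with $m:=\mathrm{mult}_{\Lambda}(\mathcal{M}_X)$ the correct statement is $\beta_\omega\geqslant\deg(\Sigma)\cdot\lambda_1^2 m^2\geqslant 2(1-\delta_\omega)^2$, not $2(1-\delta_\omega)^2/\lambda_1^2$ (recall $\beta_\omega$ is the degree of the vertical part of $T_0=\lambda_1^2 M_1\cdot M_2$, which already carries the factor $\lambda_1^2$). Even after correction, $2(1-\delta_\omega)^2\leqslant 1+6\delta_\omega$ holds in a nontrivial range of $\delta_\omega$, so the naive square-multiplicity bound does not close the argument — you are right to suspect this, but the reason is numerical, not a ``delicate borderline.'' Also, $\lambda_1>\lambda_2$ plays no role here; it was already spent establishing $H_1\equiv rF$ with $r>0$.

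Third, and most importantly, your proposed application of Theorem~\ref{theorem:II} ``on $D_\omega$ with $\Delta_1=\Gamma_2$'' is not the right one, and it would not work as stated: $\mathcal{M}_X\big\vert_{D_\omega}$ has $\Gamma_2$ as a fixed component (since $\mathrm{mult}_{\Lambda}(\mathcal{M}_X)>1/\lambda_1$), and the coefficient that should feed into the $a_1$ slot of Theorem~\ref{theorem:II} is $-\delta_\omega$, the coefficient on $D_\omega$, not something living on $\Gamma_2$. The correct application is to a \emph{general transverse surface} $S$ (a general member of $\big|\pi_2^*\mathcal{O}_{\mathbb{P}^2}(1)\big|$), with $\Delta_1=D_\omega\cap S$, $\epsilon=\lambda_1$, $a_1=-\delta_\omega$, and $O$ a general point of $\Lambda\cap S$: the pair $(X,\lambda_1\mathcal{M}_X-\delta_\omega D_\omega)$ is canonical but not terminal along $\Lambda$, so $(S,\lambda_1\mathcal{M}_X\big\vert_S-\delta_\omega\Delta_1)$ is not terminal at $O$, and Theorem~\ref{theorem:II} then yields
$$
\mathrm{coeff}_{\Lambda}\big(M_1\cdot M_2\big)=\mathrm{mult}_O\Big(M_1\big\vert_S\cdot M_2\big\vert_S\Big)\geqslant\frac{1+2\delta_\omega}{\lambda_1^2}\quad\text{or}\quad\frac{4\delta_\omega}{\lambda_1^2},
$$
according as $\delta_\omega\leqslant 1/2$ or $\delta_\omega\geqslant 1/2$. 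Writing $C_\omega=m\Lambda+\Delta$ with $m=\lambda_1^2\,\mathrm{coeff}_\Lambda(M_1\cdot M_2)$, one then has $m\geqslant 1+2\delta_\omega$ or $m\geqslant 4\delta_\omega$ — strictly stronger than $(1-\delta_\omega)^2$ — and together with $\beta_\omega\geqslant 2m$ and $\beta_\omega\leqslant 1+6\delta_\omega$ this squeezes out $\delta_\omega=1/2$, $m=2$. Finally, since $-a_1=1/2\notin\mathbb{N}$ and $a_1\ne 0$, the equality case of Theorem~\ref{theorem:II} is impossible, giving the contradiction. It is precisely this gain from the negative coefficient $a_1=-\delta_\omega$ in Theorem~\ref{theorem:II} — not a sharper Bezout estimate — that makes the numbers work.
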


\begin{proof}
Suppose that $\alpha(F_{t})$ is an irreducible curve. Put
$\Lambda=\alpha(F_{t})$. Then
$$
\mathrm{mult}_{\Lambda}\big(\mathcal{M}_{X}\big)>1\big\slash\lambda_{1},
$$
and we may assume that $\Lambda$ is $G_{2}$-invariant by swapping
$F_{t}$ with its $G_2$-orbit.

Let $L$ be a~general line in $D_{\omega}\cong\mathbb{P}^{2}$. Then
$$
3\big\slash\lambda_{1}=L\cdot\mathcal{M}_{X}>\frac{L\cdot\Lambda}{\lambda_{1}},
$$
which implies that $\Lambda=\Gamma_{2}$.

Put $C_{\omega}=m\Lambda+\Delta$, where $\Delta$ is an effective
one-cycle such that $\Lambda\not\subseteq\mathrm{Supp}(\Delta)$.
Then
$$
m\geqslant \left\{\aligned
&1+2\delta_{\omega}\ \text{if}\ \delta_{\omega}\leqslant 1\slash 2,\\
&4\delta_{\omega}\ \text {if}\ \delta_{\omega}\geqslant 1\slash 2,\\
\endaligned
\right.
$$
by Theorem~\ref{theorem:II}. But $\beta_{\omega}\geqslant 2m$ and
$\beta_{\omega}\leqslant 1+6\delta_{\omega}$ by
Corollary~\ref{corollary:super-maximal-inequality}. Then
$$
1\slash 2+3\delta_{\omega}\geqslant \left\{\aligned
&1+2\delta_{\omega}\ \text{if}\ \delta_{\omega}\leqslant 1\slash 2,\\
&4\delta_{\omega}\ \text {if}\ \delta_{\omega}\geqslant 1\slash 2,\\
\endaligned
\right.
$$
which implies that $\delta_{\omega}=1/2$ and $m=2$, which is
impossible by Theorem~\ref{theorem:II}.
\end{proof}

Put $P=\alpha(F_{t})$. Let $O^{2}_{P}\subset
D_{\omega}\cong\mathbb{P}^{2}$ be the~$G_{2}$-orbit of $P\in
D_{\omega}$.

\begin{lemma}
\label{lemma:vertical-horizontal} The inequalities
$\mathrm{mult}_{P}(Z_{X})\leqslant 3/2$ and
$\mathrm{mult}_{P}(C_{\omega})\leqslant \beta_{\omega}/2$ hold.
\end{lemma}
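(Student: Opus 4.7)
The plan for both bounds is to intersect either $Z_X$ or $C_\omega$ with an explicitly chosen $G_2$-invariant curve on $D_\omega\cong\mathbb{P}^2$ and exploit the hypothesis that every $G_2$-orbit on $D_\omega$ has at least six points. After replacing $M_1,M_2$ by the $G$-averaged $\mathbb{Q}$-divisors $|G|^{-1}\sum_{g\in G}g^{*}M_i$, which have the same class, I may assume that the cycles $Z_X$ and $C_\omega$ are $G$-invariant, so the functions $Q\mapsto\mathrm{mult}_Q(Z_X)$ and $Q\mapsto\mathrm{mult}_Q(C_\omega)$ are constant on $O^2_P$. Since averaging can only increase multiplicities, any upper bound proved for the averaged cycles is an upper bound for the original ones.

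The first inequality follows from intersecting $Z_X$ with the fiber $D_\omega$. Since no component of $Z_X$ lies in a fiber of $\pi_1$, the identity $T_0\equiv 9Z+(12+6r)C$ together with $C\cdot F_1=0$ and $Z\cdot F_1=1$ forces $Z_X\equiv 9Z+\gamma C$ for some $\gamma\geq 0$; in particular $Z_X\cdot D_\omega=9$. Since $D_\omega$ is smooth and contains no component of $Z_X$,
\[
9=Z_X\cdot D_\omega\geq\sum_{Q\in O^2_P}(Z_X\cdot D_\omega)_Q\geq|O^2_P|\cdot\mathrm{mult}_P(Z_X)\geq 6\,\mathrm{mult}_P(Z_X),
\]
which gives $\mathrm{mult}_P(Z_X)\leq 3/2$.

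For the second inequality, view $C_\omega$ as a plane curve of degree $\beta_\omega$ in $D_\omega\cong\mathbb{P}^2$. Since $|O^2_P|\geq 6$, pick $Q\in O^2_P\setminus\{P\}$ and let $L=\overline{PQ}$ be the line through $P$ and $Q$. In the generic case $L\not\subset\mathrm{Supp}(C_\omega)$ Bezout combined with $G_2$-invariance yields
\[
\beta_\omega=L\cdot C_\omega\geq(L\cdot C_\omega)_P+(L\cdot C_\omega)_Q\geq 2\,\mathrm{mult}_P(C_\omega),
\]
which is the desired bound.

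The main obstacle is the degenerate case in which every line joining $P$ to another orbit point lies in $\mathrm{Supp}(C_\omega)$. To handle it I would replace the line by a conic $K$ through five points of $O^2_P$, which exists because $|O^2_P|\geq 6$. This $K$ cannot equal the unique $G_2$-invariant conic $\Gamma_2$: if $P\in\Gamma_2$ then the orbit $O^2_P\subset\Gamma_2$ would have size at least $12$, contradicting the stated fact that $G_2$-orbits on $\Gamma_2$ have at least $12$ points. Once one verifies that $K$ itself is not a component of $C_\omega$, using that any non-$G_2$-invariant conic would bring its entire $G_2$-orbit into $C_\omega$ and thereby force $\beta_\omega$ to be so large that the inequality holds automatically, Bezout applied to $K$ and $C_\omega$ gives $2\beta_\omega\geq 5\,\mathrm{mult}_P(C_\omega)$, which is strictly stronger than the required $\mathrm{mult}_P(C_\omega)\leq\beta_\omega/2$.
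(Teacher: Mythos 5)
Your proof of the first bound ($\mathrm{mult}_P(Z_X)\leqslant 3/2$) is essentially the paper's argument: compute $Z_X\cdot D_\omega=9$, sum local intersection multiplicities over the orbit $O^2_P$, and use $|O^2_P|\geqslant 6$. (One quibble: averaging $M_1,M_2$ over $G$ does not ``only increase multiplicities''; the clean justification that the multiplicity functions are constant on orbits is that $M_1,M_2$ are general members of the $G$-invariant linear system $\mathcal{M}_X$, so for each $Q$ the generic value of $\mathrm{mult}_Q(M_1\cdot M_2)$ is the same at every point of the $G$-orbit of $Q$.)

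The second bound is where your argument genuinely diverges from the paper's, and where it has real gaps. The paper first writes $C_\omega=m\Gamma_2+\Delta$ with $\Gamma_2\not\subseteq\mathrm{Supp}(\Delta)$ and $0\leqslant m\leqslant\beta_\omega/2$, then separates into $P\in\Gamma_2$ (Bezout against $\Gamma_2\cdot\Delta$, using $|O^2_P|\geqslant 12$) and $P\notin\Gamma_2$ (scale $\Delta$, show the log canonical locus of $(\mathbb{P}^2,\mu\Delta)$ has no one-dimensional component because any such would be a $G_2$-invariant curve of degree $<4$ other than $\Gamma_2$, and then use Nadel--Shokurov vanishing to force $|O^2_P|\leqslant 3$, a contradiction). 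Your Bezout-against-a-line argument is fine and quicker in the generic case $L\not\subset\mathrm{Supp}(C_\omega)$, but the fallback is broken in two places. First, your reason for $K\neq\Gamma_2$ is a non-sequitur: when $P\in\Gamma_2$, the whole orbit $O^2_P$ lies on $\Gamma_2$, so any conic through five of those points \emph{is} $\Gamma_2$ (two distinct conics meet in at most four points); there is no contradiction with ``orbits on $\Gamma_2$ have $\geqslant 12$ points.'' In that case you must allow $K=\Gamma_2$, split off its coefficient $m$ from $C_\omega$, and intersect $\Gamma_2$ with $\Delta$ rather than with $C_\omega$ --- which is exactly what the paper does, and the resulting computation $\mathrm{mult}_P(C_\omega)\leqslant m+2(\beta_\omega-2m)/r$ needs $r\geqslant 4$ (and here $r\geqslant 12$), not $r\geqslant 6$. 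Second, ``a non-$G_2$-invariant conic would bring its entire orbit into $C_\omega$ and force $\beta_\omega$ to be so large that the inequality holds automatically'' is asserted, not proved: the $G_2$-orbit of $K$ might have as few as two elements, and you would still need to track how much each conic in the orbit contributes to $\mathrm{mult}_P(C_\omega)$, so the claimed numerical dominance is not automatic. The same issue already infects the line case: if $L\subset\mathrm{Supp}(C_\omega)$ you owe a quantitative argument, not a transfer to the conic. The paper's Nadel-vanishing step is precisely the device that disposes of all these potential curve components uniformly; your route would need to reconstruct a comparable argument, and as written it does not.
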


\begin{proof} Put $r=|O^{2}_{P}|$. Then $r\geqslant 6$. Then
$$
9=\Big(Z_{X}+\sum_{\lambda\in \mathbb{P}^{1}}C_{\lambda}\Big)\cdot D_{\omega}=Z_{X}\cdot D_{\omega}\geqslant\sum_{Q\in O^2_{P}}\mathrm{mult}_{Q}\big(Z_{X}\big)=r\mathrm{mult}_{P}\big(Z_{X}\big)\geqslant 6\mathrm{mult}_{P}\big(Z_{X}\big).%
$$
which implies that $\mathrm{mult}_{P}(Z_{X})\leqslant 3/2$. Let us
show that $\mathrm{mult}_{P}(C_{\omega})\leqslant
\beta_{\omega}/2$.

We may consider $C_{\omega}$ as an effective $G_{2}$-invariant
$\mathbb{Q}$-divisor on $\mathbb{P}^{2}\cong D_{\omega}$ of degree
$\beta_{\omega}$. Then
$$
C_{\omega}=m\Gamma_{2}+\Delta,
$$
where $m\in\mathbb{Q}$ such that $\beta_{\omega}/2\geqslant
m\geqslant 0$, and $\Delta$ is an effective $\mathbb{Q}$-divisor
such that $\Gamma_{2}\not\subseteq\mathrm{Supp}(\Delta)$.

Suppose that $P\in\Gamma_{2}$. Then
$$
2\big(\beta_{\omega}-2m\big)=\Gamma_{2}\cdot\Delta\geqslant
\sum_{Q\in
O_{P}}\mathrm{mult}_{Q}\big(\Delta\big)=r\Big(\mathrm{mult}_{P}\big(C_{\omega}\big)-m\Big)\geqslant
r\Big(\mathrm{mult}_{P}\big(C_{\omega}\big)-m\Big),
$$
and $r\geqslant 12$ (see \cite{Spr77}). Therefore, we immediately
see that
$$
\mathrm{mult}_{P}\big(C_{\omega}\big)\leqslant\frac{2\beta_{\omega}+\big(r-4\big)m}{r}\leqslant\frac{2\beta_{\omega}+\big(r-4\big)\beta_{\omega}\slash 2}{r}=\frac{\beta_{\omega}}{2},%
$$
because $r\geqslant 4$. Thus, to complete the~proof we may assume
that $P\not\in\Gamma_{2}$.

Suppose that $\mathrm{mult}_{P}(C_{\omega})>\beta_{\omega}/2$.
Then $\mathrm{mult}_{P}(\Delta)>\beta_{\omega}/2$, and there is
$\mu\in\mathbb{Q}$ such that
$$
\mathrm{mult}_{P}\big(\mu \Delta\big)\geqslant 2
$$
and $\mu<4/\beta_{\omega}$. In particular, we see that
$O^{2}_{P}\subseteq\mathrm{LCS}(\mathbb{P}^{2},\mu \Delta)$.

Suppose that there is a~$G_{2}$-invariant reduced curve
$\Omega\subset\mathbb{P}^{2}$ such that
$$
\mu \Delta=\upsilon \Omega+\Upsilon,
$$
where $\upsilon\geqslant 1$, and $\Upsilon$ is an effective
$\mathbb{Q}$-divisor, such that
$\Omega\not\subseteq\mathrm{Supp}(\Upsilon)$. Then
$$
4>\mu\beta_{\omega}-2m\mu=\mu \Delta\cdot H=\big(\upsilon \Omega+\Upsilon\big)\cdot H\geqslant\upsilon \Omega\cdot H\geqslant\Omega\cdot H,%
$$
where $H$ is a~general line on $\mathbb{P}^{2}$. Then
$\Omega=\Gamma_{2}$, which is a~contradiction.

We see that the~scheme $\mathcal{L}(\mathbb{P}^{2},\mu\Delta)$ is
zero-dimensional, and its support contains $O^{2}_{P}$.

It follows from Theorem~\ref{theorem:Shokurov-vanishing} that
the~sequence
$$
\mathbb{C}^{3}\cong H^{0}\Big(\mathbb{P}^{2},\ \mathcal{O}_{\mathbb{P}^{2}}\big(1\big)\Big)\longrightarrow H^{0}\Big(\mathcal{O}_{\mathcal{L}(\mathbb{P}^{2},\,\mu\Delta)}\Big)\longrightarrow 0%
$$
is exact, which implies that $r\leqslant 3$. But $r\geqslant 6$,
which is a~contradiction.
\end{proof}

\begin{lemma}
\label{lemma:maximal-singularity-point} The inequality
$\mathrm{mult}_{P}(\mathcal{M}_{X})<(2+\delta_{\omega})/\lambda_{1}$
holds.
\end{lemma}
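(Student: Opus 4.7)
The plan is to bound $\mathrm{mult}_{P}(\mathcal{M}_{X})$ from above using the standard inequality $\mathrm{mult}_{P}(M_{1}\cdot M_{2})\geqslant \mathrm{mult}_{P}(M_{1})\mathrm{mult}_{P}(M_{2})$ applied to two general members $M_{1},M_{2}\in\mathcal{M}_{X}$ (which share no common components since $\mathcal{M}_{X}$ is mobile). Since the general members satisfy $\mathrm{mult}_{P}(M_{i})=\mathrm{mult}_{P}(\mathcal{M}_{X})$, this gives
$$
\lambda_{1}^{2}\,\mathrm{mult}_{P}\big(\mathcal{M}_{X}\big)^{2}\;\leqslant\;\mathrm{mult}_{P}\big(T_{0}\big),
$$
where $T_{0}=\lambda_{1}^{2}M_{1}\cdot M_{2}$ is the one-cycle introduced above.

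Next, I will use the decomposition $T_{0}=Z_{X}+\sum_{\lambda\in\mathbb{P}^{1}}C_{\lambda}$. By construction every component of $C_{\lambda}$ lies in the fiber $D_{\lambda}$, and $P\in D_{\omega}$, so $C_{\lambda}$ contributes nothing to $\mathrm{mult}_{P}(T_{0})$ for $\lambda\neq\omega$. Hence
$$
\mathrm{mult}_{P}\big(T_{0}\big)\;=\;\mathrm{mult}_{P}\big(Z_{X}\big)+\mathrm{mult}_{P}\big(C_{\omega}\big).
$$

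Now I will apply the bounds just proved in Lemma~\ref{lemma:vertical-horizontal}, namely $\mathrm{mult}_{P}(Z_{X})\leqslant 3/2$ and $\mathrm{mult}_{P}(C_{\omega})\leqslant \beta_{\omega}/2$, together with the inequality $\beta_{\omega}\leqslant 1+6\delta_{\omega}$ from Corollary~\ref{corollary:super-maximal-inequality}. Combining these gives
$$
\lambda_{1}^{2}\,\mathrm{mult}_{P}\big(\mathcal{M}_{X}\big)^{2}\;\leqslant\;\frac{3}{2}+\frac{\beta_{\omega}}{2}\;\leqslant\;\frac{3}{2}+\frac{1+6\delta_{\omega}}{2}\;=\;2+3\delta_{\omega}.
$$
Finally, the elementary inequality $(2+\delta_{\omega})^{2}=4+4\delta_{\omega}+\delta_{\omega}^{2}>2+3\delta_{\omega}$ (valid for all $\delta_{\omega}\geqslant 0$, since $2+\delta_{\omega}+\delta_{\omega}^{2}>0$) yields strict inequality, so
$$
\lambda_{1}\,\mathrm{mult}_{P}\big(\mathcal{M}_{X}\big)\;\leqslant\;\sqrt{2+3\delta_{\omega}}\;<\;2+\delta_{\omega},
$$
which is the assertion. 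There is no serious obstacle: both ingredients (the multiplicity bounds on $Z_{X}$ and $C_{\omega}$, and the numerical bound on $\beta_{\omega}$) are already in hand, and the remaining step is purely the algebraic manipulation above.
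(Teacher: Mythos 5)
Your proposal is correct and takes essentially the same route as the paper: both chain the multiplicity inequality $\mathrm{mult}_{P}(M_{1}\cdot M_{2})\geqslant\mathrm{mult}_{P}^{2}(\mathcal{M}_{X})$, the bounds $\mathrm{mult}_{P}(Z_{X})\leqslant 3/2$ and $\mathrm{mult}_{P}(C_{\omega})\leqslant\beta_{\omega}/2$ from Lemma~\ref{lemma:vertical-horizontal}, and $\beta_{\omega}\leqslant 1+6\delta_{\omega}$ from Corollary~\ref{corollary:super-maximal-inequality}. The only difference is cosmetic (you argue directly, the paper argues by contradiction), and in fact your version sidesteps a small arithmetic slip in the paper's display (which has $\delta_{\omega}^{2}+2\delta_{\omega}+5/2$ where the computation yields $\delta_{\omega}^{2}+4\delta_{\omega}+5/2$; the contradiction goes through either way).
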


\begin{proof} Suppose that $\mathrm{mult}_{P}(\mathcal{M}_{X})\geqslant
(2+\delta_{\omega})/\lambda_{1}$. Then
$$
\frac{\beta_{\omega}}{2}\geqslant\mathrm{mult}_{P}\big(C_{\omega}\big)\geqslant
\lambda_{1}^{2}\mathrm{mult}_{P}\Big(M_{1}\cdot
M_{2}\Big)-\mathrm{mult}_{P}\big(Z_{X}\big)\geqslant
\lambda_{1}^{2}\mathrm{mult}^{2}_{P}\big(\mathcal{M}_{X}\big)-3/2\geqslant
\delta_{\omega}^{2}+2\delta_{\omega}+\frac{5}{2},
$$
by Lemma~\ref{lemma:vertical-horizontal}. But
$\beta_{\omega}\leqslant 1+6\delta_{\omega}$ by
Corollary~\ref{corollary:super-maximal-inequality}, which is a~
contradiction.
\end{proof}

Put $X_{0}=X$ and $\Theta_{0}=P$. For some $N\in\mathbb{N}$, there
exists a sequence of blow ups
$$ \xymatrix{
&X_{N}\ar@{->}[rr]^{\psi_{N,N-1}}&&X_{N-1}\ar@{->}[rr]^{\psi_{N-1,N-2}}&&\cdots\ar@{->}[rr]^{\psi_{3,2}}
&&X_{2}\ar@{->}[rr]^{\psi_{2,1}}&&X_{1}\ar@{->}[rr]^{\psi_{1,0}}&& X_{0},}%
$$
such that the~following conditions are satisfied:
\begin{itemize}
\item $\psi_{1,\,0}$ is a~blow up of the~point $\Theta_{0}$,%
\item $\psi_{i,\,i-1}$ is a~blow up of a~point $\Theta_{i-1}\in G_{i-1}$ for every $i\in\{2,\ldots,K\}$,\\ where $G_{i-1}\cong\mathbb{P}^{2}$ is the~exceptional divisor of $\psi_{i-1,\,i-2}$,%
\item $\psi_{K+1,\,K}$ is a~blow up of an irreducible curve $\Theta_{K}\subset G_{K}\cong\mathbb{P}^{2}$,%
\item $\psi_{i,\,i-1}$ is a~blow up of an irreducible curve $\Theta_{i-1}\subset G_{i-1}$ for every $i\in\{K+2,\ldots,N\}$\\ such that $\psi_{i-1,i-2}(\Theta_{i-1})=\Theta_{i-1}$, where $G_{i-1}$ is the~exceptional divisor~of~$\psi_{i-1,\,i-2}$,%
\item the~exceptional divisor $G_{N}$ and the~exceptional divisor $F_{t}$ induce the~same discrete valuation of the~field of rational functions of the~variety $X$.%
\end{itemize}

For $N\geqslant j\geqslant i\geqslant 0$, let $G_{i}^{j}$,
$\mathcal{M}_{X_{j}}$, $D_{\omega}^{j}$ be proper transforms on
$X_{j}$ of $G_{i}$, $\mathcal{M}_{X}$, $D_{\omega}$, respectively,
and let $\psi_{j,\,i}=\psi_{i+1,i}\circ\ldots\circ\psi_{j,j-1}$,
where $\psi_{j,j}=\mathrm{id}_{X_{j}}$. Then
$$
K_{X_{N}}+\lambda_{1}\mathcal{M}_{X_{N}}\equiv  \psi_{N,\,0}^{*}\big(rD_{\omega}\big)+\sum_{i=1}^{N}c_{i}G_{i}^{N},%
$$
where $c_{i}\in \mathbb{Q}$ and $c_{N}=a_{t}$. Similarly, we have
$$
\psi_{N,\,0}^{*}\big(D_{\omega}\big)\equiv  D_{\omega}^{N}+\sum_{i=1}^{N}d_{i}G_{i}^{N},%
$$
where $d_{i}\in\mathbb{N}$ and $d_{N}=b^{\omega}_{t}$. We have
$c_{N}<0$ and $\delta_{\omega}=-c_{N}/d_{N}$. We may assume that
$$
-\delta_{\omega}=\frac{a_{t}}{b^{\omega}_{t}}=\frac
{c_{N}}{d_{N}}<\frac{c_{i}}{d_{i}}
$$
for $i<N$. A priori, the~curve $\Theta_{i}$ may be singular for
$i\geqslant K$. But Lemma~\ref{lemma:maximal-singularity-point}
implies that
\begin{itemize}
\item the~curve $\Theta_{K}$ is a~line in $G_{K}\cong \mathbb{P}^{2}$,%
\item for $i>K$, the~curve $\Theta_{i}$ is a~section of
the~induced morphism
$$
\psi_{i-1,\,i-2}\Big\vert_{G_{i-1}}\colon G_{i-1}\longrightarrow \Theta_{i-2}\cong \mathbb{P}^{1},%
$$
which implies that $\Theta_{i}\cong\mathbb{P}^{1}$ for $i\geqslant
K$.
\end{itemize}

Let $\Gamma$ be an oriented graph whose set of vertices consists
of the~divisors $G_{1},\ldots,G_{N}$ such that
$$
\big(G_{j},G_{i}\big)\in\Gamma \iff j>i\ \text{and}\ \Theta_{j-1}\subset G_{i}^{j-1}\subset X_{j-1},%
$$
where $(G_{j},G_{i})$ is the~edge that goes from the~vertex
$G_{j}$ to the~vertex $G_{i}$. Let $P_{i}$ be the~number of
oriented pathes in $\Gamma$ that goes from $G_{N}$ to $G_{i}$.
Then
$$
c_{N}=\sum_{i=1}^{K}P_{i}\big(2-\nu_{i}\big)+\sum_{i=K+1}^{N}P_{i}\big(1-\nu_{i}\big),
$$
where
$\nu_{i}=\lambda_{1}\mathrm{mult}_{P_{i-1}}(\mathcal{M}_{X_{i-1}})$.
Note that $(G_{i},G_{i-1})\in\Gamma$ for every
$i\in\{1,\ldots,N\}$.

Put $\Sigma_{0}=\sum_{i=1}^{K}P_i$ and
$\Sigma_{1}=\sum_{i=K+1}^{N}P_i$.  Then it follows from
\cite{Pu98} that the~inequality
$$
\mathrm{mult}_{P}\big(Z_{X}\big)+\mathrm{mult}_{P}\big(C_{\omega}\big)\geqslant\frac{\big(2\Sigma_{0}+\Sigma_{1}-c_{N}\big)^{2}}{\big(\Sigma_{0}+\Sigma_{1}\big)\Sigma_{0}}
$$
holds. Put $\Sigma_{0}^{\prime}=\sum_{i=1}^{M}P_{i}$, where
$M\leqslant K$ be a~biggest natural such that $P_{M-1}\in
F_{\omega}^{M-1}$. Then
$$
\mathrm{mult}_{P}\big(Z_{X}\big)\Sigma_{0}+\mathrm{mult}_{P}\big(C_{\omega}\big)\Sigma_{0}^{\prime}\geqslant\frac{\big(2\Sigma_{0}+\Sigma_{1}-c_{N}\big)^{2}}{\Sigma_{0}+\Sigma_{1}}
$$
by \cite{Pu98}. The choice of $M$ implies that
$d_{N}\geqslant\Sigma_{0}^{\prime}\leqslant\Sigma_{0}$. But
$$
\mathrm{mult}_{P}\big(C_{\omega}\big)\leqslant\frac{\beta_{\omega}}{2}\leqslant \frac{1}{2}+3\delta_{\omega}=\frac{1}{2}-3\frac{c_{N}}{d_{N}}%
$$
by Lemma~\ref{lemma:vertical-horizontal} and
Corollary~\ref{corollary:super-maximal-inequality}. Therefore, we
see that
$$
\Bigg(\mathrm{mult}_{P}\big(Z_{X}\big)+\frac{1}{2}\Bigg)\Sigma_{0}-3c_{N}\geqslant\frac{\big(2\Sigma_{0}+\Sigma_{1}-c_{N}\big)^{2}}{\Sigma_{0}+\Sigma_{1}},
$$
where $c_{N}<0$ and $\mathrm{mult}_{P}(Z_{X})\leqslant 3/2$.
Therefore, we see that
$$
2\Sigma_{0}-3c_{N}\geqslant\frac{\big(2\Sigma_{0}+\Sigma_{1}-c_{N}\big)^{2}}{\big(\Sigma_{0}+\Sigma_{1}\big)}.
$$

\begin{lemma}
\label{lemma:point-conic} The inequality
$\mathrm{mult}_{P}(Z_{X})\leqslant 3/4$ holds.
\end{lemma}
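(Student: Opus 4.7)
The strategy is to sharpen the degree count used in Lemma~\ref{lemma:vertical-horizontal}. Since $Z_{X}$ has no vertical components, $Z_{X}\cdot D_{\omega}$ is a well-defined effective $\mathbb{Q}$-cycle of dimension zero on $D_{\omega}\cong\mathbb{P}^{2}$. Its degree is computed from the class of $Z_{X}$, and the earlier computation $Z_{X}+\sum_{\lambda}C_{\lambda}\equiv 9Z+(12+6r)C$, combined with $Z\cdot D_{\omega}=1$ and $C\cdot D_{\omega}=0$, gives $\deg(Z_{X}\cdot D_{\omega})=9$. Moreover the cycle $Z_{X}\cdot D_{\omega}$ is $G_{2}$-invariant, hence its support is a union of $G_{2}$-orbits with equal multiplicities on each orbit.

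Since $\mathrm{mult}_{P}(Z_{X}\cdot D_{\omega})\geqslant\mathrm{mult}_{P}(Z_{X})$, summing over the $G_{2}$-orbit $O^{2}_{P}$ of $P$ yields
$$
\big|O^{2}_{P}\big|\cdot\mathrm{mult}_{P}\big(Z_{X}\big)\leqslant 9,
$$
so it suffices to show $|O^{2}_{P}|\geqslant 12$. The plan is to prove that $P$ lies on the unique $G_{2}$-invariant smooth conic $\Gamma_{2}\subset D_{\omega}$: recall from the discussion preceding Lemma~\ref{lemma:maximal-singularity-curve} that every $G_{2}$-orbit on $\Gamma_{2}\cong\mathbb{P}^{1}$ has at least $12$ points, whereas a priori an orbit off $\Gamma_{2}$ may consist of only $6$ points.

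To force $P\in\Gamma_{2}$, I would argue by contradiction: assume $P\not\in\Gamma_{2}$, so that $|O^{2}_{P}|=6$ is possible. In this case I intend to apply Theorem~\ref{theorem:II} to a suitable restriction of the mobile part of $\mathcal{M}_{X}$ to $D_{\omega}$. More precisely, write $\lambda_{1}\mathcal{M}_{X}=\mu D_{\omega}+\Xi$ with $\Xi$ not containing $D_{\omega}$; then the log pair $(D_{\omega},\Xi|_{D_{\omega}})$ is not terminal at $P$ because $P\in\mathbb{CS}(X,\lambda_{1}\mathcal{M}_{X}+\delta_{\omega}D_{\omega})$ by Corollary~\ref{corollary:super-maximal-inequality} and $D_{\omega}$ is a smooth Cartier divisor. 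Theorem~\ref{theorem:II}, combined with the upper bound $\lambda_{1}\mathrm{mult}_{P}(\mathcal{M}_{X})<2+\delta_{\omega}$ of Lemma~\ref{lemma:maximal-singularity-point}, then forces $C_{\omega}$ to contain the unique $G_{2}$-invariant curve through $O^{2}_{P}$ with large multiplicity, which exceeds the bound $\beta_{\omega}/2\leqslant 1/2+3\delta_{\omega}$ from Corollary~\ref{corollary:super-maximal-inequality} — a contradiction.

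The main obstacle will be the careful bookkeeping in the previous paragraph: identifying the correct invariant curve through the exceptional orbit $O^{2}_{P}$ off $\Gamma_{2}$ (these are the orbits of six fixed points of order-$5$ elements), and matching the numerics from Theorem~\ref{theorem:II} (with the appropriate choice of $\epsilon=\lambda_{1}$ and $a_{1}=-\delta_{\omega}$) against the already-proved bounds on $\beta_{\omega}$ and $\mathrm{mult}_{P}(\mathcal{M}_{X})$. Once $P\in\Gamma_{2}$ is established, the degree-$9$ constraint on the $G_{2}$-invariant $0$-cycle $Z_{X}\cdot D_{\omega}$ immediately gives $\mathrm{mult}_{P}(Z_{X})\leqslant 9/12=3/4$, completing the proof.
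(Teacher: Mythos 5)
Your reduction to showing $P\in\Gamma_{2}$ is exactly right and matches the paper: once $P\in\Gamma_{2}$ one has $|O_{P}^{2}|\geqslant 12$, the cycle $Z_{X}\cdot D_{\omega}$ has degree $9$, and summing multiplicities over the orbit gives $\mathrm{mult}_{P}(Z_{X})\leqslant 9/12=3/4$. The gap is entirely in your proposed route to $P\in\Gamma_{2}$, and it is a real one.

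You suggest arguing by contradiction via Theorem~\ref{theorem:II} on $D_{\omega}$, assuming $P\notin\Gamma_{2}$. This fails for several reasons. First, Theorem~\ref{theorem:II} requires an irreducible curve $\Delta_{1}$ through $O$ that is smooth at $O$. If $P\notin\Gamma_{2}$, then $|O_{P}^{2}|$ could be $6$, and the only $G_{2}$-invariant curve through the six-point orbit is the six-nodal sextic $R$ of Lemma~\ref{lemma:A5-22}, which is \emph{singular} at every point of $O_{P}^{2}$; there is no $G_{2}$-invariant curve smooth at $P$ to use as $\Delta_{1}$. Second, the restriction $\mathcal{M}_{X}\big\vert_{D_{\omega}}$ may acquire fixed curve components (precisely those giving rise to $C_{\omega}$), so it need not satisfy the no-fixed-component hypothesis of Theorem~\ref{theorem:II}. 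Third, even waiving these issues, Theorem~\ref{theorem:II} would yield a lower bound of the shape $\lambda_{1}^{2}\mathrm{mult}_{P}(M_{1}\cdot M_{2})\geqslant 1+2\delta_{\omega}$ (or $4\delta_{\omega}$); against the available upper bound $\mathrm{mult}_{P}(Z_{X})+\mathrm{mult}_{P}(C_{\omega})\leqslant 3/2+(1/2+3\delta_{\omega})=2+3\delta_{\omega}$ from Lemma~\ref{lemma:vertical-horizontal} and Corollary~\ref{corollary:super-maximal-inequality}, this gives $1+2\delta_{\omega}\leqslant 2+3\delta_{\omega}$, which is vacuous. Finally, the inference that a pointwise bound on $\mathrm{mult}_{P}(M_{1}\cdot M_{2})$ ``forces $C_{\omega}$ to contain'' a specific invariant curve with large coefficient is a non sequitur: the conclusion of Theorem~\ref{theorem:II} does not control coefficients of curve components.

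What the paper actually does here is quite different. Since $\delta_{\omega}>0$, the discrepancy of $F_{t}$ for $(X,\lambda_{1}\mathcal{M}_{X}+D_{\omega})$ is strictly less than $-1$, so this pair is not log canonical at $P$; by Theorem~\ref{theorem:adjunction}, $(D_{\omega},\lambda_{1}\mathcal{M}_{X}\big\vert_{D_{\omega}})$ is not log canonical at $P$. Scaling down slightly, one gets a pair whose anticanonical divisor is ample on $D_{\omega}\cong\mathbb{P}^{2}$, so the Nadel--Shokurov connectedness theorem (Theorem~\ref{theorem:connectedness}) applies. The $G_{2}$-invariant degree count from the proof of Lemma~\ref{lemma:vertical-horizontal} shows the only possible one-dimensional component of the multiplier-ideal cosupport is $\Gamma_{2}$. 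If the cosupport were zero-dimensional it would be a single point by connectedness, contradicting $|O_{P}^{2}|\geqslant 6$; and if $\Gamma_{2}$ appears but $P\notin\Gamma_{2}$ the locus would be disconnected. Hence $P\in\Gamma_{2}$, which is the missing step in your proposal.
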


\begin{proof}
The log pair $(X,\lambda_{1}\mathcal{M}_{X})$ is not canonical at
the~point $P\in D_{\omega}$. Then the~log pair
$$
\Big(X,\ \lambda_{1}\mathcal{M}_{X}+D_{\omega}\Big)
$$
is not log canonical at $P$. Then $(D_{\omega},
\lambda_{1}\mathcal{M}_{X}\vert_{D_{\omega}})$ is not log
canonical at $P$ by Theorem~\ref{theorem:adjunction}.

There is $\mu\in\mathbb{Q}$ such that $0<\mu<\lambda_{1}$ and
$(D_{\omega}, \mu\mathcal{M}_{X}\vert_{D_{\omega}})$ is not log
canonical at $P$. Then
$$
\mathrm{LCS}\Bigg(D_{\omega},\ \mu\mathcal{M}_{X}\Big\vert_{D_{\omega}}\Bigg)%
$$
is connected by Theorem~\ref{theorem:connectedness}. Arguing as in
the~proof of Lemma~\ref{lemma:vertical-horizontal}, we see that
\begin{itemize}
\item either $\mathcal{L}(D_{\omega}, \lambda_{1}\mathcal{M}_{X}\vert_{D_{\omega}})$ is zero-dimensional,%
\item or $P\in\Gamma_{2}$.
\end{itemize}

If $\mathcal{L}(D_{\omega},
\lambda_{1}\mathcal{M}_{X}\vert_{D_{\omega}})$ is
zero-dimensional, then $|O^{2}_{P}|=1$ by connectedness. But
$|O^{2}_{P}|\geqslant 6$.

We see that $P\in\Gamma_{2}$. Then $|O^{2}_{P}|\geqslant 12$ (see
\cite{Spr77}). Then
$$
9=\Bigg(Z_{X}+\sum_{\lambda\in \mathbb{P}^{1}}C_{\lambda}\Bigg)\cdot D_{\omega}=Z_{X}\cdot D_{\omega}\geqslant\sum_{Q\in O^{2}_{P}}\mathrm{mult}_{Q}\big(Z_{X}\big)=\big|O^{2}_{P}\big|\mathrm{mult}_{P}\big(Z_{X}\big)\geqslant 12\mathrm{mult}_{P}\big(Z_{X}\big),%
$$
which implies that $\mathrm{mult}_{P}(Z_{X})\leqslant 3/4$.
\end{proof}

Therefore, we see that $c_{N}<0$, $\Sigma_{0}\geqslant 1$,
$\Sigma_{1}\geqslant 1$ and
$$
\frac{5}{4}\Sigma_{0}-3c_{N}\geqslant\frac{\big(2\Sigma_{0}+\Sigma_{1}-c_{N}\big)^{2}}{\big(\Sigma_{0}+\Sigma_{1}\big)},
$$
which is a~contradiction. The assertion of
Theorem~\ref{theorem:A5-A5} is proved.

\appendix

\section{Non-rationality}
\label{section:rigidity}

Let $X$ be a~variety, let $G\subset\mathrm{Aut}(X)$ be a~finite
subgroup, let $\pi\colon X\to S$ be a~morphism.

\begin{definition}
\label{definition:Mori-fiber-space} The morphism $\pi$ is a~
$G$-Mori fibration~if the~following conditions are satisfied:
\begin{itemize}
\item the~morphism $\pi$ is $G$-equivariant,%
\item the~variety $X$ has terminal singularities,%
\item every $G$-invariant Weil divisor on $X$ is a~$\mathbb{Q}$-Cartier divisor,%
\item the~morphism $\pi$ is surjective and $\pi_*(\mathcal{O}_{X})=\mathcal{O}_S$,%
\item the~divisor $-K_X$ is relatively ample for $\pi$,%
\item the~inequality $\mathrm{dim}(S)<\mathrm{dim}(X)$ holds and
$$
\mathrm{dim}_{\mathbb{Q}}\Bigg(\mathrm{Pic}^{G}\Big(X\big/S\Big)\otimes\mathbb{Q}\Bigg)=1.
$$
\end{itemize}
\end{definition}

Suppose that $\pi\colon X\to S$ is a~$G$-Mori fibration. It
follows from \cite{Zh06} and \cite{GrHaSta03} that
$$
\text{the~variety}\ S\ \text{is rationally connected}\iff \text{the~variety}\ X\ \text{is rationally connected}.%
$$

\begin{remark}
\label{remark:KMM} The group $G$ naturally acts on the~variety
$S$. However, this action is not necessary faithful. One can show
that every $G$-invariant Weil divisor on $S$ is
a~$\mathbb{Q}$-Cartier divisor (see \cite{KMM}).
\end{remark}

Suppose, in addition, that $X$ is rationally connected. Then
$$
\mathrm{dim}_{\mathbb{Q}}\Big(\mathrm{Pic}^{G}\big(X\big)\otimes\mathbb{Q}\Big)=\mathrm{dim}_{\mathbb{Q}}\Big(\mathrm{Pic}^{G}\big(S\big)\otimes\mathbb{Q}\Big)+1.
$$

\begin{definition}
\label{definition:rigidity} The fibration $\pi$ is
$G$-birationally rigid if, given any $G$-equivariant birational
map $\xi\colon X\dasharrow X^{\prime}$ to a~$G$-Mori fibration
$\pi^{\prime}\colon X^{\prime}\to S^{\prime}$, there is a~
commutative diagram
$$
\xymatrix{
&X\ar@{->}[d]_{\pi}\ar@{-->}[rr]^{\rho}&&X\ar@{-->}[rr]^{\xi}&&X^{\prime}\ar@{->}[d]^{\pi^{\prime}}&\\
&S\ar@{-->}[rrrr]^{\sigma}&&&&S^{\prime},&}
$$
where $\sigma$ is a~birational map, and
$\rho\in\mathrm{Bir}^{G}(X)$ such that the~rational $\xi\circ\rho$
induces an isomorphism of the~generic fibers of the~$G$-Mori
fibrations $\pi$ and $\pi^{\prime}$.
\end{definition}

\begin{definition}
\label{definition:superrigidity} The fibration $\pi$  is
$G$-bi\-ra\-ti\-onally superri\-gid if, given any
$G$-equi\-va\-ri\-ant bi\-ra\-ti\-o\-nal map $\xi\colon
X\dasharrow X^{\prime}$ to a~$G$-Mori fibration
$\pi^{\prime}\colon X^{\prime}\to S^{\prime}$, there is a~
commutative diagram
$$
\xymatrix{
&X\ar@{->}[d]_{\pi}\ar@{-->}[rr]^{\xi}&&X^{\prime}\ar@{->}[d]^{\pi^{\prime}}&\\
&S\ar@{-->}[rr]_{\sigma}&&S^{\prime},&}
$$
where $\sigma$ is a~birational map, and $\xi$ induces an
isomorphism of the~generic fibers of $\pi$ and $\pi^{\prime}$.
\end{definition}

We say that $X$ is $G$-birationally rigid ($G$-birationally
superrigid, respectively) if $\mathrm{dim}(S)=0$ and $\pi\colon
X\to S$ is $G$-birationally rigid ($G$-birationally superrigid,
respectively).

\begin{remark}
\label{remark:Fanos-rigid-superrigid} Suppose that
$\mathrm{dim}(S)=0$. Then the~following conditions are equivalent:
\begin{itemize}
\item the~Fano variety $X$ is $G$-birationally superrigid,%
\item the~Fano variety $X$ is $G$-birationally rigid and $\mathrm{Bir}^{G}(X)=\mathrm{Aut}^{G}(X)$.%
\end{itemize}
\end{remark}

We say that $\pi\colon X\to S$ is birationally rigid (birationally
superrigid, respectively) if $\pi\colon X\to S$ is
$G$-birationally rigid ($G$-birationally superrigid,
respectively), where $G$ is a~trivial group.

\begin{example}
\label{example:Pukhlikov}%
It follows from \cite{Pu98} that $\pi\colon X\to S$ is
birationally rigid if
\begin{itemize}
\item the~equalities $\mathrm{dim}(X)=3$ and $\mathrm{dim}(S)=1$ hold,%
\item the~inequality $K_{X}^{2}\cdot F\leqslant 2$ holds, where $F$ is a~general fiber of $\pi$,%
\item the~threefold $X$ is smooth and
$$
K_{X}^{2}\not\in\mathrm{Int}\Big(\overline{\mathbb{NE}}\big(X\big)\Big),
$$
where $\mathrm{Int}(\overline{\mathbb{NE}}(X))$ is an interior of
the~closure of the~cone of effective one-cycles.
\end{itemize}
\end{example}

We say that $X$ is birationally rigid (birationally superrigid,
respectively) if $\mathrm{dim}(S)=0$ and the~fibration $\pi\colon
X\to S$ is birationally rigid (birationally superrigid,
respectively).

\begin{example}
\label{example:hypersurfaces} General hypersurface in
$\mathbb{P}^{n}$ of degree $n\geqslant 4$  is birationally
superrigid  by \cite{Pu98a}.
\end{example}

It follows from Definition~\ref{definition:rigidity} that if
the~$G$-Mori fibration $\pi\colon X\to S$ is $G$-birationally
rigid and $X\not\cong\mathbb{P}^{n}$, then there is no
$G$-equivariant birational map $X\dasharrow\mathbb{P}^{n}$, where
$n=\mathrm{dim}(X)$.

\begin{definition}
\label{definition:square} We say that the~$G$-Mori fibration
$\pi\colon X\to S$ is square birationally equivalent~to a~$G$-Mori
fibration~$\pi^{\prime}\colon X^{\prime}\to S^{\prime}$ if there
exists a~commutative diagram
$$
\xymatrix{
&X\ar@{->}[d]_{\pi}\ar@{-->}[rr]^{\xi}&&X^{\prime}\ar@{->}[d]^{\pi^{\prime}}&\\
&S\ar@{-->}[rr]_{\sigma}&&S^{\prime},&}
$$
where $\sigma$ is a~birational map, and $\xi$ is a~$G$-equivariant
birational map such that the~map $\xi$ induces an~isomorphism of
the~generic fibers of the~$G$-Mori fibrations $\pi\colon X\to S$
and $\pi^{\prime}\colon X^{\prime}\to S^{\prime}$.
\end{definition}

The following definition is due to \cite{CoMe02}.

\begin{definition}
\label{definition:pliablity} Let $V$ be a~variety, and let
$\Gamma\subset\mathrm{Aut}(V)$ be a~finite subgroup. The set
$$
\mathcal{P}\big(V,\ G\big)=\Big\{\text{a $G$-Mori fibration}\ \tau\colon Y\longrightarrow T\ \Big\vert\ \text{$\exists$\ a~$G$-equivariant birational map}\ Y\dasharrow V \Big\}\Big\slash\bigstar%
$$
is a~$G$-pliability of the~variety $V$, where $\bigstar$ is
a~square birational equivalence.
\end{definition}

We put $\mathcal{P}(V)=\mathcal{P}(V,G)$ if the~group $G$ is
trivial. The~following conditions are equivalent:
\begin{itemize}
\item the~fibration $\pi\colon X\to S$ is $G$-birationally rigid,%
\item the~set $\mathcal{P}(X,G)$ consists of the~fibration $\pi\colon X\to S$,%
\item the~equality $|\mathcal{P}(X,G)|=1$ holds.%
\end{itemize}

\begin{remark}
\label{remark:BCHM} In the~notation and assumption of
Definition~\ref{definition:pliablity}, it follows from
\cite{BCHM06} that
$$
\mathcal{P}\big(V,\ \Gamma\big)\ne\varnothing \iff -K_{V}\ \text{is not pseudoeffective}\iff\ V\ \text{is uniruled}.%
$$
\end{remark}

\begin{example}
\label{example:Corti-Mella} Let $X$ be a~sufficiently general
quartic in $\mathbb{P}^{4}$ that is given by
$$
w^{2}x^2+wyz+xg_{3}(y,z,t,w)+g_{4}(y,z,t,w)=0\subset\mathbb{P}^4\cong\mathrm{Proj}\Big({\mathbb C}[x,y,z,t,w]\Big),%
$$
where $g_{i}$ is a~homogeneous polynomial of degree $i$. Then
$|\mathcal{P}(X)|=2$ by \cite{CoMe02}.
\end{example}

Let $\bar{\pi}\colon \bar{X}\to \bar{S}$ be a~$G$-Mori fibration,
let $\nu\colon \bar{X}\dasharrow X$ be a~$G$-equivariant
birational map.~Put
$$
\mathcal{M}_{\bar{X}}=\Big|\bar{\pi}^{*}\big(D\big)\Big|
$$
for a~very ample divisor $D$ on the~variety $\bar{S}$ in the~case
when $\mathrm{dim}(\bar{S})\ne 0$, and put
$$
\mathcal{M}_{\bar{X}}=\Big|-mK_{\bar{X}}\Big|
$$
for a~sufficiently big and divisible $m\in\mathbb{N}$ in the~case
when $\mathrm{dim}(\bar{S})=0$. Put
$\mathcal{M}_{X}=\nu(\mathcal{M}_{\bar{X}})$.

\begin{lemma}
\label{lemma:rigidity-fibration-easy} Suppose that
$\mathrm{dim}(\bar{S})\ne 0$. Then either there  is a~commutative
diagram
\begin{equation}
\label{equation:diagram-fibres} \xymatrix{
&\bar{X}\ar@{->}[d]_{\bar{\pi}}\ar@{-->}[rr]^{\nu}&&X\ar@{->}[d]^{\pi}&\\
&\bar{S}&&S\ar@{-->}[ll]^{\zeta},&}
\end{equation}
where $\zeta$ is a~rational dominant map, or there is
$\lambda\in\mathbb{Q}$ such that
$$
K_{X}+\lambda\mathcal{M}_{X}\equiv \pi^{*}\big(H\big),
$$
where $H$ is a~$G$-invariant $\mathbb{Q}$-divisor on the~variety
$S$ such that
\begin{itemize}
\item either the~log pair $(X, \lambda \mathcal{M}_{X})$ is not canonical,%
\item or the~divisor $H$ is not $\mathbb{Q}$-effective.%
\end{itemize}
\end{lemma}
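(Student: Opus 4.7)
The plan is to separate two cases according to the~numerical behavior of $\mathcal{M}_{X}$ relative to $\pi$. First I would note that since $\pi$ is a~$G$-Mori fibration, $\mathrm{Pic}^{G}(X/S)\otimes\mathbb{Q}$ has rank one and for a~general fiber $F_{\pi}$ of $\pi$ one has $-K_{X}\cdot F_{\pi}>0$, while the~mobility of $\mathcal{M}_{X}$ forces $\mathcal{M}_{X}\cdot F_{\pi}\geqslant 0$. Replacing $D$ by a~$G$-invariant very ample multiple if needed, both $\mathcal{M}_{\bar{X}}$ and $\mathcal{M}_{X}$ may be taken $G$-invariant.

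If $\mathcal{M}_{X}\cdot F_{\pi}=0$, then both the~mobile and the~fixed part of $\mathcal{M}_{X}$ must intersect $F_{\pi}$ trivially, so the~mobile part is numerically pulled back from $S$, and consequently the~rational map defined by $\mathcal{M}_{X}$ factors through $\pi$. On the~other hand this rational map coincides (up to composition with the~embedding $|D|$) with $\bar{\pi}\circ\nu^{-1}$, which produces the~dominant $\zeta\colon S\dasharrow\bar{S}$ of the~commutative diagram~(\ref{equation:diagram-fibres}).

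Otherwise $\mathcal{M}_{X}\cdot F_{\pi}>0$, and I would choose the~unique $\lambda\in\mathbb{Q}_{>0}$ for which $(K_{X}+\lambda\mathcal{M}_{X})\cdot F_{\pi}=0$; the~rank-one argument again gives $K_{X}+\lambda\mathcal{M}_{X}\equiv\pi^{*}(H)$ for some $G$-invariant $\mathbb{Q}$-divisor $H$ on~$S$. It then remains to show that if $(X,\lambda\mathcal{M}_{X})$ is canonical then $H$ is not $\mathbb{Q}$-effective. I plan to argue by contradiction: taking a~common $G$-equivariant smooth resolution $\alpha\colon W\to X$, $\beta\colon W\to\bar{X}$ of~$\nu$ with $\alpha=\nu\circ\beta$, and letting $\mathcal{M}_{W}$ be the~common proper transform of $\mathcal{M}_{X}$ and $\mathcal{M}_{\bar{X}}$, comparison of discrepancies using canonicity yields
$$
K_{W}+\lambda\mathcal{M}_{W}\equiv\alpha^{*}\pi^{*}(H)+E
$$
for an~effective $\alpha$-exceptional $\mathbb{Q}$-divisor $E$. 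If $H$ were $\mathbb{Q}$-effective, the~right-hand side would be $\mathbb{Q}$-effective, and then $\beta_{*}$ would show $K_{\bar{X}}+\lambda\mathcal{M}_{\bar{X}}$ is $\mathbb{Q}$-effective on $\bar{X}$.

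The hard part will be deriving a~contradiction from this last conclusion, and this is where the~Mori fibration structure of $\bar{\pi}$ is exploited. Since $\mathcal{M}_{\bar{X}}=\bar{\pi}^{*}|D|$ restricts to zero on a~general fiber $F$ of $\bar{\pi}$, adjunction gives $(K_{\bar{X}}+\lambda\mathcal{M}_{\bar{X}})\vert_{F}\equiv K_{F}$; and because $\bar{\pi}$ is a~Mori fibration with $\mathrm{dim}(\bar{S})\ne 0$, the~fiber $F$ is a~positive-dimensional smooth Fano with $-K_{F}$ ample. Choosing an~effective $\mathbb{Q}$-divisor $D^{\prime}\sim_{\mathbb{Q}}K_{\bar{X}}+\lambda\mathcal{M}_{\bar{X}}$ whose support does not contain the~general fiber $F$, restriction would produce an~effective $\mathbb{Q}$-divisor on $F$ linearly equivalent to $K_{F}$, which is impossible. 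This closes the~argument and establishes the~dichotomy.
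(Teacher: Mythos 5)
Your proof is correct and follows essentially the same route as the paper's: you make the same dichotomy (whether $\mathcal{M}_{X}$ lies in the~fibers of $\pi$), define $\lambda$ the same way, and derive the contradiction from the fact that $K_{\bar{X}}+\lambda\mathcal{M}_{\bar{X}}$ cannot be $\mathbb{Q}$-effective since its restriction to a~general fiber of $\bar{\pi}$ is $K_{F}$ with $-K_{F}$ ample. The only difference is presentational: the paper packages this last step by citing Lemma~\ref{lemma:Kodaira-dimension-is-OK} and Corollary~\ref{corollary:Kodaira-dimension} on invariance of Kodaira dimension, whereas you unpack those references into an explicit discrepancy comparison on a~common resolution $W$ followed by pushforward via $\beta$ — the same computation that underlies those lemmas.
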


\begin{proof}
The diagram~\ref{equation:diagram-fibres} exists $\iff$
$\mathcal{M}_{X}$ lies in the~fibers of $\pi$.

We may assume that $\mathcal{M}_{X}$ does not lie in the~fibers of
$\pi$. Then there is $\lambda\in\mathbb{Q}$ such that
$$
K_{X}+\lambda\mathcal{M}_{X}\equiv \pi^{*}\big(H\big),
$$
where $H$ is a~$G$-invariant $\mathbb{Q}$-divisor on the~variety
$S$. By Lemma~\ref{lemma:Kodaira-dimension-is-OK}, we have
$$
\kappa\Big(K_{X}+\lambda\mathcal{M}_{X}\Big)=-\infty,
$$
but $\kappa(K_{X}+\lambda\mathcal{M}_{X})=0$ if $(X, \lambda
\mathcal{M}_{X})$ is canonical and $H$ is $\mathbb{Q}$-effective
(see Corollary~\ref{corollary:Kodaira-dimension}).
\end{proof}

Let $\epsilon$ be a~positive rational number. There is a~
commutative diagram
$$
\xymatrix{
&&W\ar@{->}[ld]_{\alpha}\ar@{->}[rd]^{\beta}&&\\%
&X&&\bar{X}\ar@{-->}[ll]_{\nu},&}
$$ %
where $\alpha$ and $\beta$ are $G$-equivariant birational
morphisms, and $W$ is a~smooth variety.

Let $\mathcal{M}_{W}$ be a~proper transform of the~linear system
$\mathcal{M}_{\bar{X}}$ on the~variety $W$. Then
$$
\alpha^{*}\Big(K_{X}+\epsilon\mathcal{M}_{X}\Big)+\sum_{i=1}^{k}a^{\epsilon}_{i}F_{i}\equiv
K_{W}+\epsilon\mathcal{M}_{W}\equiv
\beta^{*}\Big(K_{\bar{X}}+\epsilon\mathcal{M}_{\bar{X}}\Big)+\sum_{i=1}^{r}b^{\epsilon}_{i}E_{i},
$$
where $a^{\epsilon}_{i}$ is a~rational number, $b^{\epsilon}_{i}$
is a~positive rational number, $F_{i}$ is a~$G$-orbit of an
exceptional prime divisor of  the~morphism  $\alpha$, and $E_{i}$
is a~$G$-orbit of an exceptional prime divisor of $\beta$.

\begin{lemma}
\label{lemma:rigidity-fibration} Suppose that
$\mathrm{dim}(\bar{S})\ne 0$. Then either there  is a~commutative
diagram
$$
\xymatrix{
&\bar{X}\ar@{->}[d]_{\bar{\pi}}\ar@{-->}[rr]^{\nu}&&X\ar@{->}[d]^{\pi}&\\
&\bar{S}&&S\ar@{-->}[ll]^{\zeta},&}
$$
where $\zeta$ is a~rational dominant map, or there is
$\lambda\in\mathbb{Q}$ such that
$$
K_{X}+\lambda\mathcal{M}_{X}\equiv \pi^{*}\big(H\big),
$$
where $H$ is a~$G$-invariant $\mathbb{Q}$-divisor on the~variety
$S$ such that
\begin{itemize}
\item either there is $i\in\{1,\ldots,k\}$ such that $a_{i}^{\lambda}<0$ and $F_{i}$ is not $\beta$-exceptional,%
\item or the~divisor $H$ is not $\mathbb{Q}$-effective.%
\end{itemize}
\end{lemma}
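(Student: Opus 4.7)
The plan is to derive this refined statement directly from the weaker Lemma~\ref{lemma:rigidity-fibration-easy} by a pushforward argument on the resolution~$W$. First I apply Lemma~\ref{lemma:rigidity-fibration-easy}: either we already obtain the commutative diagram with the dominant rational map $\zeta$ (and there is nothing more to prove), or we obtain $\lambda\in\mathbb{Q}$ and a $G$-invariant $\mathbb{Q}$-divisor $H$ on $S$ with $K_{X}+\lambda\mathcal{M}_{X}\equiv\pi^{*}(H)$ such that either $H$ is not $\mathbb{Q}$-effective or the pair $(X,\lambda\mathcal{M}_{X})$ fails to be canonical. In the former case the conclusion we want holds, so we may assume $H$ is $\mathbb{Q}$-effective and argue by contradiction: suppose that every $F_{i}$ with $a_{i}^{\lambda}<0$ is $\beta$-exceptional.

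Next I would exploit the key identity on $W$, namely
$$
\alpha^{*}\pi^{*}(H)+\sum_{i=1}^{k}a_{i}^{\lambda}F_{i}\equiv K_{W}+\lambda\mathcal{M}_{W},
$$
which is obtained by substituting $K_{X}+\lambda\mathcal{M}_{X}\equiv\pi^{*}(H)$ into the definition of the numbers $a_{i}^{\lambda}$. Applying $\beta_{*}$ to both sides kills every $\beta$-exceptional component; by our contradictory assumption these include all $F_{i}$ with $a_{i}^{\lambda}<0$, so every surviving term $a_{i}^{\lambda}\beta_{*}(F_{i})$ in the sum has non-negative coefficient. Since $H$ is $\mathbb{Q}$-effective, so is $\beta_{*}(\alpha^{*}\pi^{*}(H))$, and therefore
$$
K_{\bar{X}}+\lambda\mathcal{M}_{\bar{X}}\equiv\beta_{*}\big(\alpha^{*}\pi^{*}(H)\big)+\sum_{i\,:\,a_{i}^{\lambda}\geqslant 0,\ F_{i}\text{ not }\beta\text{-exc.}}a_{i}^{\lambda}\beta_{*}(F_{i})
$$
is $\mathbb{Q}$-effective on $\bar{X}$.

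Finally I would derive the contradiction from the Mori fibration structure of $\bar{\pi}\colon\bar{X}\to\bar{S}$. Because $\mathcal{M}_{\bar{X}}=|\bar{\pi}^{*}(D)|$ is numerically trivial on every fibre of $\bar{\pi}$, while $-K_{\bar{X}}$ is $\bar{\pi}$-ample, the divisor $K_{\bar{X}}+\lambda\mathcal{M}_{\bar{X}}$ has strictly negative intersection with every curve contracted by $\bar{\pi}$; any $\mathbb{Q}$-effective representative would have to contain every general fibre of $\bar{\pi}$ in its support and therefore equal the whole of $\bar{X}$, which is absurd. (Alternatively, one invokes $\kappa(\bar{X},\lambda\mathcal{M}_{\bar{X}})=-\infty$ together with Lemma~\ref{lemma:Kodaira-dimension-is-OK} and Corollary~\ref{corollary:Kodaira-dimension}.) This contradicts the $\mathbb{Q}$-effectivity established in the previous step, proving the lemma. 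The only delicate point is the bookkeeping around pushforward: one must verify that $\beta_{*}$ preserves numerical equivalence of $\mathbb{Q}$-Weil divisors and that the terms indexed by $\beta$-exceptional $F_{i}$ genuinely drop out; the Mori-fibration obstruction to $\mathbb{Q}$-effectivity is then essentially automatic.
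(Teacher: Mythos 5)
Your argument is correct and follows the paper's own proof essentially verbatim: reduce to the existence of $\lambda$ and $H$ via the ``easy'' lemma, assume for contradiction that $H$ is $\mathbb{Q}$-effective and that every $F_i$ with $a_i^\lambda<0$ is $\beta$-exceptional, push the identity $K_W+\lambda\mathcal{M}_W\equiv\alpha^*\pi^*(H)+\sum a_i^\lambda F_i$ forward by $\beta_*$ to exhibit $K_{\bar{X}}+\lambda\mathcal{M}_{\bar{X}}$ as $\mathbb{Q}$-effective, and contradict the $\bar\pi$-negativity coming from the Mori fibration structure. The parenthetical Kodaira-dimension alternative is slightly redundant (it rests on the same negativity-on-fibers observation), but the main line matches the paper exactly.
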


\begin{proof}
Arguing as in the~proof of
Lemma~\ref{lemma:rigidity-fibration-easy}, we may assume that
there is $\lambda\in\mathbb{Q}$ such that
$$
K_{X}+\lambda\mathcal{M}_{X}\equiv \pi^{*}\big(H\big),
$$
where $H$ is a~$G$-invariant $\mathbb{Q}$-divisor on the~variety
$S$.

Suppose that $H$ is $\mathbb{Q}$-effective, and
$a_{i}^{\lambda}\geqslant 0$ for every $i$ such that $F_{i}$ is
not $\beta$-exceptional. Then
$$
\beta\Bigg((\pi\circ\alpha)^{*}\big(H\big)+\sum_{i=1}^{k}a^{\lambda}_{i}F_{i}\Bigg)\equiv
\beta\Bigg((\pi\circ\alpha)^{*}\big(H\big)+\sum_{a_{i}^{\lambda}\geqslant
0}a^{\lambda}_{i}F_{i}\Bigg)\equiv
K_{\bar{X}}+\lambda\mathcal{M}_{\bar{X}},
$$
which implies that $K_{\bar{X}}+\lambda\mathcal{M}_{\bar{X}}$ is
$\mathbb{Q}$-effective, which is a~contradiction.
\end{proof}

The assertion of Lemma~\ref{lemma:rigidity-fibration} is an
analogue of \cite[Proposition~2]{Pu04d}.

\begin{lemma}
\label{lemma:rigidity-Fano} Suppose that
$\mathrm{dim}(\bar{S})=0$. Then there is $\lambda\in\mathbb{Q}$
such that
$$
K_{X}+\lambda\mathcal{M}_{X}\equiv \pi^{*}\big(H\big),
$$
where $H$ is a~$G$-invariant $\mathbb{Q}$-divisor on the~variety
$S$. Then
\begin{itemize}
\item either $\mathrm{dim}(S)=0$ and  $\nu$ is an isomorphism,%
\item or there is $i\in\{1,\ldots,k\}$ such that $a_{i}^{\lambda}<0$ and $F_{i}$ is not $\beta$-exceptional,%
\item or the~divisor $H$ is not $\mathbb{Q}$-effective.%
\end{itemize}
\end{lemma}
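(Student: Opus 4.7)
The plan is to follow the Noether--Fano--Iskovskikh template of Lemma~\ref{lemma:rigidity-fibration}, adapted to the Fano case $\mathcal{M}_{\bar{X}}=|-mK_{\bar{X}}|$.

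First I produce $\lambda$ and $H$. Since $\pi\colon X\to S$ is a $G$-Mori fibration, the group $\mathrm{Pic}^{G}(X/S)\otimes\mathbb{Q}$ is one-dimensional with $-K_{X}$ a generator, so for a general $M_{X}\in\mathcal{M}_{X}$ one may write $M_{X}\equiv -cK_{X}+\pi^{*}(L)$ for some $c\in\mathbb{Q}$ and $G$-invariant $\mathbb{Q}$-divisor $L$ on $S$. If $c=0$, then $\mathcal{M}_{X}$ is numerically pulled back from $S$, which forces $\nu$ to factor rationally through $\bar{X}\dashrightarrow S$; since $\bar{S}$ is a point and $\bar{\pi}$ has connected fibres, this degenerates to $S$ being a point and $M_{X}\equiv 0$, contradicting bigness of $|-mK_{\bar{X}}|$ on the Fano $\bar{X}$. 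Hence $c>0$, and setting $\lambda=1/c$ and $H=L/c$ gives $K_{X}+\lambda\mathcal{M}_{X}\equiv \pi^{*}(H)$.

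Next assume for contradiction that none of the three alternatives holds: $\nu$ is not an isomorphism (or $\dim(S)>0$), every $F_{i}$ not $\beta$-exceptional satisfies $a_{i}^{\lambda}\geqslant 0$, and $H$ is $\mathbb{Q}$-effective. Applying $\beta_{*}$ to the identity $K_{W}+\lambda\mathcal{M}_{W}\equiv \alpha^{*}\pi^{*}(H)+\sum a_{i}^{\lambda}F_{i}$, using $\beta_{*}(E_{i})=0$, each remaining summand on the right is $\mathbb{Q}$-effective; combined with $K_{\bar{X}}+\lambda\mathcal{M}_{\bar{X}}\equiv(1-m\lambda)K_{\bar{X}}$ and ampleness of $-K_{\bar{X}}$, this forces $\lambda\geqslant 1/m$. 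By Lemma~\ref{lemma:Kodaira-dimension-is-OK}, $\kappa(X,\lambda\mathcal{M}_{X})=\kappa(\bar{X},(1-m\lambda)K_{\bar{X}})$; the right side equals $\dim(\bar{X})$ when $\lambda>1/m$, whereas the left equals $\kappa(\pi^{*}H)\leqslant \dim(S)<\dim(X)$ by Definition~\ref{definition:Mori-fiber-space}. Hence $\lambda=1/m$, the pushed-forward equation becomes an effective cycle numerically equivalent to zero on $\bar{X}$, and intersecting with an ample class forces each summand to vanish: $a_{i}^{\lambda}=0$ for every non-$\beta$-exceptional $F_{i}$ and $\beta_{*}\alpha^{*}\pi^{*}(H)=0$, which then yields $H=0$ and $\dim(S)=0$ (otherwise a horizontal component of $\pi^{*}(H)$ would survive $\beta_{*}\alpha^{*}$).

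Finally, with $\dim(S)=0$, $\lambda=1/m$, $H=0$, and $a_{i}^{\lambda}=0$ for non-$\beta$-exceptional $F_{i}$, the two formulas for $K_{W}+\lambda\mathcal{M}_{W}$ collapse to $\sum_{F_{i}\ \beta\text{-exc}}a_{i}^{\lambda}F_{i}\equiv\sum b_{i}^{\lambda}E_{i}$ in $N^{1}(W)\otimes\mathbb{Q}$. Since the $\beta$-exceptional prime classes are linearly independent modulo $\beta^{*}N^{1}(\bar{X})$ and each $b_{i}^{\lambda}>0$, every $\beta$-exceptional prime divisor is $\alpha$-exceptional; a symmetric pushforward via $\alpha_{*}$ reverses the inclusion, so $\alpha$ and $\beta$ contract the same divisors on $W$. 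Thus $\nu$ is an isomorphism in codimension one, and this upgrades to a biregular isomorphism using the rank-one $G$-equivariant Picard group together with the $\mathbb{Q}$-factorial terminal structure of the Fanos $X$ and $\bar{X}$, placing us in the first alternative. The main obstacle is precisely this last upgrade: promoting codimension-one agreement of $\alpha$ and $\beta$ to a genuine isomorphism between two $\mathbb{Q}$-factorial terminal Fanos of rank-one $G$-equivariant Picard group requires carefully ruling out non-trivial small birational modifications.
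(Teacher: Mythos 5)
Your argument diverges from the paper's at two points, and one of them is a real gap rather than a stylistic difference.

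First, the claim that $\beta_{*}\alpha^{*}\pi^{*}(H)=0$ \emph{yields} $H=0$ and $\dim(S)=0$ is not substantiated. What that vanishing tells you is that every component of $(\pi\circ\alpha)^{*}(H)$ is $\beta$-exceptional; it does not by itself force $H=0$ when $\dim(S)\geqslant 1$, since there is no a~priori obstruction to the (finitely many) prime components of $\pi^{*}(H)$ all being contracted by $\nu^{-1}$. (Your parenthetical about ``a horizontal component of $\pi^{*}(H)$'' is also off: components of $\pi^{*}(H)$ are \emph{vertical} for $\pi$, not horizontal.) The paper treats $\dim(S)\neq 0$ as a genuinely separate sub-case and rules it out by the Picard-rank count
$$
\mathrm{dim}_{\mathbb{Q}}\Big(\mathrm{Pic}^{G}\big(\bar{S}\big)\otimes\mathbb{Q}\Big)+1+k=\mathrm{dim}_{\mathbb{Q}}\Big(\mathrm{Pic}^{G}\big(W\big)\otimes\mathbb{Q}\Big)=1+r,
$$
together with the observation that every non-$\alpha$-exceptional $E_{i}$ pushes to a divisor lying in the fibres of $\pi$; from this one shows every $F_{i}$ is $\beta$-exceptional, and then a general very ample divisor $M$ on $S$ produces a non-$\beta$-exceptional class inside $\langle E_{1},\ldots,E_{r}\rangle$, a contradiction. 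That argument has no analogue in your write-up, so the $\dim(S)\neq 0$ branch is simply missing.

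Second, in the $\dim(S)=0$ branch you end with ``$\nu$ is an isomorphism in codimension one'' and explicitly flag the upgrade to a biregular isomorphism as an unresolved obstacle. The paper closes this cleanly: from $(\pi\circ\alpha)^{*}(H)+\sum a^{\lambda}_{i}F_{i}\equiv\sum b^{\lambda}_{i}E_{i}$ with $\dim(S)=0$ one applies \cite[Lemma~2.19]{Ko91} to upgrade the numerical equivalence of these exceptional sums to an actual equality $\sum a^{\lambda}_{i}F_{i}=\sum b^{\lambda}_{i}E_{i}$ of divisors on $W$, which makes $(X,\lambda\mathcal{M}_{X})$ terminal and therefore a canonical model; uniqueness of canonical models (Theorem~\ref{theorem:canonical-model-is-unique}) then forces $\nu$ to be an isomorphism. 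You should invoke this mechanism rather than trying to rule out small modifications by hand.

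The initial part of your argument (existence of $\lambda$, the bound $\lambda\leqslant 1/m$ from Kodaira dimension, and the conclusion $\lambda=1/m$ under the contradiction hypothesis) tracks the paper's reasoning correctly.
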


\begin{proof}
Recall that $\mathcal{M}_{\bar{X}}=|-mK_{\bar{X}}|$, where $m$ is
a~sufficiently big and sufficiently divisible natural number $m$.
Then $\mathcal{M}_{X}$ does not lie in the~fibers of $\pi$. Then
there is $\lambda\in\mathbb{Q}$ such that
$$
K_{X}+\lambda\mathcal{M}_{X}\equiv \pi^{*}\big(H\big),
$$
where $H$ is a~$G$-invariant $\mathbb{Q}$-divisor on the~variety
$S$. Then
$$
\mathrm{dim}\big(S\big)\geqslant\kappa\Big(X,\ \lambda
\mathcal{M}_{X}\Big)=\kappa\Big(\bar{X},\
\lambda\mathcal{M}_{\bar{X}}\Big)=\left\{\aligned
&\mathrm{dim}(\bar{X})\ \text{if}\ \lambda>1/m,\\
&0\ \text{if}\ \lambda=1/m,\\
&-\infty\ \text{if}\ \mu<1/m,\\
\endaligned
\right.
$$
by Lemma~\ref{lemma:Kodaira-dimension-is-OK}. Therefore, we see
that $\lambda\leqslant 1/m$.

Suppose that $H$ is $\mathbb{Q}$-effective, and
$a_{i}^{\lambda}\geqslant 0$ for every $i$ such that $F_{i}$ is
not $\beta$-exceptional. Then
$$
\beta\Bigg((\pi\circ\alpha)^{*}\big(H\big)+\sum_{i=1}^{k}a^{\lambda}_{i}F_{i}\Bigg)\equiv
\beta\Bigg((\pi\circ\alpha)^{*}\big(H\big)+\sum_{a_{i}^{\lambda}\geqslant
0}a^{\lambda}_{i}F_{i}\Bigg)\equiv
K_{\bar{X}}+\lambda\mathcal{M}_{\bar{X}},
$$
which implies that $K_{\bar{X}}+\lambda\mathcal{M}_{\bar{X}}$ is
$\mathbb{Q}$-effective. Then $\lambda=1/m$. Thus, we see that
$$
\big(\pi\circ\alpha\big)^{*}\big(H\big)+\sum_{i=1}^{k}a^{\lambda}_{i}F_{i}\equiv  \sum_{i=1}^{r}b^{\lambda}_{i}E_{i}.%
$$

Suppose that $\mathrm{dim}(S)=0$. Then it follows from
\cite[Lemma~2.19]{Ko91} that
$$
\sum_{i=1}^{k}a^{\lambda}_{i}F_{i}=\sum_{i=1}^{r}b^{\lambda}_{i}E_{i},%
$$
which implies that $(X, \lambda\mathcal{M}_{X})$ is terminal. Then
$\nu$ is an isomorphism by
Theorem~\ref{theorem:canonical-model-is-unique}.

To complete the~proof, we may assume that $\mathrm{dim}(S)\ne 0$.
Then
$$
\mathrm{dim}_{\mathbb{Q}}\Big(\mathrm{Pic}^{G}\big(\bar{S}\big)\otimes\mathbb{Q}\Big)+1+k=\mathrm{dim}_{\mathbb{Q}}\Big(\mathrm{Pic}^{G}\big(X\big)\otimes\mathbb{Q}\Big)+k=\mathrm{dim}_{\mathbb{Q}}\Big(\mathrm{Pic}^{G}\big(W\big)\otimes\mathbb{Q}\Big)=1+r,
$$
which implies that $k\leqslant r-1$. For every
$i\in\{1,\ldots,r\}$, we see that
\begin{itemize}
\item either $E_{i}$ is an exceptional divisor of the~morphism $\alpha$,%
\item or $\alpha(E_{i})$ is a~divisor that lies in the~fibers of the~morphism $\pi$.%
\end{itemize}

Suppose that $F_{1}$ is not contracted by $\beta$. Then
$$
\Big\langle
F_{1},E_{1},\ldots,E_{r}\Big\rangle=\mathrm{Pic}^{G}\big(W\big)\otimes\mathbb{Q},
$$
where $\langle F_{1},E_{1},\ldots,E_{r}\rangle$ is a~linear span
of the~divisors $F_{1},E_{1},\ldots,E_{r}$. Then
$$
\mathrm{dim}_{\mathbb{Q}}\Bigg(\Big\langle
E_{1},\ldots,E_{r}\Big\rangle\bigcap \Big\langle
F_{1},\ldots,F_{k}\Big\rangle\Bigg)=k-1,
$$
because $F_{1},\ldots,F_{k}$ are linearly independent in
$\mathrm{Pic}^{G}(\bar{W})\otimes\mathbb{Q}$. Thus, we have
$$
\mathrm{dim}_{\mathbb{Q}}\Bigg(\Big\langle
\alpha\big(E_{1}\big),\ldots,\alpha\big(E_{r}\big)\Big\rangle\Bigg)=r-k+1=\mathrm{dim}_{\mathbb{Q}}\Big(\mathrm{Pic}^{G}\big(X\big)\otimes\mathbb{Q}\Big),
$$
where we assume that $\alpha(E_{i})=0$ if the~divisor $E_{i}$ is
contracted by $\alpha$. But
$$
\Big\langle
\alpha\big(E_{1}\big),\ldots,\alpha\big(E_{r}\big)\Big\rangle\ne\mathrm{Pic}^{G}\big(X\big)\otimes\mathbb{Q},
$$
because $\alpha(E_{i})$ lies in the~fibers of $\pi$ if $E_{i}$ is
not contracted by $\alpha$.

We see that $F_{1}$ is contracted by $\beta$. Similarly, we see
that all $F_{1},\ldots,F_{k}$ are contracted by $\beta$.

Without loss of generality we may assume that $F_{i}=E_{i}$ for
every $i\in\{1,\ldots,k\}$. Then
$$
\mathrm{dim}_{\mathbb{Q}}\Bigg(\Big\langle
\alpha\big(E_{k+1}\big),\ldots,\alpha\big(E_{r}\big)\Big\rangle\Bigg)=r-k=\mathrm{dim}_{\mathbb{Q}}\Big(\mathrm{Pic}^{G}\big(S\big)\otimes\mathbb{Q}\Big),
$$
and all divisors $\alpha(E_{k+1}),\ldots,\alpha(E_{r})$ lies in
the~fibers of $\pi$.

Let $M$ be a~general very ample divisor on $S$, and let $N$ be its
proper transform on $W$. Then
$$
\pi^{*}\big(M\big)\in\Big\langle\alpha\big(E_{k+1}\big),\ldots,\alpha\big(E_{r}\big)\Big\rangle,%
$$
and $N\sim(\pi\circ\alpha)^{*}(M)$. The divisor $N$ is not
contracted by $\beta$. But
$$
\big(\pi\circ\alpha\big)^{*}\big(M\big)\in\Big\langle E_{1},\ldots,E_{r}\Big\rangle,%
$$
which is a~contradiction. The obtained contradiction completes
the~proof.
\end{proof}

Suppose, in addition, that $\mathrm{dim}(S)=0$.

\begin{theorem}
\label{theorem:G-rigidity} The~following conditions are
equivalent:
\begin{itemize}
\item the~Fano variety  $X$ is $G$-birationally rigid,%

\item for every $G$-invariant linear system $\mathcal{M}$ on
the~variety $X$ that has no fixed components, there is a~
$G$-equivariant birational automorphism
$\xi\in\mathrm{Bir}^{G}(X)$ such that
$$
\Big(X,\lambda\,\xi\big(\mathcal{M}\big)\Big)
$$
has canonical singularities, where $\lambda\in\mathbb{Q}$ such
that $K_{X}+\lambda\,\xi(\mathcal{M})\equiv 0$.
\end{itemize}
\end{theorem}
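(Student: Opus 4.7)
The plan is to prove the two implications separately. For the direction ``$\Leftarrow$'', I would start from an arbitrary $G$-equivariant birational map $\xi\colon X\dashrightarrow X^{\prime}$ to a $G$-Mori fibration $\pi^{\prime}\colon X^{\prime}\to S^{\prime}$, and set $\mathcal{M}_{X^{\prime}}$ on $X^{\prime}$ as in the construction preceding Lemma~\ref{lemma:rigidity-fibration-easy}: namely $|(\pi^{\prime})^{*}(D)|$ for very ample $D$ on $S^{\prime}$ if $\dim(S^{\prime})\ne 0$, and $|-mK_{X^{\prime}}|$ for sufficiently divisible $m$ if $\dim(S^{\prime})=0$. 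Let $\mathcal{M}=\xi^{-1}(\mathcal{M}_{X^{\prime}})$; this is a $G$-invariant mobile linear system on $X$, so the hypothesis produces $\rho\in\mathrm{Bir}^{G}(X)$ with $(X,\lambda\rho(\mathcal{M}))$ canonical for the unique positive $\lambda$ making $K_{X}+\lambda\rho(\mathcal{M})\equiv 0$. I would then apply Lemma~\ref{lemma:rigidity-Fano} (when $\dim(S^{\prime})=0$) or Lemma~\ref{lemma:rigidity-fibration} (when $\dim(S^{\prime})>0$) to the $G$-equivariant birational map $\nu=\rho\circ\xi^{-1}\colon X^{\prime}\dashrightarrow X$, whose associated linear system on $X$ is exactly $\rho(\mathcal{M})$: canonicity forces every discrepancy $a_{i}^{\lambda}\geqslant 0$, while the numerical triviality forces the divisor $H$ on the one-point base of $\pi\colon X\to\mathrm{pt}$ to vanish and hence to be $\mathbb{Q}$-effective. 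All the ``bad'' outcomes of the two lemmas are thereby ruled out, leaving only $\dim(S^{\prime})=0$ with $\nu$ biregular. Hence $\xi\circ\rho^{-1}\colon X\to X^{\prime}$ is a $G$-equivariant isomorphism, and the rigidity diagram is realized with $\rho^{-1}\in\mathrm{Bir}^{G}(X)$.

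For the direction ``$\Rightarrow$'', assume $X$ is $G$-birationally rigid and fix a $G$-invariant mobile linear system $\mathcal{M}$ on $X$ with associated $\lambda>0$. The plan is to apply a $G$-equivariant log MMP to the pair $(X,\lambda\mathcal{M})$: on a $G$-equivariant log resolution, run the relative MMP over $\mathrm{Spec}(\mathbb{C})$, whose existence and termination rest on \cite{BCHM06} and its $G$-equivariant form, to produce a $G$-Mori fibration $\varphi\colon Y\to T$ together with a $G$-equivariant birational map $\phi\colon X\dashrightarrow Y$ for which the pushforward pair $(Y,\lambda\phi(\mathcal{M}))$ has canonical singularities. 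By $G$-birational rigidity applied to $\phi$, there exists $\rho\in\mathrm{Bir}^{G}(X)$ such that $\phi\circ\rho$ induces an isomorphism of the generic fibers of $\pi\colon X\to\mathrm{pt}$ and $\varphi\colon Y\to T$; since the generic fiber of $\pi$ is all of $X$, a dimension count forces $\dim(T)=0$ and $\phi\circ\rho\colon X\to Y$ to be biregular. Transporting the canonicity of $(Y,\lambda\phi(\mathcal{M}))$ back along this isomorphism yields that $(X,\lambda\rho(\mathcal{M}))$ is canonical, so $\xi:=\rho$ satisfies the required property.

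The main obstacle is the forward direction, specifically the existence of a $G$-equivariant Mori model $\phi\colon X\dashrightarrow Y$ on which the transformed pair is canonical. In characteristic zero this rests on the $G$-equivariant MMP built on top of \cite{BCHM06}, and one must both verify that the MMP can be run $G$-equivariantly (which typically follows by averaging over $G$ at each step, or equivalently by working on the quotient and pulling back) and that the output pair is canonical rather than merely log canonical on the final Mori fibration. The reverse direction, by contrast, is an essentially mechanical repackaging of Lemmas~\ref{lemma:rigidity-fibration} and \ref{lemma:rigidity-Fano}, whose key observation is that pullbacks from the one-point base $S$ vanish identically.
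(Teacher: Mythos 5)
Your ``$\Leftarrow$'' direction is correct and matches what the paper has in mind when it writes ``cf. Lemma~\ref{lemma:rigidity-fibration} and~\ref{lemma:rigidity-Fano}'': given $\xi\colon X\dasharrow X'$ and the induced mobile system $\mathcal{M}$, the hypothesis furnishes $\rho$ making $(X,\lambda\rho(\mathcal{M}))$ canonical, which kills the branch ``$a_i^\lambda<0$'' in both lemmas, while $\pi^*(H)\equiv 0$ on the point base kills the branch ``$H$ not $\mathbb{Q}$-effective''; the diagram~\ref{equation:diagram-fibres} is vacuously impossible when $\dim S'>0$, so one is left with $\dim S'=0$ and $\nu=\rho\circ\xi^{-1}$ biregular. (Minor notational slip: the witness in the rigidity diagram should be $\rho^{-1}$, not $\rho$, since $\xi\circ\rho^{-1}=\nu^{-1}$ is the isomorphism.)

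The ``$\Rightarrow$'' direction, however, has a genuine gap and does not reduce to a single $G$-equivariant log MMP run. First, the number $\lambda$ is not a bystander: it is pinned to the specific system by the requirement $K_X+\lambda\,\xi(\mathcal{M})\equiv 0$, and it changes as soon as the birational modification extracts or contracts a divisor. Writing $\alpha\colon W\to X$, $\beta\colon W\to Y$ for a resolution of the MMP map, one has $\alpha^*(K_X+\lambda\mathcal{M})\equiv 0$ but $\beta^*(K_Y+\lambda\phi(\mathcal{M}))\equiv\sum a_iF_i-\sum b_jE_j$, which need not vanish when some $\alpha$-exceptional $F_i$ survives on $Y$; consequently the $\lambda$ that makes the transported pair numerically trivial on $X$ can be strictly larger than the $\lambda$ you ran the MMP with, and terminality of $(Y,\lambda\phi(\mathcal{M}))$ does not transport to canonicity of $(X,\lambda'\rho^{-1}(\mathcal{M}))$ for the new $\lambda'$. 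Second, if $\kappa(X,\lambda\mathcal{M})\geqslant 0$ the log MMP returns a minimal model rather than a fibration, and rigidity gives you nothing directly (though in that case one can show, using that $K_W+\lambda\mathcal{M}_W\equiv\sum a_iF_i$ with $\sum a_iF_i$ $\alpha$-exceptional, that $\mathbb{Q}$-effectivity of this divisor already forces all $a_i\geqslant 0$, i.e.\ the pair was canonical to begin with; but your write-up does not make this observation). The actual proof the paper is pointing to, via \cite[Theorem~1.26]{ChSh08c}, is a Noether--Fano/Sarkisov-style descending induction: if no $\xi$ makes the pair canonical one extracts a $G$-equivariant maximal singularity, performs a Sarkisov link (a 2-ray game, not a full log MMP), lands on another $G$-Mori fibration which by rigidity must be $X$ again, and checks that the Sarkisov degree of the new system strictly drops, whence termination. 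You identify the right ``main obstacle'' but misdiagnose it as a canonical-versus-log-canonical issue at the terminal model; the real obstacles are the shifting $\lambda$ and the need to iterate rather than run the MMP once.
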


\begin{proof}
See \cite[Theorem~1.26]{ChSh08c} (cf.
Lemma~\ref{lemma:rigidity-fibration} and
\ref{lemma:rigidity-Fano}).
\end{proof}

\begin{corollary}
\label{corollary:G-rigidity} The~following conditions are
equivalent:
\begin{itemize}
\item the~Fano variety  $X$ is $G$-birationally rigid,%

\item for every $G$-invariant linear system $\mathcal{M}$ on
the~variety $X$ that has no fixed components, there is a~
$G$-equivariant birational automorphism
$\xi\in\mathrm{Bir}^{G}(X)$ such that
$$
\kappa\Big(X,\ \lambda\,\xi\big(\mathcal{M}\big)\Big)\geqslant 0,
$$
where  $\lambda\in\mathbb{Q}$ such that
$K_{X}+\lambda\,\xi(\mathcal{M})\equiv 0$.
\end{itemize}
\end{corollary}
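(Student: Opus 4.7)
The plan is to derive Corollary~\ref{corollary:G-rigidity} directly from Theorem~\ref{theorem:G-rigidity} by proving the following equivalence, pointwise in $\mathcal{M}$ and $\xi$: for any mobile $G$-invariant linear system $\mathcal{M}$ on the Fano variety $X$ and any $\xi\in\mathrm{Bir}^{G}(X)$, the pair $(X,\lambda\xi(\mathcal{M}))$ has canonical singularities if and only if $\kappa(X,\lambda\xi(\mathcal{M}))\geqslant 0$, where $\lambda$ is the unique positive rational number with $K_{X}+\lambda\xi(\mathcal{M})\equiv 0$. Granting this equivalence, the two characterizations of $G$-birational rigidity in Theorem~\ref{theorem:G-rigidity} and Corollary~\ref{corollary:G-rigidity} agree term by term, and the corollary follows.

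The direction ``canonical $\Longrightarrow \kappa\geqslant 0$'' is immediate from Corollary~\ref{corollary:Kodaira-dimension}, since the numerically trivial divisor $K_{X}+\lambda\xi(\mathcal{M})$ is automatically $\mathbb{Q}$-effective.

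For the converse, suppose $\kappa(X,\lambda\xi(\mathcal{M}))\geqslant 0$. Choose a log resolution $\pi\colon\tilde{X}\to X$ and let $\tilde{\mathcal{M}}$ be the proper transform of $\xi(\mathcal{M})$ on $\tilde{X}$. Then
$$
K_{\tilde{X}}+\lambda\tilde{\mathcal{M}}\equiv\pi^{*}\big(K_{X}+\lambda\xi(\mathcal{M})\big)+\sum_{i=1}^{m}d_{i}E_{i}\equiv\sum_{i=1}^{m}d_{i}E_{i},
$$
where $E_{1},\ldots,E_{m}$ are the $\pi$-exceptional prime divisors, and $(X,\lambda\xi(\mathcal{M}))$ is canonical if and only if every $d_{i}\geqslant 0$. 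By Definition~\ref{definition:Kodaira-dimension}, the hypothesis $\kappa\geqslant 0$ says precisely that $\sum d_{i}E_{i}$ is $\mathbb{Q}$-effective, so there is an effective divisor $D$ on $\tilde{X}$ with $D\sim k\sum d_{i}E_{i}$ for some $k\in\mathbb{N}$. Pushing forward, $\pi_{*}D$ is effective and linearly equivalent to zero on $X$, hence $\pi_{*}D=0$, so $D$ is $\pi$-exceptional. Thus $D$ and $k\sum d_{i}E_{i}$ are $\pi$-exceptional $\mathbb{Q}$-divisors in the same linear equivalence class, and the negativity lemma for $\pi$-exceptional divisors (\cite[Lemma~2.19]{Ko91}, used in the same spirit in the proof of Lemma~\ref{lemma:rigidity-Fano}) forces $D=k\sum d_{i}E_{i}$; since $D$ is effective, each $d_{i}\geqslant 0$, and the pair is canonical.

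The only nontrivial step is the last appeal to the negativity lemma, which rules out the possibility of an effective divisor $\mathbb{Q}$-linearly equivalent to $\sum d_{i}E_{i}$ other than $\sum d_{i}E_{i}$ itself; this is the mild obstacle, and it is precisely what separates $\mathbb{Q}$-effectivity of $\sum d_{i}E_{i}$ from termwise non-negativity of the coefficients $d_{i}$. Everything else reduces to Theorem~\ref{theorem:G-rigidity}, Definition~\ref{definition:Kodaira-dimension}, and Corollary~\ref{corollary:Kodaira-dimension}.
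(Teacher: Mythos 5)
Your proof is correct and is exactly the argument the paper leaves implicit: reduce Corollary~\ref{corollary:G-rigidity} to Theorem~\ref{theorem:G-rigidity} by showing that, for fixed $\mathcal{M}$ and $\xi$ with $K_{X}+\lambda\,\xi(\mathcal{M})\equiv 0$, the pair $(X,\lambda\,\xi(\mathcal{M}))$ is canonical if and only if $\kappa(X,\lambda\,\xi(\mathcal{M}))\geqslant 0$ — one direction via Corollary~\ref{corollary:Kodaira-dimension}, the other via the negativity lemma applied to the $\pi$-exceptional $\mathbb{Q}$-divisor $\sum d_{i}E_{i}$ on a log resolution, precisely as in the proof of Lemma~\ref{lemma:rigidity-Fano}. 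The only step worth writing out more explicitly is the word ``automatically'': on a Fano variety with terminal singularities $\mathrm{Pic}(X)$ is torsion-free, so $K_{X}+\lambda\,\xi(\mathcal{M})\equiv 0$ upgrades to $K_{X}+\lambda\,\xi(\mathcal{M})\sim_{\mathbb{Q}}0$; this gives the $\mathbb{Q}$-effectivity needed in the forward direction and the $\mathbb{Q}$-linear (not merely numerical) equivalence $K_{\tilde{X}}+\lambda\tilde{\mathcal{M}}\sim_{\mathbb{Q}}\sum d_{i}E_{i}$ that your negativity-lemma step requires.
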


\begin{corollary}
\label{corollary:G-superrigidity} The~following conditions are
equivalent:
\begin{itemize}
\item the~Fano variety  $X$ is $G$-birationally superrigid,%

\item for every $G$-invariant linear system $\mathcal{M}$ on $X$
that has no fixed components, the~log pair
$$
\Big(X,\ \lambda\,\mathcal{M}\Big)
$$
has canonical singularities, where $\lambda\in\mathbb{Q}$ such
that $K_{X}+\lambda\,\mathcal{M}\equiv 0$.
\end{itemize}
\end{corollary}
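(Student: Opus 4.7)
The plan is to deduce Corollary~\ref{corollary:G-superrigidity} directly from the criterion in Theorem~\ref{theorem:G-rigidity} together with Remark~\ref{remark:Fanos-rigid-superrigid}, which says that $G$-birational superrigidity is equivalent to $G$-birational rigidity combined with the equality $\mathrm{Bir}^{G}(X)=\mathrm{Aut}^{G}(X)$.

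For the forward implication, suppose $X$ is $G$-birationally superrigid. Then $X$ is $G$-birationally rigid, so by Theorem~\ref{theorem:G-rigidity}, for every $G$-invariant linear system $\mathcal{M}$ without fixed components there is some $\xi\in\mathrm{Bir}^{G}(X)$ making $(X,\lambda\,\xi(\mathcal{M}))$ canonical, where $\lambda$ is the numerical constant determined by $K_{X}+\lambda\,\xi(\mathcal{M})\equiv 0$ (equivalently by $K_{X}+\lambda\mathcal{M}\equiv 0$, since $\xi$ is birational). By Remark~\ref{remark:Fanos-rigid-superrigid}, $\xi$ is a biregular automorphism, hence $(X,\lambda\mathcal{M})$ and $(X,\lambda\,\xi(\mathcal{M}))$ differ by a $G$-equivariant isomorphism and have the same singularities. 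Thus $(X,\lambda\mathcal{M})$ is canonical.

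For the converse, assume every such $(X,\lambda\mathcal{M})$ is canonical. Taking $\xi=\mathrm{id}$ in Theorem~\ref{theorem:G-rigidity} immediately yields $G$-birational rigidity. It remains to show $\mathrm{Bir}^{G}(X)\subseteq\mathrm{Aut}^{G}(X)$. Let $\xi\in\mathrm{Bir}^{G}(X)$ and view it as a birational map $\nu\colon \bar{X}\dasharrow X$ with $\bar{X}=X$ and $\bar{\pi}\colon\bar{X}\to\mathrm{Spec}(\mathbb{C})$. For sufficiently large and divisible $m$, put $\mathcal{M}_{\bar{X}}=|-mK_{\bar{X}}|$ and $\mathcal{M}_{X}=\nu(\mathcal{M}_{\bar{X}})$; then $\lambda=1/m$ and $\mathcal{M}_{X}$ has no fixed components and is $G$-invariant. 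By hypothesis $(X,\lambda\mathcal{M}_{X})$ is canonical, which on the common resolution $W$ forces $a_{i}^{\lambda}\geqslant 0$ for every $\alpha$-exceptional divisor $F_{i}$; moreover since $S$ is a point, the pulled-back class $H\equiv 0$ is trivially $\mathbb{Q}$-effective.

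Applying Lemma~\ref{lemma:rigidity-Fano} with $\mathrm{dim}(\bar{S})=\mathrm{dim}(S)=0$, the alternatives involving a negative $a_{i}^{\lambda}$ on a divisor not contracted by $\beta$, or a non-$\mathbb{Q}$-effective $H$, are both excluded. Hence $\nu=\xi$ must be an isomorphism, giving $\mathrm{Bir}^{G}(X)=\mathrm{Aut}^{G}(X)$ and completing the proof via Remark~\ref{remark:Fanos-rigid-superrigid}. No step is really an obstacle; the only thing to keep straight is the sign convention, namely that $a_{i}^{\epsilon}\geqslant 0$ on a log resolution is precisely the canonical condition for the mobile boundary $\epsilon\mathcal{M}_{X}$, which is what allows the second bullet of Lemma~\ref{lemma:rigidity-Fano} to be ruled out.
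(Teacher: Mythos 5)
Your argument is correct and is the natural deduction from Theorem~\ref{theorem:G-rigidity}, Remark~\ref{remark:Fanos-rigid-superrigid}, and Lemma~\ref{lemma:rigidity-Fano}; the paper states this corollary without a proof precisely because this route is implicit in the surrounding material. Two minor wording slips are worth noting even though they do not affect the logic. In the forward direction, the parenthetical "since $\xi$ is birational" is not by itself a valid justification for the equality of the two normalizations of $\lambda$ — a birational self-map of $X$ need not act trivially on $\mathrm{Pic}(X)$, so $\xi(\mathcal{M})$ and $\mathcal{M}$ may have different degrees; the equivalence holds only after you invoke Remark~\ref{remark:Fanos-rigid-superrigid} to conclude that $\xi$ is biregular, which you do in the very next line. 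In the converse direction, the assertion "$\lambda=1/m$" is premature: a priori one only knows $\lambda\leqslant 1/m$ (exactly as in the proof of Lemma~\ref{lemma:rigidity-Fano}), and equality is a consequence of, not an input to, the conclusion that $\nu$ is an isomorphism — but since your argument never actually uses $\lambda=1/m$, this causes no harm.
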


Let us show how to apply
Corollary~\ref{corollary:G-superrigidity}.

\begin{lemma}
\label{lemma:orbits} Suppose that $X$ is a~smooth del Pezzo
surface such that
$$
\big|\Sigma\big|\geqslant K_{X}^{2}
$$
for any $G$-orbit $\Sigma\subset X$. Then $X$ is $G$-birationally
superrigid.
\end{lemma}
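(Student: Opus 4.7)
The idea is to invoke Corollary~\ref{corollary:G-superrigidity}: it will suffice to show that for every $G$-invariant linear system $\mathcal{M}$ on $X$ without fixed components, the log pair $(X,\lambda\mathcal{M})$ has canonical singularities, where $\lambda\in\mathbb{Q}_{>0}$ is determined by $K_{X}+\lambda\mathcal{M}\equiv 0$ (this $\lambda$ exists because $X$ underlies a $G$-Mori fibration over a point, so $\mathrm{Pic}^{G}(X)\otimes\mathbb{Q}$ has rank one). I will argue by contradiction, assuming $(X,\lambda\mathcal{M})$ fails to be canonical at some point $P\in X$, and using the orbit-size hypothesis to produce a numerical absurdity.

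Under that assumption, $(X,\lambda\mathcal{M})$ is a fortiori not terminal at $P$. I would then apply Theorem~\ref{theorem:II} with $a_{1}=0$, choosing any irreducible curve $\Delta_{1}\subset X$ smooth at $P$ (which certainly exists, since $X$ is a smooth projective surface), to obtain
$$
\mathrm{mult}_{P}\big(M_{1}\cdot M_{2}\big)\geqslant\frac{1}{\lambda^{2}}
$$
for general $M_{1},M_{2}\in\mathcal{M}$. The equality clause of Theorem~\ref{theorem:II} would force $(X,\lambda\mathcal{M})$ to be canonical at $P$, contradicting the standing assumption, so the inequality must be strict.

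Since $\mathcal{M}$ is $G$-invariant, this strict inequality propagates to every point $Q$ of the $G$-orbit $\Sigma$ of $P$. As $\mathcal{M}$ has no fixed components, $M_{1}$ and $M_{2}$ share no components, and summing local intersection multiplicities over $\Sigma$ yields
$$
\frac{K_{X}^{2}}{\lambda^{2}}=M_{1}\cdot M_{2}\geqslant\sum_{Q\in\Sigma}\mathrm{mult}_{Q}\big(M_{1}\cdot M_{2}\big)>\frac{|\Sigma|}{\lambda^{2}}\geqslant\frac{K_{X}^{2}}{\lambda^{2}},
$$
where the left-hand equality uses $\lambda\mathcal{M}\equiv-K_{X}$ and the final inequality is the hypothesis $|\Sigma|\geqslant K_{X}^{2}$. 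The resulting strict inequality $K_{X}^{2}/\lambda^{2}>K_{X}^{2}/\lambda^{2}$ is the desired contradiction.

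The delicate step is squeezing the \emph{strict} inequality out of Theorem~\ref{theorem:II}: the non-strict bound $\mathrm{mult}_{P}(M_{1}\cdot M_{2})\geqslant 1/\lambda^{2}$ alone would only yield $|\Sigma|\leqslant K_{X}^{2}$, which is compatible with the hypothesis and thus not a contradiction. This is precisely why the equality clause of Theorem~\ref{theorem:II}, which characterizes the boundary case and shows it forces canonicity at $P$, is indispensable for the argument.
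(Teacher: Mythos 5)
Your argument is correct, and it follows the same overall skeleton as the paper: invoke Corollary~\ref{corollary:G-superrigidity} to produce a $G$-invariant mobile linear system $\mathcal{M}$ with $(X,\lambda\mathcal{M})$ non-canonical at some $O$, then sum a strict local lower bound on $\mathrm{mult}_Q(M_1\cdot M_2)$ over the $G$-orbit $\Sigma$ of $O$ to beat $K_X^2/\lambda^2 = M_1\cdot M_2$. The difference is in how the strict local bound is obtained. The paper's own proof never touches Theorem~\ref{theorem:II}; it simply uses that on a smooth surface, a mobile effective boundary which is non-canonical at a point $O$ has $\mathrm{mult}_O(\mathcal{M})>1/\lambda$ (cf.\ Example~\ref{example:smooth-point-and-log-pull-back}: the first blow-up of $O$ already has non-positive discrepancy, and a short descent shows the inequality must be strict), and then squares this to get $\mathrm{mult}_Q(M_1\cdot M_2)\geqslant\mathrm{mult}^2_Q(\mathcal{M})>1/\lambda^2$ at every $Q\in\Sigma$. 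You instead specialise Theorem~\ref{theorem:II} to $a_1=0$, for which the pair $(X,\lambda\mathcal{M}+a_1\Delta_1)$ collapses to $(X,\lambda\mathcal{M})$ (so the choice of $\Delta_1$ is immaterial, as you note), and use its equality clause to upgrade $\geqslant 1/\lambda^2$ to $>1/\lambda^2$. That is legitimate: non-canonical at $O$ is in particular non-terminal at $O$, $a_1=0\leqslant 0$ fits the hypotheses of Section~\ref{section:inequality-1}, and the equality case would force $(X,\lambda\mathcal{M})$ to be canonical, contradicting the standing assumption. So your route is sound, but it invokes a heavier theorem where a one-line multiplicity bound suffices; your closing remark that the equality clause is ``indispensable'' is true for your route through Theorem~\ref{theorem:II}, but the paper's more elementary derivation of the strict bound $\mathrm{mult}_O(\mathcal{M})>1/\lambda$ shows that the detour through Theorem~\ref{theorem:II} is itself avoidable.
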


\begin{proof}
Suppose that $X$ is not $G$-birationally superrigid. Then it
follows from Corollary~\ref{corollary:G-superrigidity}~that there
is a~$G$-invariant linear system $\mathcal{M}$ on the~surface $X$
such that $\mathcal{M}$ does not have fixed curves, but the~log
pair $(X, \lambda\mathcal{M})$~is~not canonical at some point
$O\in X$, where $\lambda\in\mathbb{Q}$ such~that
$$
K_{X}+\lambda\mathcal{M}\equiv 0.
$$

Let $\Sigma$ be the~$G$-orbit of the~point $O$. Then
$\mathrm{mult}_{P}(\mathcal{M})>1/\lambda$ for every point $P\in
\Sigma$. Then
$$
\frac{K_{X}^{2}}{\lambda^{2}}=M_{1}\cdot M_{2}\geqslant\sum_{P\in\Sigma}\mathrm{mult}_{P}\Big(M_{1}\cdot M_{2}\Big)\geqslant\sum_{P\in\Sigma}\mathrm{mult}^{2}_{P}\big(\mathcal{M}\big)>\frac{|\Sigma|}{\lambda^{2}}\geqslant \frac{K_{X}^{2}}{\lambda^{2}},%
$$
where $M_{1}$ and $M_{2}$ are sufficiently general curves in
$\mathcal{M}$.
\end{proof}

Let us show how to apply Lemma~\ref{lemma:orbits}.

\begin{theorem}
\label{theorem:A6} Let $G$ be a finite subgroup in
$$
\mathrm{Aut}\big(\mathbb{P}^{2}\big)\cong\mathbb{PGL}\Big(3,\mathbb{C}\Big)
$$
such that $G\cong\mathbb{A}_{6}$. Then $\mathbb{P}^{2}$ is
$G$-birationally superrigid.
\end{theorem}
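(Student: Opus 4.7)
The plan is to invoke Lemma~\ref{lemma:orbits} applied to $X=\mathbb{P}^{2}$, which is a smooth del Pezzo surface with $K_{X}^{2}=9$. Since that lemma gives $G$-birational superrigidity once every $G$-orbit has at least $K_{X}^{2}=9$ points, the entire theorem reduces to this orbit count.

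Because $|G|=|\mathbb{A}_{6}|=360$, an orbit of size less than $9$ would force the stabilizer of one (equivalently, any) of its points to have order strictly greater than $40$. A routine inspection of the subgroup lattice of $\mathbb{A}_{6}$---using that the group is simple (so there are no proper subgroups of index less than $6$) and that every element of $\mathbb{A}_{6}$ has order in $\{1,2,3,4,5\}$---shows that the only proper subgroups of order exceeding $40$ are the two $\mathbb{A}_{6}$-conjugacy classes of $\mathbb{A}_{5}$-subgroups, each of index $6$. Hence it suffices to show that neither the full group $G\cong\mathbb{A}_{6}$ nor any of its $\mathbb{A}_{5}$-subgroups has a fixed point on $\mathbb{P}^{2}$.

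A fixed point of a subgroup $H\subset\mathbb{PGL}(3,\mathbb{C})$ on $\mathbb{P}^{2}$ is the same as a common eigenvector on $\mathbb{C}^{3}$ for the preimage $\widetilde{H}\subset\mathrm{GL}(3,\mathbb{C})$; in particular, no fixed points exist if $\widetilde{H}$ acts irreducibly. Up to conjugacy, the embedding $G\cong\mathbb{A}_{6}\hookrightarrow\mathbb{PGL}(3,\mathbb{C})$ is the classical Valentiner embedding, which lifts to an irreducible $3$-dimensional representation of the triple cover $3\cdot\mathbb{A}_{6}\subset\mathrm{SL}(3,\mathbb{C})$, so $G$ itself has no fixed point. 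For an $\mathbb{A}_{5}$-subgroup, its pullback in $3\cdot\mathbb{A}_{6}$ is a central extension of $\mathbb{A}_{5}$ by $\mathbb{Z}/3$; since the Schur multiplier of $\mathbb{A}_{5}$ is $\mathbb{Z}/2$ and hence $H^{2}(\mathbb{A}_{5},\mathbb{Z}/3)=0$, this extension splits as $\mathbb{A}_{5}\times\mathbb{Z}/3$, and the restriction of the $3$-dimensional irreducible representation of $3\cdot\mathbb{A}_{6}$ to it is the external tensor product of a $3$-dimensional irreducible representation of $\mathbb{A}_{5}$ with a nontrivial character of $\mathbb{Z}/3$. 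This restriction is irreducible, so no $\mathbb{A}_{5}$-subgroup can fix a point either.

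The only non-formal step is the representation-theoretic check in the last paragraph; it can alternatively be done by a direct computation on the character table of $3\cdot\mathbb{A}_{6}$, or by producing explicit Valentiner invariants to verify that the smallest orbits of the Valentiner group on $\mathbb{P}^{2}$ have sizes $36$, $45$, $60$, and so on---all at least $9$. Once this fixed-point-freeness is in hand, Lemma~\ref{lemma:orbits} closes the argument.
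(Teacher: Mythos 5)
Your proposal takes essentially the same route as the paper: both reduce the theorem to Lemma~\ref{lemma:orbits} applied with $K_{\mathbb{P}^{2}}^{2}=9$, after which everything hinges on showing that every $G$-orbit on $\mathbb{P}^{2}$ has at least $9$ points. The paper simply cites \cite{YauYu93} for the (stronger) bound $|\Sigma|\geqslant 12$, whereas you derive the sufficient bound $|\Sigma|\geqslant 9$ from the subgroup lattice of $\mathbb{A}_{6}$ and the representation theory of the Valentiner group; this buys a self-contained argument at the cost of a slightly longer proof. The one step you left as an assertion, namely that the $\mathbb{A}_{5}$-factor $\rho$ of the restriction to $\mathbb{A}_{5}\times\mathbb{Z}/3$ is irreducible, is easy to close: $\rho$ has nontrivial image in $\mathrm{PGL}(3,\mathbb{C})$ because $\mathbb{A}_{5}$, being simple, embeds faithfully under the Valentiner action, and since the irreducible characters of $\mathbb{A}_{5}$ have degrees $1,3,3,4,5$ (in particular no degree $2$), any reducible $3$-dimensional representation of $\mathbb{A}_{5}$ would be a sum of three trivial characters and hence would have trivial image in $\mathrm{PGL}(3,\mathbb{C})$, a contradiction.
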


\begin{proof}
Let $\Sigma\subset \mathbb{P}^{2}$ be any~$G$-orbit. Then it
follows from \cite{YauYu93} that
$$
\big|\Sigma\big|\geqslant 12,
$$
which implies that $\mathbb{P}^{2}$ is $G$-birationally superrigid
by Lemma~\ref{lemma:orbits}.
\end{proof}

Let $\Gamma$ be a~subset in $\mathrm{Bir}^{G}(X)$.

\begin{definition}
\label{definition:untwisting} The subset $\Gamma$
untwists~all~$G$-maximal singularities if for every $G$-invariant
linear system $\mathcal{M}$~on the~variety $X$ that has no fixed
components, there is a~$\xi\in\Gamma$ such that~the~pair
$$
\Big(X,\ \lambda\,\xi\big(\mathcal{M}\big)\Big)
$$
has canonical singularities, where $\lambda$ is a~rational number
such that $K_{X}+\lambda\,\xi(\mathcal{M})\equiv 0$.
\end{definition}

Applying Lemma~\ref{lemma:rigidity-fibration} and
\ref{lemma:rigidity-Fano}, we obtain the~following corollary.

\begin{corollary}
\label{corollary:untwisting} Suppose that
$\Gamma$~untwists~all~$G$-maximal singularities. Then
\begin{itemize}
\item the~variety $X$ is $G$-birationally rigid,%
\item the~group $\mathrm{Bir}^{G}(X)$ is generated by $\Gamma$ and $\mathrm{Aut}^{G}(X)$.%
\end{itemize}
\end{corollary}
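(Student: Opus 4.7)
The plan is to read off both conclusions from Theorem~\ref{theorem:G-rigidity} and Lemma~\ref{lemma:rigidity-Fano}, using only the Fano case ($\mathrm{dim}(S)=0$) that is now in force.

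For the first conclusion, the untwisting hypothesis on~$\Gamma$ is, by Definition~\ref{definition:untwisting}, formally stronger than the equivalent criterion appearing in Theorem~\ref{theorem:G-rigidity}: given any $G$-invariant linear system $\mathcal{M}$ on $X$ without fixed components, some $\xi\in\Gamma\subseteq\mathrm{Bir}^{G}(X)$ makes $(X,\lambda\,\xi(\mathcal{M}))$ canonical, where $K_{X}+\lambda\,\xi(\mathcal{M})\equiv 0$. Hence $X$ is $G$-birationally rigid by Theorem~\ref{theorem:G-rigidity}.

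For the second conclusion, I would start with an arbitrary $\phi\in\mathrm{Bir}^{G}(X)$ and apply the untwisting property to the test system $\mathcal{M}_{X}:=\phi\bigl(|-mK_{X}|\bigr)$ for a sufficiently big and divisible $m\in\mathbb{N}$; this is $G$-invariant and mobile because $|-mK_{X}|$ is base-point free, and the associated critical value is $\lambda=1/m$. The untwisting hypothesis yields $\xi\in\Gamma$ such that the log pair $(X,\lambda\,\xi(\mathcal{M}_{X}))$ has canonical singularities. Set $\psi=\xi\circ\phi\in\mathrm{Bir}^{G}(X)$ and apply Lemma~\ref{lemma:rigidity-Fano} to the data $\bar{X}=X$, $\nu=\psi$, $\mathcal{M}_{\bar{X}}=|-mK_{X}|$, so that $\mathcal{M}_{X}^{\text{new}}=\psi(|-mK_{X}|)=\xi(\mathcal{M}_{X})$. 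In this Fano setting the auxiliary divisor $H$ on the point $S$ is $0$, so it is $\mathbb{Q}$-effective; and canonicality of $(X,\lambda\,\psi(|-mK_{X}|))$ forces $a_{i}^{\lambda}\geqslant 0$ for every $\alpha$-exceptional $F_{i}$, ruling out both the second and third alternatives of Lemma~\ref{lemma:rigidity-Fano}. The only remaining possibility is that $\psi$ is biregular, i.e.\ $\psi\in\mathrm{Aut}^{G}(X)$. Therefore $\phi=\xi^{-1}\circ\psi$ lies in the subgroup generated by $\Gamma$ and $\mathrm{Aut}^{G}(X)$.

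The main technical point to verify is the clean exclusion of the second alternative in Lemma~\ref{lemma:rigidity-Fano} from the canonicality assumption on $(X,\lambda\xi(\mathcal{M}_{X}))$. This requires matching sign conventions: since every $F_{i}$ in that lemma is by definition $\alpha$-exceptional, the inequality $a_{i}^{\lambda}<0$ is exactly the statement that the log pair fails to be canonical at the center $\alpha(F_{i})\subset X$, so canonicality rules it out uniformly. The remaining steps are formal: that $|-mK_{X}|$ has no fixed components for $m$ divisible enough, and that the critical coefficient for the test system is $\lambda=1/m$ making $H\equiv 0$, are both immediate.
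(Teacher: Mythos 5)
Your argument is correct and matches the route the paper indicates (it states the corollary ``follows from Lemmas~\ref{lemma:rigidity-fibration} and \ref{lemma:rigidity-Fano}'' without spelling out details): the first bullet is immediate from Theorem~\ref{theorem:G-rigidity}, whose untwisting criterion is formally implied by the stronger hypothesis on $\Gamma\subset\mathrm{Bir}^{G}(X)$, and the second bullet comes from feeding the composition $\psi=\xi\circ\phi$ into Lemma~\ref{lemma:rigidity-Fano} with both source and target Fano. The dichotomy there collapses as you say: $H$ lives on a point so it is trivially $\mathbb{Q}$-effective, and canonicity of $(X,\lambda\,\xi(\mathcal{M}_{X}))$ forces every $\alpha$-discrepancy $a_{i}^{\lambda}$ to be nonnegative, killing the second alternative and leaving only the conclusion that $\psi\in\mathrm{Aut}^{G}(X)$. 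One small inaccuracy worth flagging: the critical coefficient for the transformed system $\mathcal{M}_{X}=\phi(|-mK_{X}|)$ is \emph{a priori} only $\lambda\leqslant 1/m$ (equality only being forced once canonicity is established, as in the proof of Lemma~\ref{lemma:rigidity-Fano}), not $\lambda=1/m$ automatically as you assert; fortunately your argument never actually uses the exact value, only the $\mathbb{Q}$-effectivity of $H=0$ and the nonnegativity of the discrepancies, so the conclusion stands.
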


It follows from Theorem~\ref{theorem:G-rigidity} that
the~following conditions are equivalent:
\begin{itemize}
\item the~variety $X$ is $G$-birationally rigid,%
\item the~group $\mathrm{Bir}^{G}(X)$ untwists all $G$-maximal singularities.%
\end{itemize}

\begin{definition}
\label{definition:threshold} Global $G$-invariant log canonical
threshold of the~variety $X$ is the~number
$$
\mathrm{lct}\big(X,G\big)=\mathrm{sup}\left\{\lambda\in\mathbb{Q}\ \left|%
\aligned
&\text{the~log pair}\ \left(X, \lambda D\right)\ \text{has log canonical singularities}\\
&\text{for every $G$-invariant effective $\mathbb{Q}$-divisor}\ D\equiv -K_{X}\\
\endaligned\right.\right\}\geqslant 0.%
$$
\end{definition}

Let us show how to compute the~number $\mathrm{lct}(X,G)$ (see
\cite[Lemma~5.7]{Ch07b}).

\begin{theorem}
\label{theorem:quintic-A5} Let $X$ be a smooth del Pezzo surface
such that $K_{X}^{2}=5$. Then
$$
\mathrm{Aut}\big(X\big)\cong\mathbb{S}_{5},
$$
the~surface $X$ is $\mathbb{A}_{5}$-birationally superrigid, and
$\mathrm{lct}(X,\mathbb{A}_{5})=2$, where
$\mathrm{Pic}^{\mathbb{A}_{5}}(X)\cong\mathbb{Z}$.
\end{theorem}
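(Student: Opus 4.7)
My plan is to prove the four assertions in sequence, relying on Lemma~\ref{lemma:orbits} for the superrigidity and on Theorem~\ref{theorem:I} (together with the convexity reduction of Remark~\ref{remark:convexity}) for the log canonical threshold. The isomorphism $\mathrm{Aut}(X)\cong \mathbb{S}_5$ is classical and follows from the unique realization of the smooth quintic del Pezzo as $\overline{M}_{0,5}$, equivalently from the faithful $\mathbb{S}_5$-symmetry of the Petersen graph formed by the ten $(-1)$-curves. For the invariant Picard group, decompose $\mathrm{Pic}(X)\otimes\mathbb{Q}=\mathbb{Q}\langle K_X\rangle\oplus V$, where $V$ is the orthogonal complement of $K_X$ carrying the standard $4$-dimensional reflection representation of $W(A_4)\cong \mathbb{S}_5$. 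This representation remains irreducible and non-trivial upon restriction to $\mathbb{A}_5$, so $V^{\mathbb{A}_5}=0$ and $\mathrm{Pic}^{\mathbb{A}_5}(X)\cong\mathbb{Z}$.

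For $\mathbb{A}_5$-birational superrigidity, by Lemma~\ref{lemma:orbits} it suffices to verify that every $\mathbb{A}_5$-orbit on $X$ has cardinality at least $K_X^2=5$. Since $\mathbb{A}_5$ is simple with smallest non-trivial permutation representation of degree $5$, an orbit of size less than $5$ would necessarily be a fixed point, which would linearize to a faithful embedding $\mathbb{A}_5\hookrightarrow\mathrm{GL}_2(\mathbb{C})$. No such embedding exists because the smallest faithful complex representation of $\mathbb{A}_5$ has dimension $3$, so the orbit-size bound holds automatically.

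The upper bound $\mathrm{lct}(X,\mathbb{A}_5)\leq 2$ is supplied by $D:=\tfrac{1}{2}\sum_{i=1}^{10}L_i$: from $-K_X\cdot\sum L_i=10=2K_X^2$ and $\mathrm{Pic}^{\mathbb{A}_5}(X)=\mathbb{Z}K_X$ one obtains $\sum L_i\equiv -2K_X$, hence $D\equiv-K_X$; because the ten $(-1)$-curves form a simple normal crossing configuration with fifteen double points, $(X,\lambda D)$ fails to be log canonical exactly when $\lambda>2$. For the lower bound, assume an $\mathbb{A}_5$-invariant effective $\mathbb{Q}$-divisor $D\equiv-K_X$ and $\lambda<2$ render $(X,\lambda D)$ not log canonical at a point $P$. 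Applying Remark~\ref{remark:convexity} against $\tfrac{1}{2}\sum L_i$, one reduces to the case where $L_i\not\subseteq\mathrm{Supp}(D)$ for each $i$, giving $D\cdot L_i=1$ by $\mathbb{A}_5$-invariance. At every point $Q$ of the orbit $\Sigma=\mathbb{A}_5\cdot P$ one has $\mathrm{mult}_Q(D)>1/\lambda>\tfrac{1}{2}$, and the estimate $1=D\cdot L_i\geq\sum_{Q\in\Sigma\cap L_i}\mathrm{mult}_Q(D)>\tfrac{1}{2}\,|\Sigma\cap L_i|$ forces $|\Sigma\cap L_i|\leq 1$ for every line; this already excludes $\Sigma$ being the fifteen Petersen nodes, and restricts the surviving possibilities to an orbit meeting the Petersen configuration in exactly one point per line, or an orbit disjoint from it.

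The hardest case is $P\in L_1$ for exactly one $(-1)$-curve $L_1$, where baby adjunction (Theorem~\ref{theorem:baby-adjunction}) yields only $\lambda>1$, short of the desired $\lambda\geq 2$. I expect to close the gap by applying Theorem~\ref{theorem:I} to the pair $(X,\lambda D+a_1L_1)$ at $P$ for a positive coefficient $a_1$ chosen just below the threshold at which the pair becomes log canonical in the $L_1$-direction, with parameters $(A,B,M,N,\alpha,\beta)$ adjusted so that the conclusion forces $\mathrm{mult}_P(D\cdot L_1)>1$, contradicting $D\cdot L_1=1$. The case in which $\Sigma$ avoids the Petersen configuration is handled symmetrically by using a general $\mathbb{A}_5$-translate of an anticanonical curve through $P$ to provide the second curve needed for Theorem~\ref{theorem:I}, exploiting $D\cdot(-K_X)=5$ and the orbit-size lower bound $|\Sigma|\geq 5$. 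Verifying that the algebraic constraints on $(A,B,M,N,\alpha,\beta)$ in Theorem~\ref{theorem:I} admit the required parameter choices uniformly across these configurations is where the bulk of the technical work resides.
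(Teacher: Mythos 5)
Your arguments for $\mathrm{Aut}(X)\cong\mathbb{S}_5$, for $\mathrm{Pic}^{\mathbb{A}_5}(X)\cong\mathbb{Z}$ via the $W(A_4)$ reflection representation, for superrigidity via Lemma~\ref{lemma:orbits} together with the observation that every $\mathbb{A}_5$-orbit on $X$ has at least $5$ points, and for the upper bound $\mathrm{lct}(X,\mathbb{A}_5)\leqslant 2$ using $\tfrac12\sum L_i$, are all correct. The lower bound, however, is not proved. The convexity reduction to $L_i\not\subseteq\mathrm{Supp}(D)$ and the resulting $|\Sigma\cap L_i|\leqslant 1$ are fine, but the promised application of Theorem~\ref{theorem:I} is never carried out, and it is far from clear that admissible parameters exist: with $\Delta_1=L_1$ and $a_2=0$, the second branch of the disjunction in Theorem~\ref{theorem:I}, $\lambda\,\mathrm{mult}_P(D\cdot\Delta_2)>N-a_1$, is automatically true whenever $a_1\geqslant N$ (recall $N\leqslant 1$), so the theorem typically returns nothing, while the anticanonical-curve replacement is weaker still, since $D\cdot C=5$ gives no tight control on $\mathrm{mult}_P(D\cdot C)$. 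Note also that the case you call hardest, $P$ on exactly one $(-1)$-curve, cannot occur: there $\mathrm{Stab}(P)$ would be the order-$6$ stabilizer $\mathbb{S}_3$ of $L_1$, acting faithfully on $T_P X$ while preserving the line $T_P L_1$, yet $\mathbb{S}_3$ has no faithful reducible two-dimensional representation. The only surviving configuration is therefore $\Sigma$ disjoint from every $(-1)$-curve, and your sketch offers no mechanism for a contradiction there.

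The paper's proof of the lower bound takes an entirely different route and does not pass through Theorem~\ref{theorem:I}. It invokes Nadel--Shokurov vanishing (Theorem~\ref{theorem:Shokurov-vanishing}) to show that $\mathrm{LCS}(X,\lambda D)$ carries no curve (because $H^0(X,-K_X)$ is a sum of two nontrivial $\mathbb{A}_5$-irreducibles, so $|-K_X|$ has no invariant member) and then that, as a finite set, $\mathrm{LCS}(X,\lambda D)$ has at most $h^0(-K_X)=6$ points. Combined with the orbit-size bound, which the paper sharpens to $|\Sigma|\geqslant 6$, this forces $|\mathrm{LCS}(X,\lambda D)|=6$; the analysis of the $\mathbb{A}_5$-invariant pencil in $|-2K_X|$ generated by $\sum L_i$ and an irreducible invariant curve $Z\sim-2K_X$ then identifies $\mathrm{LCS}(X,\lambda D)$ with the six nodes of one of two irreducible rational members $R_1,R_2$. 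From $10=R_i\cdot D\geqslant 12\,\mathrm{mult}_P(D)$ one gets $\mathrm{mult}_P(D)\leqslant 5/6$, and the contradiction is reached by blowing up the six nodes and applying the connectedness theorem, using once more that the stabilizer $D_5$ of $P$ acts faithfully on $T_P X$. Both the vanishing step bounding $|\mathrm{LCS}|$ and the final blow-up argument are essential, and neither is present in your proposal.
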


\begin{proof}
The isomorphisms $\mathrm{Aut}(X)\cong\mathbb{S}_{5}$ and
$\mathrm{Pic}^{\mathbb{A}_{5}}(X)\cong\mathbb{Z}$ are well-known
(see \cite{RaSl02}, \cite{DoIs06}).

Let $\Sigma\subset X$ be any~$\mathbb{A}_{5}$-orbit. Suppose that
$|\Sigma|\leqslant 6$. Then $|\Sigma|\geqslant 5$, because
$\mathbb{A}_{5}$ is simple.

Let $P\in\Sigma$ be a~point, let $H\subset\mathbb{A}_{5}$ be
the~stabilizer of the~point $P$. Then
$$
\big|H\big|=\frac{|\mathbb{A}_{5}|}{|\Sigma|}=\frac{60}{|\Sigma|},
$$
and $H$ acts faithfully on the~tangent space of the~surface $X$ at
the~point $P$. Then
$$
\big|\Sigma\big|\ne 5,
$$
because $\mathbb{A}_{4}$ does not have faithful
two-dimensional representations.

We see that $|\Sigma|=6$. In particular, the~surface $X$ is
$G$-birationally superrigid~by~Lemma~\ref{lemma:orbits}, which
also follows from either from \cite{Blanc} or \cite{BaTo07}.

The surface $X$ contains $10$ smooth rational curves
$L_{1},L_{2},\ldots,L_{10}$ such that
$$
L_{1}\cdot L_{1}=L_{2}\cdot L_{2}=\cdots=L_{10}\cdot L_{10}=-1,
$$
and $\sum_{i=1}^{10}L_{i}\sim -2K_{X}$. Then
$\mathrm{lct}(X,\mathbb{A}_{5})\leqslant 2$, because the~divisor
$\sum_{i=1}^{10}L_{i}$ is $\mathbb{A}_{5}$-invariant.

It follows from \cite{Edge81a} that there is a birational morphism
$\chi\colon X\to \mathbb{P}^{2}$ that is a~blow~up~of~the~four
singular points of the~curve $W\subset\mathbb{P}^{2}$ that is
given by the~equation
$$
x^{6}+y^{6}+z^{6}+\Big(x^{2}+y^{2}+z^{2}\Big)\Big(x^{4}+y^{4}+z^{4}\Big)=12x^{2}y^{2}z^{2}\subset\mathbb{P}^{2}\cong\mathrm{Proj}\Big(\mathbb{C}[x,y,z]\Big).
$$

Let $Z$ be the~proper transform on $X$ of the~curve $W$. Then
$$
Z\sim -2K_{X},
$$
and the~curves $Z$ and $\sum_{i=1}^{10}L_{i}$ are the~only
$\mathbb{S}_{5}$-invariant curves in $|-2K_{X}|$.

Let $\mathcal{P}$ be the~pencil on $X$ that is generated by
the~curves $Z$ and $\sum_{i=1}^{10}L_{i}$. Then every curve~in
the~pencil $\mathcal{P}$ is $\mathbb{A}_{5}$-inva\-riant (see
\cite{Edge81a}).

Let $T$ be a curve in $\mathcal{P}$ such that $\Sigma\cap
T\ne\varnothing$. Then
$$
\Sigma\subset \mathrm{Sing}\big(T\big),
$$
because $H$ is not abelian and, hence, does not have faithful
one-dimensional representations.

It follows from \cite{Edge81a} that $\mathcal{P}$ contains five
singular curves, which are
\begin{itemize}
\item the~curve $\sum_{i=1}^{10}L_{i}$,%
\item two irreducible rational curves $R_{1}$ and $R_{2}$ that have $6$ nodes,%
\item two reduced curves $F_{1}$ and $F_{2}$ each consisting of $5$ smooth rational curves.%
\end{itemize}

Therefore, we see that $\Sigma=\mathrm{Sing}(R_{1})$ or
$\Sigma=\mathrm{Sing}(R_{2})$.

The representation induced by the~action of $\mathbb{A}_{5}$ on
$|-K_{X}|$ is~a~sum of two non-equivalent irreducible
three-dimen\-si\-o\-nal representations of $\mathbb{A}_{5}$ (see
\cite{RaSl02}), which induces two $\mathbb{A}_{5}$-equivariant
projections $\phi\colon X\dasharrow\mathbb{P}^{2}$ and~$\psi\colon
X\dasharrow\mathbb{P}^{2}$, respectively. It follows from
\cite{RaSl02} that
\begin{itemize}
\item the~maps $\phi$ and $\psi$ are are morphisms of degree $5$,%

\item the~actions of $\mathbb{A}_{5}$ on $\mathbb{P}^{2}$ induced
by $\phi$ and $\psi$ are induced by non-equivalent irreducible
three-dimen\-si\-o\-nal representations of $\mathbb{A}_{5}$ (cf.
the~proof of Lemma~\ref{lemma:A5-22}), respectively.
\end{itemize}

Suppose that $\mathrm{lct}(X,\mathbb{A}_{5})\ne 2$. There is
an~effective $\mathbb{A}_{5}$-invariant $\mathbb{Q}$-divisor $D$
on $X$ such~that
$$
\mathbb{LCS}\Big(X,\lambda D\Big)\ne\varnothing
$$
and $D\equiv-K_{X}$, where $\lambda$ is a positive rational number
such that $\lambda<2$. Then
\begin{equation}
\label{equation:vanishing}
H^{1}\Bigg(X,\ \mathcal{O}_{X}\Big(-K_{X}\Big)\otimes\mathcal{I}\Big(X,\lambda D\Big)\Bigg)=0%
\end{equation}
by Theorem~\ref{theorem:Shokurov-vanishing}, where
$\mathcal{I}(X,\lambda D)$ is the~multiplier ideal sheaf (see
Definition~\ref{definition:log-canonical-subscheme}).

Suppose that there is a $\mathbb{A}_{5}$-invariant reduced curve
$C$ on the~surface $X$ such that
$$
\lambda D=\mu C+\Omega,
$$
where $\mu$ is a~positive rational number such that $\mu\geqslant
1$, and $\Omega$ is an effective $\mathbb{Q}$-divisor on
the~surface $X$, whose support does not contain any component of
the~curve $C$. Then
$$
C\sim -mK_{X},
$$
where $m$ is a natural number. Therefore, we have
$$
5=-K_{X}\cdot D\geqslant\frac{5\mu}{\lambda}>\frac{5m}{2},%
$$
which is implies that $m=1$. But the~equality $m=1$ is impossible.

Therefore, we see that $\mathbb{LCS}(X,\lambda D)$ does not
contain curves (see
Definition~\ref{definition:log-canonical-center}).

It follows from the~vanishing~\ref{equation:vanishing} that
$$
\Big|\mathrm{LCS}\Big(X,\lambda D\Big)\Big|\leqslant 6,%
$$
but the~set $\mathrm{LCS}(X,\lambda D)$ is
$\mathbb{A}_{5}$-invariant. Then $|\mathrm{LCS}(X,\lambda D)|=6$.

Thus, we see that either $\mathrm{LCS}(X,\lambda
D)=\mathrm{Sing}(R_{1})$ or $\mathrm{LCS}(X,\lambda
D)=\mathrm{Sing}(R_{2})$.

We may assume that $\mathrm{Supp}(D)$ does not contain $R_{1}$ and
$R_{2}$ by Remark~\ref{remark:convexity}.

We may assume that $\mathrm{LCS}(X,\lambda
D)=\mathrm{Sing}(R_{1})=\Sigma$. Put
$$
\mathrm{Sing}\big(R_{1}\big)=\Big\{O_{1},O_{2},O_{3},O_{4},O_{5},O_{6}\Big\},
$$
where $O_{i}$ is a~singular point of the~curve~$R_{1}$. We may
assume that $P=O_{1}$. Then
$$
10=R_{1}\cdot D\geqslant\sum_{i=1}^{6}\mathrm{mult}_{O_{i}}\big(D\big)\mathrm{mult}_{O_{i}}\big(R_{1}\big)\geqslant 12\mathrm{mult}_{P}\big(D\big).%
$$
which implies that $\mathrm{mult}_{P}(D)\leqslant 5/6$.

Let $\pi\colon U\to X$ be a blow up of the~points
$O_{1},O_{2},O_{3},O_{4},O_{5},O_{6}$.~Then
$$
K_{U}+\lambda\bar{D}+\sum_{i=1}^{6}\Big(\lambda\mathrm{mult}_{O_{i}}\big(D\big)-1\Big)E_{i}\equiv \pi^{*}\Big(K_{X}+\lambda D\Big),%
$$
where $E_{i}$ is the~exceptional curve of the~blow up $\pi$ such
that $\pi(E_{i})=O_{i}$, and $\bar{D}$ is the~proper transform
of~the~divisor $D$~on the~surface $U$. Then the~log pair
$$
\Bigg(U,\lambda\bar{D}+\sum_{i=1}^{6}\Big(\lambda\mathrm{mult}_{O_{i}}\big(D\big)-1\Big)E_{i}\Bigg)%
$$
is not log canonical at some point $Q\in E_{1}$. Then it follows
from Theorem~\ref{theorem:connectedness} that
$$
\mathrm{LCS}\Bigg(U,\lambda\bar{D}+\sum_{i=1}^{6}\Big(\lambda\mathrm{mult}_{O_{i}}\big(D\big)-1\Big)E_{i}\Bigg)\bigcap E_{1}=Q,%
$$
which is impossible, because the~$\mathbb{A}_{5}$-orbit of
the~point $Q$ contains at least $2$ points of the~exceptional
curve $E_{1}$, because $H$ acts faithfully on the~tangent space of
$X$ at the~point $P$.
\end{proof}

The number $\mathrm{lct}(X,G)$ plays an~important role in geometry
(see Theorem~\ref{theorem:KE}). We put
$$
\mathrm{lct}\big(X\big)=\mathrm{lct}\big(X,G\big)
$$
if the~group $G$ is trivial. Then
$\mathrm{lct}(\mathbb{P}^{n})=1/(n+1)$ (see \cite{ChSh08c}).

\begin{example}
\label{example:hypersurfaces-lct} Let $X$ be a general
hypersurface in $\mathbb{P}^{n}$ of degree $n\geqslant 6$. Then
$\mathrm{lct}(X)=1$~by~\cite{Pu04d}.
\end{example}

Note that Definition~\ref{definition:rigidity} still make sense if
$X$ is defined over a~perfect field.

\begin{definition}
\label{definition:birational-rigidity-universal} The variety $X$
is said to be universally $G$-birationally rigid if the~variety
$$
X\otimes\mathrm{Spec}\Big(\mathbb{C}\big(U\big)\Big)
$$
is $G$-birationally rigid for any variety $U$, where
$\mathbb{C}(U)$ is the~field of rational functions
of~the~variety~$U$, and we identify $G$ with a subgroup of
the~group $\mathrm{Aut}(X\otimes\mathrm{Spec}(\mathbb{C}(U)))$.
\end{definition}

It follows from Definition~\ref{definition:superrigidity} that if
the~variety $X$ is $G$-birationally superrigid, then
\begin{itemize}
\item the~variety $X$ is universally $G$-birationally rigid,%

\item for any variety $U$, the~variety
$$
X\otimes\mathrm{Spec}\Big(\mathbb{C}\big(U\big)\Big)
$$
is $G$-birationally superrigid, where $\mathbb{C}(U)$ is the~field
of rational functions of the~variety~$U$, and we identify $G$ with
a subgroup of the~group
$\mathrm{Aut}(X\otimes\mathrm{Spec}(\mathbb{C}(U)))$.
\end{itemize}

\begin{remark}
\label{remark:Kollar} Suppose that $\mathrm{dim}(X)\ne 1$ and
$X$~is~$G$-bira\-ti\-onally rigid. Then
\begin{itemize}
\item the~group $\mathrm{Aut}^{G}(X)$ is finite (see \cite{Ros56}),%
\item the~variety $X$ is universally $G$-birationally rigid if $\mathrm{Bir}^{G}(X)$ is countable (see \cite{Ko08}).%
\end{itemize}
\end{remark}

Take $r\in\mathbb{N}$ such that $r\geqslant 2$. For every
$i\in\{1,2,\ldots,r\}$, let $X_{i}$ be a~Fano variety, and let
$G_{i}$ be a finite subgroup in $\mathrm{Aut}(X_{i})$ such that
$$
\mathrm{dim}_{\mathbb{Q}}\Bigg(\mathrm{Pic}^{G_{i}}\big(X_{i}\big)\otimes\mathbb{Q}\Bigg)=1,
$$
and every $G_{i}$-invariant Weil divisor on the~variety $X_{i}$ is
a~$\mathbb{Q}$-Cartier divisor. Suppose, in addition, that $X_{i}$
has~at most terminal singularities, and the~variety $X_{i}$ is
$G_{i}$-birationally rigid. Put
\begin{itemize}
\item $\mathcal{X}=X_{1}\times X_{2}\times\cdots\times X_{r}$ and $\mathcal{G}=G_{1}\times G_{2}\times\cdots\times G_{r}$,%
\item $\mathcal{S}_{1}=X_{2}\times\cdots\times X_{r}$ and $\mathcal{S}_{r}=X_{1}\times\cdots\times X_{r}$,%
\item $\mathcal{S}_{i}=X_{1}\times\cdots\times X_{i-1}\times X_{i+1}\times\cdots\times X_{r}$ for every $i\in\{2,\ldots,r-1\}$.%
\end{itemize}

Let $\pi_{i}\colon \mathcal{X}\to\mathcal{S}_{i}$ be a~natural
projection. Then $\pi_{i}$ is a~$\mathcal{G}$-Mori fibration and
$$
\big\{\pi_{1},\ldots,\pi_{r}\big\}\subseteq \mathcal{P}\big(\mathcal{X},\mathcal{G}\big),%
$$
where $\mathcal{P}(\mathcal{X},\mathcal{G})$ is
the~$\mathcal{G}$-pliability of the~variety $\mathcal{X}$ (see
Definition~\ref{definition:pliablity}).

\begin{theorem}
\label{theorem:II-Pukhlikov} For every $i\in\{1,2,\ldots,r\}$,
suppose that $X_{i}$ is universally $G_{i}$-birationally rigid and
the~inequality $\mathrm{lct}(X_{i}, G_{i})\geqslant 1$ holds.~Then
$\mathcal{P}(\mathcal{X},\mathcal{G})=\{\pi_{1},\ldots,\pi_{r}\}$.
\end{theorem}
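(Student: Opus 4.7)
The plan is to adapt Pukhlikov's argument for direct products of birationally rigid Fano varieties (in the spirit of \cite{Pu04d}) to the $\mathcal{G}$-equivariant setting, using the universal rigidity of each factor together with the lct bound as the two driving inputs. Suppose, for contradiction, that there exists a $\mathcal{G}$-Mori fibration $\bar{\pi}\colon\bar{\mathcal{X}}\to\bar{\mathcal{S}}$ with a $\mathcal{G}$-equivariant birational map $\nu\colon\bar{\mathcal{X}}\dasharrow\mathcal{X}$ that is not square birationally equivalent to any $\pi_{i}$. Following Section~\ref{section:rigidity}, I would form the linear system $\mathcal{M}_{\mathcal{X}}=\nu(\mathcal{M}_{\bar{\mathcal{X}}})$ and, for each $i\in\{1,\ldots,r\}$, define the threshold $\lambda_{i}\in\mathbb{Q}_{\geqslant 0}$ to be the largest rational number such that $K_{\mathcal{X}}+\lambda_{i}\mathcal{M}_{\mathcal{X}}\equiv\pi_{i}^{*}(H_{i})$ for some $\mathcal{G}$-invariant $\mathbb{Q}$-divisor $H_{i}$ on $\mathcal{S}_{i}$. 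If $\mathcal{M}_{\mathcal{X}}$ lies in the fibres of some $\pi_{i}$, then Lemma~\ref{lemma:rigidity-fibration-easy} gives a rational dominant map $\mathcal{S}_{i}\dasharrow\bar{\mathcal{S}}$, and a comparison of Picard ranks together with Definition~\ref{definition:Mori-fiber-space} forces $\bar{\pi}$ to be square birationally equivalent to $\pi_{i}$, contradicting the assumption; so all thresholds $\lambda_{i}$ exist.

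The key step is to untwist $\mathcal{M}_{\mathcal{X}}$ fibrewise with respect to each $\pi_{i}$. The generic fibre of $\pi_{i}$ is $X_{i}\otimes\mathrm{Spec}(\mathbb{C}(\mathcal{S}_{i}))$, which by Definition~\ref{definition:birational-rigidity-universal} is $G_{i}$-birationally rigid. Applying Theorem~\ref{theorem:G-rigidity} over this function field yields an element $\rho_{i}$ in the $G_{i}$-equivariant birational automorphism group of the generic fibre that makes the restricted log pair $(X_{i},\,\lambda_{i}\mathcal{M}_{\mathcal{X}}|_{F_{i}})$ canonical. Such a $\rho_{i}$ spreads out over $\mathcal{S}_{i}$ to an element of $\mathrm{Bir}^{\mathcal{G}}(\mathcal{X})$ acting fibrewise. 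Replacing $\nu$ by its composition with these elements, and iterating the procedure across all $i$ (termination being guaranteed by the standard Noetherian descent on the multiplicities of maximal singularities, as in \cite[Theorem~1.26]{ChSh08c}), I may assume that $(F_{i},\,\lambda_{i}\mathcal{M}_{\mathcal{X}}|_{F_{i}})$ is canonical for a general fibre $F_{i}$ of every $\pi_{i}$ simultaneously.

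At this point the hypothesis $\mathrm{lct}(X_{i},G_{i})\geqslant 1$ enters: the restriction $\lambda_{i}\mathcal{M}_{\mathcal{X}}|_{F_{i}}$ is a $G_{i}$-invariant effective $\mathbb{Q}$-divisor proportional to $-K_{X_{i}}$, so the lct bound forces this restriction to be numerically $\leqslant -K_{X_{i}}$ and the threshold $\lambda_{i}$ to equal the value at which $H_{i}$ first fails to be $\mathbb{Q}$-effective. Together with the canonicity achieved in the previous step and Lemma~\ref{lemma:rigidity-fibration} applied to each $\pi_{i}$, the only remaining alternative is that $\mathcal{M}_{\mathcal{X}}$ has trivial horizontal component along $r-1$ of the projections, which brings us back to the fibre-containment case treated in the first paragraph and yields the desired square birational equivalence of $\bar{\pi}$ with some $\pi_{j}$.

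The main obstacle will be the second step: producing \emph{global} elements of $\mathrm{Bir}^{\mathcal{G}}(\mathcal{X})$ out of the generic-fibre untwistings, and proving that simultaneous untwisting for all $r$ projections can be arranged compatibly, i.e.\ that the untwisting over $\pi_{i}$ does not spoil the canonicity previously achieved over $\pi_{j}$ for $j\neq i$. This is precisely the point at which the two hypotheses must be balanced: the \emph{universal} rigidity of $X_{i}$ lets $\rho_{i}$ be produced with an arbitrary base, while the bound $\mathrm{lct}(X_{i},G_{i})\geqslant 1$ controls the new maximal singularities that an untwisting over one projection may introduce on the restrictions to the complementary projections. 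A careful induction on $r$, reducing to the case $r=2$ which is the genuinely new content, should close the argument.
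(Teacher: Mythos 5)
Your high-level strategy --- adapt Pukhlikov's direct-product argument from \cite{Pu04d} (and \cite[Theorem~6.5]{Ch07a}) to the $\mathcal{G}$-equivariant setting by replacing his Proposition~2 with Lemmas~\ref{lemma:rigidity-fibration} and~\ref{lemma:rigidity-Fano} --- is precisely what the paper does, and the paper itself gives little more than this pointer. But judged as a self-contained argument, your proposal has a genuine gap, one you flag yourself: the ``simultaneous untwisting'' step. An element of $\mathrm{Bir}^{\mathcal{G}}(\mathcal{X})$ that untwists the maximal singularity on the generic fibre of $\pi_{i}$ is, in general, not fibrewise for any other projection $\pi_{j}$ with $j\ne i$, so it will disturb the multiplicities of $\mathcal{M}_{\mathcal{X}}$ along the fibres of $\pi_{j}$; ``a careful induction on $r$\dots should close the argument'' is hope, not proof. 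The cited proofs do not proceed by arranging canonicity over all projections at once. Instead one singles out a suitable projection (say, one realizing the minimal normalized threshold $\lambda_{i}$), uses universal $G_{i}$-rigidity of the fibre over $\mathbb{C}(\mathcal{S}_{i})$ to untwist once, and then the bound $\mathrm{lct}(X_{i},G_{i})\geqslant 1$ together with Lemma~\ref{lemma:rigidity-fibration} forces $H_{i}$ to be non-$\mathbb{Q}$-effective, from which the conclusion follows by a numerical argument on the effective cone of $\mathcal{X}$ --- not by the simultaneous cleaning you describe.

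Two further imprecisions. Your appeal to the lct hypothesis --- ``forces this restriction to be numerically $\leqslant -K_{X_{i}}$ and the threshold $\lambda_{i}$ to equal the value at which $H_{i}$ first fails to be $\mathbb{Q}$-effective'' --- is misstated: $\lambda_{i}\mathcal{M}_{\mathcal{X}}\vert_{F_{i}}\equiv -K_{F_{i}}$ holds by the very definition of $\lambda_{i}$, there is nothing to force, and $\lambda_{i}$ has no a priori relationship with effectivity of $H_{i}$. What the lct bound actually supplies is that this restricted $G_{i}$-invariant divisor cannot carry worse than log canonical singularities, which is exactly the input Lemma~\ref{lemma:rigidity-fibration} needs to conclude that $H_{i}$ is not $\mathbb{Q}$-effective. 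And your final reduction --- ``the only remaining alternative is that $\mathcal{M}_{\mathcal{X}}$ has trivial horizontal component along $r-1$ of the projections'' --- does not follow from Lemma~\ref{lemma:rigidity-fibration} or~\ref{lemma:rigidity-Fano}; neither yields such a strong structural statement about $\mathcal{M}_{\mathcal{X}}$ directly, and without it the fall-back to the fibre-containment case of your first paragraph is unjustified.
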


\begin{proof}
Suppose that there is a~$\mathcal{G}$-Mori fibration
$\bar{\pi}\colon\bar{X}\to\bar{S}$ such that there is
a~$\mathcal{G}$-equivariant birational~map
$\nu\colon\bar{X}\dasharrow\mathcal{X}$. We must show that there
is $i\in\{1,2,\ldots,r\}$ such that $\mathrm{dim}(\bar{S})\ne 0$,
and there exists a commutative diagram
\begin{equation}
\label{equation:pliavility}
\xymatrix{
&\bar{X}\ar@{->}[d]_{\bar{\pi}}\ar@{-->}[rr]^{\nu}&&\mathcal{X}\ar@{-->}[rr]^{\rho}&&\mathcal{X}\ar@{->}[d]^{\pi_{i}}&\\
&\bar{S}\ar@{-->}[rrrr]^{\sigma}&&&&\mathcal{S}_{i},&}
\end{equation}
where $\rho$ is a~$\mathcal{G}$-equivariant birational
automorphism, and $\sigma$ is a birational map.

Arguing as in the~proof of \cite[Theorem~1]{Pu04d} and
\cite[Theorem~6.5]{Ch07a} and using
Lemma~\ref{lemma:rigidity-Fano}~instead of using
\cite[Proposition~2]{Pu04d}, we see that $\mathrm{dim}(\bar{S})\ne
0$ (cf. the~proof of Theorem~\ref{theorem:A5-A5}).

Arguing as in the~proof of \cite[Theorem~1]{Pu04d} and
\cite[Theorem~6.5]{Ch07a} and using
 Lemma~\ref{lemma:rigidity-fibration}~instead of using
\cite[Proposition~2]{Pu04d}, we obtain the~existence of
the~commutative diagram~\ref{equation:pliavility}.
\end{proof}

Let us show how to apply Theorem~\ref{theorem:II-Pukhlikov} (cf.
Examples~\ref{example:hypersurfaces} and
\ref{example:hypersurfaces-lct}).

\begin{example}
\label{example:Valentiner} The simple group $\mathbb{A}_{6}$ is a~
group of automorphisms of the~sextic
$$
10x^{3}y^{3}+9zx^{5}+9zy^{5}+27z^{6}=45x^{2}y^{2}z^{2}+135xyz^{4}\subset\mathbb{P}^{2}\cong\mathrm{Proj}\Big(\mathbb{C}\big[x,y,z\big]\Big),
$$
which induces a~monomorphism
$\mathbb{A}_{6}\times\mathbb{A}_{6}\to\mathrm{Aut}(\mathbb{P}^{2}\times\mathbb{P}^{2})$,
which induces a~monomorphism
$$
\phi\colon\mathbb{A}_{6}\times\mathbb{A}_{6}\longrightarrow\mathrm{Bir}\big(\mathbb{P}^{4}\big),%
$$
because the~variety $\mathbb{P}^{2}\times\mathbb{P}^{2}$ is
rational. Then it immediately follows from
Theorem~\ref{theorem:II-Pukhlikov} that the~subroup
$\phi(\mathbb{A}_{6}\times\mathbb{A}_{6})$ is not conjugate to
any~subgroup in $\mathrm{Aut}(\mathbb{P}^{4})$,~because
\begin{itemize}
\item the~surface $\mathbb{P}^{2}$ is $\mathbb{A}_{6}$-birationally superrigid by Theorem~\ref{theorem:A6},%
\item the~equality $\mathrm{lct}(\mathbb{P}^{2},\mathbb{A}_{6})=2$ holds (see \cite{Ch07b}).%
\end{itemize}
\end{example}

For further applications of Theorem~\ref{theorem:II-Pukhlikov} see
\cite{Pu04d} and \cite{Ch07a}.

\section{Cremona group}
\label{section:simple-subgroups}

Let $G$ be a~finite group. Put
$\mathrm{Cr}_{2}(\mathbb{C})=\mathrm{Bir}(\mathbb{P}^{2})$. Let
$\phi\colon G\to\mathrm{Cr}_{2}(\mathbb{C})$ be a~monomorphism.

\begin{problem}
\label{problem:1} Up to conjugations, find all subgroups in
$\mathrm{Cr}_{2}(\mathbb{C})$ that are isomorphic to $G$.
\end{problem}

The purpose of this section is to solve Problem~\ref{problem:1} in
the~case when
$$
G\in\Big\{\mathbb{A}_{5},\mathbb{A}_{6},\mathbb{PSL}\Big(2,\mathbb{F}_{7}\Big)\Big\},
$$
which solves Problem~\ref{problem:1} for all simple non-abelian
subgroups in $\mathrm{Cr}_{2}(\mathbb{C})$ (see
Theorem~\ref{theorem:simple-subgroups}).

\begin{theorem}
\label{theorem:conjugacy-classes} Up to conjugations, the~group
$\mathrm{Cr}_{2}(\mathbb{C})$ contains $3$, $1$, $2$ subgroups
somorphic~to
$$
\mathbb{A}_{5},\mathbb{A}_{6},\mathbb{PSL}\Big(2,\mathbb{F}_{7}\Big),
$$
respectively. The description of this subgroups follows
Theorems~\ref{theorem:Cremona-Klein},
\ref{theorem:Cremona-Valentiner},
\ref{theorem:Cremona-Icosahedron}.
\end{theorem}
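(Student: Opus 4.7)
The plan is to deduce the theorem from the three cited classifications (Theorems~\ref{theorem:Cremona-Klein}, \ref{theorem:Cremona-Valentiner}, \ref{theorem:Cremona-Icosahedron}), each of which follows the same Mori-theoretic scheme. First I would recall that by the $G$-equivariant MMP for rational surfaces, any finite subgroup $G\subset\mathrm{Cr}_{2}(\mathbb{C})$ is conjugate to a subgroup of $\mathrm{Aut}(X)$, where $X$ is a smooth rational surface admitting a $G$-Mori fibration $\pi\colon X\to S$ with $\mathrm{dim}(S)\in\{0,1\}$. Thus classifying conjugacy classes of $G$ in $\mathrm{Cr}_{2}(\mathbb{C})$ reduces to (a)~classifying all such $G$-Mori fiber spaces up to $G$-biregular isomorphism, and (b)~deciding which of them are $G$-equivariantly birational. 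Step~(b) is exactly the content of the $G$-birational rigidity results of Section~\ref{section:rigidity}, together with the new tool provided by Theorem~\ref{theorem:I}.

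For $G=\mathbb{A}_{6}$ the reduction is sharpest: a $G$-conic bundle would force a homomorphism $\mathbb{A}_{6}\to\mathrm{PGL}(2,\mathbb{C})$, which is impossible by simplicity and representation theory, so $\mathrm{dim}(S)=0$ and $X$ is a $G$-del Pezzo surface with $\mathrm{Pic}^{G}(X)\otimes\mathbb{Q}\cong\mathbb{Q}$. A case analysis on $K_{X}^{2}$ using the character table of $\mathbb{A}_{6}$ and the Lefschetz-type constraints on $H^{2}(X)^{G}$ rules out every value except $X\cong\mathbb{P}^{2}$ with the Valentiner action. Theorem~\ref{theorem:A6} then gives $\mathbb{A}_{6}$-birational superrigidity, and the two complex-conjugate $3$-dimensional irreducible representations of $\mathbb{A}_{6}$ yield $\mathrm{PGL}(3,\mathbb{C})$-conjugate subgroups after composing with the outer automorphism (realised by a Cremona transformation), giving a single conjugacy class.

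For $G=\mathbb{A}_{5}$ the same reduction leaves three candidate Mori fiber spaces: $\mathbb{P}^{2}$ with the irreducible $\mathbb{A}_{5}$-action, $\mathbb{P}^{1}\times\mathbb{P}^{1}$ with the diagonal action (this one occurs both as a del Pezzo candidate and as a conic-bundle candidate, the two coinciding), and the quintic del Pezzo surface $X_{5}$ of Theorem~\ref{theorem:quintic-A5}. Classical Noether--Fano arguments give $\mathbb{A}_{5}$-birational rigidity of the first two, while Theorem~\ref{theorem:quintic-A5} supplies it for $X_{5}$. One then checks pairwise non-conjugacy by noting that a $\mathbb{A}_{5}$-equivariant birational map between two rigid Mori fiber spaces would factor through a $G$-isomorphism of their generic fibers, and these three surfaces have non-isomorphic equivariant Picard lattices and distinct normalizers of $\mathbb{A}_{5}$ in $\mathrm{Aut}(X)$. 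For $G=\mathrm{PSL}(2,\mathbb{F}_{7})$ the reduction yields exactly two candidates, $\mathbb{P}^{2}$ with the Klein action and a degree-$2$ del Pezzo surface arising as the double cover of $\mathbb{P}^{2}$ branched along the Klein quartic, and $G$-birational rigidity of both together with the absence of an equivariant birational map between them produces two conjugacy classes.

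The main obstacle will be the rigidity step~(b), and more specifically ruling out a $G$-equivariant birational map between two non-isomorphic candidate Mori fiber spaces. Following the strategy developed in the proof of Theorem~\ref{theorem:A5-A5} above, this amounts to showing that the Noether--Fano inequality cannot hold for any $G$-invariant mobile linear system $\mathcal{M}$ on one of the candidates: one blows up a putative $G$-maximal centre, uses Lemma~\ref{lemma:orbits} together with the bound on $G$-orbit sizes (from the character tables and the action on $H^{2}(X,\mathbb{Q})$), and applies the local inequality of Theorem~\ref{theorem:I} at the boundary divisors of the blow-up to derive a numerical contradiction. A secondary technical issue is to pin down, for each Mori fiber space $X$, which abstract embeddings $G\hookrightarrow\mathrm{Aut}(X)$ exist up to conjugation in $\mathrm{Aut}(X)$ and which of them become conjugate in $\mathrm{Cr}_{2}(\mathbb{C})$ via outer automorphisms of $G$ realised by birational maps; this is handled separately for each of the three groups using the explicit representation theory supplied by the corresponding classification theorem.
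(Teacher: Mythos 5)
Your proposal takes essentially the same route as the paper, whose proof of this theorem is a one-line reduction to Theorems~\ref{theorem:Cremona-Klein}, \ref{theorem:Cremona-Valentiner}, \ref{theorem:Cremona-Icosahedron}; you have usefully unpacked the implicit logic, namely the $G$-equivariant MMP reduction to $G$-Mori fiber spaces together with the rigidity arguments that prevent equivariant birational maps between distinct candidates. The count $3,1,2$ then follows because, for each group, the listed Mori fiber spaces are pairwise non-square-birational and each carries exactly one conjugacy class of subgroups isomorphic to $G$.

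Two points in your elaboration are misstated, though neither affects the final count. First, for $G\cong\mathbb{A}_{5}$ the surface $\mathbb{P}^{1}\times\mathbb{P}^{1}$ arises \emph{only} as a conic bundle, never as a $G$-Mori del Pezzo surface: since $\mathbb{A}_{5}$ is simple it cannot swap the two rulings, so $\mathrm{Pic}^{G}(\mathbb{P}^{1}\times\mathbb{P}^{1})$ always has rank two. Moreover the canonical representative in Theorem~\ref{theorem:Cremona-Icosahedron} is the product action $\mathbb{A}_{5}\times\mathrm{id}_{\mathbb{P}^{1}}$ rather than the diagonal action; Lemmas~\ref{lemma:A5-3}--\ref{lemma:A5-6} show by an explicit cascade of $G$-equivariant elementary transformations on $\mathbb{F}_{n}$ that the diagonal and twisted-diagonal actions are all birationally conjugate to the product action, so there is a single conic-bundle class, and this cascade argument is needed precisely because the conic bundle is not birationally rigid in the sense of Definition~\ref{definition:rigidity}: one really has to move between different Hirzebruch surfaces. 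Second, for $G\cong\mathbb{A}_{6}$ the Valentiner subgroup of $\mathrm{PGL}(3,\mathbb{C})$ is already unique up to conjugacy (it preserves the unique $\mathbb{A}_{6}$-invariant smooth plane sextic), so the two three-dimensional representations of $3.\mathbb{A}_{6}$ produce the \emph{same} subgroup of $\mathrm{Aut}(\mathbb{P}^{2})$; no Cremona conjugation is required, and indeed Theorem~\ref{theorem:Cremona-Valentiner} states the opposite of what you suggest, namely that the outer automorphisms of $\mathbb{A}_{6}$ are \emph{not} realised by any Cremona transformation (the pairs $(G,\phi)$ and $(G,\phi\circ\sigma)$ are conjugate only when $\sigma$ is inner). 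This matters for counting conjugacy classes of embeddings, but not for counting conjugacy classes of subgroups, which is what the theorem asserts.
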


\begin{proof}
The required assertion (probably known) follows  from
Theorems~\ref{theorem:Cremona-Klein},
\ref{theorem:Cremona-Valentiner},
\ref{theorem:Cremona-Icosahedron}.
\end{proof}

Let $\phi^{\prime}\colon G\to\mathrm{Cr}_{2}(\mathbb{C})$ be
a~monomorphism. If $\phi(G)=\phi^{\prime}(G)$, then
$\phi^{\prime}=\phi\circ\chi$, where $\chi\in\mathrm{Aut}(G)$.

\begin{definition}
\label{definition:conjugate} We say that the~pairs $(G,\phi)$ and
$(G,\phi^{\prime})$ are conjugate if there exists a~birational
automorphism $\epsilon\in\mathrm{Cr}_{2}(\mathbb{C})$ such that
there is a~commutative diagram
$$
\xymatrix{
&&G\ar@{->}[dl]_{\phi}\ar@{->}[dr]^{\phi^{\prime}}&&\\
&\mathrm{Bir}\big(\mathbb{P}^{2}\big)\ar@{->}[rr]_{\omega_{\epsilon}}&&\mathrm{Bir}\big(\mathbb{P}^{2}\big),&}
$$
where $\omega_{\epsilon}$ is an~inner isomorphism such that
$\omega_{\epsilon}(g)=\epsilon \circ g\circ\epsilon^{-1}$ for
every $g\in\mathrm{Cr}_{2}(\mathbb{C})$.
\end{definition}

If $\phi^{\prime}=\phi\circ\chi$ for some inner isomorphism
$\chi\in\mathrm{Aut}(G)$, then $(G,\phi)$ and $(G,\phi^{\prime})$
are conjugate.

\begin{problem}
\label{problem:2} For every $\sigma\in\mathrm{Aut}(G)$, decide
whether $(G,\phi)$ and $(G,\phi\circ\sigma)$ are conjugate~or~not.
\end{problem}

It follows from \cite[Lemma~3.5]{DoIs06} that we can find a~smooth
rational surface $X$, a~mono\-mor\-phism $\upsilon\colon G\to
\mathrm{Aut}(X)$, and a~birational map $\xi\colon
X\dasharrow\nolinebreak \mathbb{P}^2$ such that
$$
\phi\big(g\big)=\xi\circ\upsilon\big(g\big)\circ
\xi^{-1}\in\mathrm{Bir}\big(\mathbb{P}^{2}\big)
$$
for every element $g\in G$. The triple $(X,\xi,\upsilon)$ is not
uniquely defined by the~pair $(G,\phi)$.

\begin{definition}
\label{definition:regularization} We say that the~triple
$(X,\xi,\upsilon)$ is a~regularization of the~pair $(G,\phi)$.
\end{definition}

Let $(X^{\prime},\xi^{\prime},\upsilon^{\prime})$ be a~
regularization of the~pair $(G,\phi^{\prime})$.

\begin{theorem}
\label{theorem:regularization} The~following assertions are
equivalent:
\begin{itemize}
\item the~pairs $(G,\phi)$ and $(G,\phi^{\prime})$ are conjugate,%

\item there is a~birational map $\rho\colon X\dasharrow
X^{\prime}$ such that
$$
\upsilon^{\prime}\big(g\big)=\rho\circ\upsilon\big(g\big)\circ
\rho^{-1}\in\mathrm{Aut}\big(X^{\prime}\big)
$$
for every element $g\in G$.
\end{itemize}
\end{theorem}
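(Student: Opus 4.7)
The statement is essentially a translation between conjugacy in $\mathrm{Cr}_2(\mathbb{C})$ and conjugacy via birational maps upstairs on the regularizing surfaces, so the plan is to unwind the definitions and exhibit the conjugating map explicitly in each direction. Both implications will follow from the defining relations
$$
\phi(g)=\xi\circ\upsilon(g)\circ\xi^{-1},\qquad \phi'(g)=\xi'\circ\upsilon'(g)\circ(\xi')^{-1}
$$
by direct substitution. No deeper geometry (minimal models, $G$-equivariant MMP, etc.) is needed at this level of generality.

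For the implication $(\Leftarrow)$, assuming a birational map $\rho\colon X\dashrightarrow X'$ with $\upsilon'(g)=\rho\circ\upsilon(g)\circ\rho^{-1}$ for every $g\in G$, I would set
$$
\epsilon=\xi'\circ\rho\circ\xi^{-1}\in\mathrm{Bir}(\mathbb{P}^2)=\mathrm{Cr}_2(\mathbb{C})
$$
and verify by a one-line computation that $\epsilon\circ\phi(g)\circ\epsilon^{-1}=\phi'(g)$ for every $g\in G$, which is exactly the condition of Definition~\ref{definition:conjugate}. Conversely, for $(\Rightarrow)$, given $\epsilon\in\mathrm{Cr}_2(\mathbb{C})$ with $\phi'(g)=\epsilon\circ\phi(g)\circ\epsilon^{-1}$, I would set
$$
\rho=(\xi')^{-1}\circ\epsilon\circ\xi\colon X\dashrightarrow X',
$$
which is a birational map because $\xi$, $\xi'$ and $\epsilon$ are, and then check by substitution that $\rho\circ\upsilon(g)\circ\rho^{-1}$ simplifies to $(\xi')^{-1}\circ\phi'(g)\circ\xi'=\upsilon'(g)$ for every $g\in G$.

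There is no serious obstacle: the proof is purely formal once one notices that in both directions the required map is obtained by pre- and post-composing with $\xi$ and $\xi'$ (or their inverses). The only small subtlety worth flagging is that everything happens in the group $\mathrm{Bir}(\mathbb{P}^2)$, so all of $\xi$, $\xi'$, $\epsilon$, $\rho$, $\upsilon(g)$, $\upsilon'(g)$ should be regarded as elements of the appropriate birational groups and composed there; once this is made explicit, the two computations above are automatic and the equivalence is proved.
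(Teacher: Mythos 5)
Your argument is correct; the two conjugation computations are exactly the formal unwinding of Definition~\ref{definition:regularization} and Definition~\ref{definition:conjugate}, and the paper itself gives no independent proof but simply cites \cite[Lemma~3.4]{DoIs06}, where the same purely formal verification is carried out. The only point worth noting (which you do flag) is that $\rho\circ\upsilon(g)\circ\rho^{-1}$ lands in $\mathrm{Aut}(X')$ rather than merely $\mathrm{Bir}(X')$ because it is shown to equal $\upsilon'(g)$, which is biregular by hypothesis.
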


\begin{proof}
See \cite[Lemma~3.4]{DoIs06}.
\end{proof}

It follows from Theorem~\ref{theorem:regularization} that, to
solve Problems~\ref{problem:1} and \ref{problem:2}, we may assume
that there~is a~morphism $\pi\colon X\to S$ that is a~
$\upsilon(G)$-Mori fibration. Then either $S\cong\mathbb{P}^{1}$
or $S$ is a~point.

\begin{theorem}
\label{theorem:simple-subgroups} Suppose that $G$ be a~simple
nonabelian group. Then
$$
G\in \Big\{\mathbb{A}_{5},\mathbb{A}_{6},\mathbb{PSL}\Big(2,\mathbb{F}_{7}\Big)\Big\}.%
$$
\end{theorem}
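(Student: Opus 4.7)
The plan is to use the regularization framework of Definition~\ref{definition:regularization}, together with Theorem~\ref{theorem:regularization}, to transfer the problem to one about $G$-equivariant geometry on a smooth rational surface. Fix a monomorphism $\phi\colon G\hookrightarrow\mathrm{Cr}_{2}(\mathbb{C})$, and choose a regularization $(X,\xi,\upsilon)$ of $(G,\phi)$. After running a $\upsilon(G)$-equivariant minimal model program on $X$ (which terminates since $X$ is rational), I may assume that $\pi\colon X\to S$ is a $\upsilon(G)$-Mori fibration. Thus either $\dim(S)=0$ and $X$ is a (possibly singular) del Pezzo surface with terminal singularities such that $\mathrm{rk}\,\mathrm{Pic}^{G}(X)=1$, or $\dim(S)=1$ and $\pi\colon X\to\mathbb{P}^{1}$ is a $G$-equivariant conic bundle.

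I would first handle the conic bundle case. The projection $\pi$ yields a homomorphism $\eta\colon G\to\mathrm{Aut}(\mathbb{P}^{1})\cong\mathbb{PGL}(2,\mathbb{C})$. Because $G$ is simple nonabelian, either $\eta$ is trivial or $\eta$ is injective. In the second case the image of $\eta$ is a finite simple nonabelian subgroup of $\mathbb{PGL}(2,\mathbb{C})$, and by the classical classification of finite subgroups of $\mathbb{PGL}(2,\mathbb{C})$ the only such subgroup is $\mathbb{A}_{5}$. In the first case $G$ acts faithfully on the geometric generic fiber, which is a smooth conic, so $G$ embeds in $\mathrm{PGL}(2,\overline{k(\mathbb{P}^{1})})$; again the only finite simple nonabelian possibility is $\mathbb{A}_{5}$. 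So the conic bundle case produces only $G\cong\mathbb{A}_{5}$.

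Now suppose $\dim(S)=0$. Then $X$ is a del Pezzo surface (with at worst terminal, hence Du Val, singularities) and $\mathrm{rk}\,\mathrm{Pic}^{G}(X)=1$. I would split by the degree $K_{X}^{2}\in\{1,2,\ldots,9\}$ and invoke the Dolgachev--Iskovskikh classification of finite subgroups of automorphism groups of del Pezzo surfaces (cited throughout the paper as \cite{DoIs06}) to enumerate the finite simple nonabelian subgroups of $\mathrm{Aut}(X)$. The outcome of this enumeration is:
\begin{itemize}
\item on $\mathbb{P}^{2}$ one finds $\mathbb{A}_{5}$ (icosahedral), $\mathbb{A}_{6}$ (Valentiner) and $\mathrm{PSL}(2,\mathbb{F}_{7})$ (Klein);
\item on $\mathbb{P}^{1}\times\mathbb{P}^{1}$ the simple nonabelian subgroups come from $\mathbb{PGL}(2,\mathbb{C})\times\mathbb{PGL}(2,\mathbb{C})$, giving only $\mathbb{A}_{5}$;
\item on a smooth del Pezzo surface of degree $5$ one has $\mathrm{Aut}(X)\cong\mathbb{S}_{5}$, so only $\mathbb{A}_{5}$;
\item for degrees $4$, $3$, $2$, $1$ the full automorphism group has been computed, and a direct inspection of its composition factors shows that the only finite simple nonabelian subgroups are again $\mathbb{A}_{5}$;
\item for singular del Pezzo surfaces with $\mathrm{rk}\,\mathrm{Pic}^{G}=1$ one checks by the same method (using that $G$ must permute the Du Val singular points and the $(-1)$-curves on the minimal resolution) that no new simple nonabelian group appears.
\end{itemize}

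Combining the two cases gives $G\in\{\mathbb{A}_{5},\mathbb{A}_{6},\mathrm{PSL}(2,\mathbb{F}_{7})\}$. The main obstacle is the thorough case-by-case check on del Pezzo surfaces of low degree; the geometric input is not deep for each individual surface, but the enumeration is long and must be carried out for every degree and every type of Du Val configuration. Once this enumeration is available from \cite{DoIs06}, the theorem follows at once.
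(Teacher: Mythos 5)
Your overall plan — regularize, run the $G$-equivariant MMP to reach a $G$-Mori fibration $\pi\colon X\to S$, then split on $\dim S$ — is exactly what the paper does, and the conic-bundle case is handled the same way. But two things in the $\dim S=0$ branch need attention.

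First, a factual slip: your claim that for degrees $4,3,2,1$ ``a direct inspection of its composition factors shows that the only finite simple nonabelian subgroups are again $\mathbb{A}_{5}$'' is false for degree $2$. The double cover of $\mathbb{P}^{2}$ branched over the Klein quartic is a smooth degree $2$ del Pezzo surface with automorphism group $\mathrm{PSL}(2,\mathbb{F}_{7})\times\mathbb{Z}_{2}$, so $\mathrm{PSL}(2,\mathbb{F}_{7})$ does occur there — this is precisely the second possibility in Theorem~\ref{theorem:Cremona-Klein}. The error does not affect the final list since $\mathrm{PSL}(2,\mathbb{F}_{7})$ also appears on $\mathbb{P}^{2}$, but it would need correction.

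Second, your enumeration is more laborious than necessary, and in one respect spurious. The paper does not sweep through all degrees $1$ through $9$ and all Du Val configurations; it quotes from \cite{DoIs06} the much sharper statement that, for a simple nonabelian $G$ and a $G$-Mori fibration with $\dim S=0$, the only possibilities for $X$ are $\mathbb{P}^{2}$ or a smooth del Pezzo surface with $K_{X}^{2}\in\{2,5\}$; the three cases are then dispatched immediately. Note also that by Definition~\ref{definition:Mori-fiber-space} the surface $X$ has terminal singularities, and terminal surface singularities are smooth points, so the case of Du Val singular del Pezzo surfaces you worry about does not actually arise. If one wishes to avoid black-boxing the precise form of the classification in \cite{DoIs06}, your degree-by-degree scan is a viable fallback, but it must be carried out carefully (the degree $2$ case above being exactly the sort of thing one would miss without care).
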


\begin{proof}
The required assertion follows from \cite{DoIs06}. Indeed, suppose
that $S\cong\mathbb{P}^{1}$. Then
$$
G\subset\mathrm{Aut}\big(\mathbb{P}^{1}\big)\cong\mathbb{PGL}\Big(2,\mathbb{C}\Big),
$$
because $\upsilon(G)$ acts nontrivially either on the~fiber of
$\pi$ or on $\pi(X)\cong\mathbb{P}^{1}$. Then
$G\cong\mathbb{A}_{5}$ (see \cite{Spr77}).

We may assume that $S$ is a~point. Then either
$X\cong\mathbb{P}^{2}$ or $K_{X}^{2}\in\{2,5\}$ (see
\cite{DoIs06}).

In the~case when $K_{X}^{2}=5$, we have
$\mathrm{Aut}(X)\cong\mathbb{S}_{5}$ and $G\cong\mathbb{A}_{5}$
(see \cite{DoIs06}). Similarly, we have
$$
G\cong\mathbb{PSL}\Big(2,\mathbb{F}_{7}\Big)
$$
in the~case when $K_{X}^{2}=2$ (see \cite{DoIs06}). In the~case
when $X\cong\mathbb{P}^{2}$, it is well-known (see \cite{DoIs06})
that
$$
G\in\Big\{\mathbb{A}_{5},\mathbb{A}_{6},\mathbb{PSL}\Big(2,\mathbb{F}_{7}\Big)\Big\},
$$
which completes the~proof.
\end{proof}

Let us identify the~subgroup $\upsilon(G)\subset\mathrm{Aut}(X)$
with the~group $G$.

\begin{theorem}
\label{theorem:Cremona-Klein} Suppose that
$G\cong\mathbb{PSL}(2,\mathbb{F}_{7})$. Then
\begin{itemize}
\item the~variety $S$ is a~point,%

\item the~surface $X$ is $G$-birationally superrigid,%

\item one of the~following two possibilities holds:
\begin{itemize}
\item $X\cong\mathbb{P}^{2}$, and $G$ is conjugate to a~subgroup
that leaves invariant the~curve
\begin{equation}
\label{equation:quartic-curve}
x^{3}y+y^{3}z+z^{3}x=0\subset\mathrm{Proj}\Big(\mathbb{C}\big[x,y,z\big]\Big)\cong\mathbb{P}^{2},
\end{equation}

\item $X$ is a~double cover of $\mathbb{P}^{2}$ branched along
the~curve~\ref{equation:quartic-curve},
\end{itemize}
\item for every  $\sigma\in\mathrm{Aut}(G)$, the~following
conditions are equivalent:
\begin{itemize}
\item the~pairs $(G,\phi)$ and $(G,\phi\circ\sigma)$ are conjugate,%
\item the~isomorphism $\sigma$ is an inner isomorphism.%
\end{itemize}
\end{itemize}
\end{theorem}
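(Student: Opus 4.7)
The plan is to follow the template of the proof of Theorem~\ref{theorem:simple-subgroups}, refining it for this specific group, and then establish rigidity and non-conjugacy. First I would rule out the conic bundle case $\mathrm{dim}(S)=1$. If $\pi\colon X\to\mathbb{P}^{1}$ were a $G$-Mori fibration, then $G$ would act on the base through some quotient; since $G$ is simple non-abelian and the only finite simple non-abelian subgroup of $\mathrm{Aut}(\mathbb{P}^{1})$ is $\mathbb{A}_{5}$, $G$ acts trivially on $\mathbb{P}^{1}$, hence embeds into the automorphism group of the generic fiber, which is a conic $Q$ over $\mathbb{C}(t)$. By Tsen's theorem $Q\cong\mathbb{P}^{1}_{\mathbb{C}(t)}$, and finite subgroups of $\mathrm{PGL}(2,\mathbb{C}(t))$ are still exhausted by the classical list, contradicting $G\cong\mathbb{PSL}(2,\mathbb{F}_{7})$. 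Thus $\mathrm{dim}(S)=0$, and $X$ is a Fano surface with $\mathrm{Pic}^{G}(X)\cong\mathbb{Z}$. Invoking the Dolgachev--Iskovskikh classification (as used in the proof of Theorem~\ref{theorem:simple-subgroups}), the only possibilities for which $G$ embeds in $\mathrm{Aut}(X)$ are $X\cong\mathbb{P}^{2}$ and the del Pezzo surfaces with $K_{X}^{2}=2$.

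Next I would identify the geometry in each case. If $X\cong\mathbb{P}^{2}$ then $G\subset\mathrm{PGL}(3,\mathbb{C})$ comes from one of the two irreducible $3$-dimensional complex representations of $\mathrm{PSL}(2,\mathbb{F}_{7})$ (which are interchanged by the outer automorphism). Classical invariant theory shows that the unique $G$-invariant plane quartic is the Klein quartic~\eqref{equation:quartic-curve}. If $K_{X}^{2}=2$, then the anticanonical morphism $|-K_{X}|\colon X\to\mathbb{P}^{2}$ is a double cover branched along a smooth quartic $B$; the $G$-action descends (via the Geiser involution) to $\mathbb{P}^{2}$ preserving $B$, so $B$ must be the Klein quartic.

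For the $G$-birational superrigidity I would apply Lemma~\ref{lemma:orbits} in the $\mathbb{P}^{2}$ case. It suffices to show that every $G$-orbit $\Sigma\subset\mathbb{P}^{2}$ has $|\Sigma|\geqslant 9=K_{X}^{2}$. The stabilizer $H$ of any point must act faithfully via an element of $\mathrm{GL}(2,\mathbb{C})$ on the tangent space (since $G\subset\mathrm{SL}(3,\mathbb{C})$ irreducibly), which rules out $H\in\{S_{4},\,7\rtimes 3,\,A_{4}\}$; the remaining subgroups of $\mathrm{PSL}(2,\mathbb{F}_{7})$ all have index at least $21$, giving $|\Sigma|\geqslant 21>9$. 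For $K_{X}^{2}=2$ the orbit bound $|\Sigma|\geqslant 2$ is trivially satisfied, but the proof of superrigidity must be refined: following \cite{ChSh08c}, if $(X,\lambda\mathcal{M})$ is not canonical for some mobile $G$-invariant linear system $\mathcal{M}$ with $K_{X}+\lambda\mathcal{M}\equiv 0$, then the Geiser involution $\gamma\in\mathrm{Bir}^{G}(X)$ untwists the maximal singularity (one reduces, via Theorem~\ref{theorem:G-rigidity} and a Noether--Fano type computation using $K_{X}^{2}=2$ and the orbit size bounds on $\mathbb{P}^{2}$, to the canonical case), proving superrigidity.

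Finally, for the conjugacy assertion I would argue as follows. By superrigidity together with Remark~\ref{remark:Fanos-rigid-superrigid}, any $G$-equivariant birational map between two copies of the same variety $X$ carrying possibly different $G$-actions must be biregular. Given $\sigma\in\mathrm{Aut}(G)$, the pairs $(G,\phi)$ and $(G,\phi\circ\sigma)$ are conjugate in $\mathrm{Cr}_{2}(\mathbb{C})$ if and only if there is $\rho\in\mathrm{Aut}(X)$ with $\rho\phi(g)\rho^{-1}=\phi(\sigma(g))$ for all $g\in G$, i.e.\ $\sigma$ is realized by conjugation inside $\mathrm{Aut}(X)$. In the $\mathbb{P}^{2}$ case, by Schur's lemma the normalizer $N_{\mathrm{PGL}(3,\mathbb{C})}(\phi(G))$ modulo $\phi(G)$ injects into $\mathrm{Out}(G)\cong\mathbb{Z}/2$ via the induced action on the given $3$-dimensional representation; since the two inequivalent $3$-dimensional representations of $G$ are interchanged by the outer automorphism, this quotient is trivial, so only inner $\sigma$ realize conjugate pairs. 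In the degree-$2$ case, $\mathrm{Aut}(X)=\phi(G)\times\langle\gamma\rangle$ with $\gamma$ central, so again $N_{\mathrm{Aut}(X)}(\phi(G))/\phi(G)\cdot C_{\mathrm{Aut}(X)}(\phi(G))$ is trivial in $\mathrm{Out}(G)$. The main obstacle is the superrigidity statement for the degree-$2$ case under the group $\mathrm{PSL}(2,\mathbb{F}_{7})$, which requires the finer orbit-plus-Geiser argument rather than the clean Lemma~\ref{lemma:orbits} reduction available on $\mathbb{P}^{2}$.
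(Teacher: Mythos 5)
Your structure matches the paper's, and the parts on ruling out $\dim(S)=1$, identifying the two possible regularizations via Dolgachev--Iskovskikh, the $\mathbb{P}^{2}$ superrigidity via Lemma~\ref{lemma:orbits}, and the conjugacy argument via normalizers in $\mathrm{Aut}(X)$ are all essentially the paper's argument. Your sharper orbit bound $|\Sigma|\geqslant 21$ on $\mathbb{P}^{2}$ (from classifying which subgroups of $\mathrm{PSL}(2,\mathbb{F}_{7})$ admit faithful two-dimensional linearizations) is correct and slightly more explicit than the paper's quoted bound of $12$.

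Where you go astray is the superrigidity of the degree-$2$ del Pezzo surface. You write that Lemma~\ref{lemma:orbits} does not suffice and that one needs a ``finer orbit-plus-Geiser argument'' in which the Geiser involution untwists the maximal singularity. This is wrong on two counts. First, Lemma~\ref{lemma:orbits} requires only $|\Sigma|\geqslant K_{X}^{2}$ for every $G$-orbit $\Sigma$; since $K_{X}^{2}=2$ and a fixed point would force a faithful two-dimensional representation of the simple group $G$ on the tangent space (which does not exist), every orbit has $|\Sigma|\geqslant 2$, and Lemma~\ref{lemma:orbits} applies verbatim. That is exactly what the paper does. Second, your proposed fix cannot work as stated: on a degree-$2$ del Pezzo surface the Geiser involution is the deck transformation of the anticanonical double cover $X\to\mathbb{P}^{2}$ and hence lies in $\mathrm{Aut}(X)$; a biregular automorphism $\gamma$ sends a non-canonical pair $(X,\lambda\mathcal{M})$ to the non-canonical pair $(X,\lambda\gamma(\mathcal{M}))$, so it cannot untwist anything in the sense of Definition~\ref{definition:untwisting}. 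The untwisting machinery is only relevant when $\mathrm{Bir}^{G}(X)\supsetneq\mathrm{Aut}^{G}(X)$, which by Remark~\ref{remark:Fanos-rigid-superrigid} is precisely the situation one is trying to exclude.
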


\begin{proof}
Arguing as in the~proof of Theorem~\ref{theorem:simple-subgroups},
we see that
\begin{itemize}
\item the~variety $S$ is a~point,%

\item one of the~following two possibilities holds:
\begin{itemize}
\item $X\cong\mathbb{P}^{2}$, and $G$ is conjugate to a~subgroup
that leaves invariant the~curve~\ref{equation:quartic-curve},

\item $X$ is a~double cover of $\mathbb{P}^{2}$ branched along
the~curve~\ref{equation:quartic-curve}.
\end{itemize}
\end{itemize}

Any $G$-orbit in $X$ consists of least $2$ points. Indeed, the~
group $G$ is simple, which implies that
\begin{itemize}
\item the~group $G$ does not have faithful two-dimensional representations,%
\item the~group $G$ does not have subgroups of index two.%
\end{itemize}

If $X\cong\mathbb{P}^{2}$, then any $G$-orbit in $X$ contains at
least $12$ points (see \cite{Spr77} or \cite{YauYu93}).

It follows from Lemma~\ref{lemma:orbits} that $X$ is
$G$-birationally superrigid.

Take any $\sigma\in\mathrm{Aut}(G)$ such that the~pairs $(G,\phi)$
and $(G,\phi\circ\sigma)$ are conjugate. Then
there~is~a~birational map $\rho\colon X\dasharrow X$ such that
$$
\upsilon\circ\sigma\big(g\big)=\rho\circ\upsilon\big(g\big)\circ\rho^{-1}%
$$
for every $g\in G$ by Theorem~\ref{theorem:regularization}. Then
$\rho\in\mathrm{Aut}(X)$, because $X$ is $G$-birationally
superrigid.~Put
$$
\hat{G}=\big\langle G,\rho\big\rangle,
$$
where we identify the~subgroup $\upsilon(G)$ with the~group $G$.
Then $\hat{G}$ is a finite subgroup in $\mathrm{Aut}^{G}(X)$,
which implies that $\hat{G}=G$, because $\mathrm{Aut}^{G}(X)=G$ if
$X\cong\mathbb{P}^{2}$, and
$$
\mathrm{Aut}\big(X\big)=\mathrm{Aut}^{G}\big(X\big)\cong\mathbb{PSL}\Big(2,\mathbb{F}_{7}\Big)\times\mathbb{Z}_{2}
$$
if $X\not\cong\mathbb{P}^{2}$. Then $\rho\in G$, which means that
$\sigma$ is an~inner isomorphism of the~group $G$.
\end{proof}

\begin{theorem}
\label{theorem:Cremona-Valentiner} Suppose that
$G\cong\mathbb{A}_{6}$. Then
\begin{itemize}
\item $X\cong\mathbb{P}^{2}$ and $X$ is $G$-birationally superrigid,%

\item the~subgroup $G$ is conjugate to a~subgroup that leaves
invariant the~curve
\begin{equation}
\label{equation:sextic-curve}
10x^{3}y^{3}+9zx^{5}+9zy^{5}+27z^{6}=45x^{2}y^{2}z^{2}+135xyz^{4}\subset\mathbb{P}^{2}\cong\mathrm{Proj}\Big(\mathbb{C}\big[x,y,z\big]\Big),
\end{equation}
\item for every $\sigma\in\mathrm{Aut}(G)$, the~following
conditions are equivalent:
\begin{itemize}
\item the~pairs $(G,\phi)$ and $(G,\phi\circ\sigma)$ are conjugate,%
\item the~isomorphism $\sigma$ is an inner isomorphism.%
\end{itemize}
\end{itemize}
\end{theorem}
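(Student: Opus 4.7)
The plan is to follow the strategy used in Theorem~\ref{theorem:Cremona-Klein} almost verbatim, substituting Theorem~\ref{theorem:A6} at the key superrigidity step. First I would reduce to $X\cong\mathbb{P}^{2}$ using the classification of minimal $G$-Mori fibrations carried out in the proof of Theorem~\ref{theorem:simple-subgroups}. The case $S\cong\mathbb{P}^{1}$ is excluded because it would yield an embedding of the simple group $\mathbb{A}_{6}$ into $\mathrm{PGL}(2,\mathbb{C})$, contradicting the fact that the only simple non-abelian subgroup of $\mathrm{PGL}(2,\mathbb{C})$ is $\mathbb{A}_{5}$. When $S$ is a point one has either $X\cong\mathbb{P}^{2}$ or $K_{X}^{2}\in\{2,5\}$; the case $K_{X}^{2}=5$ is ruled out since $\mathrm{Aut}(X)\cong\mathbb{S}_{5}$ contains no copy of $\mathbb{A}_{6}$, and $K_{X}^{2}=2$ because then $\mathrm{Aut}(X)\cong\mathrm{PSL}(2,\mathbb{F}_{7})\times\mathbb{Z}_{2}$ also contains no copy of $\mathbb{A}_{6}$.

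Next, for $X\cong\mathbb{P}^{2}$ the embedding $G\subset\mathrm{PGL}(3,\mathbb{C})$ is, up to projective equivalence, one of the two complex-conjugate three-dimensional projective irreducible representations of $\mathbb{A}_{6}$, realised via the Valentiner triple cover $3.\mathbb{A}_{6}\subset\mathrm{SL}(3,\mathbb{C})$. Classical invariant theory of the Valentiner action (the lowest-degree generator of the invariant ring sits in degree $6$) shows that up to a linear change of coordinates the unique $G$-invariant sextic on $\mathbb{P}^{2}$ is the curve~\ref{equation:sextic-curve}, which gives the claimed conjugacy. The $G$-birational superrigidity of $\mathbb{P}^{2}$ is then exactly the content of Theorem~\ref{theorem:A6}, which has already been established.

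For the conjugacy dichotomy I would argue as in the proof of Theorem~\ref{theorem:Cremona-Klein}. Take $\sigma\in\mathrm{Aut}(G)$ such that $(G,\phi)$ and $(G,\phi\circ\sigma)$ are conjugate; by Theorem~\ref{theorem:regularization} there exists a birational map $\rho\colon\mathbb{P}^{2}\dashrightarrow\mathbb{P}^{2}$ with $\upsilon(\sigma(g))=\rho\circ\upsilon(g)\circ\rho^{-1}$ for every $g\in G$. Since $\mathbb{P}^{2}$ is $G$-birationally superrigid and $\rho$ is $G$-equivariant between the two resulting $G$-Mori structures on $\mathbb{P}^{2}$, the map $\rho$ must be biregular, i.e. $\rho\in\mathrm{Aut}(\mathbb{P}^{2})=\mathrm{PGL}(3,\mathbb{C})$. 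Then the finite group $\hat{G}=\langle G,\rho\rangle$ sits inside the normaliser $N_{\mathrm{PGL}(3,\mathbb{C})}(G)$. Conversely, if $\sigma$ is inner then conjugation by an element of $G$ realises the required equivalence trivially.

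The main obstacle is therefore the identification $N_{\mathrm{PGL}(3,\mathbb{C})}(G)=G$. Schur's lemma applied to the irreducible representation $3.\mathbb{A}_{6}\to\mathrm{SL}(3,\mathbb{C})$ shows that the centraliser of $G$ in $\mathrm{PGL}(3,\mathbb{C})$ is trivial, so the quotient $N/G$ embeds into $\mathrm{Out}(\mathbb{A}_{6})\cong(\mathbb{Z}/2\mathbb{Z})^{2}$. The crux is then the classical fact that every non-trivial outer automorphism of $\mathbb{A}_{6}$ permutes its irreducible projective representations non-trivially, and in particular swaps the two inequivalent three-dimensional ones; hence no outer automorphism can be induced by conjugation inside a single $\mathrm{PGL}(3,\mathbb{C})$. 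Therefore $N=G$, forcing $\rho\in G$ and $\sigma$ to be inner, which completes the argument.
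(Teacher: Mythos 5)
Your reduction to $X\cong\mathbb{P}^{2}$, the identification of the invariant sextic via the Valentiner representation, and the appeal to Theorem~\ref{theorem:A6} for superrigidity all follow the paper's route and are correct. The gap is in the final normaliser step. You argue that $N_{\mathrm{PGL}(3,\mathbb{C})}(G)/G$ embeds in $\mathrm{Out}(\mathbb{A}_{6})\cong(\mathbb{Z}/2\mathbb{Z})^{2}$ and then assert that every non-trivial outer automorphism of $\mathbb{A}_{6}$ swaps the two inequivalent three-dimensional projective representations. That assertion is false: the action of $\mathrm{Out}(\mathbb{A}_{6})$ on the two-element set of Valentiner representations cannot be faithful, because $(\mathbb{Z}/2\mathbb{Z})^{2}$ does not embed in $\mathbb{Z}/2\mathbb{Z}$; since the $\mathbb{S}_{6}$-class does swap them (it exchanges the two $\mathbb{A}_{6}$-classes of $5$-cycles, on which the Valentiner characters differ by $\sqrt{5}\mapsto -\sqrt{5}$), exactly one non-trivial outer class (the $\mathrm{M}_{10}$-class) fixes each. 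Your argument therefore leaves open the possibility $N/G\cong\mathbb{Z}/2\mathbb{Z}$ realised by that class, and an extra step is needed to exclude it.

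The paper closes this gap by a shorter and self-contained observation: any $\rho\in\mathrm{Aut}^{G}(\mathbb{P}^{2})$ must preserve the unique $G$-invariant sextic $C$ (the Wiman curve~\ref{equation:sextic-curve}), and since $\mathrm{Aut}(C)\cong\mathbb{A}_{6}=G$ one gets $\rho\in G$ directly, hence $\mathrm{Aut}^{G}(\mathbb{P}^{2})=G$ without discussing outer automorphisms at all. If you want to keep your representation-theoretic approach you would need to add, for instance, that $\mathrm{M}_{10}$ (of order $720$) has no faithful three-dimensional projective representation, equivalently does not occur among the finite primitive subgroups of $\mathrm{PGL}(3,\mathbb{C})$; but this imports Blichfeldt's classification, whereas the paper's invariant-sextic argument is elementary and already supplies everything needed.
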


\begin{proof}
Arguing as in the~proof of Theorem~\ref{theorem:Cremona-Klein}, we
see that
$$
X\cong\mathbb{P}^{2},
$$
and $G$ is conjugate to a~subgroup that leaves invariant
the~curve~\ref{equation:sextic-curve}.

It follows from Theorem~\ref{theorem:A6} that $X$ is
$G$-birationally superrigid.

Let $C$ be the~curve~\ref{equation:quartic-curve}. Then we may
assume that $C$ is $G$-invariant.

Let $\rho$ be any element in $\mathrm{Aut}^{G}(X)$, and let $g$ be
any element in $G$. Then
$$
g\Big(\rho\big(C\big)\Big)=\rho\Big(g^{\prime}\big(C\big)\Big)=\rho\big(C\big)%
$$
for some $g^{\prime}\in G$. But $C$ is the~only sextic curve in
$\mathbb{P}^{2}$ that is $G$-invariant. Then
$$
\rho\big(C\big)=C,
$$
which implies that $\rho\in G$, because $G\cong\mathrm{Aut}(C)$.

Now arguing as in the~proof of
Theorem~\ref{theorem:Cremona-Klein}, we conclude the~proof.
\end{proof}

\begin{theorem}
\label{theorem:Cremona-Icosahedron} Suppose that
$G\cong\mathbb{A}_{5}$. Then
\begin{itemize}
\item for every $\sigma\in\mathrm{Aut}(G)$, the~pairs $(G,\phi)$ and $(G,\phi\circ\sigma)$ are conjugate,%
\item one of the~following possibilities holds:
\begin{itemize}
\item $X$ is a~blow up of $\mathbb{P}^{2}$ at any four points in
general~position, the~variety $S$ is a~point, the~surface
$X$ is $G$-birationally superrigid,~and $\mathrm{Aut}(X)\cong\mathbb{S}_{5}$,%

\item $X\cong\mathbb{P}^{2}$, the~variety $X$ is $G$-birationally
rigid, and
$$
\mathbb{A}_{5}\cong\mathrm{Aut}^{\mathbb{A}_{5}}\Big(\mathbb{P}^{2}\Big)\subsetneq\mathrm{Bir}^{\mathbb{A}_{5}}\Big(\mathbb{P}^{2}\Big)\cong\mathbb{S}_{5},
$$
\item $X\cong\mathbb{F}_{n}$, where $n\in\mathbb{N}\cup\{0\}$ and
$n$ is even, there is a~birational map $\rho\colon
X\dasharrow\mathbb{P}^{1}\times\mathbb{P}^{1}$ that induces
the~monomorphism $\bar{\upsilon}\colon
G\to\mathrm{Aut}(\mathbb{P}^{1}\times\mathbb{P}^{1})$ such that
$$
\bar{\upsilon}\big(g\big)=\rho\circ\upsilon\big(g\big)\circ\rho^{-1}
$$
for every $g\in G$, and $\bar{\upsilon}$ is induced by
the~product action of $\mathbb{A}_{5}\times \mathrm{id}_{\mathbb{P}^{1}}$ on $\mathbb{P}^{1}\times\mathbb{P}^{1}$.%
\end{itemize}
\end{itemize}
\end{theorem}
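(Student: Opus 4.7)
The plan is to carry out a case analysis based on the structure of the $G$-Mori fibration $\pi\colon X\to S$ arising from a regularization $(X,\xi,\upsilon)$ of $(G,\phi)$. Since $G\cong\mathbb{A}_5$ is simple, either $\mathrm{dim}(S)=0$ and $X$ is a $G$-del Pezzo surface with $\mathrm{Pic}^{G}(X)\cong\mathbb{Z}$, or $\mathrm{dim}(S)=1$ and $S\cong\mathbb{P}^1$ with $X\to S$ a $G$-conic bundle. First I would dispose of the conic bundle case: because $\mathbb{A}_5$ has no subgroup of index $2$, the conic bundle has no reducible fibers, so $X$ is a geometrically ruled surface, i.e. some Hirzebruch surface $\mathbb{F}_n$. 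The $G$-action on the base $\mathbb{P}^1$ is either trivial or faithful (as $\mathbb{A}_5$ is simple); in either event, diagonalizing the $\mathbb{A}_5$-representation on the relevant direct summand of $\pi_*\mathcal{O}_X(-K_{X/S})$ and using that every nontrivial two-dimensional projective representation of $\mathbb{A}_5$ is the icosahedral one (cf.\ \cite{Spr77}), one shows that $n$ must be even (the odd case forces an $\mathbb{SL}(2,\mathbb{C})$-lift that $\mathbb{A}_5$ does not admit from $\mathbb{PSL}(2,\mathbb{C})$ without going to the binary icosahedral group, which is incompatible with the required $\mathbb{Z}_n$-twisting). One then constructs a birational map $\rho\colon\mathbb{F}_n\dasharrow\mathbb{P}^1\times\mathbb{P}^1$ by elementary transformations at $G$-invariant pairs of sections (which exist because $n$ is even) to realize the $G$-action as the diagonal product action.

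Next I would handle the del Pezzo case. Using the standard classification of $G$-Mori del Pezzo surfaces with $\mathrm{Pic}^{G}(X)\cong\mathbb{Z}$ for $G\cong\mathbb{A}_5$ (see \cite{DoIs06}), the only possibilities are $X\cong\mathbb{P}^2$ and $X$ the smooth quintic del Pezzo surface, because $\mathbb{A}_5$ does not embed into the Weyl groups of the remaining relevant lattices in a way that leaves only a one-dimensional invariant sublattice of $\mathrm{Pic}(X)$. The quintic del Pezzo case is then immediate from Theorem~\ref{theorem:quintic-A5}: it gives $\mathrm{Aut}(X)\cong\mathbb{S}_5$ and $\mathbb{A}_5$-birational superrigidity. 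For $X\cong\mathbb{P}^2$ I would use $\mathrm{Aut}^{\mathbb{A}_5}(\mathbb{P}^2)\cong\mathbb{A}_5$ (since $\mathbb{A}_5$ is simple and acts irreducibly, its normalizer in $\mathbb{PGL}(3,\mathbb{C})$ is itself), and produce the extra involution in $\mathrm{Bir}^{\mathbb{A}_5}(\mathbb{P}^2)$ as follows: blow up a general $\mathbb{A}_5$-orbit of length $6$ (which exists because $\mathbb{A}_5\subset\mathrm{PGL}(3,\mathbb{C})$ has an irreducible three-dimensional representation and thus a $6$-point orbit on the conic it preserves) to arrive at the quintic del Pezzo; the outer $\mathbb{S}_5/\mathbb{A}_5$-element of $\mathrm{Aut}$ of that surface descends to a Geiser/Bertini-type involution of $\mathbb{P}^2$. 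Combined with Theorem~\ref{theorem:regularization} and an application of Corollary~\ref{corollary:G-rigidity} (using the arguments in the proof of Theorem~\ref{theorem:quintic-A5} to untwist $\mathbb{A}_5$-maximal singularities by this single involution), this establishes $\mathbb{A}_5$-birational rigidity of $\mathbb{P}^2$ and the isomorphism $\mathrm{Bir}^{\mathbb{A}_5}(\mathbb{P}^2)\cong\mathbb{S}_5$.

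Finally I would settle the conjugacy of $(G,\phi)$ and $(G,\phi\circ\sigma)$ for $\sigma\in\mathrm{Aut}(G)$. Since $\mathrm{Aut}(\mathbb{A}_5)\cong\mathbb{S}_5$, it is enough to realize a single outer automorphism $\sigma_0$ by a birational self-map. In each of the three geometric cases above this is direct: on the quintic del Pezzo the extra factor in $\mathrm{Aut}(X)\cong\mathbb{S}_5$ realizes $\sigma_0$ biregularly; on $\mathbb{P}^2$ the Bertini-type involution constructed in the previous paragraph conjugates the $\mathbb{A}_5$-action into itself by $\sigma_0$ (as can be checked on the cohomology of the quintic del Pezzo resolution, where the $\mathbb{S}_5$-action permutes the two $\mathbb{A}_5$-orbits of exceptional configurations); on $\mathbb{P}^1\times\mathbb{P}^1$ the swap of the two factors implements $\sigma_0$. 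By Theorem~\ref{theorem:regularization}, these birational maps show that $(G,\phi)$ and $(G,\phi\circ\sigma_0)$ are conjugate, hence so are $(G,\phi)$ and $(G,\phi\circ\sigma)$ for every $\sigma\in\mathrm{Aut}(G)$.

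The main obstacle I expect is the conic bundle analysis: ruling out odd $n$ and odd Hirzebruch bundles with a nontrivial $\mathbb{A}_5$-action on the base, and then constructing the $G$-equivariant birational map $\mathbb{F}_n\dasharrow\mathbb{P}^1\times\mathbb{P}^1$ realising the product action. The algebraic computations are routine once one knows that $\mathbb{A}_5$ has no two-dimensional linear (as opposed to projective) representations of its own, but one has to be careful about which lifts of the icosahedral subgroup of $\mathrm{Aut}(\mathbb{P}^1)$ to $\mathrm{Aut}(\mathbb{F}_n)$ actually occur, which is what forces $n$ to be even and simultaneously yields the product-action reduction.
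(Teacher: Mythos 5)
Your case analysis tracks the paper's quite closely (conic bundle versus del Pezzo, with the del Pezzo splitting into $\mathbb{P}^2$ and the degree-$5$ surface, conjugacy via an explicit birational map in each case), but there is a genuine error in the $\mathbb{P}^2$ branch. You propose to blow up a length-$6$ orbit ``on the conic it preserves'' and arrive at ``the quintic del Pezzo.'' Both claims are wrong. The $\mathbb{A}_5$-orbits on the invariant conic $C\cong\mathbb{P}^1$ have sizes $12$, $20$, $30$ or $60$, never $6$; the unique $6$-point orbit $\Sigma$ in $\mathbb{P}^2$ is in fact \emph{disjoint} from $C$ (it is $\mathrm{Sing}(R)$ for the unique six-nodal invariant sextic $R$). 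And blowing up six points of $\mathbb{P}^2$ yields a surface with $K^2=3$, i.e.\ a cubic surface --- here the Clebsch diagonal cubic $W$ with $\mathrm{Aut}(W)\cong\mathbb{S}_5$ --- not the quintic del Pezzo, which is the blow-up of four points. Consequently the resulting Cremona involution is not ``Geiser/Bertini-type'' (those belong to degree-$2$ and degree-$1$ del Pezzo surfaces): it is the degree-$5$ Cremona map given by quintics singular at $\Sigma$, with $\theta^*H\sim 5H-2E$ and $\theta^*E\sim 12H-5E$ on $W$. The subsequent conjugacy argument (``as can be checked on the cohomology of the quintic del Pezzo resolution'') inherits the same misidentification, so that paragraph as stated would not compile into a correct proof. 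Once you replace ``quintic del Pezzo'' by ``Clebsch cubic surface'' and drop the claim that $\Sigma\subset C$, the strategy aligns with the paper's, but the untwisting step still requires the multiplicity computation carried out in the paper (showing every $G$-orbit that can carry a maximal centre has exactly $6$ points and equals $\Sigma$, then checking that $\tau$ improves the log canonical threshold); your sketch just gestures at it.

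On the conic bundle side you take a genuinely different route: you want to rule out odd $n$ by a representation-theoretic lifting obstruction (no faithful two-dimensional linear representation of $\mathbb{A}_5$, hence no lift of the icosahedral subgroup of $\mathrm{PGL}(2,\mathbb{C})$ to $\mathrm{GL}(2,\mathbb{C})$ compatible with $\mathcal{O}\oplus\mathcal{O}(-n)$ when $n$ is odd). The paper instead performs an explicit chain of elementary transformations over the $30$-, $20$- and $12$-point orbits on the negative section, producing a $G$-equivariant square birational map $\mathbb{F}_n\dasharrow\mathbb{F}_{n-2}$ and iterating down to $\mathbb{F}_0$; this has the advantage of simultaneously producing the required birational map $\rho$ to $\mathbb{P}^1\times\mathbb{P}^1$ with the product action, which your representation-theoretic argument only asserts exists. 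If you want to carry out your version you would still need to build $\rho$ explicitly, and for that elementary transformations over $G$-invariant orbits is the natural tool anyway, so the two approaches essentially converge. You would also still need the computation (paper's Lemma showing there is a $G$-invariant section of $\pi$ on $\mathbb{P}^1\times\mathbb{P}^1$ for the \emph{twisted} diagonal action, realized by an explicit invariant curve of bidegree $(7,1)$) to close the $n=0$ case, which your sketch does not address.
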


Note that the~assertions of Theorems~\ref{theorem:Cremona-Klein},
\ref{theorem:Cremona-Valentiner} and
\ref{theorem:Cremona-Icosahedron} solves Problems~\ref{problem:1}
and \ref{problem:2} for
$$
G\in\Big\{\mathbb{A}_{5},\mathbb{A}_{6},\mathbb{PSL}\Big(2,\mathbb{F}_{7}\Big)\Big\}.
$$

In the~rest of this section we prove
Theorem~\ref{theorem:Cremona-Icosahedron}. Suppose that
$G\cong\mathbb{A}_{5}$.

\begin{lemma}
\label{label:A5-1} One of the~following possibilities~holds:
\begin{itemize}
\item the~variety $S$ is a point and
\begin{itemize}
\item either $K_{X}^{2}=5$,
\item or $X\cong\mathbb{P}^{2}$,%
\end{itemize}
\item $S\cong\mathbb{P}^{1}$ and $X\cong\mathbb{F}_{n}$, where $n\in\mathbb{N}\cup\{0\}$.%
\end{itemize}
\end{lemma}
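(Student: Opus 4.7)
The plan is to perform a case analysis on $\dim(S)$, invoking the Dolgachev--Iskovskikh classification of minimal $\mathbb{A}_{5}$-rational surfaces~\cite{DoIs06} where needed. Since $X$ has terminal singularities in dimension two it is smooth, and as $X$ is rational and $\dim(S)<2$ we have either $\dim(S)=0$ or $S\cong\mathbb{P}^{1}$.

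First I would treat the case $\dim(S)=0$, where $X$ is a smooth $G$-del Pezzo surface with $\mathrm{rk}\,\mathrm{Pic}^{G}(X)=1$. The goal is to show that $K_{X}^{2}=5$ or $X\cong\mathbb{P}^{2}$. The two realizations are classical: $\mathbb{A}_{5}\subset\mathrm{PGL}(3,\mathbb{C})=\mathrm{Aut}(\mathbb{P}^{2})$ via a faithful projective three-dimensional representation, and for the quintic del Pezzo one has $\mathrm{Aut}(X)=\mathbb{S}_{5}\supset\mathbb{A}_{5}$ with $\mathrm{Pic}^{\mathbb{A}_{5}}(X)\cong\mathbb{Z}$ (see Theorem~\ref{theorem:quintic-A5}). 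To rule out the remaining degrees I would proceed case by case: $d=8$ forces either $\mathbb{P}^{1}\times\mathbb{P}^{1}$ (which requires an index-$2$ subgroup of $\mathbb{A}_{5}$ to swap the rulings, impossible by simplicity) or $\mathbb{F}_{1}$ (whose unique $(-1)$-curve is $\mathrm{Aut}$-invariant, forcing $\mathrm{rk}\,\mathrm{Pic}^{G}\geqslant 2$); $d=7$ provides three pairwise permuted invariant curve classes (the two exceptional divisors and the strict transform of the line through the two blown-up points); $d=6$ has $\mathrm{Aut}(X)=(\mathbb{C}^{*})^{2}\rtimes D_{12}$, which contains no subgroup isomorphic to $\mathbb{A}_{5}$; and $d\in\{1,2,3,4\}$ are excluded by the tables of $\mathrm{Pic}$-minimal finite subgroups in~\cite{DoIs06}.

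Next I would treat $\dim(S)=1$, where $S\cong\mathbb{P}^{1}$ and $\pi\colon X\to\mathbb{P}^{1}$ is a smooth $G$-conic bundle; the claim is that $\pi$ has no reducible fibers, whence $X\cong\mathbb{F}_{n}$ for some $n\in\mathbb{N}\cup\{0\}$. If $F_{s}=E+E'$ were a reducible fiber over $s\in S$, the stabilizer $G_{s}\leqslant\mathbb{A}_{5}$ would have to contain an involution swapping $E$ and $E'$; otherwise the two components contribute an extra $G$-invariant class in $\mathrm{Pic}(X/S)$, contradicting $\mathrm{rk}\,\mathrm{Pic}^{G}(X/S)=1$. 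The non-generic $\mathbb{A}_{5}$-orbits on $\mathbb{P}^{1}$ have sizes $12$, $20$, $30$ with cyclic stabilizers $\mathbb{Z}_{5}$, $\mathbb{Z}_{3}$, $\mathbb{Z}_{2}$ respectively~\cite{Spr77}, so only the size-$30$ orbit can carry reducible fibers. A direct character-theoretic check on the induced $\mathbb{A}_{5}$-representation on the Picard lattice---equivalently, the conic bundle part of the classification in~\cite{DoIs06}---shows that no $\mathbb{A}_{5}$-conic bundle with $30$ reducible fibers satisfies the Mori condition $\mathrm{rk}\,\mathrm{Pic}^{G}(X/S)=1$, so $\pi$ has no reducible fibers and $X\cong\mathbb{F}_{n}$.

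The main obstacle is the uniform exclusion of the low-degree del Pezzo cases $d\in\{1,2,3,4\}$ together with the elimination of the hypothetical $30$-reducible-fiber $\mathbb{A}_{5}$-conic bundle; both ultimately reduce to tracking how $\mathbb{A}_{5}$ can sit inside the relevant automorphism groups and Picard lattices, which is most cleanly handled by invoking the Dolgachev--Iskovskikh tables rather than rederived from scratch.
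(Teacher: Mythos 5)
Your proof is correct and follows essentially the same route as the paper: the paper's own argument for this lemma consists of exactly two citations to Dolgachev--Iskovskikh (once for the del Pezzo case, once for Lemma~5.6 to get $X\cong\mathbb{F}_n$ in the conic bundle case), and your argument ultimately falls back on the same source for the hard parts, namely the exclusion of $K_X^2\in\{1,2,3,4\}$ and the nonexistence of an $\mathbb{A}_5$-minimal conic bundle with $30$ reducible fibers.

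What you add is a worked-out elimination of $K_X^2\in\{6,7,8\}$ via elementary group theory and intersection theory, plus the orbit-size/stabilizer analysis on $\mathbb{P}^1$; this is genuinely useful exposition that the paper suppresses (and the orbit analysis reappears later in the paper's Lemma~\ref{lemma:A5-4}). Two small imprecisions worth noting. First, in the $d=7$ case the three $(-1)$-curves are not ``pairwise permuted'': the strict transform $L$ of the line through the two blown-up points is distinguished by its intersection pattern ($L\cdot E_1=L\cdot E_2=1$, $E_1\cdot E_2=0$), so $L$ is canonically $\mathrm{Aut}(X)$-invariant and already gives $\mathrm{rk}\,\mathrm{Pic}^G\geqslant 2$; no permutation argument is needed. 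Second, the ``direct character-theoretic check'' that rules out a $30$-reducible-fiber $\mathbb{A}_5$-conic bundle is not actually carried out---you correctly reduce to the statement that only the $\mathbb{Z}_2$-stabilizer orbit can support reducible fibers (a $\mathbb{Z}_3$ or $\mathbb{Z}_5$ stabilizer has no involution to swap components, and the trivial stabilizer on the free orbit gives a $G$-invariant contractible family), but the final exclusion is asserted rather than proved and in effect re-cites \cite{DoIs06}, which is exactly what the paper does. Neither issue is a gap in the sense of making the argument fail; they are places where you are leaning on the cited classification in the same way the original proof does.
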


\begin{proof}
If the~variety $S$ is a point, then either $K_{X}^{2}=5$ or
$X\cong\mathbb{P}^{2}$ (see \cite{DoIs06}). If
$S\cong\mathbb{P}^{1}$, then
$$
X\cong\mathbb{F}_{n}
$$
by \cite[Lemma~5.6]{DoIs06}, where
$n\in\mathbb{N}\cup\{0\}$.
\end{proof}

Note that if the~equality $K_{X}^{2}=5$ holds and  the~variety $S$
is a~point, then
$$
\mathrm{Aut}\big(X\big)\cong\mathbb{S}_{5}
$$
and $X$ is a~blow up of $\mathbb{P}^{2}$ at four points in
general~position  (see \cite{RaSl02}, \cite{DoIs06}).

\begin{lemma}
\label{lemma:A5-2} Suppose that $K_{X}^{2}=5$ and  $S$ is a~point.
Then
\begin{itemize}
\item for every $\sigma\in\mathrm{Aut}(G)$, the~pairs $(G,\phi)$ and $(G,\phi\circ\sigma)$ are conjugate,%
\item the~surface $X$ is $G$-birationally superrigid.%
\end{itemize}
\end{lemma}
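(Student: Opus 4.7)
The plan is to dispose of both bullet points quickly by invoking results already in hand. The second bullet, that $X$ is $G$-birationally superrigid, is literally the content of Theorem~\ref{theorem:quintic-A5}, so it requires no further work. Everything hinges on the first bullet.

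For the first bullet, I would apply Theorem~\ref{theorem:regularization} to the regularization $(X,\xi,\upsilon)$ of $(G,\phi)$: the pairs $(G,\phi)$ and $(G,\phi\circ\sigma)$ are conjugate in $\mathrm{Cr}_2(\mathbb{C})$ if and only if there exists a birational self-map $\rho\colon X\dashrightarrow X$ with
$$
\upsilon\big(\sigma(g)\big)=\rho\circ\upsilon(g)\circ\rho^{-1}
$$
for every $g\in G$. My aim is therefore to produce such a $\rho$ as a biregular automorphism of $X$. Since $X$ is a smooth del Pezzo surface of degree $5$, we have $\mathrm{Aut}(X)\cong\mathbb{S}_{5}$ by \cite{RaSl02},\cite{DoIs06}, and under this isomorphism the subgroup $\upsilon(G)\cong\mathbb{A}_{5}$ is identified with the unique index-two subgroup of $\mathbb{S}_{5}$.

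The key arithmetic input is the classical fact that $\mathrm{Aut}(\mathbb{A}_{5})\cong\mathbb{S}_{5}$ and that every automorphism of $\mathbb{A}_{5}$ is realized by conjugation by some element of $\mathbb{S}_{5}$ (the inner automorphisms exhaust $\mathbb{A}_{5}$, and the outer class is realized by conjugation by any transposition). Given $\sigma\in\mathrm{Aut}(G)$, I therefore choose $\rho\in\mathrm{Aut}(X)\cong\mathbb{S}_{5}$ whose conjugation action on $\upsilon(G)\subset\mathrm{Aut}(X)$ coincides with $\upsilon\circ\sigma\circ\upsilon^{-1}$. Such a $\rho$ satisfies the hypothesis of Theorem~\ref{theorem:regularization} and witnesses the required conjugacy.

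The only potential obstacle would be an obstruction to realizing $\rho$ biregularly rather than merely birationally, but the $G$-birational superrigidity of $X$ (the second bullet, already proved) rules this out — any birational $G$-conjugating map must be biregular, so the search for $\rho$ may be carried out inside $\mathrm{Aut}(X)$ from the start. Everything then reduces to the group-theoretic statement $\mathrm{Aut}(\mathbb{A}_{5})\cong\mathbb{S}_{5}$ combined with $\mathrm{Aut}(X)\cong\mathbb{S}_{5}$, both of which are standard, so there is no genuine difficulty.
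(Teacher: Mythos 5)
Your proof is correct and takes essentially the same route as the paper: the second bullet is immediate from Theorem~\ref{theorem:quintic-A5}, and the first bullet comes from $\mathrm{Aut}(\mathbb{A}_5)\cong\mathbb{S}_5$ with every automorphism realized by conjugation inside $\mathbb{S}_5\cong\mathrm{Aut}(X)$, which produces the required $\rho\in\mathrm{Aut}(X)$ and, via Theorem~\ref{theorem:regularization}, the conjugacy in $\mathrm{Cr}_2(\mathbb{C})$. One small remark: your final paragraph invoking superrigidity to rule out a merely-birational $\rho$ is unnecessary (and logically inverted) — you are constructing $\rho$ directly as a biregular automorphism, so there is nothing to upgrade; superrigidity would only matter if one wanted to show that \emph{every} conjugating birational map is biregular, which the statement does not require.
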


\begin{proof}
The surface $X$ is $G$-birationally superrigid by
Theorem~\ref{theorem:quintic-A5}. It is well-known that
$$
\mathrm{Aut}\big(G\big)\cong\mathbb{S}_{5},
$$
and every element in $\mathrm{Aut}(G)$ is induced by an~inner
isomorphism~of~the~group~$\mathbb{S}_{5}$, which implies that
the~pairs $(G,\phi)$ and $(G,\phi\circ\sigma)$ are conjugate for
every $\sigma\in\mathrm{Aut}(G)$.
\end{proof}

Note that if $X\cong\mathbb{P}^{2}$, then the~embedding
$$
\mathbb{A}_{5}\cong G\subset\mathrm{Aut}\big(X\big)\cong\mathbb{PSL}\Big(3,\mathbb{C}\Big)%
$$
is induced by a~three-dimensional representation of the~group
$\mathbb{A}_{5}$.

\begin{lemma}
\label{lemma:A5-22} Suppose that $X\cong\mathbb{P}^{2}$. Then
\begin{itemize}
\item the~variety $X$ is $G$-birationally rigid, and
$$
\mathbb{A}_{5}\cong\mathrm{Aut}^{G}\big(X\big)\subsetneq\mathrm{Bir}^{G}\big(X\big)\cong\mathbb{S}_{5},
$$
\item for every $\sigma\in\mathrm{Aut}(G)$, the~pairs $(G,\phi)$ and $(G,\phi\circ\sigma)$ are conjugate.%
\end{itemize}
\end{lemma}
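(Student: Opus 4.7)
The plan is to construct an explicit $G$-equivariant Cremona involution $\tau \colon \mathbb{P}^{2} \dashrightarrow \mathbb{P}^{2}$ based on the unique $6$-point $G$-orbit, and then to use $\tau$ both to realize the outer automorphism of $G$ inside $\mathrm{Bir}(\mathbb{P}^{2})$ and to untwist arbitrary maximal singularities. The identification $\mathrm{Aut}^{G}(\mathbb{P}^{2}) \cong \mathbb{A}_{5}$ follows from Schur: an irreducible $\mathbb{A}_{5}$-subgroup of $\mathbb{PGL}(3,\mathbb{C})$ has trivial centralizer, and the outer automorphism of $\mathbb{A}_{5}$ swaps the two non-equivalent irreducible $3$-dimensional representations and so cannot be realized by conjugation inside $\mathbb{PGL}(3,\mathbb{C})$. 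I would then record the orbit structure of $G$ on $\mathbb{P}^{2}$ already used in the proof of Theorem~\ref{theorem:A5-A5}: there is a unique smooth $G$-invariant conic $\Gamma_{2}$, no $G$-invariant curves of degree $1$, $3$ or $5$, every $G$-orbit on $\Gamma_{2}$ has at least $12$ points, every orbit in $\mathbb{P}^{2}\setminus\Gamma_{2}$ has at least $6$ points, and there is a unique orbit $\Sigma$ of size exactly $6$ (see \cite{Spr77}). Since $6<K_{X}^{2}=9$, Lemma~\ref{lemma:orbits} does not apply, so $\mathrm{Bir}^{G}(\mathbb{P}^{2})$ must be strictly larger than $\mathrm{Aut}^{G}(\mathbb{P}^{2})$, giving the strict inclusion in the displayed formula.

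To construct $\tau$, let $\pi_{1}\colon Y\to\mathbb{P}^{2}$ be the blow-up of $\Sigma$. Since $\Sigma$ lies on no $G$-invariant curve of degree $\leqslant 2$, its six points are in general position, so $Y$ is a smooth cubic surface. Using the $2$-transitivity of the non-standard embedding $\mathbb{A}_{5}\hookrightarrow\mathrm{Sym}(\Sigma)$, which is the $\mathbb{PSL}(2,\mathbb{F}_{5})$-action on $\mathbb{P}^{1}(\mathbb{F}_{5})$, one checks that $\mathrm{Pic}^{G}(Y)$ is spanned by the hyperplane pullback $H$ and the total exceptional class $E=\sum_{i=1}^{6}E_{i}$, hence has rank $2$, and that the extremal ray of $\overline{\mathbb{NE}}^{G}(Y)$ complementary to the ray of $E$ is spanned by the $G$-orbit of the strict transforms of the six conics through five of the six points of $\Sigma$. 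Contracting this $G$-orbit of $(-1)$-curves yields a second morphism $\pi_{2}\colon Y\to\mathbb{P}^{2}$, and $\tau:=\pi_{2}\circ\pi_{1}^{-1}$ is the desired $G$-equivariant quintic Cremona involution, which realizes the outer automorphism of $\mathbb{A}_{5}$ on $G$ by conjugation (the two inequivalent irreducible $3$-dimensional $G$-representations being carried by the target $\mathbb{P}^{2}$'s of $\pi_{1}$ and $\pi_{2}$). This at once gives the second bullet of the Lemma and the inclusion $\mathrm{Bir}^{G}(\mathbb{P}^{2})\supseteq\langle G,\tau\rangle\cong\mathbb{S}_{5}$ (cf.\ \cite{DoIs06}).

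For the $G$-birational rigidity and the reverse inclusion $\mathrm{Bir}^{G}(\mathbb{P}^{2})\subseteq\mathbb{S}_{5}$, I would apply the criterion of Theorem~\ref{theorem:G-rigidity}. Given a $G$-invariant mobile linear system $\mathcal{M}$ with $K_{X}+\lambda\mathcal{M}\equiv 0$, suppose $(\mathbb{P}^{2},\lambda\mathcal{M})$ is not canonical; then by Example~\ref{example:smooth-point-and-log-pull-back} there is a point $P$ with $\mathrm{mult}_{P}(\mathcal{M})>1/\lambda$, and the Noether--Fano bound $9/\lambda^{2}=M_{1}\cdot M_{2}\geqslant|G\cdot P|/\lambda^{2}$ forces $|G\cdot P|\leqslant 8$. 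Combined with the orbit analysis, this forces $G\cdot P=\Sigma$. Writing $d=3/\lambda$ for the degree of $\mathcal{M}$ and $\mu=\mathrm{mult}_{P}(\mathcal{M})$, one has $\mu>d/3$, while intersection of $\mathcal{M}$ with a conic through five of the six points of $\Sigma$ gives $\mu\leqslant 2d/5$; hence the proper transform $\tau(\mathcal{M})$ has degree $d'=5d-12\mu$ satisfying $d/5\leqslant d'<d$. Iterating with $\tau$ produces a strictly decreasing sequence of positive integers (after rescaling $\mathcal{M}$) which must terminate with a canonical pair, proving both the rigidity and the reverse inclusion. The main obstacle I expect is the cubic-surface step: confirming that $\mathrm{Pic}^{G}(Y)$ has rank exactly $2$ and that the second extremal ray has the predicted form requires verifying that $\mathbb{A}_{5}$ acts transitively on each of the three natural $S_{6}$-orbits of $(-1)$-curves on $Y$ (the six $E_{i}$, the fifteen lines through pairs, and the six conics through fives).
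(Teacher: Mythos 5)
Your proposal reaches the same endpoint as the paper's proof but by a genuinely different route in two places. For the identification $\mathrm{Aut}^{G}(\mathbb{P}^{2})\cong\mathbb{A}_{5}$ you invoke Schur (trivial centralizer for an irreducible subgroup) together with the fact that the outer automorphism of $\mathbb{A}_{5}$ interchanges the two inequivalent $3$-dimensional representations; this is a clean representation-theoretic argument. The paper instead argues geometrically: any $\rho\in\mathrm{Aut}^{G}(\mathbb{P}^{2})$ preserves the unique invariant conic $C$ and the unique $12$-point orbit $\Lambda\subset C$, and the stabilizer of $\Lambda$ in $\mathrm{Aut}(C)\cong\mathbb{PGL}(2,\mathbb{C})$ is a finite group containing $\mathbb{A}_{5}$, hence equal to it. For the construction of $\tau$, you blow up the $6$-orbit $\Sigma$, analyze the rank-$2$ invariant Picard group of the resulting cubic surface $Y$, and run the $G$-equivariant $2$-ray game to find the second contraction. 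The paper instead recognizes $Y$ as the Clebsch cubic, cites Hitchin for $\mathrm{Aut}(Y)\cong\mathbb{S}_{5}$, and sets $\tau=\gamma\circ\theta\circ\gamma^{-1}$ for an odd involution $\theta$. Your $2$-ray game is more self-contained on this step. The untwisting argument is the same in substance; you phrase it as a terminating decreasing sequence of degrees, the paper by computing directly that one application of $\tau$ renders the pair canonical at $\Sigma$.

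There is one real gap. You write that $\tau=\pi_{2}\circ\pi_{1}^{-1}$ ``is the desired $G$-equivariant quintic Cremona \emph{involution}'' and that $\langle G,\tau\rangle\cong\mathbb{S}_{5}$, but neither statement follows from the $2$-ray game alone. What the $2$-ray game delivers is a birational self-map of $\mathbb{P}^{2}$ (after identifying the two targets, which carry \emph{inequivalent} $G$-actions, so no $G$-equivariant identification exists); it does not deliver $\tau^{2}=\mathrm{id}$, and without that the structure of $\langle G,\tau\rangle$ is not determined. The paper gets $\tau^{2}=\mathrm{id}$ precisely from the Clebsch identification: $\mathrm{Aut}(Y)\cong\mathbb{S}_{5}$, so one may choose $\theta$ to be an odd \emph{involution}, and $\tau$ is then conjugate to $\theta$. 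One fix inside your framework: observe that by your own analysis the quintic Cremona transformation centred at $\Sigma$ with multiplicity $2$ is unique up to postcomposition with $\mathbb{PGL}(3,\mathbb{C})$, so $\tau^{-1}$ and $\tau$ differ by an element of $\mathbb{PGL}(3,\mathbb{C})$ normalizing $G$; by your Schur argument that normalizer equals $G$, so $\tau^{2}\in G$, and then the extension $1\to\mathbb{A}_{5}\to\langle G,\tau\rangle\to\mathbb{Z}/2\to 1$ with outer action splits (trivial $H^{2}$ since $Z(\mathbb{A}_{5})$ is trivial), giving $\langle G,\tau\rangle\cong\mathbb{S}_{5}$. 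With that supplement your proof is complete and genuinely different from the paper's.
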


\begin{proof}
It follows from~\cite{YauYu93} that we can assume that
the~embedding $G\subset\mathrm{Aut}(\mathbb{P}^{2})$ induces
the~action of the~group~$G$~on the~ring~$\mathbb{C}[x,y,z]$ such
that the~ring $\mathbb{C}[x,y,z]^{G}$ is generated by
$$
f_{2}\big(x,y,z\big)=x^{2}+yz,\
f_{6}\big(x,y,z\big)=8x^{4}yz-2x^2y^2z^2-x\Big(y^{5}+z^{5}\Big)+y^{3}z^{3},
$$
$$
f_{10}\big(x,y,z\big)=320x^6y^2z^2-160x^4y^3z^3+20x^2y^4z^4+6y^5z^5-4x\Big(y^5+z^5\Big)\Big(32x^4-20x^2yz+5y^2z^2\Big)+y^{10}+z^{10},
$$
$$
\left.\aligned
&f_{15}\big(x,y,z\big)=x\Big(y^{10}-z^{10}\Big)\Big(352x^4-160x^2yz-10y^2z^2\Big)+\Big(y^{5}-z^{5}\Big)\Big(3840x^8yz-1024x^{10}\Big)-\\
&-\Big(y^{5}-z^{5}\Big)\Big(3840x^6y^2z^2+1200x^4y^3z^3-100x^2y^4z^4+y^{10}+z^{10}+2y^5z^5\Big).\\
\endaligned
\right.
$$

It follows from \cite{YauYu93} that the~forms $f_{2}$, $f_{6}$,
$f_{10}$ and $f_{15}$ are related by~the~equation
$$
f_{15}^{4}=-1728f_{6}^{5}+f_{10}^{3}+720f_{2}f_{6}^{3}f_{10}-80f_{2}^{2}f_{6}f_{10}^{2}+64f_{2}^{3}\Big(5f_{6}^{2}-f_{2}f_{10}\Big)^{2}.
$$

Note that every $G$-semi-invariant polynomial in
$\mathbb{C}[x,y,z]$ must be also $G$-invariant, because the~group
$G$ is simple. In particular, there are no $G$-invariant curves in
$\mathbb{P}^{2}$ of degree $1$, $3$ and $5$.

Let $C\subset\mathbb{P}^{2}$ be the~curve that is given
by~the~equation
$$
f_{2}\big(x,y,z\big)=0\subset\mathbb{P}^{2}\cong\mathrm{Proj}\Big(\mathbb{C}\big[x,y,z\big]\Big),
$$
and let $\mathcal{P}$ be the~pencil on $\mathbb{P}^{2}$ that is
given by~the~equation
$$
\lambda f^{3}_{2}\big(x,y,z\big)+\mu f_{6}\big(x,y,z\big)=0,
$$
where $[\lambda:\mu]\in\mathbb{P}^{1}$. Then the~following
assertions hold:
\begin{itemize}
\item the~curve $C$ is unique $G$-invariant conic on $\mathbb{P}^{2}$,%
\item the~action of $G\cong\mathbb{A}_{5}$ on $C\cong\mathbb{P}^{1}$ induces an~embedding $C\subset\mathrm{Aut}(C)$,%
\item every $G$-invariant sextic curve on $\mathbb{P}^{2}$ belongs to~the~pencil $\mathcal{P}$,%
\item all curves in the~pencil  $\mathcal{P}$ are irreducible and
reduced except the~following curves:
\begin{itemize}
\item the~non-reduced curve that is given by the~equation $f_{2}^{3}(x,y,z)=0$,%
\item a~reduced reducible curve consisting of $6$ lines
$L_{1},\ldots,L_{6}$ such that the~set
$$
C\cap\Big(L_{1}\cup L_{2}\cup L_{3}\cup L_{4}\cup L_{5}\cup L_{6}\Big)%
$$
is the~unique $G$-orbit contained in $C$ that consists of $12$ points (cf. \cite{Spr77}),%
\end{itemize}
\item every $G$-orbit in $\mathbb{P}^{2}$ consists of at least $6$ points,%
\item there is a~unique curve $R\in\mathcal{P}$ such that
$|\mathrm{Sing}(R)|=6$, and the~subset
$$
\mathrm{Sing}\big(R\big)\subset\mathbb{P}^{2}
$$
is the~only $G$-orbit consisting of $6$ points.
\end{itemize}

Take any $\sigma\in\mathrm{Aut}(G)$. Then $\sigma$ is induced by
an~inner isomorphism~of $\mathbb{S}_{5}$. Thus, if
$$
\mathrm{Bir}^{G}\big(X\big)\cong\mathbb{S}_{5},
$$
then the~pairs
$(G,\phi)$ and $(G,\phi\circ\sigma)$ are conjugate.

Let $\rho$ be any element in $\mathrm{Aut}^{G}(X)$, and let $g$ be
any element in $G$. Then
$$
g\circ\rho=\rho\circ g^{\prime}
$$
for some $g^{\prime}\in G$. Then
$$
g\Big(\rho\big(C\big)\Big)=\rho\Big(g^{\prime}\big(C\big)\Big)=\rho\big(C\big),%
$$
which implies that $\rho(C)=C$. There is unique $G$-orbit
$\Lambda\subset C$ such that $|\Lambda|=12$. Then
$$
g\Big(\rho\big(\Lambda\big)\Big)=\rho\Big(g^{\prime}\big(\Lambda\big)\Big)=\rho\big(\Lambda\big),%
$$
which implies that $\rho(\Lambda)=\Lambda$. The subgroup in
$\mathrm{Aut}(C)$ that leaves invariant the~set $\Lambda$ is
finite, which implies that $\rho\in G$. Thus, we see that
$\mathrm{Aut}^{G}(X)=G$.

To complete the~proof, we must show that $X$ is $G$-birationally
rigid and $\mathrm{Bir}^{G}(X)\cong\mathbb{S}_{5}$.

Let $\Sigma\subset\mathbb{P}^{2}$ be a~$G$-orbit  such that
$|\Sigma|\leqslant 9$. Then it follows from \cite{Spr77} or
\cite{YauYu93} that
$$
\Sigma\cap C=\varnothing,
$$
and $|\Sigma|=6$ (cf. Section~\ref{section:icosahedron}). Then
$\Sigma$ is uniquely defined by the~equality $|\Sigma|=6$.

Let $\gamma\colon W\to X$ be a blow up of all points in $\Sigma$.
Then it follows from \cite[Proposition~1]{Hi08} that
$$
\mathrm{Aut}\big(W\big)\cong\mathbb{S}_{5},
$$
and $W$ is isomorphic to the~Clebsch cubic surface (see
\cite{DoIs06}). Put
$$
\tau=\gamma\circ\theta\circ\gamma^{-1},
$$
where $\theta$ is an~odd involution in
$\mathrm{Aut}(W)\cong\mathbb{S}_{5}$. Then
$\tau\not\in\mathrm{Aut}(\mathbb{P}^{2})$.

The~involution $\tau$ induces the~monomorphism
$\upsilon^{\prime}\colon G\to\mathrm{Aut}(\mathbb{P}^{2})$ such
that
$$
\upsilon^{\prime}\big(g\big)=\tau\circ\upsilon\big(g\big)\circ\tau^{-1}
$$
for every $g\in G$. Then $\upsilon^{\prime}$ is induced by
a~three-dimensional representation of the~group $\mathbb{A}_{5}$
that is different from the~representation that induces
the~monomorphism $\upsilon$ (cf. \cite[Section~9]{DoIs06}).

Let $E$ be the~reduced $\gamma$-exceptional divisor such that
$\gamma(E)=\Sigma$. Then there is a~commutative diagram
$$
\xymatrix{
&&W\ar@{->}[dl]_{\gamma}\ar@{->}[dr]^{\psi}&&\\
&\mathbb{P}^{2}\ar@{-->}[rr]_{\tau}&&\mathbb{P}^{2},&}
$$
where $\psi$ is a~blow down of the~curve $\theta(E)$ to the~set
$\Sigma$ (cf. proof of Theorem~\ref{theorem:quintic-A5}).

If the~group generated by $\tau$ untwists all $G$-maximal
singularities (see Definition~\ref{definition:untwisting}), then
$$
\mathrm{Bir}^{G}\big(X\big)=\Big\langle\mathrm{Aut}^{G}\big(X\big),\tau\Big\rangle\cong\mathbb{S}_{5}
$$
by Corollary~\ref{corollary:untwisting}, and $X$ is
$G$-birationally rigid by Corollary~\ref{corollary:G-rigidity}.
Hence, to complete the~proof, we must show that the~group
generated by $\tau$ untwists all $G$-maximal singularities.

Let  $\mathcal{M}$ be a~$G$-invariant linear system on the~surface
$X$ such that
\begin{itemize}
\item the~linear system $\mathcal{M}$ does not have fixed curves,%
\item the~log pair $(X, \mu\mathcal{M})$~is~not canonical at some
point $O\in X$, where $\mu\in\mathbb{Q}$ such~that
$$
K_{X}+\mu\mathcal{M}\equiv 0.
$$
\end{itemize}

Let $\Delta$ be the~$G$-orbit of the~point $O$. Then
$$
\mathrm{mult}_{P}\big(\mathcal{M}\big)>\frac{1}{\mu}
$$
for every point $P\in \Delta$. Let $M_{1}$ and $M_{2}$ be general
curves in $\mathcal{M}$. Then
$$
\frac{9}{\mu^{2}}=\frac{K_{X}^{2}}{\mu^{2}}=M_{1}\cdot M_{2}\geqslant\sum_{P\in\Delta}\mathrm{mult}_{P}\Big(M_{1}\cdot M_{2}\Big)\geqslant\sum_{P\in\Delta}\mathrm{mult}_{P}\big(M_{1}\big)\mathrm{mult}_{P}\big(M_{2}\big)=\sum_{P\in\Delta}\mathrm{mult}^{2}_{P}\big(\mathcal{M}\big)>\frac{|\Delta|}{\mu^{2}},%
$$
which implies that $|\Delta|<9$. Then $\Delta=\Sigma$. Put
$H=\gamma^{*}(\mathcal{O}_{\mathbb{P}^{2}}(1))$. Then
\begin{equation}
\label{equation:involution} \left\{\aligned
&\theta^{*}\big(H\big)\sim 5H-2E,\\
&\theta^{*}\big(E\big)\sim 12H-5E,\\
\endaligned
\right.
\end{equation}
because the~involution $\theta$ acts non-trivially on
$\mathrm{Pic}(W)$.

Put $\mathcal{M}^{\prime}=\tau(\mathcal{M})$. Let $\mu^{\prime}$
be a~positive rational number such that the~equivalence
$$
K_{X}+\mu^{\prime}\mathcal{M}^{\prime}\equiv 0
$$
holds. Then it follows from
the~equivalences~\ref{equation:involution} that
$$
\mu^{\prime}=\frac{3}{15\slash\mu-12\mathrm{mult}_{O}\big(\mathcal{M}\big)},
$$
which implies that $\mu^{\prime}>\mu$. Similarly, it follows from
the~equivalences~\ref{equation:involution} that
$$
\mathrm{mult}_{P}\big(\mathcal{M}^{\prime}\big)=\frac{6}{\mu}-5\mathrm{mult}_{O}\big(\mathcal{M}\big),
$$
for every point $P\in\Sigma$. Then the~log pair
$(X,\mu^{\prime}\mathcal{M}^{\prime})$ is canonical at every point
of the~set $\Sigma$.

Arguing as above, we see that the~singularities of the~log pair
$(X,\mu^{\prime}\mathcal{M}^{\prime})$ are canonical everywhere,
which implies that the~group generated by $\tau$ untwists all
$G$-maximal singularities.
\end{proof}

To complete the~proof of
Theorem~\ref{theorem:Cremona-Icosahedron}, we may assume that
$X\cong\mathbb{F}_{n}$, where $n\in\mathbb{N}\cup\{0\}$.

\begin{lemma}
\label{lemma:A5-3}  Suppose that there are monomorphism
$\iota\colon G\to\mathrm{Aut}(\mathbb{P}^{1})$, a~non-inner
isomorphism $\nu\in\mathrm{Aut}(G)$, and a~birational map
$\chi\colon X\dasharrow\mathbb{P}^{1}\times\mathbb{P}^{1}$ such
that
$$
\chi\circ\upsilon\big(g\big)\circ\chi^{-1}\Big(a,b\Big)=\Bigg(\iota\big(g\big)\Big(a\Big),\iota\circ\nu\big(g\big)\Big(b\Big)\Bigg)
$$
for every $g\in G$ and
$(a,b)\in\mathbb{P}^{1}\times\mathbb{P}^{1}$. Then  $(G,\phi)$ and
$(G,\phi\circ\sigma)$ are conjugate for any
$\sigma\in\mathrm{Aut}(G)$.
\end{lemma}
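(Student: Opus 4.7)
The plan is to apply Theorem~\ref{theorem:regularization}: it suffices to exhibit, for each $\sigma\in\mathrm{Aut}(G)$, a birational self-map $\rho\colon X\dasharrow X$ with $\rho\circ\upsilon(g)\circ\rho^{-1}=\upsilon(\sigma(g))$ for every $g\in G$. Since $G\cong\mathbb{A}_{5}$ and $\mathrm{Out}(\mathbb{A}_{5})\cong\mathbb{Z}/2$, every $\sigma$ has the form $\sigma=\mathrm{inn}_{g_{0}}\circ\nu^{\epsilon}$ with $g_{0}\in G$ and $\epsilon\in\{0,1\}$. For inner $\sigma$ (the case $\epsilon=0$) one simply takes $\rho=\upsilon(g_{0})\in\mathrm{Aut}(X)$, so the whole statement reduces to the single case $\sigma=\nu$; the general non-inner case then follows by composing the resulting $\rho$ with $\upsilon(g_{0})$.

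To handle $\sigma=\nu$, I would transport the problem to $\mathbb{P}^{1}\times\mathbb{P}^{1}$ via $\chi$. Because $\nu^{2}\in\mathrm{Inn}(G)$, there exists $h\in G$ with $\nu^{2}(g)=hgh^{-1}$ for every $g\in G$. My proposal is to take $\rho=\chi^{-1}\circ\tilde{\rho}\circ\chi$, where
$$
\tilde{\rho}\colon\mathbb{P}^{1}\times\mathbb{P}^{1}\longrightarrow\mathbb{P}^{1}\times\mathbb{P}^{1},\qquad\tilde{\rho}(a,b)=\bigl(b,\iota(h)(a)\bigr),
$$
that is, the swap of the two factors composed with $(\mathrm{id},\iota(h))$. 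A direct computation using the hypothesis on $\chi$ gives
$$
\tilde{\rho}\circ\Bigl(\chi\circ\upsilon(g)\circ\chi^{-1}\Bigr)\circ\tilde{\rho}^{-1}(a,b)=\Bigl(\iota(\nu(g))(a),\,\iota(h)\iota(g)\iota(h)^{-1}(b)\Bigr),
$$
and the right-hand side equals $\bigl(\iota\circ\nu(g)(a),\,\iota\circ\nu^{2}(g)(b)\bigr)=\chi\circ\upsilon(\nu(g))\circ\chi^{-1}(a,b)$ by the choice of $h$, as required.

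The only real point to verify is the intertwining identity above; once $\rho$ is in place, Theorem~\ref{theorem:regularization} supplies the birational automorphism of $\mathbb{P}^{2}$ realising the conjugacy of $(G,\phi)$ and $(G,\phi\circ\sigma)$. The conceptual content, such as it is, lies in noticing that the naive candidate---the bare swap $(a,b)\mapsto(b,a)$---does the job only when $\nu$ is an involution of $G$, and that the correction by $\iota(h)$ on the second factor exactly absorbs the discrepancy $\nu^{2}=\mathrm{inn}_{h}$; the existence of such an $h$ is guaranteed precisely because $|\mathrm{Out}(\mathbb{A}_{5})|=2$.
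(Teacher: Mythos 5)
Your proof is correct and rests on the same core idea as the paper's: realize the non-inner automorphism of $\mathbb{A}_{5}$ by the factor-swap of $\mathbb{P}^{1}\times\mathbb{P}^{1}$. The paper argues more tersely, taking the bare swap $\tau(a,b)=(b,a)$ and asserting $\langle\grave{\upsilon}(G),\tau\rangle\cong\mathbb{S}_{5}$, then quoting the fact that $\mathrm{Aut}(\mathbb{A}_{5})\cong\mathbb{S}_{5}$ consists of restrictions of inner automorphisms of $\mathbb{S}_{5}$. That step implicitly requires $\tau$ to normalize $\grave{\upsilon}(G)$, which, as your computation makes visible, amounts to $\nu^{2}=\mathrm{id}$; the hypothesis only says $\nu$ is non-inner, so strictly one must first replace $\nu$ by an involutory representative of its class in $\mathrm{Out}(\mathbb{A}_{5})$ (possible by absorbing an inner automorphism into $\iota$ on the second factor). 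Your correction of the swap by $\iota(h)$, where $\nu^{2}=\mathrm{inn}_{h}$, dispenses with that preliminary normalization and treats an arbitrary non-inner $\nu$ directly, and your reduction of a general $\sigma$ to $\nu$ using $\mathrm{Out}(\mathbb{A}_{5})\cong\mathbb{Z}/2$ together with Theorem~\ref{theorem:regularization} is exactly right. In short: same route, but you fill in a step the paper leaves implicit, which makes your argument the cleaner of the two.
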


\begin{proof}
The~birational map $\chi$ induces the~monomorphism
$\grave{\upsilon}\colon
G\to\mathrm{Aut}(\mathbb{P}^{1}\times\mathbb{P}^{1})$ such that
$$
\grave{\upsilon}\big(g\big)=\chi\circ\upsilon\big(g\big)\circ\chi^{-1}
$$
for every $g\in G$. Then $\grave{\upsilon}$ is induced by
the~twisted diagonal action of the~group $\mathbb{A}_{5}$ on
$\mathbb{P}^{1}\times\mathbb{P}^{1}$.

Let us identify the~subgroup $\grave{\upsilon}(G)$ with the~group
$G$.

Take $\tau\in\mathrm{Aut}(\mathbb{P}^{1}\times\mathbb{P}^{1})$
such that  $\tau(a,b)=(b,a)$ for any
$(a,b)\in\mathbb{P}^{1}\times\mathbb{P}^{1}$. Then
$$
\big\langle G,\tau\big\rangle\cong\mathbb{S}_{5},
$$
which implies that the~pairs $(G,\phi)$ and $(G,\phi\circ\sigma)$
are conjugate  for every $\sigma\in\mathrm{Aut}(G)$, because every
isomorphism of the~group $\mathbb{A}_{5}$ is induced by an~inner
isomorphism~of~the~group $\mathbb{S}_{5}$.
\end{proof}

Therefore, to complete the~proof of
Theorem~\ref{theorem:Cremona-Icosahedron}, we may ignore any
difference between the~monomorphism $\upsilon$ and
$\upsilon\circ\sigma$, where $\sigma\in\mathrm{Aut}(G)$.

\begin{lemma}
\label{lemma:A5-4} Suppose that $n\ne 0$. Then $n$ is even, there
is a~commutative diagram
$$
\xymatrix{
&X\ar@{->}[d]_{\pi}\ar@{-->}[rr]^{\rho}&&\mathbb{P}^{1}\times\mathbb{P}^{1}\ar@{->}[d]^{\bar{\pi}}\\
&\mathbb{P}^{1}\ar@{=}[rr]&&\mathbb{P}^{1},&}
$$
where $\rho$ is a~birational map that induces the~monomorphism
$\bar{\upsilon}\colon
G\to\mathrm{Aut}(\mathbb{P}^{1}\times\mathbb{P}^{1})$ such that
$$
\bar{\upsilon}\big(g\big)=\rho\circ\upsilon\big(g\big)\circ\rho^{-1}
$$
for every $g\in G$, and $\bar{\upsilon}$ is induced by
the~product action of the~group $\mathbb{A}_{5}\times \mathrm{id}_{\mathbb{P}^{1}}$ on $\mathbb{P}^{1}\times\mathbb{P}^{1}$.%
\end{lemma}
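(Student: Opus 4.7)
The plan is to reduce $X\cong\mathbb{F}_n$ (for $n\geq 1$) via a sequence of $G$-equivariant elementary transformations to $\mathbb{F}_0\cong\mathbb{P}^{1}\times\mathbb{P}^{1}$ with a product $G$-action, simultaneously forcing $n$ to be even.

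First I will establish the basic structural setup. Since $n\geq 1$, the Hirzebruch surface $\mathbb{F}_n$ contains a unique section $s\subset X$ with $s^{2}=-n$, so $s$ must be $G$-invariant. The morphism $\pi$ induces a homomorphism $G\to\mathrm{Aut}(\mathbb{P}^{1})$; its kernel is normal in the simple group $G$, hence either trivial or all of $G$. If it were all of $G$, then $G$ would preserve every fiber $F_t$ and fix the point $F_t\cap s$, embedding $G$ in the solvable affine group of $F_t\setminus(F_t\cap s)\cong\mathbb{A}^{1}$ and contradicting the simplicity of $\mathbb{A}_5$. Hence $G$ acts faithfully on the base, realizing the standard icosahedral action on $\mathbb{P}^{1}$, whose orbit sizes are $12$, $20$, $30$, or $60$---all even.

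Next I will construct the elementary transformation. For a non-generic $G$-orbit $\Sigma\subset\mathbb{P}^{1}$ and $t\in\Sigma$, the stabilizer $G_t$ is a non-trivial cyclic subgroup of $G$ that acts on $F_t\cong\mathbb{P}^{1}$ and fixes $p_t:=F_t\cap s$. Using the explicit description of $\mathrm{Aut}(\mathbb{F}_n)$ as an extension of $\mathrm{PGL}_2$ by the fiber-wise affine automorphism group, I will show that $G_t$ cannot act trivially on $F_t$: any generator of $G_t$ of prime order $p$ fixing $F_t$ pointwise would act by a non-trivial character $\zeta$ on the normal direction at $p_t$, and compatibility with the faithful action of $G_t$ on $s$ would force the corresponding automorphism of $\mathbb{F}_n$ to lie in the unipotent radical of a Borel subgroup and hence have infinite order. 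So $G_t$ fixes a second point $q_t\in F_t\setminus s$, and $\Omega:=G\cdot q_t$ is a $G$-invariant subset of $X\setminus s$ of size $|\Sigma|$, meeting each fiber over $\Sigma$ in exactly one point. The $G$-equivariant elementary transformation $\tau\colon X\dasharrow X'$ at $\Omega$---blow up $\Omega$, then contract the strict transforms of the fibers over $\Sigma$---produces a Hirzebruch surface $X'\cong\mathbb{F}_{n'}$ in which the image of $s$ is a $G$-invariant section of self-intersection $-n+|\Sigma|$; a direct computation shows $n'\equiv n\pmod{2}$.

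Iterating with appropriate choices of $\Sigma$ (using all three orbit sizes $12$, $20$, $30$ to progressively decrease $n$), I descend through this tower until arriving at either $\mathbb{F}_0$ or $\mathbb{F}_1$. The surface $\mathbb{F}_1$ cannot arise, because its $G$-invariant negative section could be contracted $G$-equivariantly to a $G$-fixed point on $\mathbb{P}^{2}$, contradicting the fact that every $\mathbb{A}_5$-orbit on $\mathbb{P}^{2}$ has size at least $6$. Therefore $n$ is even, and we reach $X'\cong\mathbb{F}_0=\mathbb{P}^{1}\times\mathbb{P}^{1}$. Finally I will identify the resulting $G$-action with a product action. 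Since $\mathbb{A}_5$ has no index-two subgroup, $G\subset\mathrm{PGL}_2\times\mathrm{PGL}_2\subset\mathrm{Aut}(\mathbb{P}^{1}\times\mathbb{P}^{1})$, and the projections $\phi_1,\phi_2\colon G\to\mathrm{PGL}_2$ are each either trivial or faithful, with $\phi_1$ faithful. If $\phi_2$ is trivial we are done. Otherwise, one further $G$-equivariant elementary transformation---along a $G$-orbit of stabilizer-fixed points on fibers of $\pi_1$ chosen away from the $G$-invariant graph of $\phi_2\circ\phi_1^{-1}$---produces a model whose image of $s$ is disjoint from each new fiber of the first ruling, forcing that section to be a fiber of the opposite ruling and hence trivializing the action on the second factor. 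The main obstacles will be the careful bookkeeping of $G$-invariant sections through the chain of transformations (which determines whether one actually reaches $\mathbb{F}_0$ rather than cycling between surfaces of small $n$) and the verification in the last step that a twisted diagonal action is $G$-birationally equivalent to the product action.
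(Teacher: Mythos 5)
Your high-level strategy matches the paper's: $G$-equivariant elementary transformations along orbits, parity preserved because every icosahedral orbit on $\mathbb{P}^{1}$ has even size, and exclusion of $\mathbb{F}_{1}$ via the contraction to $\mathbb{P}^{2}$. But there is a concrete gap in your descent.

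You only blow up orbits of the other stabilizer-fixed points, i.e.\ you only perform elementary transformations at $G$-orbits \emph{disjoint from} the negative section $s$, each of which changes the self-intersection of the strict transform $s'$ from $-n$ to $-n+|\Sigma|$ with $|\Sigma|\in\{12,20,30,60\}$. Once $n<12$ (for instance $n=2,4,\dots,10$), every such move makes $s'^{2}>0$, so $s'$ is no longer the negative section of the new $\mathbb{F}_{m}$, and the new index $m$ is \emph{not} $|{-n+|\Sigma|}|$; it is only constrained by $0\leqslant m\leqslant -n+|\Sigma|$ and $m\equiv n\ (\mathrm{mod}\ 2)$, and it depends on whether some other $G$-invariant section passes through many of the blown-up points. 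Your iteration therefore need not be monotone, and ``descend until arriving at $\mathbb{F}_{0}$ or $\mathbb{F}_{1}$'' does not follow.

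The paper avoids this by a three-step cycle: first blow up the orbit of size $30$ \emph{on} the section $Z$, which raises $Z^{2}$ to $-n-30$; then blow up an off-section orbit of size $20$ (constructed, as you do, from the second fixed point of a $\mathbb{Z}_{3}$ fiber-stabilizer), then one of size $12$ (from a $\mathbb{Z}_{5}$ fiber-stabilizer). The net effect is $-30+20+12=2$, so $Z'^{2}=-n+2$, and since $n\geqslant 2$ after the first rejected cases, the invariant section stays negative at every stage. One cycle thus produces $\mathbb{F}_{n-2}$ honestly, and iterating $\lfloor n/2\rfloor$ times reaches $\mathbb{F}_{0}$. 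To repair your proof you should include the on-section blow-up of the $30$-point orbit (or otherwise prove that you can always keep a $G$-invariant section of nonpositive self-intersection while strictly lowering $n$); the pure off-section version does not close.

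As a minor remark, your faithfulness-on-the-base observation and your $\mathbb{F}_{1}\to\mathbb{P}^{2}$ exclusion are correct and are what the paper's ``Similarly, we see that $n\neq 3$'' alludes to; and the final identification of the twisted diagonal action with a product action on $\mathbb{P}^{1}\times\mathbb{P}^{1}$ is handled in the paper by the separate Lemmas~\ref{lemma:A5-3}, \ref{lemma:A5-5}, \ref{lemma:A5-6}, \ref{lemma:A5-7}, so your attempt to fold it into this lemma is re-proving those.
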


\begin{proof}
Let $Z$ be the~section of $\pi$ such that $Z^{2}=-n$. Then $Z$ is
$G$-invariant.

The curve $Z$ contains $G$-orbits consisting of $12$, $20$ and
$30$ points (see \cite{Spr77}).

Let $\Sigma$ be the~$G$-orbit such that $\Sigma\subset Z$ and
$|\Sigma|=30$. Then there is a~commutative diagram
$$
\xymatrix{
&&U\ar@{->}[dl]_{\alpha}\ar@{->}[dr]^{\beta}&&\\
&X\ar@{->}[d]_{\pi}\ar@{-->}[rr]^{\psi}&& X_{1}\ar@{->}[d]^{\pi_{1}}\\
&\mathbb{P}^{1}\ar@{=}[rr]&&\mathbb{P}^{1},&}
$$
where $\psi$ is a birational map, $\pi_{1}$ is
a~$\mathbb{P}^{1}$-bundle, $\alpha$ is the~blow up of the~set
$\Sigma$, and $\beta$ is the~blow down of the~proper transforms of
the~fibers of $\pi$ that pass through the~points of the~set
$\Sigma$.

The birational map $\psi$ induces the~monomorphism
$\upsilon_{1}\colon G\to\mathrm{Aut}(X_{1})$ such that
$$
\upsilon_{1}\big(g\big)=\psi\circ\upsilon\big(g\big)\circ\psi^{-1}
$$
for every $g\in G$. Let us identify the~subgroup $\upsilon_{1}(G)$
with the~group $G$. Put $Z_{1}=\psi(Z)$. Then
$$
Z_{1}\cdot Z_{1}=Z\cdot Z-30=-n-30<0,
$$
which implies that $X_{1}\cong\mathbb{F}_{n+30}$ and $Z_{1}$ is
$G$-invariant.

The curve $Z_{1}$ contains $G$-orbits consisting of $12$, $20$ and
$30$ points (see \cite{Spr77}).

Let $\Sigma_{1}$ be the~$G$-orbit such that $\Sigma_{1}\subset
Z_{1}$ and
$$
\big|\Sigma_{1}\big|=20,
$$
let $P_{1}$ be a point in $\Sigma_{1}$, let $H_{1}$ be
the~stabilizer in $G$ of the~point $P_{1}$. Then
$H_{1}\cong\mathbb{Z}_{3}$.

Let $L_{1}$ be the~fiber of $\pi_{1}$ such that $P_{1}\in L_{1}$.
Then
$$
h_{1}\big(L_{1}\big)=L_{1}
$$
for every $h_{1}\in H_{1}$. Then there is a point $Q_{1}\in
L_{1}\setminus P_{1}$ such that $h_{1}(Q_{1})=Q_{1}$ for every
$h_{1}\in H_{1}$.

Let $\Lambda_{1}$ be the~$G$-orbit of the~point $Q_{1}$. Then
$|\Lambda_{1}|=20$ and
$$
\Lambda_{1}\cap Z_{1}=\varnothing,
$$
because $Z_{1}$ is $G$-invariant and $Q_{1}\not\in Z_{1}$. Thus,
there is a~commutative diagram
$$
\xymatrix{
&&U_{1}\ar@{->}[dl]_{\alpha_{1}}\ar@{->}[dr]^{\beta_{1}}&&\\
&X_{1}\ar@{->}[d]_{\pi_{1}}\ar@{-->}[rr]^{\psi_{1}}&& X_{2}\ar@{->}[d]^{\pi_{2}}\\
&\mathbb{P}^{1}\ar@{=}[rr]&&\mathbb{P}^{1},&}
$$
where $\psi_{1}$ is a birational map, $\pi_{2}$ is
a~$\mathbb{P}^{1}$-bundle, $\alpha_{1}$ is the~blow up of the~set
$\Lambda_{1}$, and $\beta_{1}$ is the~blow down of the~proper
transforms of the~fibers of $\pi_{1}$ that pass through the~points
of the~set $\Lambda_{1}$.

The birational map $\psi_{1}$ induces the~monomorphism
$\upsilon_{2}\colon G\to\mathrm{Aut}(X_{2})$ such that
$$
\upsilon_{2}\big(g\big)=\psi_{1}\circ\upsilon_{1}\big(g\big)\circ\psi_{1}^{-1}
$$
for every $g\in G$. Let us identify the~subgroup $\upsilon_{2}(G)$
with the~group $G$. Put $Z_{2}=\psi_{1}(Z_{1})$. Then
$$
Z_{2}\cdot Z_{2}=Z_{1}\cdot Z_{1}+20=Z\cdot Z-30+20=-n-10<0,
$$
which implies that $X_{2}\cong\mathbb{F}_{n+10}$ and $Z_{2}$ is
$G$-invariant.

The curve $Z_{2}$ contains $G$-orbits consisting of $12$, $20$ and
$30$ points (see \cite{Spr77}).

Let $\Sigma_{2}$ be the~$G$-orbit such that $\Sigma_{2}\subset
Z_{2}$ and
$$
\big|\Sigma_{1}\big|=12,
$$
let $P_{2}$ be a point in $\Sigma_{2}$, let $H_{2}$ be
the~stabilizer in $G$ of the~point $P_{2}$. Then
$H_{2}\cong\mathbb{Z}_{5}$.

Let $L_{2}$ be the~fiber of $\pi_{2}$ such that $P_{2}\in L_{2}$.
Then
$$
h_{2}\big(L_{2}\big)=L_{2}
$$
for every $h_{2}\in H_{2}$. Then there is a point $Q_{2}\in
L_{2}\setminus P_{2}$ such that $h_{2}(Q_{2})=Q_{2}$ for every
$h_{2}\in H_{2}$.

Let $\Lambda_{2}$ be the~$G$-orbit of the~point $Q_{2}$. Then
$|\Lambda_{2}|=12$ and
$$
\Lambda_{2}\cap Z_{2}=\varnothing,
$$
because $Z_{2}$ is $G$-invariant and $Q_{2}\not\in Z_{2}$. Thus,
there is a~commutative diagram
$$
\xymatrix{
&&U_{2}\ar@{->}[dl]_{\alpha_{2}}\ar@{->}[dr]^{\beta_{2}}&&\\
&X_{2}\ar@{->}[d]_{\pi_{2}}\ar@{-->}[rr]^{\psi_{2}}&& X^{\prime}\ar@{->}[d]^{\pi^{\prime}}\\
&\mathbb{P}^{1}\ar@{=}[rr]&&\mathbb{P}^{1},&}
$$
where $\psi_{2}$ is a birational map, $\pi^{\prime}$ is
a~$\mathbb{P}^{1}$-bundle, $\alpha_{2}$ is the~blow up of the~set
$\Lambda_{2}$, and $\beta_{2}$ is the~blow down of the~proper
transforms of the~fibers of $\pi_{2}$ that pass through the~points
of the~set $\Lambda_{2}$.

The birational map $\psi_{2}$ induces the~monomorphism
$\upsilon^{\prime}\colon G\to\mathrm{Aut}(X^{\prime})$ such that
$$
\upsilon^{\prime}\big(g\big)=\psi_{2}\circ\upsilon_{3}\big(g\big)\circ\psi_{2}^{-1}
$$
for every $g\in G$. Let us identify the~subgroup
$\upsilon^{\prime}(G)$ with the~group $G$. Put
$Z^{\prime}=\psi_{2}(Z_{2})$. Then
$$
Z^{\prime}\cdot Z^{\prime}=Z_{2}\cdot Z_{2}+12=Z_{1}\cdot Z_{1}+20+12=Z\cdot Z-30+20+12=-n+2\leqslant 0,%
$$
and the~curve $Z^{\prime}$ is a~$G$-invariant section of
$\pi^{\prime}$. Note that $X^{\prime}\cong\mathbb{F}_{n-2}$.

Put $\nu=\psi_{2}\circ\psi_{1}\circ\psi$. Then
$(X^{\prime},\xi\circ\nu,\upsilon^{\prime})$ is a~regularization
of the~pair $(G,\phi)$.

So far, we constructed the~commutative diagram
$$
\xymatrix{
&X\ar@{->}[d]_{\pi}\ar@{-->}[rr]^{\nu}&& X^{\prime}\ar@{->}[d]^{\pi^{\prime}}\\
&\mathbb{P}^{1}\ar@{=}[rr]&&\mathbb{P}^{1},&}
$$
where $\nu$ is a~birational map, $\pi^{\prime}$ is
a~$\mathbb{P}^{1}$-bundle, $X^{\prime}\cong\mathbb{F}_{n-2}$,
there is a~section $Z^{\prime}$ of $\pi^{\prime}$ such that
$$
Z^{\prime}\cdot Z^{\prime}=-n+2
$$
and $Z^{\prime}$ is $G$-invariant. If $n=2$, then we are done.
Similarly, we see that $n\ne 3$. Then $n\geqslant 4$.

Repeating the~above construction $\lfloor n/2\rfloor$ times, we
conclude the~proof.
\end{proof}

Thus, to complete the~proof of
Theorem~\ref{theorem:Cremona-Icosahedron}, we may assume that
$n=0$. Then
$$
X\cong\mathbb{P}^{1}\times\mathbb{P}^{1},
$$
and we may assume that $\pi\colon
\mathbb{P}^{1}\times\mathbb{P}^{1}\to\mathbb{P}^{1}$ is
a~projection to the~first factor.

\begin{lemma}
\label{lemma:A5-5} Suppose that there is a~$G$-invariant section
$Z$ of the~fibration $\pi$. Then there is a~commutative diagram
$$
\xymatrix{
&X\ar@{->}[d]_{\pi}\ar@{-->}[rr]^{\rho}&&\mathbb{P}^{1}\times\mathbb{P}^{1}\ar@{->}[d]^{\bar{\pi}}\\
&\mathbb{P}^{1}\ar@{=}[rr]&&\mathbb{P}^{1},&}
$$
where $\rho$ is a~birational map that induces the~monomorphism
$\bar{\upsilon}\colon
G\to\mathrm{Aut}(\mathbb{P}^{1}\times\mathbb{P}^{1})$ such that
$$
\bar{\upsilon}\big(g\big)=\rho\circ\upsilon\big(g\big)\circ\rho^{-1}
$$
for every $g\in G$, and $\bar{\upsilon}$ is induced by
the~product action of the~group $\mathbb{A}_{5}\times \mathrm{id}_{\mathbb{P}^{1}}$ on $\mathbb{P}^{1}\times\mathbb{P}^{1}$.%
\end{lemma}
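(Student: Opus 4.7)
The plan is to handle directly the case $d=0$, where the $G$-invariant section $Z$ has bidegree $(0,1)$, using simplicity of $\mathbb{A}_{5}$, and to reduce the general case $d\geqslant 1$ to either this base case or to Lemma~\ref{lemma:A5-4} by means of $G$-equivariant elementary transformations.

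First, since every automorphism of $\mathbb{F}_{0}=\mathbb{P}^{1}\times\mathbb{P}^{1}$ preserving the first projection $\pi$ lies in $\mathrm{PGL}_{2}(\mathbb{C})\times\mathrm{PGL}_{2}(\mathbb{C})$, the $G$-action on $X$ takes the diagonal form $(a,b)\mapsto(\iota_{1}(g)\,a,\,\iota_{2}(g)\,b)$ for homomorphisms $\iota_{i}\colon G\to\mathrm{PGL}_{2}(\mathbb{C})$; here $\iota_{1}$ is faithful (it realizes the icosahedral subgroup) and, by simplicity of $\mathbb{A}_{5}$, the homomorphism $\iota_{2}$ is either trivial or faithful. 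The section $Z$ has some bidegree $(d,1)$, so $Z^{2}=2d$. If $d=0$, then $Z=\mathbb{P}^{1}\times\{q\}$ is a fibre of the second projection, the point $q\in\mathbb{P}^{1}$ is fixed by $\iota_{2}(G)$, and its stabilizer in $\mathrm{PGL}_{2}(\mathbb{C})$ is the solvable affine group $\mathbb{C}^{*}\ltimes\mathbb{C}$. Simplicity of $\mathbb{A}_{5}$ then forces $\iota_{2}$ to be trivial, so the action on $X$ is already the product action $\mathbb{A}_{5}\times\mathrm{id}_{\mathbb{P}^{1}}$, and we take $\rho=\mathrm{id}$.

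For $d\geqslant 1$, I would pick a generic $G$-orbit $\Sigma\subset Z$ of size $60$ (so $\Sigma=G\cdot p$ for some $p\in Z$ outside the $12$-, $20$-, and $30$-orbits of the icosahedral action on $Z\cong\mathbb{P}^{1}$) and perform the $G$-equivariant elementary transformation $\chi\colon X\dasharrow X'$ at $\Sigma$: blow up $\Sigma$, then contract the strict transforms of the fibres of $\pi$ through the points of $\Sigma$. This yields a new Hirzebruch surface $X'$ with a $G$-action and a $G$-invariant section $Z'$ (the image of $Z$) of self-intersection $(Z')^{2}=2d-60$. For a generic choice of $\Sigma$, every other section $Y\neq Z$ of $X$ is disjoint from $\Sigma$, so its image in $X'$ has self-intersection $Y^{2}+60\geqslant 60$; hence $X'\cong\mathbb{F}_{n'}$ with $n'=\max(60-2d,0)$. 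If $d<30$, then $n'>0$, and Lemma~\ref{lemma:A5-4} applied to $X'$ produces a $G$-equivariant birational map $\rho'\colon X'\dasharrow\mathbb{P}^{1}\times\mathbb{P}^{1}$ over the base intertwining the action on $X'$ with the product action, and the composition $\rho=\rho'\circ\chi$ satisfies the conclusion. If $d=30$, then $X'\cong\mathbb{F}_{0}$ and $Z'$ has self-intersection $0$, so $Z'$ is a fibre of the second projection on $X'$, reducing to the base case. If $d>30$, then $X'\cong\mathbb{F}_{0}$ and $Z'$ has bidegree $(d-30,1)$, and we iterate; since $d$ strictly decreases by $30$ at each step, the process terminates after finitely many iterations.

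The main obstacle is the intersection-theoretic verification that the images in $X'$ of sections of $X$ other than $Z$ indeed have non-negative self-intersection, which depends on the genericity of the orbit $\Sigma$. For each section $Y$ of $X$ of bidegree $(e,1)$, the intersection $Y\cap Z$ consists of at most $e+d$ specific points, and avoiding the $G$-translates of this finite set is an open condition on $p\in Z$; since the space of such sections is at most countable and $Z\cong\mathbb{P}^{1}$ is uncountable, a general $p$ (hence a generic $60$-orbit $\Sigma=G\cdot p$) avoids the union of bad configurations, and the claimed bound $Y^{2}+60\geqslant 60$ for images of other sections follows.
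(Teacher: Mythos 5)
Your approach is essentially the same as the paper's: perform a $G$-equivariant elementary transformation of $X$ at a $G$-orbit $\Sigma\subset Z$ of size $60$, drop the self-intersection of the $G$-invariant section by $60$, and iterate until the section has non-positive self-intersection, then finish by Lemma~\ref{lemma:A5-4} or a direct argument. You add a clean treatment of the base case $Z^{2}=0$ (and the case $(Z')^{2}=0$ after a transformation), using simplicity of $\mathbb{A}_{5}$ and the fact that a point-stabilizer in $\mathrm{PGL}_{2}(\mathbb{C})$ is solvable; this is worth having, since the paper's invocation of Lemma~\ref{lemma:A5-4} when $m=-Z'\cdot Z'=0$ is outside that lemma's hypothesis $n\ne 0$, so your base case genuinely fills a small gap.

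However, your genericity argument in the last paragraph is wrong, both in its conclusion and in its justification. The justification fails because the set of sections of $\pi$ is \emph{not} countable: sections of bidegree $(e,1)$ form a projective space of dimension $2e+1$. The conclusion fails because for a fixed $G$-orbit $\Sigma=G\cdot p$ one can always produce sections of sufficiently high bidegree passing through all $60$ points of $\Sigma$, so no choice of $p$ makes every other section disjoint from $\Sigma$. Fortunately the genericity is unnecessary: for any section $Y$ of bidegree $(e,1)$ meeting $\Sigma$ in $k$ points one has $(Y')^{2}=Y^{2}+60-2k$ with $k\leqslant Y\cdot Z=e+d$, hence $(Y')^{2}\geqslant 60-2d$ for \emph{any} $60$-orbit on $Z$. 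Combined with $(Z')^{2}=2d-60$, this already shows that $Z'$ is the unique minimal section whenever $d\leqslant 30$, so $X'\cong\mathbb{F}_{\max(60-2d,\,0)}$ without any genericity. For $d>30$ you need not know that $X'\cong\mathbb{F}_{0}$: if $X'\cong\mathbb{F}_{n}$ with $n>0$ you finish by Lemma~\ref{lemma:A5-4} at once (its unique negative section is automatically $G$-invariant), and if $X'\cong\mathbb{F}_{0}$ you iterate with $Z'$ in place of $Z$, the self-intersection dropping by $60$ each time. Replace the genericity discussion by this uniform intersection-theoretic bound and the proof is sound.
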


\begin{proof}
Let $\Sigma$ be a~$G$-orbit such that $\Sigma\subset Z$ and
$|\Sigma|=60$. Then there is a~commutative diagram
$$
\xymatrix{
&&U\ar@{->}[dl]_{\alpha}\ar@{->}[dr]^{\beta}&&\\
&X\ar@{->}[d]_{\pi}\ar@{-->}[rr]^{\psi}&& X^{\prime}\ar@{->}[d]^{\pi^{\prime}}\\
&\mathbb{P}^{1}\ar@{=}[rr]&&\mathbb{P}^{1},&}
$$
where $\psi$ is a birational map, $\pi^{\prime}$ is
a~$\mathbb{P}^{1}$-bundle, $\alpha$ is the~blow up of the~set
$\Sigma$, and $\beta$ is the~blow down of the~proper transforms of
the~fibers of $\pi$ that pass through the~points of the~set
$\Sigma$.

The birational map $\psi$ induces the~monomorphism
$\upsilon^{\prime}\colon G\to\mathrm{Aut}(X^{\prime})$ such that
$$
\upsilon^{\prime}\big(g\big)=\psi\circ\upsilon\big(g\big)\circ\psi^{-1}
$$
for every $g\in G$. Let us identify the~subgroup
$\upsilon^{\prime}(G)$ with the~group $G$. Put
$Z^{\prime}=\psi(Z)$. Then
$$
Z^{\prime}\cdot Z^{\prime}=Z\cdot Z-60,
$$
the~curve $Z^{\prime}$ is a section of $\pi^{\prime}$, and
the~curve $Z^{\prime}$ is $G$-invariant.

Put $m=-Z^{\prime}\cdot Z^{\prime}$. If $m\geqslant 0$, then
$X^{\prime}\cong\mathbb{F}_{m}$ and we are done by
Lemma~\ref{lemma:A5-4}.

If $m<0$, then we can repeat the~above construction $\lceil
m/60\rceil$ times to complete the~proof.
\end{proof}

Let $\tau$ be the~biregular involution of
$\mathbb{P}^{1}\times\mathbb{P}^{1}$ such that
$$
\tau\big(a,b\big)=\big(b,a\big)
$$
for every $(a,b)\in\mathbb{P}^{1}\times\mathbb{P}^{1}$. Then
$\tau$ induces the~monomorphism $\upsilon^{\prime}\colon
G\to\mathrm{Aut}(\mathbb{P}^{1}\times\mathbb{P}^{1})$ such that
$$
\upsilon^{\prime}\big(g\big)=\tau\circ\upsilon\big(g\big)\circ\tau
$$
for every $g\in G$. Then $(\mathbb{P}^{1}\times\mathbb{P}^{1},
\xi\circ\tau,\upsilon^{\prime})$ is a~regularization of the~pair
$(G,\phi)$.

\begin{lemma}
\label{lemma:A5-7} The following assertions are equivalent:
\begin{itemize}
\item the~monomorphism $\upsilon$ is induces by the~product action of $\mathrm{id}_{\mathbb{P}^{1}}\times \mathbb{A}_{5}$,%
\item the~monomorphism $\upsilon^{\prime}$ is induces by the~product action of $\mathbb{A}_{5}\times\mathrm{id}_{\mathbb{P}^{1}}$.%
\end{itemize}
\end{lemma}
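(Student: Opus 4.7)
The plan is to verify this equivalence by direct computation from the defining formula $\upsilon^{\prime}(g)=\tau\circ\upsilon(g)\circ\tau$, using that $\tau$ is an involution that swaps the two factors of $\mathbb{P}^{1}\times\mathbb{P}^{1}$.

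First I would unravel what it means for $\upsilon$ to be induced by the product action of $\mathrm{id}_{\mathbb{P}^{1}}\times\mathbb{A}_{5}$: there is an embedding $\iota\colon G\to\mathrm{Aut}(\mathbb{P}^{1})$ such that $\upsilon(g)(a,b)=(a,\iota(g)(b))$ for every $g\in G$ and $(a,b)\in\mathbb{P}^{1}\times\mathbb{P}^{1}$. Then I would compute
$$\upsilon^{\prime}(g)(a,b)=\tau\bigl(\upsilon(g)(b,a)\bigr)=\tau\bigl(b,\iota(g)(a)\bigr)=\bigl(\iota(g)(a),b\bigr),$$
which is precisely the product action of $\mathbb{A}_{5}\times\mathrm{id}_{\mathbb{P}^{1}}$ (via the same embedding $\iota$). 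This gives one direction of the equivalence.

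For the converse, I would use that $\tau\circ\tau=\mathrm{id}$, so the relation $\upsilon^{\prime}(g)=\tau\circ\upsilon(g)\circ\tau$ can be inverted to $\upsilon(g)=\tau\circ\upsilon^{\prime}(g)\circ\tau$; the same calculation then shows that if $\upsilon^{\prime}$ is induced by the product action $\mathbb{A}_{5}\times\mathrm{id}_{\mathbb{P}^{1}}$, then $\upsilon$ is induced by $\mathrm{id}_{\mathbb{P}^{1}}\times\mathbb{A}_{5}$. There is no real obstacle here: the entire content of the lemma is that conjugation by the coordinate swap $\tau$ interchanges the two factor actions of a diagonal-product embedding.
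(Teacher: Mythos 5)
Your proof is correct, and it is exactly the computation the paper elides by calling the assertion ``obvious'': conjugating $\upsilon(g)$ by the factor-swap involution $\tau$ transfers an action of the form $(a,b)\mapsto(a,\iota(g)(b))$ to $(a,b)\mapsto(\iota(g)(a),b)$ and vice versa, with the converse following because $\tau$ is an involution. Nothing more is needed.
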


\begin{proof}
The required assertion is obvious.
\end{proof}

Let us fix a~monomorphism of~groups $\iota\colon
G\to\mathrm{Aut}(\mathbb{P}^{1})$, and let us fix
some~non-inner~iso\-mor\-phism $\nu\in\mathrm{Aut}(G)$. It follows
from Lemmas~\ref{lemma:A5-3}, \ref{lemma:A5-5} and
\ref{lemma:A5-7} that we may assume~that
$$
\upsilon(g)\big(a,b\big)=\Big(\iota(g)\big(a\big),\iota\circ\nu(g)\big(b\big)\Big)
$$
for every $g\in G$ and every
$(a,b)\in\mathbb{P}^{1}\times\mathbb{P}^{1}$.

\begin{lemma}
\label{lemma:A5-6} There is a~$G$-invariant section of
the~projection $\pi$.
\end{lemma}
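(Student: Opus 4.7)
My plan. A section of $\pi\colon\mathbb{P}^1\times\mathbb{P}^1\to\mathbb{P}^1$ is the graph of a morphism $\mathbb{P}^1\to\mathbb{P}^1$ of some degree $d\geq 0$, equivalently an irreducible divisor of bidegree $(d,1)$. So the plan is to produce a $G$-invariant such divisor via representation theory, and then to show by an orbit-size argument that any $G$-invariant effective divisor of the bidegree that arises must actually be a section.

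First I would lift the two actions to the binary icosahedral group $2I\cong\mathrm{SL}_2(\mathbb{F}_5)$. The group $2I$ has exactly two inequivalent faithful two-dimensional irreducibles $V,V'$, and the outer automorphism class of $\mathbb{A}_5$ interchanges them. Hence, because $\nu$ is not inner, I may realize the first factor as $\mathbb{P}(V)$ and the second as $\mathbb{P}(V')$. Using $V\cong V^{*}$ and $V'\cong V'^{*}$ (which holds for any $\mathrm{SL}_2$-representation), there is a natural isomorphism of $2I$-modules
\[
H^0\bigl(\mathbb{P}^1\times\mathbb{P}^1,\mathcal{O}(d,1)\bigr)\;\cong\;\mathrm{Sym}^d V\otimes V',
\]
and a nonzero $G$-invariant divisor of bidegree $(d,1)$ exists if and only if $V'$ embeds as a subrepresentation of $\mathrm{Sym}^d V$.

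Next I would compute $\mathrm{Sym}^d V$ inductively via $\mathrm{Sym}^{d+1} V=V\otimes \mathrm{Sym}^d V\ominus \mathrm{Sym}^{d-1} V$ (valid because $\wedge^2 V$ is trivial), guided by the McKay graph of $2I$, which is the extended Dynkin diagram $\widetilde{E}_8$ whose nine nodes are labelled by the dimensions $1,2,2,3,3,4,4,5,6$ of the irreducibles of $2I$. The central character of $\mathrm{Sym}^d V$ equals $(-1)^d$, and walking along the longer arm of $\widetilde{E}_8$ gives $\mathrm{Sym}^d V$ irreducible of dimension $d+1$ for $0\leq d\leq 5$. At the branching trivalent node one finds $\mathrm{Sym}^6 V=U\oplus W_2$, where $U$ and $W_2$ are irreducibles of $\mathbb{A}_5$ of dimensions $4$ and $3$, and then
\[
\mathrm{Sym}^7 V \;=\; V'\oplus S,
\]
where $S$ is the six-dimensional faithful irreducible of $2I$. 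Thus $V'$ appears in $\mathrm{Sym}^7 V$ with multiplicity one, giving a one-dimensional space of $G$-invariants in $H^0(\mathcal{O}(7,1))$, and hence a unique $G$-invariant effective divisor $D$ of bidegree $(7,1)$.

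Finally I would check that this $D$ is automatically a section of $\pi$. Because the vertical degree of $D$ equals $1$, any proper decomposition must split off a component of bidegree $(k,0)$ with $1\leq k\leq 7$, i.e.\ a $G$-invariant union of $k$ fibres of $\pi$; such a union corresponds to a $G$-orbit of size $k$ on the base $\mathbb{P}^1$, but every orbit of the icosahedral group on $\mathbb{P}^1$ has at least $12$ points (see \cite{Spr77}). Since $7<12$ this is impossible, so $D$ is irreducible and reduced, hence a section of $\pi$, which is a $G$-invariant section because $D$ itself is $G$-invariant. The main obstacle is the representation-theoretic computation identifying $\mathrm{Sym}^7 V$: one has to keep track of the alternation of the central character of $2I$ and the branching of $\mathrm{Sym}^6 V$ at the trivalent node of $\widetilde{E}_8$ carefully enough to see that $V'$ appears in $\mathrm{Sym}^d V$ for the first time at $d=7$, so that the resulting invariant divisor has precisely bidegree $(7,1)$ rather than a larger vertical degree.
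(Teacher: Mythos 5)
Your argument is correct and fills in exactly what the paper's one-line proof (``Explicit computations shows\ldots'') leaves implicit: the McKay-graph calculation showing that $V'$ first appears in $\mathrm{Sym}^{d}V$ at $d=7$, which produces the $G$-invariant effective divisor of bidegree $(7,1)$ that the paper asserts, together with the orbit-size observation that every $\mathbb{A}_5$-orbit on the base $\mathbb{P}^1$ has at least $12$ points, so a $G$-invariant divisor of horizontal degree $7$ can have no fibre components and is therefore a section. This is the same route the paper takes; the paper simply omits the representation-theoretic and irreducibility details and attributes the computation to T.\,Dokchitser.
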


\begin{proof}
Explicit computations shows that there is a~$G$-invariant curve
$Z\subset\mathbb{P}^{1}\times\mathbb{P}^{1}$ such that
$$
Z\sim\pi^{*}\Big(\mathcal{O}_{\mathbb{P}^{1}}\big(7\big)\Big)\otimes\big(\pi\circ\tau\big)^{*}\Big(\mathcal{O}_{\mathbb{P}^{1}}\big(1\big)\Big),
$$
which implies that $Z$  is a~$G$-invariant section of
the~projection $\pi$.
\end{proof}

The assertion of Theorem~\ref{theorem:Cremona-Icosahedron} is
proved.

\end{document}